\numberwithin{equation}{section}
\newtheorem{theorem}[subsubsection]{Theorem}
\newtheorem{proposition}[subsubsection]{Proposition}
\newtheorem{corollary}[subsubsection]{Corollary}
\newtheorem{lemma}[subsubsection]{Lemma}
\newtheorem{conjecture}[subsubsection]{Conjecture}
\newtheorem*{theorem*}{Theorem}
\newtheorem{atheorem}{Theorem}
\newtheorem{aconjecture}[atheorem]{Conjecture}
\theoremstyle{definition}
\newtheorem{definition}[subsubsection]{Definition}
\theoremstyle{remark}
\newtheorem{example}[subsubsection]{Example}
\newtheorem{remark}[subsubsection]{Remark}
\newtheorem{question}[subsubsection]{Question}
\newcommand{\cR}{\mathcal{R}}
\newcommand\id{\operatorname{id}}
\newcommand\chara{\operatorname{char}}
\newcommand{\Rep}{\operatorname{Rep}}
\newcommand{\Res}{\operatorname{Res}}
\newcommand{\Ind}{\operatorname{Ind}}
\newcommand{\Hom}{\operatorname{Hom}}
\newcommand{\End}{\operatorname{End}}
\newcommand{\Frob}{\operatorname{Frob}}
\newcommand{\Fr}{\operatorname{Fr}}
\newcommand{\Stab}{\operatorname{Stab}}
\newcommand{\Fun}{\operatorname{Fun}}
\newcommand{\FPdim}{\operatorname{FPdim}}
\newcommand{\coker}{\operatorname{coker}}
\newcommand{\AbCat}{\mathcal{A}b\mathcal{C}at}
\newcommand{\Comod}{\mathrm{Comod}}
\newcommand{\Perm}{\mathrm{Perm}}
\newcommand{\Ver}{\operatorname{Ver}}
\newcommand{\Tilt}{\operatorname{Tilt}}
\newcommand{\Vecc}{\mathrm{Vec}}
\newcommand{\sVec}{\mathrm{sVec}}
\newcommand{\op}{\mathrm{op}}
\newcommand{\Tak}{\mathcal{T}ak}
\newcommand{\Tens}{\mathcal{T}ens}
\newcommand{\CoAl}{\mathcal{C}o\mathcal{A}l}
\newcommand{\mN}{\mathbb{N}}
\newcommand{\mZ}{\mathbb{Z}}
\newcommand{\mA}{\mathbb{A}}
\newcommand{\mF}{\mathbb{F}}
\newcommand{\mG}{\mathbb{G}}
\newcommand{\mP}{\mathbb{P}}
\newcommand{\mI}{\mathbb{I}}
\newcommand{\mJ}{\mathbb{J}}
\newcommand{\cA}{\mathcal{A}}
\newcommand{\cN}{\mathcal{N}}
\newcommand{\cB}{\mathcal{B}}
\newcommand{\cC}{\mathcal{C}}
\newcommand{\cV}{\mathcal{V}}
\newcommand{\cD}{\mathcal{D}}
\newcommand{\cP}{\mathcal{P}}
\newcommand{\cF}{\mathcal{F}}
\newcommand{\cE}{\mathcal{E}}
\newcommand{\cI}{\mathcal{I}}
\newcommand{\cJ}{\mathcal{J}}
\newcommand{\cK}{\mathcal{K}}
\newcommand{\bk}{\Bbbk}
\newcommand{\bV}{\mathbf{V}}
\newcommand{\bO}{\mathbf{O}}
\newcommand{\unit}{\mathbf{1}}
\newcommand\one{\mathbf{1}}
\newcommand\kk{\Bbbk}
\renewcommand\o{\otimes}
\newcommand{\tto}{\twoheadrightarrow}
\newcommand{\ulambda}{{\underline{\lambda}}}
\title[Higher Frobenius functors]{Towards higher Frobenius functors for symmetric tensor categories}
\author{Kevin Coulembier}
\address{School of Mathematics and Statistics, University of Sydney, Australia}
\email{kevin.coulembier@sydney.edu.au}
\author{Johannes Flake}
\address{Mathematical Institute, University of Bonn, Germany}
\email{flake@math.uni-bonn.de}
\begin{document}

\begin{abstract}
We develop  theory and examples of monoidal functors on tensor categories in positive characteristic that generalise the Frobenius functor from \cite{Os, EOf, Tann}. The latter has proved to be a powerful tool in the ongoing classification of tensor categories of moderate growth, and we demonstrate the similar potential of the generalisations. More explicitly, we describe a new construction of the generalised Verlinde categories $\Ver_{p^n}$ in terms of representation categories of elementary abelian $p$-groups. This leads to families of functors relating to $\Ver_{p^n}$ that we conjecture, and partially show, to exhibit the characteristic properties of the Frobenius functor relating to $\Ver_p$. In particular, we conjecture some of these functors to detect categories that fibre over $\Ver_{p^n}$.
\end{abstract}

\maketitle

\section{Introduction}

The structural study of (symmetric) tensor categories over a field, in the sense of \cite{DM, EGNO}, typically splits into categories of `moderate growth' and those of `superexponential growth', see \cite{CEO,HS} and references therein for recent progress in both directions. In the current paper we focus on the former type. In characteristic zero, by classical results of Deligne in \cite{Del90, Del02},
all such categories are super-tannakian, meaning they are representation categories of affine group superschemes. By the principle of tannakian reconstruction \cite{Del90} this is equivalent to saying that all tensor categories of moderate growth admit a tensor functor to (`fibre over') the tensor category of supervector spaces $\sVec$.

For the rest of the introduction, we fix a prime $p$, and work over an algebraically closed field $\bk$ of characteristic $p$. Then the situation is much more complicated, and we refer to \cite{BE, BEO, Tann, AbEnv, CEO, CEO2, EG, EOf, Os} and references therein for recent progress. We list the results that feature most prominently in the motivation for the current work. The symmetric fusion category $\Ver_p$, see \cite{Os}, which can be defined as the semisimplification of the representation category $\Rep C_p$ of the cyclic group of order $p$, is an example of a tensor category of moderate growth that is not super-tannakian (when $p>3$). In \cite{CEO}, generalising specific cases in \cite{EOf, Os}, it was proved that a tensor category of moderate growth fibres over $\Ver_p$ if and only it is `Frobenius exact'. The latter condition has many equivalent definitions, see \cite{Tann, CoNew, EOf}. The definition responsible for the terminology is that a tensor category $\cC$ is Frobenius exact when the Frobenius functor
\begin{equation}\label{FrIntro}\Fr:\;\cC\;\to\;\cC\boxtimes\Ver_p\end{equation}
is exact. This symmetric monoidal functor $\Fr$ is obtained from the $p$-th power functor $X\mapsto X^{\otimes p}$ on $\cC$, where we can interpret $X^{\otimes p}$ as a representation of $S_p$ or $C_p<S_p$, combined with the defining semisimplification functor $\Rep C_p\to \Ver_p$.

On the other hand, in \cite{BE, BEO, AbEnv} a chain of (incompressible) tensor categories
$$\Vecc\;\subset\;\sVec\;\subset\Ver_p\;\subset\;\Ver_{p^2}\;\subset\; \Ver_{p^3}\;\subset\;\cdots $$
is constructed where $\Ver_{p^n}$ is not Frobenius exact for $n>1$. In \cite[Conjecture~1.4]{BEO} it is conjectured that any tensor category of moderate growth fibres over $\Ver_{p^\infty}=\cup_n\Ver_{p^n}$. By \cite{CEO2}, this conjecture is equivalent to the conjecture that the only incompressible tensor categories of moderate growth are the tensor subcategories of $\Ver_{p^\infty}$.

\subsection{General theory of \texorpdfstring{$\bO$}{O}-functors} The aim of this paper, only partially fulfilled thus far, is to construct `higher analogues'
\begin{equation}\label{eqIntro}
    \cC\;\to\; \cC\boxtimes\Ver_{p^n}
\end{equation}
of $\Fr$ based on the functor $X\mapsto X^{\otimes p^n}$, for general $n\ge1$.
We axiomatise the essential properties of the Frobenius functor \eqref{FrIntro} yielding the notion of $\bO_n$-functors. In general terms, an $\bO_n$-functor is a family of symmetric monoidal functors $\Phi_\cC$ as in \Cref{eqIntro}, for each tensor category $\cC$ over $\bk$, which intertwines tensor functors. We say that an $\bO_n$-functor \emph{detects fibering over $\Ver_{p^n}$} if, for any tensor category $\cC$ of moderate growth, $\cC$ fibres over $\Ver_{p^n}$ if and only if $\Phi_\cC$ is exact. The main theorem of \cite{CEO} can then be rephrased as stating that $\Fr$ detects fibering over $\Ver_p$. Higher versions of the Frobenius functors should be $\bO_n$-functors that detect fibering over $\Ver_{p^n}$.

Using a distinguished generating object $V$ of the categories $\Ver_{p^n}$, a parity involution $X\mapsto\overline{X}$ on them, and a canonical semisimplification functor to $\Ver_p$, which allows us to associate to any $\bO_n$-functor $\Phi$ an $\bO_1$-functor $\overline{\Phi}$, we show in \Cref{ThmO}:

\begin{atheorem}
An $\bO_n$-functor $\Phi=(\Phi_\cC)_\cC$ detects fibering over $\Ver_{p^n}$ if
\begin{itemize}
\item $\Phi_{\Ver_{p^n}}(V)\simeq\overline{\one}\boxtimes\overline{V}$ and
\item the $\bO_1$-functor $\overline\Phi$ detects fibering over $\Ver_p$.
\end{itemize}
\end{atheorem}

We also show that, without any additional assumptions, any $\bO_n$-functor `defines' some incompressible tensor category, in a way such that the $\bO_1$-functor $\Fr$ defines $\Ver_p$. This gives further motivation for the construction of $\bO_n$-functors, although we also show that different $\bO_n$-functors can define the same incompressible category.

\subsection{Construction of \texorpdfstring{$\bO$}{O}-functors} To obtain concrete instances of potential higher Frobenius functors, we establish a new connection between the representation theory of $SL_2$ and that of the elementary abelian $p$-groups $C_p^n$. This is motivated by the fact that the categories $\Ver_{p^n}$ are defined in terms of $SL_2$. Only the special case $\Ver_p$ has thus far been defined via $C_p$ as well. Concretely, $\Ver_p$ is both the semisimplification of  $\Tilt SL_2$ and of $\Rep C_p$, and the latter definition is the one needed for \Cref{FrIntro}. 

Choose a faithful two-dimensional $C_p^n$-representation $V$ (in other words, an embedding of $C_p^n$ into $SL_2$) and let $\cD$ be the additive monoidal subcategory it generates. We identify an explicit tensor ideal $\cK$ in $\cD$, and show in \Cref{FirstResult} (see also \Cref{RemFirst}(1)):

\begin{atheorem}
$\Ver_{p^n}$ is the monoidal abelian envelope of the quotient category $\cD/\cK$.
\end{atheorem}

This generalises the definition of $\Ver_p$ used in the construction of $\Fr$ in \Cref{FrIntro}, and relates higher Verlinde categories to modular representations of elementary abelian $p$-groups, too. Indeed, in case $n=1$, we find $\cD=\Rep C_p$ and $\cK$ is the tensor ideal of negligible morphisms, so that $\cD/\cK=\Ver_p$.

Using an embedding $C_p^n<S_{p^n}$ this then allows us to define, in \Cref{Sec54}, functors 
$$\Phi^V_\cC:\cC\to\cC\boxtimes\Ver_{p^n}$$
generalising the construction of the Frobenius functor in \Cref{FrIntro} for $n=1$. 

Based on the known cases for $n=1$, which corresponds to the main theorem of \cite{CEO}, and our partial results for the case $p^n=4$ (as explained below), we formulate the following conjecture (the first part is \Cref{Conj}(3), the second part for $p^n=4$ is \Cref{ConjMG}):

\begin{aconjecture} \label{conj:higher-Frobenius}
The functors $\Phi^V_\cC$ define an $\bO_n$-functor. For certain natural choices of $V$, it detects fibering over $\Ver_{p^n}$. 
\end{aconjecture}

\subsection{Connection with modular representations of elementary \texorpdfstring{$p$
}{p}-groups}

We show that the part of \Cref{conj:higher-Frobenius} predicting that the family of functors $\Phi^V_\cC$ define an $\bO_n$-functor would follow from an intriguing conjecture about elementary abelian $p$-groups that can be stated without any reference to tensor categories. Let $V\in \Rep C_p^n$ again be any faithful two-dimensional representation. We prove that the above monoidal subcategory $\cD$ of $\Rep C_p^n$ has finitely many indecomposables up to isomorphism ({\it i.e.}~$V$ is algebraic) and that it has a minimal non-empty thick tensor ideal $I\subset\cD$.

Based on the case $n=1$, where this can been seen immediately, the case $p^n=4$, where it follows from the known classification of indecomposable modules for $C_2^2$, and some evidence in the general case that we derive in forthcoming work, we formulate a second conjecture in \Cref{Conj}(1):

\begin{aconjecture} \label{conj:T}
The full subcategory $I\subset \cD\subset\Rep C_p^n$ remains a thick tensor ideal when considered in $\Rep C_p^n$.
\end{aconjecture}

We then show in \Cref{ThmConj}:

\begin{atheorem} \label{prop:T-implies-O}
    \Cref{conj:T} implies that the $\Phi^V_\cC$ define an $\bO_n$-functor (i.e., the first part of \Cref{conj:higher-Frobenius}).
\end{atheorem}

\subsection{The case \texorpdfstring{$p^n=4$}{pⁿ=4}} In this case we can prove \Cref{conj:T} and partially also \Cref{conj:higher-Frobenius}.
From \Cref{prop:T-implies-O}, we obtain $\bO_2$-functors
\begin{equation}\label{eqIntro2}
    \cC\;\to\;\cC\boxtimes\Ver_{4}.
\end{equation}
We prove that, for two (Galois conjugate) choices of $V$, the functor in \Cref{eqIntro2} is exact for $\cC=\Ver_4$ and hence is exact on every tensor category that fibres over $\Ver_4$. We make some progress towards proving that, for these choices of $V$, the functor in \Cref{eqIntro2} being exact on $\cC$ is also a sufficient condition for $\cC$ to fibre over $\Ver_4$. In particular, we show in \Cref{ThmVer4}:

\begin{atheorem}
Assuming \cite[Conjecture~1.4]{BEO}, the constructed $\bO_2$-functor detects fibering over $\Ver_4$ at least for finite tensor categories.
\end{atheorem}

In this case we also derive a large family of new $\bO_1$-functors based on the fourth tensor power. Again under \cite[Conjecture~1.4]{BEO} these should, at least on finite tensor categories, all single out (via the exactness condition) tannakian categories. Our analysis also leads to unconditional results, for instance the following result in \Cref{CorVer2n}:

\begin{atheorem} Consider $n>2$. For any two projective objects $P,Q$ in $\Ver_{2^n}$ or $\Ver_{2^n}^+$, the space $\Hom(P,Q^{\o4})$, viewed as a representation over the Klein 4-subgroup $C_2^2<S_4$, is a direct sum of non-trivial permutation modules.
\end{atheorem}

\subsection*{Structure of the paper} In \Cref{SecPrel} we recall some required background. In \Cref{SecDTP} we explain some properties of the Deligne tensor product that are known but not always easy to locate in the literature. In \Cref{SecO} we introduce and develop the theory of $\bO$-functors. In \Cref{SecConst} we introduce a conjectural $\bO$-functor corresponding to each $\Ver_{p^n}$. In \Cref{SecKlein} we classify linear symmetric monoidal functors from $\Rep_{\bk} C_2^2$, for $\mathrm{char}(\bk)=2$, to tensor categories, leading to a large family of $\bO$-functors, including the main `conjectural one' from \Cref{SecConst} for $\Ver_4$. In \Cref{FinSec} we describe which tensor categories the above $\bO$-functors single out via their exactness condition.

\section{Preliminaries}\label{SecPrel}

Throughout, $\bk$ will denote an algebraically closed field, usually of characteristic $p>0$. Then we have the Frobenius automorphism
$$\Frob:\,\bk\to\bk\,,\; \lambda\mapsto \lambda^p.$$
We consider the symmetric group $S_n$ as the permutation group of the set 
$\{1,2,\dots, n\}.$ 

We set $\mN:=\{0,1,\dots\}$.

\subsection{Tensor categories and functors}
We refer to \cite{EGNO} for basic notions regarding monoidal categories.

\subsubsection{} We will abbreviate `$\bk$-linear symmetric monoidal category' to $\bk$-LSM category, leaving out $\bk$ when it is clear from context. It is understood that the tensor product in such a category is $\bk$-linear in each variable. Similarly, we speak of LSM functors between LSM categories. 

More generally, for $\bk_i$-LSM categories $\cC_i$, $i\in\{1,2\}$ and a field homomorphism $\varphi:\bk_1\to\bk_2$, a $\varphi$-LSM functor $F:\cC_1\to\cC_2$ is a symmetric monoidal functor $F$ such that 
$$F(\lambda f)=\varphi(\lambda)F(f),\quad\mbox{for all $\lambda\in\bk_1$},$$
and all morphisms $f$ in $\cC_1$.

Usually, this situation will occur for $\bk_1=\bk_2=\bk$ and $\varphi$ a power of $\Frob$.

\subsubsection{} A $\bk$-LSM category
$(\cC,\otimes,\unit)$ is a {\bf tensor category over $\bk$} if
\begin{enumerate}
\item $\cC$ is abelian with objects of finite length;
\item $\bk\to\End_{\cC}(\unit)$ is an isomorphism;
\item $(\cC,\otimes,\unit)$ is rigid, meaning that every object $X$ has a monoidal dual $X^\vee$.
\end{enumerate}
Such categories are also sometimes called symmetric tensor categories or pretannakian categories. Note that $\unit$ is a simple object, see \cite[Theorem~4.3.8]{EGNO}. It then follows from the assumptions (1)-(3) that morphism spaces in a tensor category are finite-dimensional.

The first example of a tensor category is the category $\Vecc=\Vecc_{\bk}$ of finite-dimensional vector spaces over $\bk$. We denote the category of all vector spaces by $\Vecc^\infty$.

For tensor categories, we will usually leave out the reference `over $\bk$' when it is clear from context over which field we work.

A {\bf $\varphi$-tensor functor} is an exact $\varphi$-LSM functor between tensor categories over the source and target of a field homomorphism $\varphi$. We leave out reference to $\varphi$ when $\varphi=\id$.

A {\bf tensor subcategory} of a tensor category is a full subcategory which is closed under taking subquotients, tensor products and duals. A tensor functor $F:\cC_1\to\cC_2$ is called {\bf surjective} if every object in $\cC_2$ is a subquotient of one in the essential image of $F$. In other words, there is no proper tensor subcategory of $\cC_2$ in which~$F$ takes values.

\begin{example}\label{prelim::representations}
\begin{enumerate}
    \item For an abstract group $H$, we have the tensor category $\Rep_{\bk}H=\Rep H$ of finite-dimensional representations.
    \item  For an affine group scheme $G$ over $\bk$, we have the tensor category $\Rep_{\bk}G=\Rep G$ of rational representations (comodules over the coordinate ring). In particular, we have a (forgetful) tensor functor 
$$
\Rep_\bk G \to \Rep_\bk G(\bk).
$$
\end{enumerate}
    
\end{example}

\subsubsection{} Following \cite[Chapter~6]{EGNO}, we say that a tensor category is {\bf finite}, if it has enough projective objects and only finitely many simple objects up to isomorphism. This is equivalent to demanding that the underlying abelian category is equivalent to the category of finite-dimensional modules of a finite-dimensional associative $\bk$-algebra.

Note also that in a tensor category $\cC$, the existence of one (non-zero) projective object implies that $\cC$ has enough projective objects; and projective objects are injective.

We record some weaker `finiteness' conditions on tensor categories:
\begin{enumerate}
    \item[(S)] $\cC$ is a union of tensor (sub)categories with finitely many simple objects, up to isomorphism;
    \item[(P)] $\cC$ is a union of tensor categories which have enough projective objects (or equivalently, which have non-zero projective objects);
    \item[(SP)] $\cC$ satisfies (S) and (P), or equivalently, it is a union of finite tensor categories.
\end{enumerate}
Note that these three properties are inherited by tensor subcategories. On tensor categories satisfying (S), we have a well-defined notion of Frobenius--Perron dimension $\FPdim$ by \cite[Proposition 4.5.7]{EGNO}.

An example of a tensor category satisfying (SP) is $\Ver_{p^\infty}$.

Another finiteness condition that we can impose extends the notion of algebraic representations due to Alperin. Concretely, we say that an object in a tensor category is {\bf algebraic} if there are only finitely many indecomposable summands (up to isomorphism) in its tensor powers. Equivalently, an object is algebraic if its image in the split Grothendieck ring satisfies a polynomial equation.

\subsubsection{}A tensor category $\cC$ is of {\bf moderate growth}, see \cite{BE, CEO, CEO2}, if for every $X\in\cC$, the limit (the \emph{growth dimension} of $X$)
$$\lim_{n\to\infty}\sqrt[n]{\ell(X^{\otimes n})}\;\in\;\mathbb{R}_{\ge 0}\cup\{\infty\}$$
is finite, see \cite[\S 4]{CEO}, where $\ell$ denotes the length of an object. 

It follows from the alternative characterisation in \cite[Lemma~4.11(3)]{CEO} that if $\cC$ is of moderate growth and there exists a surjective tensor functor $F:\cC\to\cC'$, then also $\cC'$ is of moderate growth. Tensor categories satisfying (S) are of moderate growth, since $\ell(X)\le\FPdim(X)$ and the Frobenius--Perron dimension is multiplicative.

\subsection{Exactness criteria}

We will be particularly interested in LSM functors between tensor categories that are not (necessarily) exact.

The following property, see \cite[Theorem~2.4.1]{CEOP} or \cite{Top}, will be used often.
\begin{theorem}\label{Thm:FE}
    Let $F:\cC_1\to\cC_2$ be a $\varphi$-LSM functor between tensor categories over fields $\bk_1,\bk_2$. Then $F$ is exact ({\it i.e.}~a $\varphi$-tensor functor) if and only if it is faithful.
\end{theorem}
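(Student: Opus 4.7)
The plan is to separate the two implications after a harmless reduction. Both faithfulness and exactness of $F$ are properties of the underlying additive functor; they do not see the action on scalars. Pulling back the $\bk_2$-linear structure on $\cC_2$ along $\varphi$ therefore lets us assume $\bk_1=\bk_2=\bk$ and $\varphi=\id$, so that $F$ is a $\bk$-LSM functor.

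For the forward direction (exact $\Rightarrow$ faithful) I would use the standard fact that an exact additive functor between abelian categories is faithful if and only if it sends nonzero objects to nonzero objects. It thus suffices to show $F(X)\neq 0$ for every $X\neq 0$ in $\cC_1$. Rigidity supplies a coevaluation $\mathrm{coev}_X:\unit\to X\otimes X^\vee$, and this map is nonzero because $(\ev_X\otimes\id_X)\circ(\id_X\otimes\mathrm{coev}_X)=\id_X\neq 0$; since $\unit$ is simple, $\mathrm{coev}_X$ is automatically a monomorphism. Applying the exact strong monoidal $F$ and using $F(\unit)\cong\unit$ produces a monomorphism $\unit\hookrightarrow F(X)\otimes F(X)^\vee$, which forces $F(X)\neq 0$.

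The substantive content is the converse (faithful $\Rightarrow$ exact). The two inputs I would rely on are: (i) because $F$ is strong monoidal between rigid categories, it canonically preserves duals ($F(X^\vee)\cong F(X)^\vee$) and hence intertwines the internal-hom adjunctions $\Hom(X,Y)\cong\Hom(\unit,X^\vee\otimes Y)$; and (ii) in any rigid tensor category, tensoring with a fixed object is exact, since $X^\vee\otimes(-)$ is both a left and a right adjoint to $X\otimes(-)$. My preferred route is to extend $F$ to the Ind-completions $\widetilde F:\Ind\cC_1\to\Ind\cC_2$, which is cocontinuous and hence right exact, and which admits a right adjoint $G$ by the adjoint functor theorem since $\Ind\cC_2$ is locally presentable. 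The faithfulness of $F$, combined with preservation of duals, should translate into a faithful-flatness property for $\widetilde F$ (the unit $X\to G\widetilde F(X)$ being monic on compact objects), and this upgrades right exactness to full exactness. Restricting back to $\cC_1$ gives the claim.

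The main obstacle is precisely this backward direction: faithfulness a priori only reflects zero morphisms, not kernels, so one has to genuinely exploit the rigid monoidal structure to manufacture left exactness. An alternative direct approach proceeds through image factorisation: for $f:A\to B$, write $f=i\circ p$ with $p$ epi and $i$ mono, pass via the rigid adjunction to $\tilde f:\unit\to A^\vee\otimes B$, and use simplicity of $\unit$ together with faithfulness on hom-spaces out of $\unit$ to show $F(p)$ is epi and $F(i)$ is mono in $\cC_2$, whence $F$ preserves short exact sequences. I would check that whichever route I pursue is aligned with the arguments in \cite[Theorem~2.4.1]{CEOP} or \cite{Top}, which are the standard references for this equivalence.
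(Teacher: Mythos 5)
The paper itself does not prove this result; it is quoted from \cite[Theorem~2.4.1]{CEOP} and \cite{Top}, so there is no internal proof to compare against directly. Your argument for the easy direction (exact implies faithful) is correct: the coevaluation $\unit\to X\otimes X^\vee$ is monic because $\unit$ is simple, and an exact additive functor that sends nonzero objects to nonzero objects is faithful.

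The backward direction, which you rightly identify as the substantive content, contains a genuine gap. In your preferred route you assert that the Ind-extension $\widetilde F\colon\Ind\cC_1\to\Ind\cC_2$ is cocontinuous and hence right exact; but the Ind-extension preserves finite colimits exactly when $F$ already does, so this is circular — you would be assuming the very right exactness you are trying to prove, and the right adjoint $G$ and the ensuing faithful-flatness discussion have nothing to stand on. Moreover, even if right exactness were secured, faithfulness by itself does not upgrade a right exact additive functor to an exact one (e.g.\ $-\otimes_{\mZ}(\mZ\oplus\mZ/2)$ on $\mZ$-modules is right exact and faithful but not left exact); one must genuinely exploit rigidity — that $F$ preserves duals and that $(-)^\vee$ is an exact contravariant equivalence — to pass from right exactness to left exactness, a step you mention only in passing. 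Your alternative route through the rigid adjunction is closer in spirit to the references, but it is left entirely as a sketch; the missing ingredient is a monoidal characterisation of exactness that LSM functors automatically respect, which is exactly what \cite[Theorem~4.2.2(ii)]{Top} (the right exact sequence $U\otimes U\xrightarrow{u_\Delta}U\xrightarrow{u}\unit\to 0$ for nonzero $u\colon U\to\unit$, quoted by the present paper in the proof of its extension lemma) supplies. Neither of your routes states or proves such a lemma, so as written the ``faithful implies exact'' direction is not established.
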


\begin{lemma}\label{ExFin}\label{LemK0}
Consider a $\varphi$-LSM functor $F:\cC\to\cC'$ between tensor categories, where $\cC$ satisfies {\rm (P)}. The following conditions are equivalent:
    \begin{enumerate}
    \item $F$ is exact.
    \item $F$ is faithful.
    \item There are tensor subcategories $\{\cC_\alpha\subset\cC\}$ satisfying $\cup_\alpha\cC_\alpha=\cC$, with projective objects $P_\alpha\in\cC_\alpha$ such that $F(P_\alpha)\not=0$.
      
    \end{enumerate}
\end{lemma}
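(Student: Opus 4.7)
My plan is to verify the three implications separately.

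The equivalence $(1)\Leftrightarrow(2)$ is immediate from \Cref{Thm:FE} applied to $F$. The implication $(2)\Rightarrow(3)$ is then direct: by hypothesis (P), write $\cC=\bigcup_\alpha\cC_\alpha$ with each $\cC_\alpha$ a tensor subcategory containing a nonzero projective $P_\alpha$; since $F$ is faithful, $F(\id_{P_\alpha})=\id_{F(P_\alpha)}\neq 0$, whence $F(P_\alpha)\neq 0$.

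The substantive content lies in $(3)\Rightarrow(1)$. The plan is to reduce to showing that each restriction $F|_{\cC_\alpha}:\cC_\alpha\to\cC'$ is exact: this suffices because every short exact sequence in $\cC$ lies in some single $\cC_\alpha$ (if one term lies in $\cC_\alpha$, then so do the others, by closure under subquotients). Each restriction is itself an LSM functor between tensor categories, and by \Cref{Thm:FE} its exactness is equivalent to its faithfulness. Thus the task reduces to the following key local claim: \emph{an LSM functor $G:\cD\to\cC'$ between tensor categories, where $\cD$ contains a nonzero projective $P$ such that $G(P)\neq 0$, is faithful.}

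To establish the key claim, I would argue by contrapositive. Suppose $G$ annihilates a nonzero morphism $g:X\to Y$. Since LSM functors preserve coevaluations and the mate construction is built entirely from tensor products, unitors and $\mathrm{coev}$, the mate $g^\flat:\unit\to Y\otimes X^\vee$ is again nonzero and annihilated by $G$; so we may assume $g:\unit\to W$ with $W\neq 0$ and $G(g)=0$. Then, tensoring with $P$ and $P^\vee$ appropriately and composing with the evaluation and coevaluation of $P$, one uses the snake identity to build a nonzero scalar multiple of $\id_P$ that factors through an object which $G$ sends to zero, contradicting $G(\id_P)=\id_{G(P)}\neq 0$.

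The main obstacle will be to carry out this last construction cleanly, working entirely within the rigid monoidal structure preserved by a (not a priori exact) LSM functor, namely tensor products, units, duals, $\ev$, $\mathrm{coev}$, and symmetries; the non-exactness of $G$ forbids manipulation of images or subobjects. The projectivity of $P$ enters by ensuring that $X\otimes P$ and similar tensor products remain projective, which together with rigidity in $\cC'$ provides the control needed to force the contradiction.
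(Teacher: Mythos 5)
Your reduction of $(3)\Rightarrow(1)$ to faithfulness of each restriction $F|_{\cC_\alpha}$ is a valid reorganisation, but it is genuinely different from the paper's argument. The paper observes directly that tensoring a short exact sequence in $\cC_\alpha$ with the projective $P_\alpha$ splits it, so that $F$ (being additive and monoidal) sends the tensored sequence to a split exact sequence, which after identifying $F(-\otimes P_\alpha)\cong F(-)\otimes F(P_\alpha)$ forces exactness of $F$ applied to the original sequence, because $-\otimes F(P_\alpha)$ is faithful exact and hence reflects exactness when $F(P_\alpha)\neq 0$. Your route instead isolates a local faithfulness claim and feeds it back through \Cref{Thm:FE}; the two routes buy essentially the same thing.

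The real issue is your sketch of the key local claim, which as written does not go through. The phrase ``a nonzero scalar multiple of $\id_P$ that factors through an object which $G$ sends to zero'' is incorrect: $G(g)=0$ for a morphism $g$ produces no \emph{object} annihilated by $G$, and indeed a non-faithful LSM functor between tensor categories need not kill any nonzero object (no tensor functor to a tensor category does). What you want is for $\id_P$ to factor through a \emph{morphism} killed by $G$. Moreover, a purely snake-identity/evaluation/coevaluation argument cannot succeed on its own: projective objects in a tensor category of positive characteristic typically have vanishing categorical dimension (e.g.\ the regular module of $C_p$), so $\ev_P\circ\mathrm{coev}_P$ gives no leverage and there is no trace argument available. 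The ingredients you are missing are: (i) a nonzero $g:\unit\to W$ is automatically a \emph{monomorphism}, since $\unit$ is simple; (ii) $g\otimes\id_P:P\to W\otimes P$ is then a monomorphism by exactness of $\otimes$; (iii) $P$ is \emph{injective} (projectives in a tensor category are injective, as recalled in the preliminaries), so this monomorphism splits, say $r\circ(g\otimes\id_P)=\id_P$. Applying $G$ gives $\id_{G(P)}=G(r)\circ(G(g)\otimes\id_{G(P)})=0$, the desired contradiction. (Dually, one could use that $g^\vee:W^\vee\to\unit$ is an epimorphism and that $P$ is projective to split $g^\vee\otimes\id_P$.) Finally, the worry that ``the non-exactness of $G$ forbids manipulation of images or subobjects'' is misplaced: the splitting is produced entirely inside $\cD$, where the full abelian and rigid structures are available, and $G$ only needs to preserve the resulting direct-sum decomposition, which any additive functor does.
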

\begin{proof}That (1) and (2) are equivalent is \Cref{Thm:FE}.
Clearly (2) implies (3). 
    Since tensoring with a projective object splits every short exact sequence, it follows that (3) implies (1).
\end{proof}


\begin{corollary}\label{CorExFin}
    Consider a surjective $\varphi$-tensor functor $F:\cC\to\cC'$ between finite tensor categories and a $\varphi'$-LSM functor $F':\cC'\to\cC''$ to a third tensor category such that the composite $F'\circ F$
        is exact. Then $F'$ is a $\varphi'$-tensor functor.
\end{corollary}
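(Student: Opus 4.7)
My plan is to reduce the assertion, via \Cref{Thm:FE} and \Cref{ExFin}, to exhibiting a single projective object of $\cC'$ on which $F'$ does not vanish. Indeed, by \Cref{Thm:FE}, $F'$ being a $\varphi'$-tensor functor is equivalent to $F'$ being faithful, and since the finite tensor category $\cC'$ satisfies property~(P) we may apply \Cref{ExFin} to $F'$ with the trivial covering $\{\cC'_\alpha\}=\{\cC'\}$ in condition~(3); this reduces the task to producing one projective $P'\in\cC'$ with $F'(P')\neq 0$.

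To produce such a $P'$, I would set $P':=F(P)$ where $P:=P(\unit_\cC)$ is the projective cover of the unit in the finite tensor category $\cC$ (or indeed any nonzero projective object of $\cC$). The nonvanishing of $F'(P')$ is then automatic: by hypothesis $F'\circ F$ is exact, hence faithful by \Cref{Thm:FE}, and a faithful additive functor reflects the zero object; thus $F'(F(P))=(F'\circ F)(P)\neq 0$ since $P\neq 0$.

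The one substantive step that remains---and the main potential obstacle---is to verify that $F(P)$ is itself projective in $\cC'$. For this I would invoke the standard fact that a tensor functor between finite tensor categories preserves projective objects: such an $F$ admits a right adjoint $R$ which is exact in the finite tensor setting, and the adjunction isomorphism $\Hom_{\cC'}(F(P),-)\cong\Hom_\cC(P,R(-))$ then shows $F(P)$ to be projective whenever $P$ is, see \cite{EGNO}. Surjectivity of $F$ enters here to rule out degenerate cases (cf.\ $\Vecc\hookrightarrow\cC$, which does not preserve projectives when $\unit_\cC$ fails to be projective). Granted this, the choice $P':=F(P)$ satisfies both requirements of the reduction, concluding the argument.
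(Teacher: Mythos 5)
Your proof is correct and follows the same route as the paper: reduce via \Cref{ExFin} to finding one projective in $\cC'$ not killed by $F'$, then take the image under $F$ of a nonzero projective in $\cC$, using \cite[Theorem~6.1.16]{EGNO} that a surjective tensor functor between finite tensor categories preserves projectives, and faithfulness of $F'\circ F$ (via \Cref{Thm:FE}) to see it survives. Your parenthetical sketch of \emph{why} $F$ preserves projectives (``a right adjoint $R$ which is exact in the finite tensor setting'') is a little loose---exactness of the right adjoint is not automatic in the finite setting but rather a consequence of surjectivity---but since you correctly cite the standard result this does not affect the argument.
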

\begin{proof}
    This follows from the equivalence between (1) and (2) in \Cref{ExFin} and the observation, see \cite[Theorem~6.1.16]{EGNO}, that $F$ sends projective objects in $\cC$ to projective objects in $\cC'$.
\end{proof}

\subsubsection{} 
As recalled in \Cref{Thm:FE}, an LSM functor between tensor categories is not exact if and only if its kernel is a non-trivial tensor ideal. In general for an LSM category $\cA$ and a tensor ideal $I$ in $\cA$ it will be understood that $\cA\to\cA/I$ stands for the LSM functor which sends every object to itself and every morphism to its equivalence class.   An important example, for any tensor category $\cC$, is the {\bf semisimplification}
$$\Sigma: \cC\to\overline{\cC},$$
see \cite{EOs} for an overview. Most importantly, $\overline{\cC}$ is the quotient with respect to the unique maximal tensor ideal and is again a tensor category, and moreover semisimple.

For a finite group $G$, the minimal non-zero tensor ideal in $\Rep_{\bk} G$ (say over a field~$\bk$ such that $\mathrm{char}(\bk)$ divides $|G|$) is the ideal of all morphisms that factor through projective objects. The resulting quotient is the stable category $\Stab G$.

\subsection{Incompressible categories} \label{SecInc}

An {\bf incompressible (tensor) category}, see \cite{CEO2} and references therein, is a tensor category $\cC$ for which every tensor functor $\cC\to\cC'$ realises an equivalence with a tensor subcategory of $\cC'$.

In this section, we assume that $\chara(\bk)=p>0$. The list of currently known incompressible tensor categories is given in \cite{BEO}. We review some essential properties and conjectures concerning these.

\subsubsection{} \label{TiltSL2}
Recall from \Cref{prelim::representations} that $\Rep_{\bk} SL_2$ is the category of finite-dimensional rational $SL_2(\bk)$-representations. 
The full subcategory of $\Rep_{\bk}SL_2$ of tilting modules, see \cite[Chapter~E]{Jantzen}, is denoted by $\Tilt SL_2$. This is also the full monoidal subcategory comprising direct summands of direct sums of tensor powers of the natural (2-dimensional) representation~$V$. Indecomposable tilting modules are labelled by their highest weight, and we denote them as $T_i$, $i\in\mN$. For instance, $T_0\simeq \unit$ and $T_1\simeq V$.

By \cite[Theorem~5.3.1]{Selecta}, the proper tensor ideals in $\Tilt SL_2$ form one countable chain 
$$I_1\supset I_2\supset I_3\supset \cdots,$$
with $I_n$ the ideal generated by the Steinberg module $St_n=T_{p^n-1}$. More precisely, $I_n$ comprises all morphisms that factor through a direct sum of $T_j$ with $j\ge p^n-1$. Furthermore,
$$I_n(T_i,T_j)\;=\;0$$
unless $i+j> 2p^n-2$. In particular, if we denote by $\Tilt^nSL_2$, for $n\in\mZ_{>0}$ the full subcategory of $\Tilt SL_2$ comprising direct sums of copies of $T_i,i<p^n-1$, then the composite
$$\Tilt^n SL_2\;\hookrightarrow\; \Tilt SL_2\;\to\; (\Tilt SL_2)/I_n$$
is fully faithful. We denote further by $\Tilt^{[n]}SL_2$, for $n\in\mZ_{>0}$ the full subcategory of $\Tilt SL_2$ comprising direct sums of copies of $T_i$ with $p^{n-1}-1\le i<p^n$.

\subsubsection{}\label{abenv} By \cite{BEO, AbEnv}, for each $n\in\mZ_{>0}$, the LSM category $(\Tilt SL_2)/I_n$ admits a `monoidal abelian envelope'. Concretely, for each $n\in\mZ_{>0}$ we have an LSM-functor
$$\Sigma^n:\;\Tilt SL_2\to\Ver_{p^n}$$
with kernel $I_n$, to a tensor category $\Ver_{p^n}$. Note that $\Sigma^1$ corresponds to the semisimplification of the (non-abelian) symmetric monoidal category $\Tilt SL_2$. We will use the same notation for the corresponding faithful LSM-functor
\begin{equation}\label{Sigman2}
    \Sigma^n:\;(\Tilt SL_2)/I_n\to\Ver_{p^n}.
\end{equation}
Moreover, $\Sigma^n$ restricts to an equivalence
\begin{equation}\label{EquivProj}\Tilt SL_2\supset\Tilt^{[n]}SL_2\;\xrightarrow{\sim}\; \cP\subset\Ver_{p^n}
\end{equation}
with the category $\cP$ of projective objects in $\Ver_{p^n}$, see \cite[Theorem~4.5]{BEO}.

It is also proved in \cite{BEO, AbEnv} that, for each tensor category $\cC$, restriction along $\Sigma^n$ in \Cref{Sigman2} gives an equivalence between the category of tensor functors $\Ver_{p^n}\to\cC$ and the category of faithful LSM functors $(\Tilt SL_2)/I_n\to \cC$.

\subsubsection{}It is proved in \cite{BEO} that each $\Ver_{p^n}$ is incompressible and that the simple objects in $\Ver_{p^n}$ are labelled by 
$$\{L_i^{[n]}\mid 0\le i\le p^{n-1}(p-1)-1\}$$
such that
$$L_0^{[n]}\simeq \unit,\quad L_1^{[n]}\simeq \Sigma^n(T_1)=\Sigma^n(V), \quad\mbox{and}\quad L^{[n]}_{p^{n-1}(p-2)}=\bar{\unit},$$
where the last example assumes $p>2$ and denotes by $\bar{\unit}$ the odd line in the category $\sVec$ of supervector spaces.
When no confusion is possible, we will write $L_i$ instead of $L_i^{[n]}$. However, we have inclusions $\Ver_{p^n}\subset \Ver_{p^{n+1}}$ which identify $L_i^{[n]}$ with~$L_{pi}^{[n+1]}$. 

We will also refer to the subcategories $\Ver_{p^n}^+\subset\Ver_{p^n}$ from \cite[\S 4.1]{BEO}. The tensor category $\Ver_{p^n}^+$ is a direct summand of $\Ver_{p^n}$ that contains half of the simple objects and corresponds to the $SL_2$-tilting modules with even highest weight under~\eqref{EquivProj}. Moreover, $\Ver_{p^n}\simeq \Ver_{p^n}^+\boxtimes\sVec$ if $p>2$. 
We will be particularly interested in $\Ver_4^+$ and $\Ver_4$, described as $\cC_1$ and $\cC_2$ in \cite{BE}.

The following theorem is due to the first author, Etingof and Ostrik, see \cite[Theorem~1.1]{CEO}. Special cases were obtained earlier in for instance \cite{EG, EOf, Os}. For the notion of Frobenius exactness we refer to \cite{CEO, EOf}, or \cite[Theorem~C]{Tann} or \Cref{ExOFirst}(2) and \Cref{ExCpFr} below.

\begin{theorem}[$\Ver_p$-Theorem]
    The following conditions are equivalent on a tensor category $\cC$:
    \begin{enumerate}
        \item $\cC$ is of moderate growth and Frobenius exact;
        \item $\cC$ admits a tensor functor to $\Ver_p$.
    \end{enumerate}
    Moreover, the tensor functor in (2) is unique up to isomorphism.
    \label{pTheorem}
\end{theorem}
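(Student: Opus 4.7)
The plan is to prove the two implications separately, then establish uniqueness, with the hard work concentrated in the forward direction from (1) to (2).

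For the easy direction (2)$\Rightarrow$(1), suppose $F:\cC\to\Ver_p$ is a tensor functor, hence faithful by \Cref{Thm:FE}. For moderate growth, I would use that faithfulness of $F$ forces $\ell(Y)\le\ell(F(Y))$ for every $Y\in\cC$, since a composition series of $Y$ maps to a filtration of $F(Y)$ whose successive quotients are nonzero. Applied to $Y=X^{\otimes n}$ and combined with the standard bound $\ell\le\FPdim$ on $\Ver_p$ together with multiplicativity of $\FPdim$, this gives $\ell(X^{\otimes n})\le\FPdim(F(X))^n$, which is an exponential bound. For Frobenius exactness, I would exploit naturality of $\Fr$ in LSM functors to obtain a commuting square
\[ (F\boxtimes\id_{\Ver_p})\circ\Fr_\cC\;=\;\Fr_{\Ver_p}\circ F. \]
Since $\Ver_p$ is semisimple, $\Fr_{\Ver_p}$ is automatically exact and faithful, so the right-hand side is faithful, and hence so is $\Fr_\cC$ (as $F\boxtimes\id$ is faithful). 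Another application of \Cref{Thm:FE} promotes faithful to exact.

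For the main direction (1)$\Rightarrow$(2), I would first reduce to the finite case. Moderate growth combined with local finiteness allows us to write $\cC$ as a filtered union of tensor subcategories $\cC_\alpha$ that are of moderate growth and inherit Frobenius exactness; a compatible family of tensor functors $\cC_\alpha\to\Ver_p$ then assembles into a tensor functor on $\cC$. For a finite Frobenius exact $\cC_\alpha$, the strategy would be to extract a fiber functor to $\Ver_p$ from the iterated Frobenius $\cC_\alpha\to\cC_\alpha\boxtimes\Ver_p^{\boxtimes n}$: exactness of these functors produces structural constraints on the Grothendieck ring of $\cC_\alpha$ (forcing FP-dimensions to lie in the fusion ring of $\Ver_p$) and should allow one to descend through the semisimplification $\Rep C_p\to\Ver_p$ used to construct $\Fr$. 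The cleanest framework in which to conclude is a Tannakian reconstruction theorem relative to $\Ver_p$: one would verify its hypotheses using the combinatorial data obtained from Frobenius exactness, then apply it to produce $F$.

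For uniqueness of $F$, I would use incompressibility of $\Ver_p$, together with the fact that primality of $p$ forces the tensor subcategories of $\Ver_p$ to be either $\Vecc$ or $\Ver_p$ itself. Any two tensor functors $F,F':\cC\to\Ver_p$ necessarily have the same image (determined by whether $\cC$ is super-tannakian or not), and one builds a monoidal natural isomorphism by matching their values on simples (tracked by $\FPdim$) and extending by monoidality.

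The hard part will be (1)$\Rightarrow$(2): constructing an actual tensor functor out of the purely homological datum of Frobenius exactness. In particular, the Tannakian reconstruction step in positive characteristic relative to $\Ver_p$ is substantially more delicate than Deligne's theorem in characteristic zero, and I expect that extracting a fiber functor from iterated Frobenius data will require the full chain of results leading up to \cite{CEO}, notably a careful analysis of how Frobenius twists interact with projective covers and with the semisimplification map $\Rep C_p\to\Ver_p$.
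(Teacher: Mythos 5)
The paper does not prove this theorem; it is stated with an explicit attribution to \cite{CEO} (Theorem~1.1 there, with special cases in \cite{EG, EOf, Os}), so there is no internal proof against which to compare your argument. Your direction (2)$\Rightarrow$(1) is correct and complete: the length bound $\ell(Y)\le\ell(F(Y))$ from exactness-plus-faithfulness of $F$, combined with $\ell\le\FPdim$ on $\Ver_p$ and multiplicativity of $\FPdim$, does give moderate growth, and the naturality square for $\Fr$ together with automatic exactness of LSM functors out of the semisimple $\Ver_p$ does give Frobenius exactness. (Minor point: what you actually use is that faithfulness of a composite $g\circ h$ implies faithfulness of $h$; the parenthetical appeal to faithfulness of $F\boxtimes\id$ is superfluous.)

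For (1)$\Rightarrow$(2) you have written an outline, not a proof: the steps ``reduce to finite $\cC$'', ``extract a fiber functor from iterated Frobenius'', ``apply a Tannakian reconstruction relative to $\Ver_p$'' are headings for roughly forty pages of \cite{CEO}, and you yourself acknowledge that you are deferring to that work. That is a reasonable position given the scope, but it is not a proof. The uniqueness sketch also contains a concrete error: you assert that primality of $p$ forces the tensor subcategories of $\Ver_p$ to be only $\Vecc$ or $\Ver_p$. This is false for $p>2$; at the very least $\sVec$ is a proper nontrivial tensor subcategory (generated by the odd line $\bar\unit=L_{p-2}$), and for $p>3$ there is also the subcategory $\Ver_p^+$. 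Consequently ``$F$ and $F'$ have the same image'' does not follow from this false dichotomy, and ``matching values on simples by $\FPdim$'' does not by itself produce a monoidal natural isomorphism (two objects of the same $\FPdim$ need not be isomorphic, and even if the objects match one must construct the coherence data). Uniqueness is genuinely nontrivial and is also established in \cite{CEO}.
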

This is to be compared with the following conjecture due to Benson, Etingof and Ostrik, see \cite[Conjecture~1.4]{BEO}.
\begin{conjecture}[$\Ver_{p^\infty}$-Conjecture]
    The following conditions are equivalent on a tensor category $\cC$:
    \begin{enumerate}
        \item $\cC$ is of moderate growth;
        \item $\cC$ admits a tensor functor to $\Ver_{p^\infty}$.
    \end{enumerate}
    
    \label{BEOConjecture}
\end{conjecture}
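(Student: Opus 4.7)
The direction $(2)\Rightarrow(1)$ is routine. Any tensor functor $F:\cC\to\Ver_{p^\infty}$ is faithful by \Cref{Thm:FE}, and combined with exactness this gives $\ell(X)\le\ell(F(X))$ for every $X\in\cC$, so $\sqrt[n]{\ell(X^{\o n})}\le\sqrt[n]{\ell(F(X)^{\o n})}$. Since $\Ver_{p^\infty}$ satisfies (SP) and is therefore of moderate growth, $\cC$ is too.

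The substance lies in $(1)\Rightarrow(2)$, where the strategy I would pursue is to use the higher Frobenius functors $\Phi^V:\cC\to\cC\boxtimes\Ver_{p^n}$ constructed later in this paper as an inductive ladder. Granting Conjecture~A, each such $\Phi^V$ is an $\bO_n$-functor, and the target property advertised for $\bO_n$-functors is that, for a well-chosen $V=V_n$ at each level $n$, exactness of $\Phi^{V_n}$ on a moderate growth $\cC$ is equivalent to the existence of a tensor functor $\cC\to\Ver_{p^n}$. Assuming both of these inputs, the conjecture reduces to showing that every moderate growth $\cC$ admits \emph{some} $n$ for which $\Phi^{V_n}$ is exact.

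The plan then has four steps. First, reduce to the case where $\cC$ is finite by approximating by finitely generated tensor subcategories and gluing the resulting tensor functors using a uniqueness statement for tensor functors to $\Ver_{p^n}$ (generalising the uniqueness clause of \Cref{pTheorem}). Second, for such a finite $\cC$, study the family of tensor ideals $\ker\Phi^{V_n}\subseteq\cC$ as $n$ grows and show, from the moderate growth hypothesis, that at least one of them is zero. Third, invoke the conjectural level-$n$ analogue of \Cref{pTheorem} at that $n$ to obtain a tensor functor $\cC\to\Ver_{p^n}\subset\Ver_{p^\infty}$. Fourth, check the compatibilities needed to glue these functors for varying finite subcategories and varying $n$ into a single $\cC\to\Ver_{p^\infty}$; this last step should rest on how $\Phi^{V_{n+1}}$ refines $\Phi^{V_n}$ through the chain of embeddings $C_p^n<C_p^{n+1}<S_{p^{n+1}}$ underlying their construction.

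The principal obstacle is the second step: there is no a priori reason that $\ker\Phi^{V_n}$ should vanish for any particular $n$ on an arbitrary moderate growth $\cC$. What seems to be needed is a quantitative link between the growth dimensions of objects of $\cC$ and the level of the Verlinde hierarchy at which their images stabilise, going beyond what either Conjecture~A or a level-$n$ $\Ver_{p^n}$-Theorem provides on its own. I expect this growth-theoretic input to be the genuinely hard ingredient; granting it, the remainder of the programme looks plausibly accessible via the $\bO$-functor machinery developed in this paper.
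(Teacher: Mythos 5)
The statement you were asked to prove is a conjecture, not a theorem of the paper. It is attributed to Benson, Etingof, and Ostrik as \cite[Conjecture~1.4]{BEO}; the paper records it (as \Cref{BEOConjecture}) but does not prove it, and indeed several later results explicitly take it as a hypothesis (\Cref{PropO1}, \Cref{LemOCond}, parts of \Cref{PropPhiinf2}, and \Cref{ThmVer4}). There is consequently no ``paper proof'' to compare against.

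Your treatment of $(2)\Rightarrow(1)$ is correct and routine: a tensor functor to $\Ver_{p^\infty}$ is faithful and exact, hence length-nondecreasing; and since $\Ver_{p^\infty}$ satisfies (SP), it is of moderate growth, so $\cC$ inherits moderate growth. (The paper itself only records the converse surjection statement, so this direction is not spelled out in the source, but your argument is the standard one.)

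For $(1)\Rightarrow(2)$ you correctly acknowledge that no proof is available and propose a program rather than a proof; I would add that the program as stated is circular in places. Your step~3 calls on ``the conjectural level-$n$ analogue of \Cref{pTheorem},'' \emph{i.e.}\ property (OF) for the $\bO^i_n$-functors. But the paper shows in \Cref{PropO1} that deducing (OF) from (O1) on finite tensor categories already requires assuming the $\Ver_{p^\infty}$-Conjecture itself, and the unconditional route via \Cref{ThmO} needs (O2), which (as \Cref{ThmFinal1} illustrates in the $p=2$, $n=2$ case) is essentially of the same depth as the sought (OF). Your step~1 reduction to finite $\cC$ also presupposes a uniqueness statement for tensor functors to $\Ver_{p^n}$ that the paper does not establish for $n>1$ (\Cref{pTheorem} covers only $n=1$). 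So you have located the hard core of the problem accurately --- your step~2, the ``quantitative link'' between growth dimension and the level of the Verlinde hierarchy --- but the remaining steps are not merely ``plausibly accessible'': they currently rest on the very conjecture at issue.
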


We conclude the preliminaries with some considerations regarding exactness of LMS functors out of our incompressible categories. We fix $n\in\mZ_{>0}$ and $i\in\mZ$. The Frobenius--Perron dimensions of all simple objects in $\Ver_{p^n}$ have been calculated in \cite[Corollary~4.44]{BEO}. We use the result freely in the following two proofs.

\begin{lemma} Assume $p^n>2$.
Consider a field homomorphism $\varphi:\bk\to\bk'$.
    A $\varphi$-LSM functor $F:\Ver_{p^n}\to\cC$ to a finite tensor category $\cC$ over $\bk'$ is exact if and only if
    $$\FPdim(F(L_1))\;=\; \FPdim(L_1)\;=\; 2\cos\left(\frac{\pi}{p^n}\right),$$
    which is the case if and only if $\FPdim(L_1)\not\in\{2\cos(\frac{\pi}{p^m}):m<n\}$.
\end{lemma}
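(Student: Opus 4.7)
The strategy is to pull the question back along $\Sigma^n$ and apply the classification of tensor ideals in $\Tilt SL_2$. Set $G := F \circ \Sigma^n : \Tilt SL_2 \to \cC$, a $\varphi$-LSM functor. Its kernel contains $I_n$ and, since $G$ is monoidal with $G(\id_\unit) = \id_\unit \ne 0$, is a proper tensor ideal of $\Tilt SL_2$; by the classification recalled in \S\ref{TiltSL2}, $\ker G = I_m$ for a unique $m \in \{1, \dots, n\}$. The resulting faithful $\varphi$-LSM functor $(\Tilt SL_2)/I_m \to \cC$ then extends, by the abelian envelope property in \S\ref{abenv}, to an exact $\varphi$-tensor functor $H : \Ver_{p^m} \to \cC$.

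The next step is to show that $F$ is exact if and only if $m = n$. If $F$ is exact, it is faithful by \Cref{Thm:FE}, and together with $\ker \Sigma^n = I_n$ this gives $\ker G = I_n$, so $m = n$. Conversely, assume $m = n$. For any projective $P \in \Ver_{p^n}$, by \eqref{EquivProj} we may write $P = \Sigma^n(T)$ for some $T \in \Tilt^{[n]} SL_2$. Since morphism spaces between objects in $\Tilt^{[n]} SL_2$ lie outside $I_n = \ker G$ (by the vanishing of $I_n(T_i, T_j)$ for small $i, j$ recalled in \S\ref{TiltSL2}), the functor $G$ is faithful on $\Tilt^{[n]} SL_2$; hence $F(P) = G(T)$ is nonzero, as $G(\id_T) = \id_{G(T)} \ne 0$. \Cref{ExFin} then forces $F$ to be exact.

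Since $H$ is an exact $\varphi$-tensor functor between finite tensor categories, it preserves Frobenius--Perron dimensions. Thus $F(L_1^{[n]}) = F\Sigma^n(T_1) = H\Sigma^m(T_1) = H(L_1^{[m]})$ yields $\FPdim(F(L_1)) = 2\cos(\pi/p^m)$. As $m$ ranges over $\{1, \dots, n\}$, the argument $\pi/p^m$ decreases within $(0, \pi/2]$, on which $\cos$ is strictly decreasing, so the values $\{2\cos(\pi/p^m) : 1 \le m \le n\}$ are pairwise distinct (the assumption $p^n > 2$ guarantees this range is nondegenerate). Combining, $F$ is exact iff $m = n$ iff $\FPdim(F(L_1)) = 2\cos(\pi/p^n)$, equivalently $\FPdim(F(L_1)) \notin \{2\cos(\pi/p^m) : m < n\}$. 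The main technical point is the converse implication in the second paragraph, which relies on the abelian envelope description of the projectives in $\Ver_{p^n}$ combined with \Cref{ExFin}; the dimension preservation and distinctness arguments are then routine.
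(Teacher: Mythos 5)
Your proof is correct and takes essentially the same route as the paper: both pull back along $\Sigma^n$, identify the kernel as some $I_m$ via the classification of tensor ideals in $\Tilt SL_2$, invoke the abelian-envelope property to obtain an exact functor from $\Ver_{p^m}$, and then compare Frobenius--Perron dimensions. Your write-up spells out the converse (nonvanishing on $\Tilt^{[n]}SL_2$ plus \Cref{ExFin} forces exactness when $m=n$) slightly more explicitly than the paper, which phrases the argument contrapositively, but the substance is identical.
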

\begin{proof}
In case $F$ is exact, it preserves Frobenius--Perron dimension as a consequence of \cite[Proposition~3.3.13]{EGNO}.
Consider the LSM functors
$$(\Tilt SL_2)/ I_n\;\xrightarrow{\Sigma^n}\;\Ver_{p^n}\;\xrightarrow{F}\;\cC.$$
Assume that $F$ is not exact. Then \Cref{ExFin} and the fact that all projective objects in $\Ver_{p^n}$ are in the essential image of $\Sigma^n$ imply that $F\circ\Sigma^n$ is not faithful. Hence, $F\circ\Sigma^n$ factors via $(\Tilt SL_2)/ I_m$ for $m<n$. We take the minimal such $m$. By the abelian envelope property of \Cref{abenv}, if follows that $F\circ\Sigma^n$ is also the composite of LSM functors
$$(\Tilt SL_2)/ I_n\;\tto\;(\Tilt SL_2)/ I_m\;\xrightarrow{\Sigma^m}\;\Ver_{p^m}\;\to\;\cC,$$
where the right-most functor is exact. By the first paragraph we find that 
$$\FPdim(F(L_1))\;=\; 2\cos\left(\frac{\pi}{p^m}\right)\;<\; 2\cos\left(\frac{\pi}{p^n}\right),$$
concluding the proof.
\end{proof}

We refer to the next section, and references therein, for a full treatment of the Deligne tensor product $\boxtimes$ appearing in the next corollary.

\begin{corollary}\label{CorVVV} Assume $p^n>2$.
    A $\Frob^i$-LSM functor $$F:\Ver_{p^n}\to\Ver_{p^n}\boxtimes \Ver_{p^n}$$ is exact if and only if we are in one of the following cases:
    \begin{enumerate}
        \item $F(L_1)=\unit\boxtimes L_1$, or $F(L_1)=\bar{\unit}\boxtimes(\bar{\unit}\otimes L_1)$ if $p>2$, yielding equivalences
        $$\Ver_{p^n}\xrightarrow{\sim}\Vecc\boxtimes \Ver_{p^n}\quad\mbox{or}\quad\Ver_{p^n}\xrightarrow{\sim}\sVec\boxtimes \Ver_{p^n}^+.$$
        \item $F(L_1)= L_1\boxtimes \unit$, or $F(L_1)=(\bar{\unit}\otimes L_1)\boxtimes \bar{\unit}$ if $p>2$, yielding similar equivalences.
    \end{enumerate}
\end{corollary}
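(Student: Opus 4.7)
The plan is to reduce the exactness of $F$ to a single Frobenius--Perron dimension condition via the preceding lemma, then enumerate possible values of $F(L_1)$, and finally use the symmetric monoidal structure to single out the configurations listed.

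By the preceding lemma, $F$ is exact if and only if $\FPdim(F(L_1))=2\cos(\pi/p^n)$. Since this value is strictly less than $2$ and every simple object in a tensor category has FP dim at least $1$, additivity of FP dim along composition series forces $F(L_1)$ to be simple in $\Ver_{p^n}\boxtimes\Ver_{p^n}$. Simples of the Deligne product are external products $A\boxtimes B$ with FP dim $\FPdim(A)\FPdim(B)$, so exactly one factor is invertible and the other has FP dim $2\cos(\pi/p^n)$. Consulting the list of simples in $\Ver_{p^n}$ from \cite[Corollary~4.44]{BEO}, the invertibles are $\unit$ (and $\bar\unit$ if $p>2$), and the simples of FP dim $2\cos(\pi/p^n)$ are $L_1$ (and $\bar\unit\otimes L_1$ if $p>2$), these being the only simples of FP dim below $2$. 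This gives two candidates for $p=2$ and eight for $p>2$.

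For $p>2$, I would then use that $L_1=\Sigma^n(V)$ for $V$ the natural (symplectic) $SL_2$-representation, so $\Lambda^2 L_1\cong\unit$ in $\Ver_{p^n}$, which via LSM functoriality forces $\Lambda^2 F(L_1)\cong\unit\boxtimes\unit$. The external-product formula
$$\Lambda^2(A\boxtimes B)\;\cong\;(\Lambda^2 A\boxtimes S^2 B)\oplus(S^2 A\boxtimes\Lambda^2 B),$$
combined with $\Lambda^2\unit=0$, $S^2\unit=\unit$, $\Lambda^2\bar\unit=\unit$, $S^2\bar\unit=0$, together with the analogous $\Lambda^2 L_1=\unit$, $S^2 L_1=L_2$ (and their $\bar\unit$-twists), selects exactly the four configurations listed in the statement; heuristically, these are the candidates in which the $\sVec$-parities of the two tensor factors match the parity of $L_1$. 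For $p=2$ no elimination is needed, as the two remaining candidates already match the statement.

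Finally, I would realise each surviving configuration as an actual $\Frob^i$-LSM functor. The embeddings $X\mapsto\unit\boxtimes X$ and $X\mapsto X\boxtimes\unit$, composed with the scalar twist $\Frob^i$, cover the two cases with $F(L_1)\in\{\unit\boxtimes L_1,L_1\boxtimes\unit\}$; the remaining two cases (for $p>2$) arise from the splitting $\Ver_{p^n}\simeq\sVec\boxtimes\Ver_{p^n}^+$ composed with the analogous embeddings into $\Ver_{p^n}\boxtimes\Ver_{p^n}$. Incompressibility of $\Ver_{p^n}$ (see \Cref{SecInc}) then guarantees each such $F$ is an equivalence onto its essential image, and matching the image of $L_1$ identifies this image with the claimed subcategory. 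The main technical hurdle I anticipate is verifying that the $\bar\unit$-twisted construction is genuinely symmetric monoidal (and not merely monoidal); this hinges on the sign conventions in $\sVec$ and the splitting of $\Ver_{p^n}$, but should follow formally from standard facts about super-tensor structures.
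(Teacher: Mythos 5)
Your overall strategy is the natural one and almost certainly what the authors had in mind: reduce exactness to the Frobenius--Perron constraint via the preceding lemma, use simplicity of $F(L_1)$ and the structure of simples in a Deligne product to get a short list of candidates, and then eliminate the wrong parities using the symplectic form on $L_1$. The $\Lambda^2$ argument in characteristic $p>2$ (using that additive symmetric monoidal functors preserve the idempotent $\tfrac{1}{2}(1-\sigma)$, hence $\Lambda^2$) correctly cuts the eight candidates down to the four listed; a direct check shows the surviving ones are exactly those where $\Lambda^2F(L_1)\simeq\unit\boxtimes\unit$. The realisation/incompressibility step at the end is also fine.

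However, there is a genuine gap in the step where you claim that $\unit$, $\bar\unit$, $L_1$, $\bar\unit\otimes L_1$ ``are the only simples of FP dim below $2$.'' That is false: $\Ver_{p^n}$ contains the chain $\Ver_p\subset\cdots\subset\Ver_{p^{n-1}}\subset\Ver_{p^n}$, and the simple $L_1^{[m]}$ of $\Ver_{p^m}$ (for $2\leq m<n$, identified with $L_{p^{n-m}}^{[n]}$) has FP dimension $2\cos(\pi/p^m)$, which is strictly between $1$ and $2\cos(\pi/p^n)$. So there are plenty of other simples with FP dimension below $2$. As written, your justification for ``exactly one factor is invertible'' rests on this incorrect claim. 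The correct argument is close by: both factors have FP dimension at most $2\cos(\pi/p^n)<2$; if neither were invertible, the product would be at least $d_{\min}^2$ where $d_{\min}$ is the smallest FP dimension exceeding $1$ among simples of $\Ver_{p^n}$, and from \cite[Corollary~4.44]{BEO} one sees $d_{\min}\ge\sqrt{2}$ (it is $2\cos(\pi/4)$ for $p=2$, $2\cos(\pi/9)$ for $p=3$, $2\cos(\pi/p)$ for $p\ge5$), so $d_{\min}^2\ge 2>2\cos(\pi/p^n)$, a contradiction. After establishing that exactly one factor is invertible, you then need only that the unique-up-to-$\bar\unit$-twist simple of FP dimension exactly $2\cos(\pi/p^n)$ is $L_1$, which is what the BEO formula does give you; you should invoke that precisely rather than the stronger false statement.
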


\begin{remark}\label{RemFrobVer}
    There exists a $\Frob$-LSM auto-equivalence ({\it i.e.}~a $\Frob$-tensor auto-equivalence) of $\Ver_{p^n}$. Indeed, for a $\bk$-LSM category $\cA$, denote by $\cA^{(1)}$ the same pre-additive symmetric monoidal category, but with a new $\bk$-linear structure where the new action of $\lambda\in\bk$ on a morphism $f$ is given by $\lambda^p f$ in terms of the old linear structure. Then $\cA^{(1)}$ is again a $\bk$-LSM category, a priori different from~$\cA$. We can thus reformulate the claim as saying that $\Ver_{p^n}$ and $(\Ver_{p^n})^{(1)}$ are actually equivalent as tensor categories over $\bk$. Now $\Tilt SL_2$ and $(\Tilt SL_2)^{(1)}$ are equivalent, for instance because $SL_2$ can be defined over $\mF_p$, or by using the equivalence with the diagrammatic Temperley--Lieb category, and this equivalence exchanges the ideals $I_n$. We can then realise $\Ver_{p^n}$ and $(\Ver_{p^n})^{(1)}$ as the abelian envelopes of the equivalent categories $(\Tilt SL_2)/I_n$ and $((\Tilt SL_2)/I_n)^{(1)}$.
\end{remark}


\section{Deligne products}\label{SecDTP}
In this section, we do not claim originality. We write out some useful aspects of the Deligne tensor product as first introduced in \cite{Del90}. Many of these have appeared implicitly in \cite{Tann, CEO, EOf}, but not always in full detail or with complete proof. Recall that $\bk$ is an algebraically closed field.

\subsection{Comodules of coalgebras}

\subsubsection{}
For a coalgebra $C$ over $\bk$, we consider the categories of finite-dimensional, and all, left comodules, respectively:
$$\Comod_{C}\quad\mbox{and}\quad \Comod^\infty_{C}\simeq\Ind(\Comod_{C}).$$

 For a right $C$-comodule $M$ and a left $C$-comodule $N$ we have the cotensor product
$$M\Box^C N\;=\; \mathrm{Eq}(M\otimes_{\bk}N\rightrightarrows M\otimes_{\bk}C\otimes_{\bk}N),$$
which is a vector space.
The following lemma is well-known.
\begin{lemma}\label{LemComod}
    \begin{enumerate}
        \item A right $C$-comodule $M$ is coflat ($M\Box^C-$ is exact) if and only if it is injective.
        \item For coalgebras $C_1,C_2$, consider a right $C_1\otimes C_2$-comodule $T$ for which
    $$\Comod_{C_1}\times\Comod_{C_2}\,\to\, \Vecc^\infty,\;\, (M_1,M_2)\mapsto T\Box^{C_1\otimes C_2}(M_1\otimes M_2) $$
    is exact in both variables.
    Then $T$ is injective over $C_1\otimes C_2$.
        \item For coalgebras $C_1,C_2$, the assignment which sends a $(C_2,C_1)$-bicomodule $M$ to the functor
        $$M\Box^{C_1}-:\; \Comod_{C_1}\to\Comod_{C_2}^\infty$$
        yields an equivalence between the category of right injective bicomodules and the category of exact $\bk$-linear functors.
    \end{enumerate}
\end{lemma}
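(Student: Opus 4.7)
The plan treats the three parts separately. Part (1) is classical, and I would sketch it as follows. That injective implies coflat is immediate: any injective right $C$-comodule is a direct summand of a cofree comodule $V \otimes C$, and one checks $(V \otimes C) \Box^C N \cong V \otimes N$, which is exact in $N$. For the converse, expressing arbitrary right $C$-comodules as filtered colimits of finite-dimensional subcomodules (and using that cotensor commutes with filtered colimits) reduces the claim to extending any map $f: X \to M$ along a monomorphism $X \hookrightarrow Y$ of finite-dimensional comodules. Dualising produces an epimorphism $Y^* \twoheadrightarrow X^*$ of left $C$-comodules; applying $M \Box^C -$, which is exact by coflatness, together with the natural identification $M \Box^C Z^* \cong \Hom_C(Z, M)$ for finite-dimensional $Z$, yields the desired surjection on $\Hom$ spaces.

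For (2), by (1) it suffices to show $T$ is coflat over $C_1 \otimes C_2$. The key input I would establish is the Fubini-type identity
\[
T \Box^{C_1 \otimes C_2}(M_1 \otimes M_2) \;\cong\; (T \Box^{C_1} M_1) \Box^{C_2} M_2,
\]
verified by unpacking the defining equalisers and invoking commutativity of the $C_1$- and $C_2$-coactions on $T$. Specialising $M_1 = C_1$, respectively $M_2 = C_2$, and using $X \Box^{C_i} C_i \cong X$ then shows $T$ is coflat separately over $C_1$ and over $C_2$. To promote this to joint coflatness I would embed any $C_1 \otimes C_2$-comodule $N$ in a cofree comodule $V \otimes C_1 \otimes C_2 \cong (V \otimes C_1) \otimes C_2$, an external tensor product, and iterate to obtain a cofree resolution of $N$ by external tensor products. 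Applying $T \Box^{C_1 \otimes C_2}$ and the Fubini identity termwise produces a double complex, from which a spectral sequence using single-factor coflatness yields the vanishing $\mathrm{Cotor}^{i > 0}_{C_1 \otimes C_2}(T, N) = 0$. The main obstacle is exactly this bootstrap from single-factor exactness on external tensor products to joint coflatness; the Fubini identity and the homological setup are what require the most care.

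For (3), this is a Morita-type equivalence for coalgebras. Given a right-injective bicomodule $M$, the functor $M \boxtimes^{C_1} -$ is $\bk$-linear, lands in $\Comod^\infty_{C_2}$ by transport of the left $C_2$-coaction through cotensor, and is exact by (1) applied to $M$ over $C_1$. Conversely, given an exact $\bk$-linear $F: \Comod_{C_1} \to \Comod^\infty_{C_2}$, I would define $M_F := F(C_1)$, equipped with left $C_2$-coaction from the codomain of $F$ and right $C_1$-coaction obtained by applying $F$ to the right $C_1$-comodule structure map of $C_1$ (preserved since $F$ is $\bk$-linear and functorial); coflatness of $M_F$ over $C_1$ follows from exactness of $F$. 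The unit $M \boxtimes^{C_1} C_1 \cong M$ is the standard cotensor identity, and the counit $M_F \boxtimes^{C_1} N \cong F(N)$ natural in $N$ is checked first on cofree $N = V \otimes C_1$ (where both sides compute $V \otimes F(C_1)$) and then extended to all $N$ using exactness of $F$ and of $M_F \boxtimes^{C_1} -$, since every $C_1$-comodule arises as the kernel of a map between cofree comodules.
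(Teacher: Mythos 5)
Your parts (1) and (3) match the paper's (very terse) treatment: for (1) the paper uses exactly the identification $M\Box^C(-)\simeq\Hom_C((-)^\ast,M)$ that underlies your converse direction, and for (3) the paper simply writes ``standard, using part (1),'' so your Morita-style unwinding is a reasonable expansion (though note $C_1$ is not in general finite-dimensional, so $M_F:=F(C_1)$ requires first extending $F$ along filtered colimits to $\Comod_{C_1}^\infty$).

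Part (2) has a genuine gap, and you have also dropped a hypothesis the paper relies on. The bootstrap step does not go through: a cofree coresolution $N\to I^\bullet$ with $I^j=V_j\otimes C_1\otimes C_2$ is a \emph{single} complex, and although each term is an external product, the differentials are arbitrary comodule maps and do not respect the external-product decomposition. After applying $T\Box^{C_1\otimes C_2}(-)$ and Fubini you obtain the single complex $V_\bullet\otimes T$, whose cohomology is by definition $\mathrm{Cotor}^\bullet_{C_1\otimes C_2}(T,N)$ --- there is no double complex and no spectral sequence, and asserting its exactness is precisely the statement to be proved. Single-factor coflatness of $T$ over each $C_i$ (which your specialisations $M_1=C_1$, $M_2=C_2$ do yield, modulo a filtered-colimit argument since $C_i$ is infinite-dimensional) does not by itself imply coflatness over $C_1\otimes C_2$.

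The paper instead follows Deligne \cite[Prop.~5.7]{Del90}: it is enough to show $\mathrm{Cotor}^1_{C_1\otimes C_2}(T,-)$ vanishes, and since this functor is half-exact (middle-exact on short exact sequences) and commutes with filtered colimits, one reduces by induction on length to \emph{simple} $C_1\otimes C_2$-comodules. This is where the standing hypothesis that $\bk$ is algebraically closed enters crucially: over such $\bk$ every simple $C_1\otimes C_2$-comodule is an external product $V_1\otimes V_2$ of simples (\cite[Lemme~5.9]{Del90}). On such objects one \emph{does} get a genuine double complex, namely $T\Box^{C_1\otimes C_2}(I^p\otimes J^q)$ for separate injective coresolutions $V_1\to I^\bullet$, $V_2\to J^\bullet$, whose rows and columns are exact in positive degree by the exactness-in-each-variable hypothesis; the spectral sequence of the double complex then gives the vanishing. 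Your proof never invokes algebraic closure and never reduces to simples, so the essential reduction is missing.
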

\begin{proof}
    For part (1) it suffices to observe that
    $$M\Box^C-\;\simeq\;\Hom_{C}(-^\ast, M)$$
    as functors from $\Comod_{C}$ to $\Vecc^\infty$, where we interpret the dual space of a finite-dimensional left $C$-comodule canonically as a right $C$-comodule.

    For part (2), it is sufficient to show that $T$ is coflat, by part (1). This is a straightforward generalisation of \cite[Proposition~5.7]{Del90}. We can express the coflatness as vanishing of a first derived functor; and it is sufficient to prove vanishing when evaluated on simple comodules, since the cohomology functors commute with direct limits and are exact in the middle.

    Now, the simple $B_1\otimes B_2$-comodules are precisely those of the form $V_1\otimes V_2$ for simple $B_i$-comodules $V_i$. This is true because $\bk$ is algebraically closed, see for instance \cite[Lemme~5.9]{Del90}. 
    
    However, vanishing of the cohomology functor on $B_1\otimes B_2$-comodules that are of the exterior tensor product form $V_1\otimes V_2$ follows from the assumptions.

    Part (3) is standard, using part (1).
\end{proof}

\subsection{The Deligne product of abelian categories}

Let $\cA$ and $\cB$ be two essentially small $\bk$-linear abelian categories.
\subsubsection{}\label{DefDel}  The Deligne tensor product $\cA\boxtimes\cB=\cA\boxtimes_{\bk}\cB$ (if it exists) is a $\bk$-linear abelian category, equipped with a bilinear bifunctor 
\begin{equation}
\label{DefboxDel}-\boxtimes-:\;\cA\times\cB\to\cA\boxtimes\cB, \quad (X,Y)\mapsto X\boxtimes Y,    
\end{equation}
 satisfying the following universal property.
For every $\bk$-linear abelian category $\cC$, restriction along \Cref{DefboxDel} yields an equivalence between the category of right exact $\bk$-linear functors $\cA\boxtimes \cB\to\cC$ with the category of bilinear bifunctors $\cA\times\cB\to\cC$ that are right exact in each variable. 

As demonstrated in \cite{LF}, the Deligne tensor product need not always exist. Of course, when $\cA\boxtimes\cB$ exists, it is unique up to equivalence. 

\subsubsection{}\label{DefDel2} Keeping the set-up of \Cref{DefDel}, assume now that all morphisms spaces in $\cA$ are finite-dimensional and all objects have finite length. It then follows from Takeuchi's theorem, see \cite{Tak} or \cite[Theorem~1.9.15]{EGNO}, that there exists a faithful exact $\bk$-linear functor $\omega:\cA\to\Vecc$, which allows us to identify $\cA$ with the category of finite-dimensional comodules of the coalgebra $$C\;=\;\int^{X\in\cA}\omega(X)^\ast\otimes_{\bk}\omega(X).$$


We define the (abelian) category $\Comod_C(\cB)$ of (left) $C$-comodules in $\cB$ as the category of objects $Y$ in $\cB$, equipped with an algebra morphism
$$C_Y^\ast\;\to\; \End_{\cB}(Y),$$
for some finite-dimensional subcoalgebra $C_Y\subset C$. Here we use the fact that a coalgebra is the union of its finite-dimensional subcoalgebras, allowing use to avoid using the ind-completion of $\cB$ in the definition. We have an obvious bilinear bifunctor
$$\cA\times\cB\;\to\; \Comod_C\cB.$$

The following lemma is a minor generalisation of \cite[Theorem~3]{LF} (in the sense that no finiteness assumptions are made on $\cB$), but more importantly includes the explicit realisation of the Deligne product that we need.
\begin{lemma}\label{LemExDTP}
    Under the assumption on $\cA$ in \Cref{DefDel2} and with the corresponding $C$ as defined there, we have:
    \begin{enumerate}
        \item The Deligne product $\cA\boxtimes\cB$ exists and is given by $\Comod_C\cB$.
        \item Every object in $\cA\boxtimes\cB$ is a quotient of one of the form~$X\boxtimes Y$.
        \item Every object in $\cA\boxtimes\cB$ is a subobject of one of the form~$X\boxtimes Y$.
    \end{enumerate}
\end{lemma}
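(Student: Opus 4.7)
The plan is to use the reconstruction identification $\cA = \Comod_C$ and verify that $\Comod_C\cB$ satisfies the universal property defining the Deligne tensor product.

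I would first describe the canonical bifunctor $\boxtimes: \cA \times \cB \to \Comod_C\cB$. For $M \in \cA$ with coaction factoring through a finite-dimensional subcoalgebra $C_M \subset C$, and $Y \in \cB$, set $M \boxtimes Y := \omega(M) \otimes Y \in \cB$, a finite direct sum of $\dim \omega(M)$ copies of $Y$, equipped with the algebra map $C_M^* \to \End_\bk(\omega(M)) \to \End_\cB(M \boxtimes Y)$ induced by the coaction on $M$. Bilinearity and right-exactness in $Y$ are immediate, and right-exactness in $M$ follows from exactness of $\omega$.

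For parts (2) and (3), I would exploit that for each finite-dimensional subcoalgebra $C' \subset C$, the forgetful functor $\Comod_{C'}\cB \to \cB$ has both a left adjoint and a right adjoint. In each case the adjoint sends $Y \in \cB$ to an object whose underlying object in $\cB$ is $C' \otimes Y$, equipped with one of two canonical $C'$-coactions (coming respectively from left multiplication of $C'^*$ on itself and from the comultiplication of $C'$). In our framework both are of the form $X \boxtimes Y$ for a suitable $X \in \cA$ with $\omega(X) = C'$, since $C'$ is finite-dimensional and hence lies in $\Comod_C = \cA$ for any such comodule structure. As the left adjoint produces projective generators and the right adjoint produces injective cogenerators, any $Z \in \Comod_{C'}\cB \subset \Comod_C\cB$ admits both a surjection from and an embedding into such an object, establishing (2) and (3).

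For part (1), I would construct the extension $\tilde F: \Comod_C\cB \to \cD$ of a given bilinear $F: \cA \times \cB \to \cD$ that is right-exact in each variable. By (2) and bilinearity, every $Z$ admits a presentation $X_1 \boxtimes Y_1 \to X_0 \boxtimes Y_0 \to Z \to 0$ with $X_i \in \cA$ and $Y_i \in \cB$, and I would set $\tilde F(Z) := \coker(F(X_1, Y_1) \to F(X_0, Y_0))$. The main obstacle will be verifying that this assignment is well-defined and functorial on morphisms: any map $Z \to Z'$ must be lifted to a morphism of presentations (after possibly enlarging the relevant finite-dimensional subcoalgebras to a joint $C'' \subset C$), and two such lifts must induce the same morphism of cokernels in $\cD$. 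Once this is settled by the usual horseshoe-style diagram chase, right-exactness of $\tilde F$, the natural isomorphism $\tilde F(X \boxtimes Y) \simeq F(X, Y)$, and uniqueness are all formal consequences, using that a right-exact functor from $\Comod_C\cB$ is determined on the generating family $\{M \boxtimes Y\}$.
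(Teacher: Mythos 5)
Your proofs of parts~(2) and~(3) are essentially the same as the paper's: the paper directly exhibits the counit of the free-module adjunction $C_{M_0}^\ast\boxtimes M_0\tto M$ and the unit of the cofree-comodule adjunction $M\hookrightarrow C_{M_0}\boxtimes M_0$, while you phrase the same two maps in the language of adjoints producing projective generators and injective cogenerators.

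For part~(1), your route genuinely differs from the paper's. The paper verifies that $\Comod_C\cB$ is the subcategory of compact objects in the category of bifunctors $\cA^{\op}\times\cB^{\op}\to\Vecc^\infty$ that are left exact in each variable, and then cites L\'opez Franco's characterisation of the Deligne product (Remark~10 and Theorem~18 of \cite{LF}). You instead propose to check the universal property directly by constructing the extension $\widetilde F$ via presentations by objects $X\boxtimes Y$. That is a legitimate strategy, and in some respects more self-contained, but there is a concrete gap at its core: to set $\widetilde F(Z):=\coker\bigl(F(X_1,Y_1)\to F(X_0,Y_0)\bigr)$ you need to turn the $\Comod_C\cB$-morphism $X_1\boxtimes Y_1\to X_0\boxtimes Y_0$ into a $\cD$-morphism $F(X_1,Y_1)\to F(X_0,Y_0)$, and a priori such a morphism is \emph{not} of the form $f\boxtimes g$. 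What makes this work is the computation
$$\Hom_{\Comod_C\cB}(X_1\boxtimes Y_1,\,X_0\boxtimes Y_0)\;\simeq\;\Hom_{\cA}(X_1,X_0)\otimes_{\bk}\Hom_{\cB}(Y_1,Y_0),$$
which uses that $\Hom_{\cA}(X_1,X_0)$ is a finite-dimensional equaliser and that $-\otimes_{\bk}\Hom_{\cB}(Y_1,Y_0)$ is exact, so the colinearity constraint passes through the tensor product; one then applies the bilinear pairing induced by $F$ on morphism spaces. Without this Hom formula, both the definition of $\widetilde F$ on objects and the subsequent lift-of-presentations argument are unavailable, so this needs to be stated and proved before the ``horseshoe-style'' argument can run. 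You also implicitly need that $\Comod_C\cB$ is abelian (a hypothesis built into the definition of the Deligne product), which follows because the forgetful functor to $\cB$ is exact and reflects exactness, but is worth flagging.
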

\begin{proof}
    By the general theory of \cite{LF}, for instance Remark 10 and Theorem 18, we need to show that $\Comod_C\cB$ is equivalent to the category of compact (finitely presented) objects in the category of bilinear bifunctors
    $$\cA^{\op}\times\cB^{\op}\;\to\;\Vecc^\infty$$
    that are left exact in each variable.
    The latter category can be reinterpreted as the category of linear left exact functors from~$\cB^{\op}$ to 
    $$\Ind\cA\;\simeq\;\Comod^\infty_C,$$
    the category of all (not necessarily finite-dimensional) $C$-comodules over $\kk$. Using the forgetful functor from $\Comod^\infty_C$ to $\Vecc^\infty$, we can observe that we can reinterpret (left exact) functors from $\cB^\op$ to $\Comod^\infty_C$ as (left exact) functors from $\cB^\op$ to $\Vecc^\infty$ equipped with a $C$-comodule structure. In conclusion, we have identified our original functor category with the category
    of $C$-comodules in $\Ind\cB$ (in the ordinary sense, meaning objects $Y\in\Ind\cB$ with a coaction $Y\to C\otimes_{\bk}Y$). Its subcategory of compact objects is indeed $\Comod_C(\cB)$: To show that compact $C$-comodules in $\Ind\cB$ must have their underlying object in $\cB$, it suffices to test against comodules of the form $C\otimes_{\bk}(\mathrm{colim} Y_i)$  with $Y_i\in\cB$. To see that the underlying object being in $\cB$ is sufficient to be compact for a $C$-comodule in $\Ind\cB$ one can realise $C$-linear morphisms spaces as equalisers of  $\cB$-morphism spaces.
    This concludes the proof of part (1).

    For  $M$ in $\Comod_C\cB$ realised as $C_{M_0}^\ast\to \End_{\cB}(M_0)$, with $M_0\in\cB$,
    $$ M\;\hookrightarrow\; C_{M_0}\boxtimes M_0\quad\mbox{and}\quad C_{M_0}^\ast\boxtimes {M_0}\;\tto\; M$$
are an obvious monomorphism and epimorphism proving parts (2) and (3).
\end{proof}

\begin{example}\label{ExRepG}
\begin{enumerate}
    \item For a finite group $G$ and a $\bk$-linear abelian category $\cB$,
    $$\Comod_{(\bk G)^\ast}\cB\;\simeq\; (\Rep G)\boxtimes\cB\;\simeq\; \cB\boxtimes (\Rep G)\;\simeq\;\Fun(BG,\cB)$$
    is the category of $G$-representations in $\cB$.
    \item More generally, assume that $\cA$ is a finite category in the sense of \cite{EGNO}, meaning it is equivalent to the category of finite-dimensional modules over a finite-dimensional associative algebra $A$. Then we can simplify all the above constructions by replacing the (now finite-dimensional) coalgebra $C$ by its dual algebra $C^\ast=A\simeq \End(\omega)$ (for $\omega:\cA\to\Vecc$ as above). So $\cA\boxtimes\cB$ is the category of $A$-modules in $\cB$. 
\item  Yet equivalently, for $\cA$ as in (2), we have
$$\cA\boxtimes\cB\simeq\Fun_{\bk}(\cP^{\op},\cB),$$
with $\cP$ the category of projective objects in $\cA$. 
\end{enumerate}
    
\end{example}

\subsubsection{}\label{DefDel3} Denote by $\AbCat_{\bk}$ the 2-category whose objects are essentially small $\bk$-linear categories, 1-morphisms are $\bk$-linear (not necessarily exact) functors and 2-morphisms are all natural transformations.
We denote the full 2-subcategory $\Tak_{\bk}$ (after Takeuchi) of $\bk$-linear abelian categories with objects of finite length and morphism spaces that are finite-dimensional. 
We consider 2-subcategories
$$\AbCat^{ex}_{\bk}\subset\AbCat^{rex}_{\bk}\subset\AbCat_{\bk}\quad\mbox{and}\quad \Tak^{ex}_{\bk}\subset\Tak^{rex}_{\bk}\subset\Tak_{\bk}$$
by keeping the same objects but restricting the 1-morphisms to (right) exact functors.

\begin{proposition}\label{PropPseudo}
There is a pseudo-functor
    $$-\boxtimes-:\;\Tak_{\bk}^{ex}\times \AbCat_{\bk}\;\to\; \AbCat_{\bk}$$
    that agrees with the pseudo-functor (implicit in the definition of $\boxtimes$)
    $$-\boxtimes-:\;\Tak_{\bk}^{rex}\times \AbCat_{\bk}^{rex}\;\to\; \AbCat_{\bk}^{rex}$$
    on $\Tak^{ex}_{\bk}\times \AbCat^{rex}_{\bk}$.
\end{proposition}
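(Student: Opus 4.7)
The plan is to exploit the comodule realisation of the Deligne product from \Cref{LemExDTP}: choose coalgebras $C_i$ with $\cA_i \simeq \Comod_{C_i}$, so that $\cA_i \boxtimes \cB_j \simeq \Comod_{C_i}(\cB_j)$. The construction of $F \boxtimes G$ will be factored into two independent pieces, one using only that $G$ is $\bk$-linear and one using that $F$ is exact.

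For any $\bk$-linear $G : \cB_1 \to \cB_2$ and any coalgebra $C$, the tautological assignment
\[
(Y_0,\, \rho : C_{Y_0}^\ast \to \End_{\cB_1}(Y_0)) \;\mapsto\; (G(Y_0),\, G \circ \rho : C_{Y_0}^\ast \to \End_{\cB_2}(G(Y_0)))
\]
defines a functor $\widetilde{G}_C : \Comod_C(\cB_1) \to \Comod_C(\cB_2)$, strictly 2-functorial in $G$ and in $C$, with no exactness of $G$ required. For $F : \cA_1 \to \cA_2$ exact in $\Tak_{\bk}^{ex}$, \Cref{LemComod}(3) identifies $F$ with cotensor by a $(C_2, C_1)$-bicomodule $M_F$ that is right-coflat over $C_1$, and the existing right-exact Deligne pseudofunctor produces a functor $F \boxtimes \id_{\cB} : \Comod_{C_1}(\cB) \to \Comod_{C_2}(\cB)$ for every $\cB \in \AbCat_{\bk}$. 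One then sets
\[
F \boxtimes G \;:=\; (F \boxtimes \id_{\cB_2}) \circ \widetilde{G}_{C_1}.
\]
Agreement with the existing pseudofunctor on $\Tak_{\bk}^{ex} \times \AbCat_{\bk}^{rex}$ follows because, when $G$ is right exact, $\widetilde{G}_C$ is canonically isomorphic to $\id_{\cA} \boxtimes G$: both are right exact and send $X \boxtimes Y \mapsto X \boxtimes G(Y)$, which pins them down by the universal property in \Cref{DefDel}.

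The main obstacle is verifying the interchange coherence
\[
(F_2 \boxtimes G_2) \circ (F_1 \boxtimes G_1) \;\simeq\; (F_2 F_1) \boxtimes (G_2 G_1),
\]
which reduces to exhibiting a canonical natural isomorphism $(F \boxtimes \id_{\cB_2}) \circ \widetilde{G}_{C_1} \simeq \widetilde{G}_{C_2} \circ (F \boxtimes \id_{\cB_1})$ for $F$ exact and $G$ arbitrary $\bk$-linear. For $(Y_0, \rho) \in \Comod_{C_1}(\cB)$ with coaction through the finite-dim subcoalgebra $C_{Y_0}$, unwinding the cotensor shows $(F \boxtimes \id_{\cB})(Y) = M_F|_{C_{Y_0}} \boxtimes^{C_{Y_0}} Y$ where $M_F|_{C_{Y_0}} := M_F \boxtimes^{C_1} C_{Y_0}$. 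Right-coflatness of $M_F$ over $C_1$ descends to right-coflatness, hence injectivity by \Cref{LemComod}(1), of the finite-dim comodule $M_F|_{C_{Y_0}}$ over $C_{Y_0}$, so that it is a direct summand of a cofree comodule $\bk^n \otimes_{\bk} C_{Y_0}$ via some idempotent $e$; the cotensor then evaluates to $\mathrm{im}(e \otimes \id_{Y_0})$, a direct summand of $Y_0^n$ in $\cB$. Being $\bk$-linear and hence additive, $G$ preserves finite direct sums and splittings of idempotents (the target $\cB_2$ being idempotent complete), yielding the required natural isomorphism. The remaining associators and unitors of the pseudofunctor then assemble from the strict 2-functoriality of $\widetilde{(-)}$ and the pseudofunctor structure of $(F,\id) \mapsto F \boxtimes \id$.
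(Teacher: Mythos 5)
Your construction coincides with the paper's: the paper likewise defines $M\boxtimes G$ as the canonical lift of $G$ to comodule categories (your $\widetilde{G}_{C_1}$) followed by the cotensor $M\Box^{C_1}-$ (your $F\boxtimes\id_{\cB_2}$), after passing to the equivalent 2-category of coalgebras and right-injective bicomodules. The paper leaves the coherence verification as an exercise, which you carry out correctly via reduction to finite-dimensional subcoalgebras and split idempotents on cofree comodules.
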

\begin{proof}
By the above, $\Tak_{\bk}^{ex}$ is equivalent to the 2-category $\CoAl$ which has as objects coalgebras over $\bk$, as 1-morphisms bicomodules that are injective as right comodules and as 2-morphisms the morphisms of bimodules. We can replace $\Tak_{\bk}^{ex}$ by $\CoAl$ for the purposes of the proposition.

    For $C\in\CoAl$ and $\cB\in \AbCat$ we then set (following \Cref{LemExDTP}(1))
    $$C\boxtimes\cB:= \Comod_C\cB.$$

    For all $C_1,C_2\in\CoAl$ and $\cB_1,\cB_2\in \AbCat$ we must now define a functor
    $$\CoAl(C_1,C_2)\times \AbCat(\cB_1,\cB_2)\;\to\; \AbCat(\Comod_{C_1}\cB_1,\Comod_{C_2}\cB_2).$$
    For $M\in \CoAl(\cA_1,\cA_2)$ and $G\in \AbCat(\cB_1,\cB_2)$, we define the composite functor
    $$M\boxtimes G:\; \Comod_{C_1}\cB_1\xrightarrow{G} \Comod_{C_1}\cB_2\xrightarrow{M\Box^{C_1}-} \Comod_{C_2}\cB_2,$$
    where we use the notation $G$ for the canonical lift of $G$ to the categories of comodules.
    Finally, for a morphism $f:M\to M'$ and a natural transformation $\beta: G\Rightarrow G'$, the corresponding natural transformation from $M\boxtimes G$ to $M'\boxtimes G'$ is easily defined.

    That this definition satisfies all properties of a pseudo-functor and agrees with the classical interpretation of $-\boxtimes-$ as a pseudo-functor is left as an exercise.
\end{proof}

\begin{remark}
     One can verify that we cannot further extend $-\boxtimes-$ to a pseudo-functor
     $$-\boxtimes-:\;\Tak_{\bk}^{rex}\times \AbCat_{\bk}\;\to\; \AbCat_{\bk}.$$
     Indeed, the functors $H_0(G_1,-)$ and $H^0(G_2,-)$ do not commute on $G_1\times G_2$-representations, for finite groups $G_1,G_2$.
\end{remark}

\begin{corollary}
Take $\cA,\cB,\cB'$ in $\Tak$.
    For a $\bk$-linear functor $F:\cB\to\cB'$, the functor $\cA\boxtimes F$ is an equivalence if and only if $F$ is an equivalence.
\end{corollary}
\begin{proof}
    If $F$ is an equivalence, it follows that so is $\cA\boxtimes F$. Conversely, assume that $\cA\boxtimes F$ is an equivalence, and choose a coalgebra $C$ for $\cA$ as in \ref{DefDel2}.

For a given object $X$ in $\cA$ (interpreted as a comodule), we have a commutative square
$$\xymatrix{
\Comod_C\cB\ar[rr]^{\cA\boxtimes F}&&\Comod_C\cB'\\
\cB\ar[u]^{X\boxtimes -}\ar[rr]^{F}&& \cB'\ar[u]^{X\boxtimes -}.
    }$$
    For $A,B\in\cB$, the equivalence $\cA\boxtimes F$ induces isomorphisms
    $$\End_{C}(X)\otimes\Hom_{\cB}(A,B)\xrightarrow{\sim}\End_C(X)\otimes\End_{\cB'}(FA,FB),$$
    showing that $F$ is fully faithful.

   Now we consider the commutative square
    $$\xymatrix{
\Comod_C\cB\ar[rr]^{\cA\boxtimes F}\ar[d]&&\Comod_C\cB'\ar[d]\\
\cB\ar[rr]^{F}&& \cB',
    }$$
    where the downwards arrows are forgetful functors. For any object $Y\in\cB'$, there is some $l\in\mZ_{>0}$ such that $Y^{\oplus l}$ is in the image of the downwards arrow, and hence in the image of $F$. Since $F$ is fully faithful and $\cB'$ is Krull-Schmidt, it follows that also $F$ is essentially surjective.
\end{proof}

The following is a standard application of \Cref{LemComod}(2), which shows that one can refine the defining property of the Deligne product, from right exact functors to exact functors.
\begin{lemma}\label{LemDelEx}
    For $\cA,\cB,\cC$ in $\Tak$, restriction along $\cA\times\cB\to\cC$ yields an equivalence between the category of exact $\bk$-linear functors $\cA\boxtimes \cB\to\cC$ with the category of bilinear bifunctors $\cA\times\cB\to\cC$ that are exact in each variable. 
\end{lemma}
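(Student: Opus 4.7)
The plan is to prove both implications in the claimed equivalence, building directly on the universal property in~\ref{DefDel} and using Lemma~\ref{LemComod}(2) as advertised in the paper's lead-in sentence.

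The ``only if'' direction is immediate. If $F:\cA\boxtimes\cB\to\cC$ is exact, then its restriction $G=F\circ(-\boxtimes-):\cA\times\cB\to\cC$ is bilinear and exact in each variable, because the ``slice'' functors $-\boxtimes Y:\cA\to\cA\boxtimes\cB$ and $X\boxtimes-:\cB\to\cA\boxtimes\cB$ are themselves exact. This last fact is a standard feature of the Deligne product and can be read off the comodule description in Lemma~\ref{LemExDTP}: through the equivalence $\cA\boxtimes\cB\simeq\Comod_C\cB$, the functor $-\boxtimes Y$ becomes the ``cofree'' functor $X\mapsto X\otimes_{\bk}Y$ equipped with its natural $C$-coaction, which is clearly exact in $X$, and symmetrically for $X\boxtimes-$.

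For the substantive ``if'' direction, assume $G:\cA\times\cB\to\cC$ is bilinear and bi-exact, and let $F:\cA\boxtimes\cB\to\cC$ denote the associated right exact functor; we must show $F$ is left exact. Using Takeuchi's theorem (see \ref{DefDel2}), fix equivalences $\cA\simeq\Comod_C$ and $\cB\simeq\Comod_B$, and a faithful exact $\bk$-linear embedding $\omega_\cC:\cC\hookrightarrow\Vecc^\infty$. By Lemma~\ref{LemExDTP} combined with the equivalence $\Comod_C\Comod_B\simeq\Comod_{C\otimes B}$, we have $\cA\boxtimes\cB\simeq\Comod_{C\otimes B}$. Since $\omega_\cC$ is faithful and exact, it suffices to prove that $\omega_\cC\circ F:\Comod_{C\otimes B}\to\Vecc^\infty$ is exact.

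Extend $\omega_\cC\circ F$ to its unique colimit-preserving prolongation $\hat F:\Comod^\infty_{C\otimes B}\to\Vecc^\infty$. By the coalgebraic Eilenberg--Watts theorem, $\hat F\simeq T\Box^{C\otimes B}(-)$ where $T:=\hat F(C\otimes B)$ carries a natural right $C\otimes B$-comodule structure. Evaluating on external tensor products yields $T\Box^{C\otimes B}(M\otimes N)\simeq\omega_\cC G(M,N)$ for $M\in\Comod_C$, $N\in\Comod_B$, and this bifunctor is exact in each variable, being the composition of the bi-exact $G$ with the exact $\omega_\cC$. Lemma~\ref{LemComod}(2) then forces $T$ to be injective as a right $C\otimes B$-comodule, and Lemma~\ref{LemComod}(1) delivers the exactness of $T\Box^{C\otimes B}(-)$, hence of $\omega_\cC\circ F$, hence of $F$. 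The main technical hurdle is the Eilenberg--Watts-style identification $\hat F\simeq T\Box^{C\otimes B}(-)$; once this cotensor form is in hand, the hypothesis of Lemma~\ref{LemComod}(2) is exactly what bi-exactness of $G$ provides, and the rest of the argument is formal.
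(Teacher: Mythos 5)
Your overall strategy — reduce via an exact faithful $\omega_\cC\colon\cC\to\Vecc$ to the case $\cC=\Vecc$, get a cotensor presentation of the universal right-exact extension of $\omega_\cC\circ G$, and then feed the bi-exactness on pure tensors into Lemma~\ref{LemComod}(2) to force the ``kernel'' comodule $T$ to be injective — is exactly the ``standard application of Lemma~\ref{LemComod}(2)'' the paper has in mind, and your ``only if'' direction is fine.

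The gap is in the Eilenberg--Watts step. You assert that the colimit-preserving prolongation $\hat F\colon\Comod^\infty_{C\otimes B}\to\Vecc^\infty$ is isomorphic to a cotensor functor $T\Box^{C\otimes B}(-)$. That cannot be justified at this stage: a cotensor product is an equalizer, hence \emph{always} left exact, whereas $\hat F$ — being the $\Ind$-extension of the right exact $\omega_\cC\circ F$ — is a priori only right exact. If every colimit-preserving functor out of a comodule category were of cotensor form, every such functor would automatically be exact; but coinvariants $H_0(G,-)$ on $\Rep G$ (for $G$ a finite $p$-group, $\chara\bk=p$) is colimit-preserving and not left exact, so it is not of cotensor form. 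The Eilenberg--Watts theorem for comodules (and the statement implicit in Lemma~\ref{LemComod}(3)) produces cotensor presentations from \emph{left exact} functors commuting with filtered colimits — i.e. from precisely the property you are trying to establish — so the argument as written is circular.

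The repair is a duality twist. Take the right adjoint $R$ of $\hat F$ (which exists since $\hat F$ preserves colimits out of a Grothendieck category), and set $T:=R(\bk)$, a left $C\otimes B$-comodule. For $M\in\Comod_{C\otimes B}$ one then has natural isomorphisms
$$\hat F(M)^{\ast}\;\simeq\;\Hom_{C\otimes B}(M,T)\;\simeq\;M^{\ast}\Box^{C\otimes B}T,$$
the last isomorphism being the observation from the proof of Lemma~\ref{LemComod}(1) (with the roles of left and right comodules exchanged). Thus exactness of $\omega_\cC\circ F$ is equivalent to exactness of $N\mapsto N\Box^{C\otimes B}T$, and evaluating on $N=N_1\otimes N_2$ gives $(\omega_\cC G(N_1^{\ast},N_2^{\ast}))^{\ast}$, which is bi-exact; now Lemma~\ref{LemComod}(2) and then Lemma~\ref{LemComod}(1) apply and finish the proof as you intended. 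Equivalently, one may pass to opposite categories first and apply your cotensor argument to the resulting left exact functor.
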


\subsection{The Deligne product of tensor categories}

In analogy with the notation $\Tak^{ex}_{\bk}$, we denote by $\Tens_{\bk}^{ex}$ the 2-category of tensor categories, tensor functors and natural transformations of monoidal functors and by $\Tens_{\bk}$ the 2-category of tensor categories, LSM functors and natural transformations of monoidal functors. We have obvious forgetful 2-functors to $\Tak^{ex}_{\bk}$ and $\Tak_{\bk}$, respectively. Note also that LSM functors between tensor categories are exact whenever they are right exact, by duality.

It is a classical fact from \cite[\S5]{Del90} that $-\boxtimes-$ from $\Tak^{rex}\times\Tak^{rex}$ to $\Tak^{rex}$ lifts to a pseudo-functor
\begin{equation}\label{boxTens1}
    -\boxtimes-:\;\Tens_{\bk}^{ex}\times\Tens_{\bk}^{ex}\;\to\; \Tens_{\bk}^{ex}.
\end{equation}
We sketch a proof that allows us to generalise again to include non-exact functors.

\subsubsection{}
Consider tensor categories $\cC_1,\cC_2$ over $\bk$. The tensor structure on $\cC_i$ equips the coalgebra $C_i$ (obtained from some choice of exact faithful $\cC_i\to\Vecc$) with the structure of a copseudo-Hopf algebra, see \cite[\S 2.5]{CEO}. We can use this to equip $\cC_1\boxtimes\cC_2$ with a unique structure of a tensor category for which the functors
\begin{equation}\label{eqC1C2}-\boxtimes \unit:\cC_1\to\cC_1\boxtimes\cC_2\quad\mbox{and}\quad\unit\boxtimes -:\cC_2\to\cC_1\boxtimes\cC_2\end{equation}
are tensor functors, \cite[Proposition~5.17]{Del90}.
Indeed, it follows quickly, see \cite[\S 5.16]{Del90}, that there is a unique symmetric tensor product on $\cC_1\boxtimes\cC_2$ compatible with $\cC_1\times\cC_2\to\cC_1\boxtimes\cC_2$, the only subtlety is proving that the tensor product is exact. This is equivalent to $\cC_1\boxtimes\cC_2$ being rigid, via \Cref{LemExDTP}(2), and only known to be true when $\bk$ is perfect, see the discussion in~\cite[\S 6]{CEOP}. The copseudo-Hopf algebra structure on $C=C_1$ contains a $(C,C^{\otimes 2})$-bicomodule~$T$, see \cite[Definition~2.1]{CEO}, so that the tensor product on $\Comod_C(\cC_2)$ is given by
\begin{equation}\label{TensProdBox}
M\times N\;\mapsto\; T\Box^{C\otimes C}(M\otimes N),\end{equation}
where the occurrence of $\otimes$ refers to the tensor product in $\cC_2$ of (the objects in $\cC_2$ underlying) $M$ and $N$. The crucial observation 
is therefore \Cref{LemComod}(2) from which we can indeed derive exactness of the tensor product.

\subsubsection{}\label{somesubsec} We can further show that the pseudo-functor in \Cref{boxTens1} extends to a pseudo-functor
\begin{equation}\label{boxTens2}
    -\boxtimes-:\;\Tens_{\bk}^{ex}\times\Tens_{\bk}\;\to\; \Tens_{\bk}
\end{equation}
which is compatible with the pseudo-functor in \Cref{PropPseudo}. Indeed, since $T$ is injective as a right ${C\otimes C}$-comodule, the operation
$$M\otimes N\;\leadsto\; T\Box^{C\otimes C}(M\otimes N)$$
is an additive one, which commutes with $\bk$-linear functors, not only exact ones. 

An instance of the coherence conditions of a pseudo-functor is the following example.

\begin{example}
\label{Lem:Comm}
    Consider a tensor functor $F:\cC_1\to\cC_2$ and an LSM functor $\Theta:\cD_1\to\cD_2$, the following diagram of LSM functors is commutative
    $$\xymatrix{
\cC_1\boxtimes\cD_1\ar[rr]^{\cC_1\boxtimes\Theta}\ar[d]^{F\boxtimes \cD_1}&&\cC_1\boxtimes\cD_2\ar[d]^{F\boxtimes \cD_2}\\
    \cC_2\boxtimes\cD_1\ar[rr]^{\cC_2\boxtimes\Theta}&&\cC_2\boxtimes\cD_2.
    }$$
\end{example}

\begin{lemma}\label{Cor:box}
    Consider tensor categories $\cC_1,\cC_2,\cC$ over $\bk$. If 
    $$F:\cC_1\boxtimes\cC_2\;\to\;\cC$$
    is an LSM functor that restricts under the functors in \Cref{eqC1C2} to tensor functors on $\cC_1,\cC_2$, then $F$ is exact, {\it i.e.}~a tensor functor.
\end{lemma}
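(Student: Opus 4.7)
The plan is to show $F$ is faithful, which by \Cref{Thm:FE} implies exactness. I will argue by contradiction: assume $F(f)=0$ for some non-zero $f\colon M\to N$ in $\cC_1\boxtimes\cC_2$, and derive a contradiction from the hypothesis that the restrictions $F_1:=F\circ(-\boxtimes\unit)$ and $F_2:=F\circ(\unit\boxtimes-)$ are tensor functors (in particular faithful).

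My first step is to reduce to a morphism out of $\unit$ landing in an object of the form $X\boxtimes Y$. Since $F$ is LSM it preserves duals and (co)evaluations, so the rigidity-based isomorphism $\Hom(M,N)\simeq\Hom(\unit,M^\vee\otimes N)$ converts $f$ to a non-zero $\hat f\colon\unit\to M^\vee\otimes N$ with $F(\hat f)=0$. Since $\unit$ is simple, $\hat f$ is monic, and composing with an embedding $M^\vee\otimes N\hookrightarrow X\boxtimes Y$ provided by \Cref{LemExDTP}(3) yields a non-zero (monic) morphism $g\colon\unit\to X\boxtimes Y$ still satisfying $F(g)=0$. I then invoke the standard Hom-space decomposition
\[
\Hom_{\cC_1\boxtimes\cC_2}(\unit,X\boxtimes Y)\;\simeq\;\Hom_{\cC_1}(\unit,X)\otimes_\bk\Hom_{\cC_2}(\unit,Y),
\]
which is a direct consequence of the comodule realization of the Deligne product in \Cref{DefDel2}, to write $g=\sum_i\alpha_i\otimes\beta_i$ with $\{\alpha_i\}$ linearly independent.

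By the monoidality of $F$ together with $F|_{\cC_i}=F_i$, the element $F(g)$ equals the image of $g$ under the composite
\[
\Hom(\unit,X)\otimes\Hom(\unit,Y)\xrightarrow{F_1\otimes F_2}\Hom(\unit,F_1X)\otimes\Hom(\unit,F_2Y)\xrightarrow{\mu}\Hom(\unit,F_1X\otimes F_2Y),
\]
where $\mu$ is the lax-monoidal structure map on $\Hom_\cC(\unit,-)$. The first arrow is injective since $F_1,F_2$ are faithful. The map $\mu$ is the effect on $\Hom(\unit\boxtimes\unit,F_1X\boxtimes F_2Y)$ of the tensor product functor $\otimes_\cC\colon\cC\boxtimes\cC\to\cC$, which is itself a tensor functor and hence faithful by \Cref{Thm:FE}; thus $\mu$ is injective. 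The composite is therefore injective, yielding $F(g)\neq 0$ and the desired contradiction. The main delicate point is the injectivity of $\mu$, which ultimately encodes the rigidity-based exactness of the tensor product $\otimes_\cC\colon\cC\boxtimes\cC\to\cC$ discussed just before \Cref{Lem:Comm}; the rest is routine bookkeeping with the Deligne-product structure through $F$.
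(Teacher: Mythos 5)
Your proof is correct, and it lands on the same key observation as the paper's: faithfulness of $F$ can be tested on the subcategory of objects of the form $X\boxtimes Y$, where $F$ is determined by $F_1$ and $F_2$. The execution differs, though. The paper reduces a general non-zero $f\colon M\to N$ to a morphism $X\boxtimes Y\to U\boxtimes V$ by sandwiching $M$ as a quotient and $N$ as a subobject of box products (using both \Cref{LemExDTP}(2) and (3)), and then simply observes that on the full subcategory of box-product objects the natural isomorphism $F(X\boxtimes Y)\simeq F_1(X)\otimes F_2(Y)$ identifies $F$ with the tensor functor $G\colon\cC_1\boxtimes\cC_2\to\cC$ produced from $F_1,F_2$ by the universal property, which is faithful. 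You instead use rigidity to pin one end at $\unit$, then only need the subobject embedding, and you verify the relevant injectivity by hand, factoring $F$ on $\Hom(\unit,X\boxtimes Y)$ through $F_1\otimes F_2$ followed by the map induced by $\Delta=\otimes_\cC\colon\cC\boxtimes\cC\to\cC$, each of which is injective. That last step — recognizing $\mu$ as the action of $\Delta$ on Hom-spaces and invoking its faithfulness as a tensor functor — is precisely the ingredient the paper packages more abstractly via the universal property. Your version is a touch longer but avoids citing the universal property explicitly and makes the factorization of $F$ on Hom-spaces transparent; the paper's version is shorter but leans on the reader to unpack why $F$ agrees with $G$ on the box-product subcategory.
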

\begin{proof}
    By \Cref{Thm:FE} it is sufficient to show that $F$ is faithful. Since every object in $\cC_1\boxtimes\cC_2$ is a subobject and a quotient of an object $X\boxtimes Y$, it suffices to prove that no non-zero morphism
    $$X\boxtimes Y\;\to\;U\boxtimes V$$
    is sent to zero by $F$. However, the restriction of $F$ to such objects corresponds to that of the (faithful) tensor functor $\cC_1\boxtimes\cC_2\to\cC$ given by the universal property of $\cC_1\boxtimes\cC_2$ from the two restrictions of $F$.
\end{proof}

\subsubsection{}
For future use, we also record the following result.
Consider a tensor category $\cC$ and $\cA,\cB\in\Tak$. Consider the functor
$$\cC\times\cC\times \cA\to\cC\boxtimes\cA,\quad (X_1,X_2,A)\mapsto (X_1\otimes X_2)\boxtimes \cA$$
which is $\bk$-linear and exact in each variable. We can reinterpret this as a functor
$$\cC\times\cA\;\to\;\Fun_{\bk}(\cC,\cC\boxtimes \cA),$$
leading via \Cref{LemDelEx} to an exact functor with domain $\cC\boxtimes\cA$. In turn we can interpret this as a functor 
$$-\circledast-:\cC\times (\cC\boxtimes \cA)\;\to\;\cC\boxtimes\cA$$
which is linear and exact in both variables. Such functors make $\cC\boxtimes\cA$ and $\cC\boxtimes \cB$ into module categories over $\cC$ in the sense of \cite[\S 7.3]{EGNO}.

\begin{proposition}
    For a $\bk$-linear functor $\Theta:\cA\to\cB$, the $\bk$-linear functor $\cC\boxtimes\Theta$ is a $\cC$-module functor. 
\end{proposition}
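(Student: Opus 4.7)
The strategy is to realise $X\circledast-$ itself as an instance of the pseudo-functor $-\boxtimes-$ of \Cref{PropPseudo}, and then let pseudo-functoriality do the work. Write $L_X:\cC\to\cC,\; Y\mapsto X\otimes Y$ for the left-tensoring functor, which is exact and $\bk$-linear.

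First I would identify, for each $X\in\cC$, the exact $\bk$-linear endofunctor $X\circledast-$ of $\cC\boxtimes\cA$ with the pseudo-functorial image $L_X\boxtimes\id_\cA$. Both are exact $\bk$-linear functors $\cC\boxtimes\cA\to\cC\boxtimes\cA$, and both send a pure tensor $Y\boxtimes A$ to $(X\otimes Y)\boxtimes A$ bilinearly and naturally in $(Y,A)$. The uniqueness part of the universal property of $\boxtimes$ for exact bifunctors (\Cref{LemDelEx}) then yields a canonical natural isomorphism $X\circledast-\simeq L_X\boxtimes\id_\cA$, and likewise for $\cC\boxtimes\cB$.

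The key step is pseudo-functoriality of $-\boxtimes-$ applied to the two ways of composing the 1-morphisms $L_X:\cC\to\cC$ and $\Theta:\cA\to\cB$. Both composites, $(L_X,\id_\cA)$ followed by $(\id_\cC,\Theta)$ and $(\id_\cC,\Theta)$ followed by $(L_X,\id_\cB)$, equal the pair $(L_X,\Theta)$ in the product 2-category, so the coherence data of the pseudo-functor $-\boxtimes-$ furnish canonical natural isomorphisms
$$(\cC\boxtimes\Theta)\circ(L_X\boxtimes\id_\cA)\;\simeq\; L_X\boxtimes\Theta\;\simeq\; (L_X\boxtimes\id_\cB)\circ(\cC\boxtimes\Theta).$$
Combining this with the identifications from the previous paragraph gives the required module-functor constraint
$$c_{X,M}:\;(\cC\boxtimes\Theta)(X\circledast M)\;\xrightarrow{\sim}\; X\circledast(\cC\boxtimes\Theta)(M),$$
natural in $X\in\cC$ and $M\in\cC\boxtimes\cA$.

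Finally I would check the module-functor axioms. The unit axiom is immediate from $L_\unit\simeq\id_\cC$, giving $\unit\circledast-\simeq\id_{\cC\boxtimes\cA}$ so that $c_{\unit,M}$ is the identity up to the canonical isomorphism. The associativity axiom reduces, via the identification $X\circledast-\simeq L_X\boxtimes\id_\cA$ and the canonical natural isomorphism $L_X\circ L_{X'}\simeq L_{X\otimes X'}$ coming from the associator on $\cC$, to the composition coherence of the pseudo-functor $-\boxtimes-$. The main technical obstacle is not producing the isomorphism $c_{X,M}$, which is entirely formal, but rather this coherence bookkeeping: one must verify that the ``internal'' associativity of $\cC$ interacts with the pseudo-functor coherence in exactly the way demanded by the module-functor axioms. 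Provided the pseudo-functoriality of \Cref{PropPseudo} is spelled out with its full coherence (and not just as the diagram in \Cref{Lem:Comm}), the verification is routine.
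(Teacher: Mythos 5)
Your proof is correct in spirit and rests on the same underlying fact — pseudo-functoriality of $-\boxtimes-$ — but it takes a pointwise route where the paper takes a global one. You fix $X\in\cC$, identify $X\circledast-$ with $L_X\boxtimes\id_\cA$, and invoke the coherence of $-\boxtimes-$ applied to the two factorisations of the 1-morphism $(L_X,\Theta)$. The paper instead encodes the entire $\cC$-action at once via the single exact functor $\Delta:\cC\boxtimes\cC\to\cC$ representing $-\otimes-$, and obtains the module-functor constraint from one commutative square
$$\xymatrix{
\cC\boxtimes\cC\boxtimes\cA\ar[rr]\ar[d]_{\Delta\boxtimes\cA}&&\cC\boxtimes\cC\boxtimes\cB\ar[d]^{\Delta\boxtimes\cB}\\
\cC\boxtimes\cA\ar[rr]&&\cC\boxtimes\cB,
}$$
which is an instance of the coherence already displayed in \Cref{Lem:Comm}. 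The benefit of the paper's packaging is that naturality of the constraint in $X$ is automatic, since $\Delta$ already sees all of $\cC$ in the first factor; in your version this naturality must be checked as an extra step (your $c_{X,M}$ is a priori only defined for each fixed $X$, and you would need to verify it assembles into a natural transformation of bifunctors). Both proofs leave the final coherence bookkeeping (pentagon and unit axioms against the pseudo-functor coherence) implicit, so your honest acknowledgement of that at the end matches the level of detail in the paper.
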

\begin{proof}
    This follows from the commutative diagram
    $$\xymatrix{
\cC\boxtimes\cC\boxtimes\cA\ar[rr]^{(\cC\boxtimes\cC)\boxtimes\Theta}\ar[d]^{\Delta\boxtimes \cA}&& \cC\boxtimes\cC\boxtimes\cB\ar[d]^{\Delta\boxtimes \cB}\\
    \cC\boxtimes\cA\ar[rr]^{(\cC\boxtimes\cC)\boxtimes\Theta}&& \cC\boxtimes\cB,
    }$$
    where $\Delta:\cC\boxtimes\cC\to\cC$ is the exact functor corresponding to $-\otimes-:\cC\times\cC\to\cC$ under \Cref{LemDelEx}.
\end{proof}

\subsection{The tensor power functor}\label{SecTPF}
Let $\cC$ be a tensor category and fix $n\in\mZ_{>0}$.

\subsubsection{}
The category $\cC\boxtimes\Rep S_n$, see \Cref{ExRepG}(1), is the tensor category of $S_n$-representations in $\cC$. The canonical example of an $S_n$-representation in $\cC$ is $X^{\otimes n}$ equipped with the braiding action. It follows that
\begin{equation}\label{eqTPF}
    \cC\to\cC\boxtimes \Rep S_n,\quad X\mapsto X^{\otimes n}
\end{equation}
is a well-defined symmetric monoidal (non-additive) functor.

\begin{lemma}\label{LemPoFu}
    For every tensor functor $F:\cC\to\cD$, the diagram of symmetric monoidal functors
    $$\xymatrix{
    \cC\ar[rr]^-{(-)^{\otimes n}}\ar[d]^F&&\cC\boxtimes\Rep S_n\ar[d]^{F\boxtimes\Rep S_n}\\
    \cD\ar[rr]^-{(-)^{\otimes n}}&&\cD\boxtimes\Rep S_n
    }$$
    is commutative.
\end{lemma}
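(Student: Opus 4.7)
The plan is to check commutativity by unpacking both composites on objects using the concrete description of $\cC\boxtimes\Rep S_n$ as the category of $S_n$-representations in $\cC$, then invoking the coherence of the symmetric monoidal structure of $F$.

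First I would identify $\cC\boxtimes\Rep S_n\simeq \Fun(BS_n,\cC)$ via \Cref{ExRepG}(1). Unwinding the construction of the pseudo-functor $-\boxtimes-$ from \Cref{PropPseudo} at the identity bicomodule on $(\bk S_n)^\ast$ shows that $F\boxtimes\Rep S_n$ is given by post-composition with $F$: it sends a representation $(Y,\rho\colon S_n\to\End_\cC(Y))$ to $(F(Y),\,F\circ\rho)$, and a morphism of representations to its image under $F$. This reduces the lemma to a statement about $S_n$-representations that can be checked on underlying objects together with equivariance under the $S_n$-action.

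Next I would chase an object $X\in\cC$ around the square. The composite going right then down produces the representation $(F(X^{\o n}),\,F\circ\beta_X)$, where $\beta_X\colon S_n\to\End_\cC(X^{\o n})$ is the braiding action; the composite going down then right produces $(F(X)^{\o n},\,\beta_{F(X)})$. The iterated monoidal constraint of $F$ supplies a natural isomorphism $\mu_X\colon F(X)^{\o n}\xrightarrow{\sim}F(X^{\o n})$ in $\cD$, and the heart of the proof is to show that $\mu_X$ intertwines the two $S_n$-actions, i.e.\ $F(\beta_X(\sigma))\circ\mu_X=\mu_X\circ\beta_{F(X)}(\sigma)$ for all $\sigma\in S_n$.

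The main obstacle is precisely this equivariance check. I would reduce it to the generators of $S_n$, the adjacent transpositions $(i,i{+}1)$, for which the required identity collapses to the fact that $F$ preserves the symmetric braiding $c_{X,X}$ modulo the monoidal constraint; compatibility across the remaining tensor factors follows from monoidal coherence. Finally, naturality of $\mu_X$ in $X$ and its compatibility with the symmetric monoidal structures on the two tensor-power functors follow formally from the pentagon and hexagon axioms for $F$, so the $\mu_X$ assemble into the claimed natural isomorphism of symmetric monoidal functors.
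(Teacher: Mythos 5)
Your proof is correct, and it fills in what the paper leaves unproved: Lemma~\ref{LemPoFu} is stated in the paper with no argument, treated as a routine consequence of the constructions in \Cref{SecTPF} and \Cref{SecDTP}. Your unwinding of $F\boxtimes\Rep S_n$ as post-composition with $F$ under the identification $\cC\boxtimes\Rep S_n\simeq\Fun(BS_n,\cC)$, followed by the equivariance check for the iterated monoidal constraint $\mu_X$ reduced to adjacent transpositions via the axiom that a symmetric monoidal functor intertwines braidings, is exactly the natural argument, and the appeal to coherence for naturality and compatibility with the monoidal structures on the two tensor-power functors is the right way to close it.
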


\begin{remark}\label{RemHomPP}
Assume that $\cC$ has projective objects. Then we have the exact faithful functor
$$\omega=\bigoplus_P\Hom_{\cC}(P,-):\;\cC\to\Vecc,$$
where $P$ ranges over all projective covers of simple objects in $\cC$.
We can then construct the following functor
$$\cC\xrightarrow{(-)^{\otimes n}}\cC\boxtimes\Rep S_n\xrightarrow{\omega\boxtimes\Rep S_n}\Rep S_n,\quad X\;\mapsto\;\bigoplus_P\Hom_{\cC}(P, X^{\otimes n}),$$
where we consider the obvious $S_n$-module structures.
\end{remark}


\section{\texorpdfstring{$\bO$}{O}-functors}\label{SecO}
Let $\bk$ be an algebraically closed field of characteristic $p>0$ and fix $i\in\mZ$.

\subsection{Definitions} \label{sec:definitions}

Let $\cV$ be a tensor category (over $\bk$).

\begin{definition} \label{DefO}
\begin{enumerate}
    \item An $\bO^i\{\cV\}$-functor\footnote{We use the letter $\bO$ as a reference to Victor Ostrik, who in \cite{Os} was the first to introduce, and demonstrate the central role of, the first example of an $\bO^i\{\cV\}$-functor, the Frobenius functor.} $\Phi$ is a family of $\Frob^i$-LSM functors
    $$\Phi_{\cC}\;:\; \cC\to\cC\boxtimes\cV,$$
    for every tensor category $\cC$ over $\bk$, such that, for each tensor functor $F\colon\cC\to\cD$, the diagram of LSM functors
    $$\xymatrix{
    \cC\ar[rr]^-{\Phi_{\cC}}\ar[d]^F&&\cC\boxtimes\cV\ar[d]^{F\boxtimes\cV}\\
    \cD\ar[rr]^-{\Phi_{\cD}}&&\cD\boxtimes\cV
    }$$
    is commutative up to a natural isomorphism (of monoidal functors).
    \item Given an $\bO^i\{\cV\}$-functor $\Phi$, a tensor category $\cC$ is called {\bf $\Phi$-exact} if $\Phi_{\cC}$ is exact, and hence a $\Frob^i$-tensor functor. 
    \item Given an $\bO^i\{\cV\}$-functor $\Phi$, the {\bf $\Phi$-type} of $\cC$ is the minimal tensor subcategory $\cV_1\subset\cV$ for which $\Phi_{\cC}$ takes values in
$$\cC\boxtimes\cV_1\;\subset\; \cC\boxtimes\cV.$$
\end{enumerate}
\end{definition}

\begin{remark}
    One could make the definition of an $\bO^i\{\cV\}$-functor stricter by adding the natural transformations for the commutative squares to the data and imposing coherence conditions. Concretely, we can consider the forgetful 2-functor from the 2-category of tensor categories and tensor functors to the 2-category of tensor categories and additive symmetric monoidal functors, and the composite of this 2-functor with the pseudo-functor $-\boxtimes\cV$. The stricter definition of an $\bO^i\{\cV\}$-functor would be a pseudo-natural transformation (consisting of $\Frob^i$-linear functors) between these pseudo-functors.
\end{remark}

\begin{lemma}\label{Ofunfun}
    Consider a tensor functor $F:\cC'\to\cC$ and an $\bO^i\{\cV\}$-functor $\Phi$.
    \begin{enumerate}
        \item If $\cC$ is $\Phi$-exact, then also $\cC'$ is $\Phi$-exact.
        \item Let $\cV_1$ and $\cV_1'$ be the $\Phi$-types of $\cC$ and $\cC'$, then $\cV_1'\subset\cV_1$.
    \end{enumerate}
\end{lemma}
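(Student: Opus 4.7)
Both parts exploit the commutative square from Definition \ref{DefO}(1), which provides a natural isomorphism $(F\boxtimes\cV)\circ\Phi_{\cC'}\simeq\Phi_\cC\circ F$.

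For part (1), the argument is formal. As a tensor functor between tensor categories, $F$ is faithful by Theorem \ref{Thm:FE}, and by assumption so is $\Phi_\cC$. Their composite $\Phi_\cC\circ F$ is therefore faithful, and by the commutative square so is $(F\boxtimes\cV)\circ\Phi_{\cC'}$. Faithfulness of a composition forces faithfulness of the inner functor, hence $\Phi_{\cC'}$ is faithful, and a final invocation of Theorem \ref{Thm:FE} yields its exactness.

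For part (2), the approach is to reduce to the following sub-claim: for a faithful tensor functor $F:\cC'\to\cC$ and any tensor subcategory $\cV_1\subset\cV$, an object $Z\in\cC'\boxtimes\cV$ lies in $\cC'\boxtimes\cV_1$ if and only if $(F\boxtimes\cV)(Z)$ lies in $\cC\boxtimes\cV_1$. Granting the sub-claim, the commutative square applied to $\Phi_\cC$ (which takes values in $\cC\boxtimes\cV_1$ by definition of $\cV_1$) shows that $(F\boxtimes\cV)\circ\Phi_{\cC'}$ takes values in $\cC\boxtimes\cV_1$, and the sub-claim then forces every $\Phi_{\cC'}(X)$ into $\cC'\boxtimes\cV_1$. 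Since $\cC'\boxtimes\cV_1$ is a tensor subcategory, minimality of $\cV_1'$ gives $\cC'\boxtimes\cV_1'\subset\cC'\boxtimes\cV_1$, hence $\cV_1'\subset\cV_1$.

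To prove the sub-claim I would use the coalgebra realisation from Section \ref{SecDTP}. Fix a faithful exact $\omega:\cV\to\Vecc$ with coefficient coalgebra $D$, so that $\cV\simeq\Comod_D$ and the tensor subcategory $\cV_1$ corresponds to a subcoalgebra $D_1\subset D$; then $\cC'\boxtimes\cV\simeq\Comod_D(\cC')$, and an object $(Z,\rho)$ lies in $\cC'\boxtimes\cV_1$ precisely when its coaction factors through $D_1$. In the finite-dimensional subcoalgebra formulation of Section \ref{DefDel2} this becomes the condition that the minimal subcoalgebra $C_Z\subset D$ describing $Z$ is contained in $D_1$. The functor $F\boxtimes\cV$ acts as $F$ on the underlying object while preserving the coalgebra datum, and faithfulness of $F$ ensures that the algebra map $C_Z^{\ast}\to\End_{\cC'}(Z)\hookrightarrow\End_{\cC}(FZ)$ has the same image as before, so the minimal subcoalgebras of $Z$ and $(F\boxtimes\cV)(Z)$ coincide. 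This yields the sub-claim; the main work is simply unpacking the coalgebra description, with no essential obstacle beyond bookkeeping.
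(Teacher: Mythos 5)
Your part (1) matches the paper's argument: both use the commutative square from Definition~\ref{DefO}(1) together with Theorem~\ref{Thm:FE} and the observation that faithfulness of a composite descends to the inner functor. For part (2), the paper is terse but signals a different toolkit: it suggests viewing $\cC'$ as the category of representations of an affine group scheme $\pi$ internal to $\cC$, in the style of Deligne's tannakian reconstruction, and then tracking the $\Phi$-type through the resulting identification $\cC'\boxtimes\cV \simeq \Rep_{\cC\boxtimes\cV}(\pi)$. You instead reduce to a coalgebra-theoretic sub-claim using the explicit realisation $\cC'\boxtimes\cV\simeq\Comod_D(\cC')$ from Lemma~\ref{LemExDTP}, detecting membership in $\cC'\boxtimes\cV_1$ via the minimal subcoalgebra through which a coaction factors, and using faithfulness of $F$ (so that $\End_{\cC'}(Z)\hookrightarrow\End_{\cC}(FZ)$) to show this minimal subcoalgebra is unchanged under $F\boxtimes\cV$. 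Both arguments ultimately hinge on the same principle — a faithful $F$ cannot change what the coalgebra/group scheme ``sees'' — but yours stays at the level of coalgebras developed in Section~\ref{SecDTP} and is more explicit, at the cost of the bookkeeping you acknowledge; the paper's hint trades that bookkeeping for the heavier tannakian-internal machinery. Both are valid; your proof is correct.
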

\begin{proof}
    Part (1) follows from commutativity of the diagram in \Cref{DefO}(1), and using either \Cref{Thm:FE} or the fact that $F\boxtimes\cV$ is exact in \Cref{DefO}(1), by \Cref{boxTens1}.

    Also part (2) follows from commutativity of the diagram in \Cref{DefO}(1), for instance by viewing $\cC'$ as a category of representations over some affine group scheme in $\cC$ as in \cite[\S 7]{Del90}.
\end{proof}

\subsubsection{}\label{ProcDesc}
Given a second tensor category $\cV'$ and an LSM-functor $F:\cV\to\cV'$, using \Cref{Lem:Comm}, we obtain an $\bO^i\{\cV'\}$-functor from any $\bO^i\{\cV\}$-functor $\Phi$, by considering the composites
$$\Phi_{\cC}':\;\cC\xrightarrow{\Phi_{\cC}}\cC\boxtimes\cV\xrightarrow{\cC\boxtimes F}\cC\boxtimes\cV'.$$
If $F$ is exact ({\it i.e.}~a tensor functor), we call the resulting $\bO^i\{\cV'\}$-functor $\Phi'$ a {\bf descendant} of $\Phi$.

If $\Phi'$ is a descendant of $\Phi$, then $\cC$ is $\Phi$-exact if and only if it is $\Phi'$-exact. Partly because of this, it is natural to focus on $\bO^i\{\cV\}$-functors for incompressible~$\cV$. In particular, we \emph{abbreviate $\bO^i\{\Ver_{p^n}\}$ to $\bO^i_n$.}

\begin{example} \label{ExOFirst}
    \begin{enumerate}
        \item The standard inclusion $\cC\simeq\cC\boxtimes\Vecc\hookrightarrow\cC\boxtimes\Ver_{p^n}$ is a trivial example of an $\bO^0_n$-functor.
         
        \item The Frobenius functor
        $$\Fr:\cC\to\cC\boxtimes\Ver_p$$
        from \cite{Tann, EOf, Os} is an $\bO^1_1$-functor. Usually $\Fr$-exact categories are called Frobenius exact. The Frobenius functor $\Fr$ is also a descendant of the enhanced Frobenius functor $\Fr^{en}$, see \cite[\S 3.2]{CEO}, which is an $\bO^1\{\overline{\Rep S_p}\}$-functor.
       
        \item Assume that we are given an $\bO^i\{\cV\}$-functor $\Phi$. Consider the composite LSM functor
        $$H:\cV\xrightarrow{\Sigma}\overline{\cV}\to \Ver_p,$$
        with the second functor the unique tensor functor to $\Ver_p$, guaranteed to exist by the $\Ver_p$-Theorem (\Cref{pTheorem}). Then the procedure in \Cref{ProcDesc} yields the~$\bO^i_1$-functor~$\overline{\Phi}$
        \begin{equation}\label{eq:Phibar}
            \overline{\Phi}_{\cC}: \; \cC\xrightarrow{\Phi_{\cC}}\cC\boxtimes\cV\xrightarrow{\cC\boxtimes H}\cC\boxtimes\Ver_p.
        \end{equation}
        Note that $\overline{\Phi}$ is a descendant of $\Phi$ if and only if $\cV$ is semisimple.
    \end{enumerate}
\end{example}

\begin{lemma}\label{Lem:DefO} Let $\Phi$ be an $\bO^i\{\cV\}$-functor, and $\cC$ and $\cD$ tensor categories.
    \begin{enumerate}
        \item If $\cC$ and $\cD$ are $\Phi$-exact, then so is $\cC\boxtimes\cD$.
        \item If $\cC$ is $\overline{\Phi}$-exact, then it is also $\Phi$-exact.
    \end{enumerate}
\end{lemma}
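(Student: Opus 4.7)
\smallskip
\noindent\textbf{Proof plan.} My plan for part (1) is to exploit the naturality square of \Cref{DefO}(1) applied to the two canonical tensor embeddings $\iota_\cC\colon\cC\to\cC\boxtimes\cD$ and $\iota_\cD\colon\cD\to\cC\boxtimes\cD$ from \eqref{eqC1C2}, and then to invoke \Cref{Cor:box}. Concretely, naturality supplies an isomorphism
$$\Phi_{\cC\boxtimes\cD}\circ\iota_\cC\;\cong\;(\iota_\cC\boxtimes\cV)\circ\Phi_\cC,$$
and similarly for $\iota_\cD$. Since $\Phi_\cC$ is a $\Frob^i$-tensor functor by hypothesis, and $\iota_\cC\boxtimes\cV$ is a tensor functor (being the image of the tensor functor $\iota_\cC$ under the pseudo-functor of \Cref{boxTens2}), the composite on the right is a $\Frob^i$-tensor functor. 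The same argument on the $\cD$-side then shows that the restrictions of $\Phi_{\cC\boxtimes\cD}$ along both $\iota_\cC$ and $\iota_\cD$ are tensor functors, so \Cref{Cor:box} concludes that $\Phi_{\cC\boxtimes\cD}$ itself is exact.

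For part (2), the plan is a two-step application of \Cref{Thm:FE}. By hypothesis, the composite
$$\overline{\Phi}_\cC\;=\;(\cC\boxtimes H)\circ\Phi_\cC$$
from \eqref{eq:Phibar} is exact, hence faithful by \Cref{Thm:FE}. Faithfulness of a composite is inherited by its first factor, so $\Phi_\cC$ is faithful, and a second application of \Cref{Thm:FE} upgrades this to exactness.

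The only (minor) obstacle I anticipate is justifying that \Cref{Cor:box} applies in the $\Frob^i$-twisted setting needed for part (1). This is essentially immediate, since the proof of \Cref{Cor:box} only uses \Cref{Thm:FE}, which is stated for general $\varphi$-LSM functors, and the universal property of $\boxtimes$; alternatively one can reinterpret the target via the twisted $\bk$-linear structure of \Cref{RemFrobVer} and reduce to the untwisted case.
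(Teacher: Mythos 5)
Your proof is correct and follows essentially the same route as the paper's. Part (1) is, in both cases, an application of \Cref{Cor:box}: you spell out the restriction argument (via the naturality square applied to $\iota_\cC$ and $\iota_\cD$) that the paper leaves implicit, and you correctly flag and resolve the only subtlety — that \Cref{Cor:box} is stated for LSM functors but is needed for $\Frob^i$-LSM functors, which is harmless since its proof rests on \Cref{Thm:FE} (stated for general $\varphi$-LSM functors) and on faithfulness on objects $X\boxtimes Y$, or alternatively one can twist the linear structure as in \Cref{RemFrobVer}. Part (2) is word-for-word the paper's argument: exactness of the composite $\overline{\Phi}_\cC$ gives faithfulness by \Cref{Thm:FE}, faithfulness passes to the first factor $\Phi_\cC$, and \Cref{Thm:FE} upgrades this back to exactness.
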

\begin{proof}
    Part (1) is an example of \Cref{Cor:box}. For part (2), it suffices (using \Cref{Thm:FE}) to observe that faithfulness of the composite \Cref{eq:Phibar} implies in particular that $\Phi_{\cC}$ is faithful.
\end{proof}

\subsection{Potential properties}

Our main motivation to introduce and study $\bO$-functors is their potential for replacing the Frobenius functor in the proof of the $\Ver_p$-Theorem towards proving the $\Ver_{p^\infty}$-Conjecture.
In this section, we therefore explore potential properties of $\bO$-functors that would be required to realise this. We fix $n\in\mZ_{>0}$ and an $\bO^i_n$-functor $\Phi$.

\subsubsection{}We have the following potential properties of an $\bO^i_n$-functor $\Phi$:
\begin{enumerate}
    \item[(O0)] $\Ver_{p^n}$ is $\Phi$-exact.
    \item[(O1)] We have $\Phi_{\Ver_{p^n}}(L_1)\simeq\unit\boxtimes L_1$ or (if $p>2$) $\Phi_{\Ver_{p^n}}(L_1)\simeq\bar{\unit}\boxtimes (\bar{\unit}\otimes L_1)$.
    \item[(O2)] A tensor category of moderate growth is $\overline{\Phi}$-exact if and only if it is $\Fr$-exact.
    \item[(O3)] Let $\cC_1\to\cC_2$ be a surjective tensor functor, and assume that $\cC_1$ is $\Phi$-exact and of moderate growth. Then also $\cC_2$ is $\Phi$-exact.
\end{enumerate}

As will be explained in the following subsection, the above technical properties are motivated by the below conceptual potential property, that $\Phi$ determines which tensor categories `fibre' over $\Ver_{p^n}$:
\begin{enumerate}
    \item[(OF)] The following properties are equivalent on a tensor category $\cC$:
    \begin{enumerate}
        \item[(1)] $\cC$ is of moderate growth and $\Phi$-exact;
        \item[(2)] $\cC$ admits a tensor functor to $\Ver_{p^n}$.
    \end{enumerate}
    \end{enumerate}

\begin{remark}\label{RemDefO}
\begin{enumerate}
\item The $\Ver_p$-Theorem states precisely that $\Fr$ satisfies (OF) for $n=1$.
\item By \Cref{CorVVV}, (O1) implies (O0). Moreover, (O1) does not make sense for $p^n=2$, so by convention for that case we redefine (O1) as (O0).

    \item LSM functors are exact on semisimple tensor categories. 
    It thus follows from the $\Ver_p$-Theorem that a Frobenius exact tensor category of moderate growth is $\Psi$-exact, for every $\bO^i\{\cV\}$-functor $\Psi$. In particular, the `if and only if' condition in (O2) is always satisfied in one direction.
    \item Condition (O3) is automatically satisfied when we only consider tensor categories with property (SP). Indeed, this follows from \Cref{ExFin}.
    That Condition (O3) is satisfied in general for the Frobenius functor was proved in \cite{CEO2}.
\end{enumerate}
    
\end{remark}

\subsection{Interpretation of the potential properties} \label{SecOProp}

In this section, we fix $n\in\mZ_{>0}$ and an $\bO^i_n$-functor $\Phi$.

\subsubsection{Summary}
In the quest for $\bO$-functors satisfying (OF), imposing Condition (O1) is natural. Indeed, by \Cref{CorVVV} it is one of only two options. It is the option that is satisfied by the Frobenius functor, and one of the indispensable properties exploited in \cite{CEO} for proving the $\Ver_p$-Theorem. Once we accept (O1) and (OF), condition (O2) follows, by \Cref{Thm:ReverseO} below.

On the other hand, assumptions (O1) and (O2) are also sufficient to imply (OF).
This is proved in \Cref{ThmO} below, which contains the $\Ver_p$-Theorem as a special case, but is proved by reducing to that case.

The results in this section thus motivate the quest for $\bO^i_n$-functors satisfying (O1) and (O2). Moreover, \Cref{PropO1} below motivates the quest for $\bO^i_n$-functors satisfying simply (O1); and \Cref{EsotericProp} in the next section even motivates the quest for arbitrary $\bO^i_n$-functors.

\begin{theorem}\label{Thm:ReverseO}
    Assume that $\Phi$ satisfies {\rm (OF)}, then:
    \begin{enumerate}
    \item $\Phi$ satisfies {\rm (O0)}.
    \item If $\Phi$ satisfies {\rm (O1)}, then it also satisfies {\rm (O2)}.
        \item Condition {\rm (O3)} on $\Phi$ is equivalent to the condition that $\Ver_{p^n}$ be Bezrukavnikov, as defined in \cite{CEO2}.
        
    \end{enumerate}
\end{theorem}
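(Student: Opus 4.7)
I will handle the three parts in turn, reducing (2) to the $\Ver_p$-theorem and (3) to a direct unpacking of definitions under (OF).

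For (1), the category $\Ver_{p^n}$ is a finite tensor category, hence of moderate growth, and the identity is a tensor functor $\Ver_{p^n}\to\Ver_{p^n}$. Applying (OF) to $\cC=\Ver_{p^n}$ immediately gives that $\Phi_{\Ver_{p^n}}$ is exact, i.e.\ (O0).

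For (2), one direction is already recorded in \Cref{RemDefO}(3): any $\Fr$-exact tensor category of moderate growth is $\overline{\Phi}$-exact. For the converse, assume $\cC$ is of moderate growth and $\overline{\Phi}$-exact. Then \Cref{Lem:DefO}(2) makes $\cC$ also $\Phi$-exact, and (OF) produces a tensor functor $G\colon\cC\to\Ver_{p^n}$. The case $p^n=2$ is trivial, as then $\Ver_p=\Vecc$, $\overline{\Phi}=\Phi$, and (O2) collapses to (OF), so assume $p^n>2$. Part (1) together with (O1) and \Cref{CorVVV} identifies $\Phi_{\Ver_{p^n}}$ with a $\Frob^i$-LSM equivalence onto $\unit\boxtimes\Ver_{p^n}\subset\Ver_{p^n}\boxtimes\Ver_{p^n}$. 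Post-composing with $\Ver_{p^n}\boxtimes H$ then forces the essential image of $\overline{\Phi}_{\Ver_{p^n}}$ to lie inside the tensor subcategory $\unit\boxtimes\Ver_p\simeq\Ver_p\subset\Ver_{p^n}\boxtimes\Ver_p$. Naturality of $\overline{\Phi}$ along $G$ yields an isomorphism $(G\boxtimes\Ver_p)\circ\overline{\Phi}_{\cC}\simeq\overline{\Phi}_{\Ver_{p^n}}\circ G$, so the left-hand composite also has essential image in $\unit\boxtimes\Ver_p$. This composite is exact: $\overline{\Phi}_{\cC}$ is exact by assumption and $G\boxtimes\Ver_p$ is a tensor functor by \Cref{boxTens1}. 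Corestriction therefore produces an exact $\Frob^i$-LSM functor $F\colon\cC\to\Ver_p$; composing with a suitable power of the $\Frob$-tensor auto-equivalence from \Cref{RemFrobVer} (whose argument also applies to $\Ver_p$) converts $F$ into an ordinary tensor functor $\cC\to\Ver_p$, and \Cref{pTheorem} delivers the desired $\Fr$-exactness of $\cC$.

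For (3), under (OF) the property ``$\cC$ is of moderate growth and $\Phi$-exact'' coincides with ``$\cC$ admits a tensor functor to $\Ver_{p^n}$''. Combined with the fact, recalled in \Cref{SecPrel}, that a surjective tensor functor out of a tensor category of moderate growth has a target of moderate growth, (O3) translates verbatim into the Bezrukavnikov inheritance condition for $\Ver_{p^n}$ from \cite{CEO2}, and the equivalence is immediate in both directions.

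The main obstacle is (2): one must verify that $\unit\boxtimes\Ver_p$ really is a tensor subcategory of $\Ver_{p^n}\boxtimes\Ver_p$ closed under subquotients, so that the corestriction of an exact functor remains exact. This follows from the coalgebra description of the Deligne product in \Cref{LemExDTP}, under which subobjects of $\unit\boxtimes Y$ correspond to subobjects of $Y$ in $\Ver_p$. A secondary piece of bookkeeping is eliminating the $\Frob^i$-twist before invoking \Cref{pTheorem}, which is handled via \Cref{RemFrobVer}.
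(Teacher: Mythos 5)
The approach is essentially the same as the paper's for all three parts, but your treatment of part (2) has a case gap for $p>2$. Condition (O1) allows two possibilities for $\Phi_{\Ver_{p^n}}(L_1)$: either $\unit\boxtimes L_1$ or (when $p>2$) $\bar{\unit}\boxtimes(\bar{\unit}\otimes L_1)$, and correspondingly \Cref{CorVVV}(1) produces an equivalence onto either $\Vecc\boxtimes\Ver_{p^n}$ \emph{or} $\sVec\boxtimes\Ver_{p^n}^+$. You handle only the first alternative, claiming the essential image lies in $\unit\boxtimes\Ver_{p^n}$ and consequently that $\overline{\Phi}_{\Ver_{p^n}}$ lands in $\unit\boxtimes\Ver_p\simeq\Ver_p$. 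In the super case that is false: the image lands in $\sVec\boxtimes\Ver_p$ (or $\sVec\boxtimes\Ver_p^+$), which is \emph{not} a copy of $\Ver_p$ sitting inside the Deligne product, and your corestriction-to-$\Ver_p$ step breaks down. The paper avoids this by observing only that in both alternatives the image lands in the \emph{semisimple} tensor category $\sVec\boxtimes\Ver_p$, and then invoking directly that a tensor category of moderate growth admitting a $\Frob^i$-tensor functor to a semisimple tensor category is Frobenius exact. Your argument can be repaired the same way: replace $\unit\boxtimes\Ver_p$ by $\sVec\boxtimes\Ver_p$, note it is semisimple, and conclude (either by the paper's abstract argument, or by postcomposing with $\otimes\colon\sVec\boxtimes\Ver_p\to\Ver_p$ and then twisting away $\Frob^i$ as you do). Parts (1) and (3) are fine.
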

\begin{proof}
    Parts (1) and (3) follow by definition. Part (2) for $n=1$ is a tautology by the $\Ver_p$-theorem, so we focus on $n>1$. Consider a tensor category~$\cC$ that is $\overline{\Phi}$-exact. 
    \Cref{Lem:DefO}(2) implies that $\cC$ is also $\Phi$-exact and so by assumption admits a tensor functor $F$ to $\Ver_{p^n}$.
    We have the commutative diagram
    $$\xymatrix{
\cC\ar[rr]^{\overline{\Phi}_{\cC}}\ar[d]^{F} &&\cC\boxtimes \Ver_p\ar[d]^{F\boxtimes \Ver_{p^n}}\\
\Ver_{p^n}\ar[rr]^{\overline{\Phi}_{\Ver_{p^n}}}&& \Ver_{p^n}\boxtimes\Ver_p,
    }$$
    and by assumption (O1) on $\Phi$ (and \Cref{CorVVV}(1)), $\overline{\Phi}_{\Ver_{p^n}}$ takes values in the semisimple tensor category $\sVec\boxtimes\Ver_p$. It follows that the upper path represents a $\Frob^i$-tensor functor from~$\cC$ to a semisimple tensor category, so~$\cC$ is indeed Frobenius exact.
\end{proof}

\begin{theorem}\label{ThmO}
    If $\Phi$ satisfies {\rm (O1)} and {\rm (O2)}, then the following are equivalent for a tensor category $\cC$ over $\bk$:
    \begin{enumerate}
        \item $\cC$ is $\Phi$-exact and of moderate growth;
        \item $\cC$ admits a tensor functor to $\Ver_{p^n}$.
    \end{enumerate}
    In other words, properties {\rm (O1)} and {\rm (O2)} together imply {\rm (OF)}.
\end{theorem}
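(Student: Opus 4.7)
For the direction $(2)\Rightarrow(1)$, suppose $G\colon\cC\to\Ver_{p^n}$ is a tensor functor; then $G$ is faithful and exact, so the bound $\ell_\cC(X^{\otimes m})\le\ell_{\Ver_{p^n}}(G(X)^{\otimes m})$ transfers moderate growth from $\Ver_{p^n}$ (which satisfies (S)) to $\cC$. Since (O1) implies (O0) by \Cref{RemDefO}(2), the target $\Ver_{p^n}$ is itself $\Phi$-exact, and \Cref{Ofunfun}(1) then transfers $\Phi$-exactness back along $G$ to $\cC$.

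For $(1)\Rightarrow(2)$ suppose $\cC$ is $\Phi$-exact and of moderate growth. The strategy is to use (O2) and the $\Ver_p$-Theorem (\Cref{pTheorem}) as a base case, and then lift the resulting tensor functor via the rigid constraint on $\Phi_{\Ver_{p^n}}$ provided by (O1). Concretely, I would analyse the exact tensor functor $\Phi_\cC\colon\cC\to\cC\boxtimes\Ver_{p^n}$ together with its composition $\overline{\Phi}_\cC=(\cC\boxtimes H)\circ\Phi_\cC\colon\cC\to\cC\boxtimes\Ver_p$; use (O1) and \Cref{CorVVV} to pin down $\Phi_{\Ver_{p^n}}$ up to the equivalence $\Ver_{p^n}\simeq\Vecc\boxtimes\Ver_{p^n}$ (or its $\sVec$ variant); and use naturality of $\Phi$ (\Cref{DefO}) applied to the inclusion $\unit\boxtimes-\colon\Ver_{p^n}\hookrightarrow\cC\boxtimes\Ver_{p^n}$ and to $-\boxtimes\unit\colon\cC\hookrightarrow\cC\boxtimes\Ver_{p^n}$ to constrain the behaviour of $\Phi_{\cC\boxtimes\Ver_{p^n}}$ on each factor. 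This yields enough rigidity to relate $\overline{\Phi}_\cC$ to the Frobenius functor on a suitable quotient, enabling the $\Ver_p$-Theorem to be applied there, and then to reassemble the answer across the $\Ver_{p^n}$-factor.

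The main obstacle is that the naive reduction is insufficient: one cannot simply apply (O2) to $\cC\boxtimes\Ver_{p^n}$ to conclude Frobenius exactness and then project via the $\Ver_p$-Theorem to $\Ver_p\subset\Ver_{p^n}$, because $\Ver_{p^n}$ itself is not $\Fr$-exact for $n>1$, and hence neither $\cC\boxtimes\Ver_{p^n}$ nor $\cC$ need be $\overline{\Phi}$-exact in general (as witnessed already by $\cC=\Ver_{p^n}$). The resolution should be to use (O1) to ``separate variables'' in $\cC\boxtimes\Ver_{p^n}$: the $\cC$-factor is handled via (O2) and the $\Ver_p$-Theorem applied to the quotient $\cC/\ker(\overline{\Phi}_\cC)$, producing a tensor functor into $\Ver_p$, while the $\Ver_{p^n}$-factor is controlled by the rigid identification of $\Phi_{\Ver_{p^n}}$ from (O1). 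Assembling these ingredients --- rather than merely composing the $\Ver_p$-Theorem with the inclusion $\Ver_p\subset\Ver_{p^n}$ --- should produce the desired tensor functor $\cC\to\Ver_{p^n}$ that genuinely detects the $\Ver_{p^n}$-structure.
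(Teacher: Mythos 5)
Your $(2)\Rightarrow(1)$ direction is correct and matches the paper in spirit. You have also correctly identified the central obstacle in $(1)\Rightarrow(2)$: one cannot simply apply (O2) to $\cC$ or to $\cC\boxtimes\Ver_{p^n}$, since $\cC$ need not be $\overline{\Phi}$-exact. However, your proposed resolution has a genuine gap. You suggest applying the $\Ver_p$-Theorem to ``the quotient $\cC/\ker(\overline{\Phi}_\cC)$''. The quotient of a tensor category by a non-zero tensor ideal is merely a pseudo-abelian LSM category, not a tensor category, so the $\Ver_p$-Theorem does not apply to it, and it is not clear what exact tensor functor you would extract. Nor is it enough to say ``separate variables'' and ``reassemble''---the whole difficulty is to produce an \emph{exact} tensor functor to $\Ver_{p^n}$ from the data at hand, and no concrete mechanism is given.

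The paper's resolution is to work with a tensor \emph{sub}category of $\cC$, not a quotient. Concretely, one first shows (using (O1), \Cref{CorVVV}(1) and \Cref{Lem:DefO}(1)) that $\Phi_{\cC\boxtimes\Ver_{p^n}}$ takes values in $\cC\boxtimes\sVec\boxtimes\Ver_{p^n}$, hence so do both composites in the commuting square obtained by applying naturality to $F=\Phi_\cC$. Letting $\cC_1\subset\cC$ be the Serre tensor subcategory generated by the simples $V\in\cC$ for which some $V\boxtimes V'$ appears as a constituent of $\Phi_\cC(X)$, one deduces that $\Phi_\cC$ lands in $\cC_1\boxtimes\Ver_{p^n}$ and that $\Phi_{\cC_1}$ lands in $\cC_1\boxtimes\sVec$. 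Since $\sVec\hookrightarrow\Ver_{p^n}\to\overline{\Ver_{p^n}}\to\Ver_p$ composes to the exact inclusion $\sVec\subset\Ver_p$, this makes $\cC_1$ $\overline{\Phi}$-exact, so (O2) and the $\Ver_p$-Theorem give a tensor functor $F\colon\cC_1\to\Ver_p$. The desired $\Frob^i$-tensor functor is then
$\cC\xrightarrow{\Phi_\cC}\cC_1\boxtimes\Ver_{p^n}\xrightarrow{F\boxtimes\Ver_{p^n}}\Ver_p\boxtimes\Ver_{p^n}\xrightarrow{\otimes}\Ver_{p^n}$,
and postcomposing with a $\Frob^{-i}$-tensor auto-equivalence of $\Ver_{p^n}$ (\Cref{RemFrobVer}) finishes. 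This Serre subcategory construction, and the argument that the $\Phi$-type of $\cC_1$ is $\Vecc$ or $\sVec$, is the piece your outline is missing.
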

\begin{proof}
    That (2) implies (1) follows from assumption (O1). For convenience, we write out the proof that (1) implies (2) for $p>2$ and for the option
    $$\Phi(L_1)\;=\; \bar{\unit}\boxtimes (\bar{\unit}\otimes L_1)$$ in (O1), the remaining case being identical up to notation.

    Applying the commutative diagram in \Cref{DefO} to $F=\Phi_{\cC}$ shows that the two composites in
    \begin{equation}\label{Eqboxbox}
        \xymatrix{
    \cC\ar[rr]^-{\Phi_{\cC}}&& \cC\boxtimes\Ver_{p^n} \ar@<-.5ex>[rr]_-{\Phi_{\cC}\boxtimes\Ver_{p^n}} \ar@<.5ex>[rr]^-{\Phi_{\cC\boxtimes \Ver_{p^n}}} &&\cC\boxtimes\Ver_{p^n}\boxtimes\Ver_{p^n}.
    }
    \end{equation}
    are isomorphic. By assumption (O1), the upper right functor takes values in
    $$\cC\boxtimes\sVec\boxtimes \Ver_{p^n}.$$
    Indeed, this is true for objects $X\boxtimes Y$ in $\cC\boxtimes \Ver_{p^n}$ by \Cref{CorVVV}(1), and hence true for all objects by exactness of $\Phi_{\cC\boxtimes\Ver_{p^n}}$ in \Cref{Lem:DefO}(1).
Therefore both composites in \Cref{Eqboxbox} also take values in this tensor subcategory.

    The simple objects in $\cC\boxtimes\Ver_{p^n}$ are of the form $V_1\boxtimes V_2$, for simple objects $V_1\in\cC$, $V_2\in \Ver_{p^n}$.
    Let $V\in\cC$ be any simple object for which there exists a simple $V'\in\Ver_{p^n}$, for which $V\boxtimes V'$ appears as a simple constituent (in the Jordan-H\"older series) of $\Phi_{\cC}(X)$ for some~$X\in\cC$. Since $\Phi_{\cC}\boxtimes\Ver_{p^n}$ is exact, it thus follows from the previous paragraph that 
    $$\Phi_{\cC}(V)\;\in\; \cC\boxtimes \sVec\;\subset\; \cC\boxtimes\Ver_{p^n}.$$

    Let $\cC_1$ denote the Serre tensor subcategory of $\cC$ generated by all simple $V\in\cC$ as above. It then follows that $\Phi_{\cC}$ takes values in $\cC_1\boxtimes \Ver_{p^n}$ and that $\Phi_{\cC_1}$ takes values in $\cC_1\boxtimes\sVec$ (that is, the $\Phi$-type of $\cC_1$ is $\Vecc$ or $\sVec$).

     Next we can observe that $\cC_1$ is $\overline{\Phi}$-exact. Indeed
    $$\sVec\hookrightarrow \Ver_{p^n}\to\overline{\Ver_{p^n}}\to\Ver_p$$
    compose to the (exact) inclusion $\sVec\subset\Ver_p$. Consequently the composite $\overline{\Phi}_{\cC_1}$
    $$\xymatrix{\cC_1\ar[rr]^{\Phi_{\cC_1}}\ar@{-->}[rrd]&& \cC_1\boxtimes\Ver_{p^n}\ar[rr]^{\cC_1\boxtimes H}&&\cC_1\boxtimes \Ver_p\\
    && \cC_1\boxtimes\sVec\ar@{^{(}->}[u]
    }$$
    is indeed exact. By Assumption (O2) and the $\Ver_p$-Theorem, we therefore have a tensor functor $F:\cC_1\to\Ver_p$, yielding a $\Frob^i$-tensor functor
    $$\cC\xrightarrow{\Phi_{\cC}}\cC_1\boxtimes\Ver_{p^n}\xrightarrow{F\boxtimes \Ver_{p^n}}\Ver_p\boxtimes\Ver_{p^n}\xrightarrow{\otimes}\Ver_{p^n}.$$
    This concludes the proof, since by \Cref{RemFrobVer}, we can postcompose with a $\Frob^{-i}$-tensor functor $\Ver_{p^n}\to\Ver_{p^n}$.
\end{proof}

The following proposition is an example of the potential of $\bO$-functors in the structure theory of tensor categories. For the moment, we can only prove such a result for finite tensor categories.
\begin{proposition}
    \label{PropO1}
    Assume that the $\Ver_{p^\infty}$-Conjecture (see \Cref{BEOConjecture}) is true, and that $p^n>2$. If $\Phi$ satisfies {\rm (O1)}, then the following are equivalent on a {\rm finite} tensor category $\cC$:
    \begin{enumerate}
        \item $\cC$ is $\Phi$-exact;
        \item $\cC$ admits a tensor functor to $\Ver_{p^n}$.
    \end{enumerate}
\end{proposition}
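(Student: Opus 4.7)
The $(2) \Rightarrow (1)$ direction is immediate: \Cref{RemDefO}(2) tells us that (O1) implies (O0), so $\Ver_{p^n}$ is $\Phi$-exact, and \Cref{Ofunfun}(1) then forces $\Phi$-exactness of any $\cC$ carrying a tensor functor to $\Ver_{p^n}$. For the substantive direction $(1) \Rightarrow (2)$, my plan is to first apply the $\Ver_{p^\infty}$-Conjecture to $\cC$, which is of moderate growth because it is finite. This yields a tensor functor $G\colon \cC \to \Ver_{p^\infty}$; finiteness of the simple set of $\cC$ lets me push $G$ through some $\Ver_{p^m}$ with $m$ minimal, and incompressibility of $\Ver_{p^m}$ makes $G$ an embedding. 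Invoking the classification of tensor subcategories of $\Ver_{p^m}$ from \cite{BEO}, minimality of $m$ identifies $\cC$ with either $\Ver_{p^m}$ or $\Ver_{p^m}^+$, so the remaining task is to rule out $m > n$.

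Assuming $m > n$, I would rerun the part of the proof of \Cref{ThmO} that uses only (O1): applying commutativity of the $\bO$-functor square at $F = \Phi_\cC$ extracts a tensor subcategory $\cC_1 \subset \cC$ with $\Phi_\cC$ factoring through $\cC_1 \boxtimes \Ver_{p^n}$ and $\Phi_{\cC_1}$ factoring through $\cC_1 \boxtimes \sVec$. Applying the same classification to $\cC_1 \subset \cC$, write $\cC_1 \simeq \Ver_{p^{k_1}}$ or $\Ver_{p^{k_1}}^+$ for some $k_1 \le m$, and split on $k_1$. In the case $k_1 \ge n$, restricting $\Phi_{\cC_1}$ along the inclusion $\Ver_{p^n} \hookrightarrow \cC_1$ and using commutativity of the $\bO$-functor diagram forces $\Phi_{\Ver_{p^n}}$ to factor through $\Ver_{p^n} \boxtimes \sVec$; this contradicts (O1) because the second factor of $\Phi_{\Ver_{p^n}}(L_1^{[n]})$ has Frobenius--Perron dimension $2\cos(\pi/p^n) > 1$ (using $p^n > 2$) and hence cannot live in $\sVec$. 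In the case $k_1 < n$, we have $\cC_1 \hookrightarrow \Ver_{p^{n-1}} \subset \Ver_{p^n}$, and the composite
\[
    \cC \;\xrightarrow{\Phi_\cC}\; \cC_1 \boxtimes \Ver_{p^n} \;\hookrightarrow\; \Ver_{p^n} \boxtimes \Ver_{p^n} \;\xrightarrow{\otimes}\; \Ver_{p^n},
\]
post-composed with a $\Frob^{-i}$-auto-equivalence from \Cref{RemFrobVer}, would supply a tensor functor $\cC \to \Ver_{p^n}$; but $\cC \simeq \Ver_{p^m}$ or $\Ver_{p^m}^+$ with $m > n$, so incompressibility of $\Ver_{p^m}$ rules this out.

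The main obstacle I anticipate is the clean invocation of the classification of tensor subcategories of $\Ver_{p^m}$, in particular handling the $\Ver_{p^k}^+$ variants uniformly for both $\cC$ and $\cC_1$, and checking that the $\bO$-functor square restricts meaningfully along these subcategory inclusions. Once that classification is in place, the rest is formal $\bO$-functor bookkeeping combined with the explicit form of $\Phi_{\Ver_{p^n}}(L_1^{[n]})$ supplied by (O1).
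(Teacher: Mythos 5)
The direction $(2)\Rightarrow(1)$ is fine. For $(1)\Rightarrow(2)$, however, there is a genuine error at the very first step: you claim that ``incompressibility of $\Ver_{p^m}$ makes $G$ an embedding'', and deduce that $\cC$ is identified with $\Ver_{p^m}$ or $\Ver_{p^m}^+$. Incompressibility of $\cV$ controls tensor functors \emph{out of} $\cV$ (they are embeddings onto tensor subcategories of the target); it says nothing about tensor functors \emph{into} $\cV$. A finite tensor category $\cC$ admitting a tensor functor to $\Ver_{p^m}$ is certainly not forced to be a Verlinde category: $\cC=\Rep C_p$ admits a tensor functor to $\Vecc\subset\Ver_p$, yet $\cC\not\simeq\Vecc$. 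Since the whole subsequent case analysis (extracting $\cC_1$, parametrising it by $k_1$, invoking incompressibility of $\Ver_{p^m}$ at the end) is predicated on $\cC$ being a Verlinde category, the argument collapses.

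The correct move, which the paper makes via \Cref{LemOCond}, is to pass to the \emph{image} $\cC'$ of $G$ inside $\Ver_{p^\infty}$: $\cC'$ is a tensor subcategory of some $\Ver_{p^m}$, hence by \cite[Corollary~4.61]{BEO} one of $\Vecc,\sVec,\Ver_p,\Ver_{p^2}^+,\dots$. The nontrivial ingredient you are missing is \Cref{CorExFin}: since $G\colon\cC\to\cC'$ is a surjective tensor functor between finite tensor categories and $(G\boxtimes\Ver_{p^n})\circ\Phi_{\cC}=\Phi_{\cC'}\circ G$ is exact, $\Phi_{\cC'}$ is exact, i.e.~$\cC'$ is $\Phi$-exact. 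Combined with \Cref{Ofunfun}(1), if $\cC'$ were not contained in $\Ver_{p^n}$ then $\Ver_{p^{n+1}}^+$ would be $\Phi$-exact, and the paper handles precisely this in \Cref{Lem+} (whose proof, note, has a separate $p=2$ branch; your Frobenius--Perron argument is essentially the $p>2$ case there). Your pipeline never shows that anything related to $\cC$ but other than $\cC$ itself is $\Phi$-exact, so even after replacing ``$\cC$'' by ``image of $G$'' you would still need the \Cref{CorExFin} step to make the contradiction land.
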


As part of the proof of \Cref{PropO1}, we derive the following unconditional lemma.

\begin{lemma}\label{Lem+}
    Assume that $p^n>2$. Let $\Phi$ be an $\bO^i_n$-functor satisfying {\rm (O1)}. Then $\Ver_{p^{n+1}}^+$ is not $\Phi$-exact.
\end{lemma}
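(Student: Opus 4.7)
The plan is to argue by contradiction: suppose $\cC := \Ver_{p^{n+1}}^+$ is $\Phi$-exact, so $\Phi_\cC$ is a $\Frob^i$-tensor functor. The strategy combines the extraction of a Serre tensor subcategory $\cC_1\subset\cC$ from the first half of the proof of \Cref{ThmO} with the explicit form of $\Phi_{\Ver_{p^n}}$ forced by (O1), mediated by the tensor embedding $\iota\colon\Ver_{p^n}\hookrightarrow\cC$ inherited from the linear chain of incompressible tensor subcategories of $\Ver_{p^{n+1}}^+$ in \cite{BEO}.

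By (O1) and \Cref{CorVVV}, $\Phi_{\Ver_{p^n}}$ is (up to the $\Frob^i$-twist) the tensor equivalence $X\mapsto\unit\boxtimes X$; the second case of (O1) is handled identically after twisting by $\bar{\unit}$. Naturality of $\Phi$ along $\iota$ then yields $\Phi_\cC(\iota(X))\simeq\unit\boxtimes X$ for all $X\in\Ver_{p^n}$. Separately, the analysis at the start of the proof of \Cref{ThmO} (the naturality square with $F=\Phi_\cC$ combined with (O1) applied through the tensor functors $-\boxtimes\unit$ and $\unit\boxtimes-$) produces a Serre tensor subcategory $\cC_1\subset\cC$ such that $\Phi_\cC$ factors through $\cC_1\boxtimes\Ver_{p^n}$ and $\Phi_{\cC_1}$ factors through $\cC_1\boxtimes\sVec$. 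Since the tensor subcategory lattice of $\Ver_{p^{n+1}}^+$ forms a linear chain (by \cite{BEO}), $\cC_1$ and $\iota(\Ver_{p^n})$ are comparable.

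If $\iota(\Ver_{p^n})\subseteq\cC_1$, the equation $\Phi_{\cC_1}(\iota(L_1^{[n]}))=\unit\boxtimes L_1^{[n]}\in\cC_1\boxtimes\sVec$ forces $L_1^{[n]}\in\sVec$; this is impossible for $p^n>3$, since $\FPdim(L_1^{[n]})=2\cos(\pi/p^n)>1$. In the complementary case $\cC_1\subsetneq\iota(\Ver_{p^n})$, so $\cC_1\subseteq\Ver_{p^n}^+$, every simple of $\cC_1\boxtimes\Ver_{p^n}$ has Frobenius--Perron dimension in the cyclotomic subfield $\mathbb{Q}(2\cos(\pi/p^n))\subset\mathbb{R}$; since $\Phi_\cC$ preserves FP dimension, this would confine the FP dimensions of all simples of $\cC=\Ver_{p^{n+1}}^+$ to that subfield, contradicting the existence of simples with FP dimension in the strict extension $\mathbb{Q}(2\cos(\pi/p^{n+1}))\setminus\mathbb{Q}(2\cos(\pi/p^n))$ provided by the explicit formulas in \cite[Cor.~4.44]{BEO}.

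The hard part will be the edge case $p^n=3$ in the first scenario, where $L_1^{[n]}=\bar{\unit}\in\sVec$ already, so the above contradiction degenerates in the subcase $\cC_1=\cC=\Ver_9^+$. Excluding this ``maximally twisted'' subcase requires a separate argument to rule out a would-be $\Frob^i$-tensor embedding $\Ver_9^+\hookrightarrow\Ver_9^+\boxtimes\sVec$ compatible with the formula forced by (O1), presumably by combining the incompressibility of $\Ver_9^+$ with an explicit tensor structure computation from \cite{BEO}. A secondary point to verify is the precise linear chain structure of the tensor subcategory lattice of $\Ver_{p^{n+1}}^+$, which should follow from the classification of incompressible tensor categories in \cite{BEO}.
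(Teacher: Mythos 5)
Your proposal takes a genuinely different route from the paper's. The paper's own proof is much more direct. For $p>2$ it evaluates $\Phi$ on the generator $L=L_1^{[n+1]}\otimes\bar{\unit}$ of $\Ver_{p^{n+1}}^+$: the uniqueness of an object with the relevant Frobenius--Perron dimension in $\Ver_{p^{n+1}}^+\boxtimes\Ver_{p^n}$ forces $\Phi(L)\simeq L\boxtimes\unit$, so $\Phi_{\Ver_{p^{n+1}}^+}$ lands in $\Ver_{p^{n+1}}^+\boxtimes\Vecc$, contradicting (O1) via the inclusion $\Ver_{p^n}\subset\Ver_{p^{n+1}}^+$. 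For $p=2$ it uses instead that $\Ver_{2^{n+1}}^+$ is indecomposable as an additive category while $\Ver_{2^{n+1}}^+\boxtimes\Ver_{2^n}$ decomposes, combined with (O1). You instead import the Serre tensor subcategory $\cC_1$ from the proof of \Cref{ThmO} and split into two cases according to how $\cC_1$ compares with $\iota(\Ver_{p^n})$ in the chain of tensor subcategories. Both frameworks use the pseudonaturality of $\Phi$ along the inclusion of $\Ver_{p^n}$, but the paper's argument is considerably shorter and is uniform enough to cover the edge case $p^n=3$ directly.

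Beyond the acknowledged $p^n=3$ gap, there is a second genuine worry with your Case (b) that you should not wave away. You argue that because $\cC_1\subseteq\Ver_{p^n}^+$, every simple of $\cC_1\boxtimes\Ver_{p^n}$ has Frobenius--Perron dimension in $\mathbb{Q}(2\cos(\pi/p^n))$, and you then appeal to ``the existence of simples with FP dimension in $\mathbb{Q}(2\cos(\pi/p^{n+1}))\setminus\mathbb{Q}(2\cos(\pi/p^n))$'' in $\Ver_{p^{n+1}}^+$, citing \cite[Cor.~4.44]{BEO}. This last claim is not obviously correct and you have not verified it. The simple objects of $\Ver_{p^{n+1}}^+$ are a subset of those of $\Ver_{p^{n+1}}$, and it is not automatic that the ones lying in the plus-part have Frobenius--Perron dimension generating the larger cyclotomic field. (The fact that the paper's own proof for $p=2$ deliberately avoids any field-of-dimensions argument and uses additive indecomposability instead is itself a warning sign.) You would need an explicit computation, from the dimension formulas in \cite{BEO}, that at least one simple of $\Ver_{p^{n+1}}^+$ has Frobenius--Perron dimension outside $\mathbb{Q}(2\cos(\pi/p^n))$, and this has to be checked for all $p,n$ with $p^n>2$; I would not accept it as obvious. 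If that computation fails for $p=2$, Case (b) collapses for exactly the case where the paper introduces its separate argument.

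One more small point: when you write ``$\Phi_{\cC_1}(\iota(L_1^{[n]}))=\unit\boxtimes L_1^{[n]}$'' you are implicitly chaining two pseudonaturality squares (for $\iota\colon\Ver_{p^n}\to\cC$ and for $\cC_1\hookrightarrow\cC$) and identifying $\Phi_{\cC_1}$ with the restriction of $\Phi_{\cC}$; this is fine but should be spelled out, since it is the mechanism that lets the constraint ``$\Phi_{\cC_1}$ lands in $\cC_1\boxtimes\sVec$'' bite on the object $\iota(L_1^{[n]})$. Also, ``handled identically after twisting by $\bar{\unit}$'' is slightly imprecise for the second branch of (O1): there one gets $\Phi_\cC(\iota(L_1))\simeq\iota(\bar{\unit})\boxtimes(\bar{\unit}\otimes L_1)$, and the contradiction is that $\bar{\unit}\otimes L_1\notin\sVec$ because its Frobenius--Perron dimension is $2\cos(\pi/p^n)>1$, the same numerics as the first branch, but it is not literally a twist.
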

\begin{proof}
    Assume first that $p>2$. Then $\Ver_{p^{n+1}}^+$ is generated by the simple object 
    $$L:=L_1^{[n+1]}\otimes \bar{\unit}= L_{1+p^{n}(p-2)}^{[n+1]}.$$ 
    Assume for a contradiction that $\Phi_{\Ver_{p^{n+1}}^+}$ is exact. Since there is only one object with corresponding Frobenius--Perron dimension in the target, see \cite[Corollary~4.44]{BEO}, we find that
    $$\Ver_{p^{n+1}}^+\xrightarrow{\Phi}\Ver_{p^{n+1}}^+\boxtimes \Ver_{p^n}, \quad L\mapsto L\boxtimes \unit.$$
    This implies that the functor is an equivalence onto the subcategory $\Ver_{p^{n+1}}^+\boxtimes \Vecc$. However, this contradicts assumption (O1) applied to $\Ver_{p^n}\subset \Ver_{p^{n+1}}^+$.


    Now consider $p=2$ and hence $n>1$. It follows from Assumption (O1) that, under
    $$\Ver_{2^{n+1}}^+\;\xrightarrow{\Phi_{\Ver_{2^{n+1}}^+}}\;\Ver_{2^{n+1}}^+\boxtimes \Ver_{2^n},$$
    the generator $L_1$ of $\Ver_{2^n}\subset\Ver_{2^{n+1}}^+$ is sent to $\unit\boxtimes L_1$.

    On the other hand, $\Ver_{2^{n+1}}^+$ is incompressible, so the displayed functor is an equivalence onto a tensor subcategory of the right-hand side. We also observe that $\Ver_{2^{n+1}}^+$ is indecomposable as an additive category, see \cite[Theorem~2.1(viii)]{BE}, whereas the right-hand side decomposes into the tensor subcategory $\Ver_{2^{n+1}}^+\boxtimes \Ver_{2^n}^+$ and a subcategory equivalent to $\Ver_{2^{n+1}}^+\boxtimes \Ver_{2^{n-1}}$, see \cite[Theorem~2.1(ix)]{BE}. Hence, we find that the displayed functor takes values in $\Ver_{2^{n+1}}^+\boxtimes \Ver_{2^n}^+$, which contradicts the conclusion from the previous paragraph.
\end{proof}

\begin{lemma}\label{LemOCond}
    Assume that the $\Ver_{p^\infty}$-Conjecture is true and that ${\Ver_{p^n}}$ is $\Phi$-exact ({\it i.e.}~$\Phi$ satisfies (O0)), but $\Ver_{p^{n+1}}^+$ is not $\Phi$-exact. Then a finite tensor category is $\Phi$-exact if and only if it admits a tensor functor to $\Ver_{p^n}$.
\end{lemma}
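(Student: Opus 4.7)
The plan is to prove the two directions separately, with the forward direction being a one-line deduction and the reverse direction carrying all the content.

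For the \emph{if} direction, given a tensor functor $\cC\to\Ver_{p^n}$ and the assumption (O0) that $\Ver_{p^n}$ is $\Phi$-exact, Lemma \ref{Ofunfun}(1) immediately yields $\Phi$-exactness of $\cC$.

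For the \emph{only if} direction, I would proceed as follows. Let $\cC$ be a finite $\Phi$-exact tensor category. Finite tensor categories trivially satisfy (SP) and are therefore of moderate growth, so the $\Ver_{p^\infty}$-Conjecture supplies a tensor functor $F\colon\cC\to\Ver_{p^\infty}$. Since $\cC$ has finitely many simple objects, the tensor subcategory $\cC'\subset\Ver_{p^\infty}$ generated by $F(\cC)$ also has finitely many simple objects and hence lies inside some $\Ver_{p^m}$; pick $m$ minimal with this property. By the classification of tensor subcategories of $\Ver_{p^\infty}$ (an essential ingredient of the incompressibility results of \cite{BEO}), minimality of $m$ forces $\cC'$ to be either $\Ver_{p^m}$ or $\Ver_{p^m}^+$.

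The core step is to rule out $m\ge n+1$. Assume $m\ge n+1$ for contradiction. The induced tensor functor $\cC\to\cC'$ is surjective between tensor categories that both satisfy (SP), so the $\Phi$-exactness of $\cC$ transports to $\cC'$ by Remark \ref{RemDefO}(4), whose justification rests on Lemma \ref{ExFin}. Now $\Ver_{p^{n+1}}^+\subset\Ver_{p^{n+1}}\subset\Ver_{p^m}$ is a tensor subcategory in the first alternative, while in the alternative $\cC'=\Ver_{p^m}^+$ the same conclusion holds via the chain $\Ver_{p^{n+1}}^+\subset\Ver_{p^{n+2}}^+\subset\cdots\subset\Ver_{p^m}^+$. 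Either way, Lemma \ref{Ofunfun}(1) applied to the inclusion $\Ver_{p^{n+1}}^+\hookrightarrow\cC'$ would make $\Ver_{p^{n+1}}^+$ itself $\Phi$-exact, contradicting the hypothesis. Hence $m\le n$, and $F$ factors as $\cC\to\Ver_{p^m}\hookrightarrow\Ver_{p^n}$.

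The main point that I expect to require care is the compatibility of the $\Ver_{p^j}^+$ subcategories with the inclusions $\Ver_{p^j}\hookrightarrow\Ver_{p^{j+1}}$, especially across the odd-$p$ versus $p=2$ conventions of \cite[\S 4.1]{BEO}, which is what makes the chain $\Ver_{p^{n+1}}^+\subset\Ver_{p^m}^+$ in the second alternative a legitimate tensor embedding rather than only a set-theoretic one. Once that chain is in hand, the whole argument is essentially a bookkeeping exercise centred on a single transfer of exactness and a single classification input.
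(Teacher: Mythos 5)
Your proof is correct and follows the same path the paper takes: the paper's one-line proof cites \Cref{CorExFin} together with the BEO classification of tensor subcategories of $\Ver_{p^n}$, which is exactly what you have fleshed out (your invocation of \Cref{RemDefO}(4) via \Cref{ExFin} is just \Cref{CorExFin} applied to $\Phi_{\cC'}\circ F\cong(F\boxtimes\Ver_{p^n})\circ\Phi_\cC$). The compatibility of the $\Ver_{p^j}^+$ with the inclusions you flag as a worry is indeed a standard feature of the BEO chain and is not a genuine gap.
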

\begin{proof}
    This follows from \Cref{CorExFin} and the classification of tensor subcategories of $\Ver_{p^n}$ in \cite[Corollary~4.61]{BEO}.
\end{proof}

    \begin{proof}[Proof of \Cref{PropO1}]
        The conclusion follows immediately from the combination of \Cref{Lem+} and \Cref{LemOCond}.
    \end{proof}

    \subsection{\texorpdfstring{$\bO$}{O}-functors and incompressible categories}
Here we show that to any $\bO$-functor $\Phi$ (without further assumptions), we can associate an incompressible category so that $\Phi$ controls which tensor categories fibre over it.

    \subsubsection{} By a {\bf class of incompressible categories}, we mean a family $\{\cC_\alpha\}$ of incompressible categories for which $\cC_\alpha$, for every $\alpha$, is equivalent to a tensor subcategory of $\cC_\beta$ for every~$\beta$. 
    
    The $\Ver_{p^\infty}$-Conjecture predicts in particular that classes of incompressible categories of moderate growth are simply {\em equivalence} classes.

    \begin{proposition}\label{EsotericProp}
        Let $\cV$ be a tensor category and $\Phi$ an $\bO^i\{\cV\}$-functor. There exists a unique class of incompressible categories $\{\cC_\alpha\}$, all satisfying {\rm (SP)}, for which the following properties are equivalent on every {\bf finite} tensor category $\cC$:
        \begin{enumerate}
            \item $\cC$ is $\Phi$-exact;
            \item $\cC$ admits a tensor functor to some $\cC_\alpha$;
            \item $\cC$ admits a tensor functor to every $\cC_\alpha$.
        \end{enumerate}
    \end{proposition}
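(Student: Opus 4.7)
The plan is to realize each $\cC_\alpha$ as a maximal $\Phi$-exact tensor category with property (SP) that contains every finite $\Phi$-exact tensor category as a tensor subcategory. The starting point is the 2-class $\cF_\Phi$ of (equivalence classes of) finite $\Phi$-exact tensor categories over $\bk$. By \Cref{Ofunfun}(1), $\cF_\Phi$ is closed under passage to tensor subcategories, and by \Cref{Lem:DefO}(1) it is closed under binary Deligne products. Equipped with tensor embeddings (for instance the canonical $\cA, \cB \hookrightarrow \cA \boxtimes \cB$), $\cF_\Phi$ thus becomes a 2-filtered system under tensor embeddings.

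I would then construct a $\Phi$-exact tensor category with (SP) that absorbs $\cF_\Phi$ as subcategories. Well-order $\cF_\Phi$ as $(\cA_\mu)_{\mu<\kappa}$, set $\cB_0 := \cA_0$, $\cB_{\mu+1} := \cB_\mu \boxtimes \cA_{\mu+1}$ (together with the canonical embedding $\cB_\mu \hookrightarrow \cB_{\mu+1}$), and take filtered 2-colimits of this system at limit ordinals. The resulting category $\cC^\ast := \cB_\kappa$ is an increasing union of finite tensor categories in $\cF_\Phi$, so it satisfies (SP); it is also $\Phi$-exact because \Cref{ExFin} reduces faithfulness of $\Phi_{\cC^\ast}$ to a check on a covering family of finite tensor subcategories with projective generators, and each such subcategory lies in $\cF_\Phi$ by construction.

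Next, I apply Zorn's lemma to chains of tensor categories satisfying (SP), $\Phi$-exactness, and containing $\cF_\Phi$ as subcategories, ordered by tensor embeddings. Chains admit upper bounds given by the same filtered 2-colimit construction (inheriting (SP) and $\Phi$-exactness as above), so maximal elements $\cC_\alpha$ exist. A maximal such $\cC_\alpha$ must be incompressible: any tensor functor $F\colon \cC_\alpha \to \cD$ is faithful by \Cref{Thm:FE}, so its image $F(\cC_\alpha) \subset \cD$ is a $\Phi$-exact tensor subcategory with (SP) by \Cref{Ofunfun}(1); if $F$ failed to be fully faithful, I would form a strict extension of $\cC_\alpha$ within a suitable ambient (for instance by identifying $\cC_\alpha$ with $F(\cC_\alpha)$ and then enlarging by tensor subcategories of $\cD$ that intersect $F(\cC_\alpha)$ properly), contradicting maximality. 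I define $\{\cC_\alpha\}$ to be the collection of all such maximal categories.

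For finite $\cC$ the equivalences are then verified as follows: (3) $\Rightarrow$ (2) is immediate; (2) $\Rightarrow$ (1) follows from \Cref{Ofunfun}(1) applied to the given tensor functor $\cC \to \cC_\alpha$; and (1) $\Rightarrow$ (3) holds because $\cC \in \cF_\Phi$ is, by construction, a tensor subcategory of every $\cC_\alpha$. For any two members $\cC_\alpha, \cC_\beta$ of the class, each finite subcategory of $\cC_\alpha$ lies in $\cF_\Phi$ and so embeds into $\cC_\beta$; a cofiltered coherence argument over these finite subcategories upgrades this to a tensor embedding $\cC_\alpha \hookrightarrow \cC_\beta$, establishing the class structure. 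Uniqueness follows because any incompressible tensor category with (SP) and the three equivalences must have $\cF_\Phi$ as its precise family of finite subcategories, hence is maximal in the above sense. The main obstacle I anticipate is the incompressibility step: producing a concrete strict extension of $\cC_\alpha$ that still lies in the class requires care when $F\colon \cC_\alpha \to \cD$ fails to be fully faithful, and will likely proceed by realizing $\cC_\alpha$ as a tensor subcategory of a suitable Deligne product $\cC_\alpha \boxtimes \cD'$ or by invoking structural results on $\Phi$-exact extensions from \cite{CEO2, BEO}.
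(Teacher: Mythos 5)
Your construction has a fatal structural flaw, independent of the acknowledged gap in the incompressibility step. You build $\cC^\ast$ as a transfinite Deligne product absorbing \emph{every} finite $\Phi$-exact tensor category, and then take a Zorn-maximal extension $\cC_\alpha \supset \cC^\ast$. But any nontrivial tannakian category $\cA$ (e.g.~$\Rep C_2$ in characteristic $2$) is Frobenius exact, hence $\Phi$-exact by \Cref{RemDefO}(3), so $\cA \boxtimes \cA \subset \cC^\ast \subset \cC_\alpha$. The multiplication $\cA \boxtimes \cA \to \cA$ is a surjective tensor functor that is not full, so $\cA \boxtimes \cA$ admits a non-embedding tensor functor, and therefore any $\cC_\alpha$ containing it admits one too (restrict any fibre functor $\cC_\alpha \to \cC_\alpha'$; the point is that $\cA \boxtimes \cA$ is itself not incompressible and this already obstructs $\cC_\alpha$ from being the desired target). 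More basically: the right notion is not ``contains $\cC$ as a subcategory'' but ``receives a tensor functor from $\cC$'' — a finite $\Phi$-exact $\cC$ need not embed into $\cC_\alpha$ at all, it must merely map to it. By trying to make $\cC_\alpha$ contain all of $\cF_\Phi$ as subcategories, you are forcing $\cC_\alpha$ to be much too large and destroying incompressibility.

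The paper's argument avoids this by a two-step reduction using results you do not invoke. First, by \cite[Theorem~5.2.1]{CEO2}, every finite tensor category $\cC$ admits a \emph{surjective} tensor functor to a (necessarily finite) incompressible tensor category, which is $\Phi$-exact whenever $\cC$ is, by \Cref{RemDefO}(3). This reduces the problem to the collection of finite \emph{incompressible} $\Phi$-exact categories — a far more rigid and well-behaved collection than $\cF_\Phi$. Second, \cite[Corollary~5.2.8]{CEO2} produces a single incompressible tensor category $\cD$ containing all finite incompressible $\Phi$-exact categories as tensor subcategories (and one shrinks $\cD$ to the tensor subcategory they generate, which then satisfies (SP) and is $\Phi$-exact via \Cref{ExFin}). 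The chain ``$\cC$ is $\Phi$-exact $\Rightarrow$ $\cC$ surjects onto a finite incompressible $\Phi$-exact category $\Rightarrow$ that embeds into $\cD$'' gives (1)$\Rightarrow$(2), without ever requiring $\cC$ itself to embed. The class $\{\cC_\alpha\}$ is then the canonical class of $\cD$. Your Zorn's-lemma maximality argument is not only hand-waved at the crucial incompressibility step (as you note), it is actually attempting to reprove the nontrivial structural content of \cite[Corollary~5.2.8]{CEO2} from scratch — and doing so in a way that cannot succeed because of the $\cA \boxtimes \cA$ obstruction above.
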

\begin{proof}
    Consider the collection of all finite incompressible tensor categories which are $\Phi$-exact. It is non-empty as it contains $\Vecc$. By \cite[Corollary~5.2.8]{CEO2}, there exists an incompressible tensor category $\cD$ which contains all these finite incompressible categories as tensor subcategories. If necessary, we replace $\cD$ by its minimal tensor subcategory which contains all these finite subcategories. 
    By construction $\cD$ satisfies (SP) and is $\Phi$-exact. 

Now let $\cC$ be an arbitrary $\Phi$-exact finite tensor category. By \cite[Theorem~5.2.1]{CEO2}, $\cC$ admits a surjective tensor functor to an incompressible tensor category. By surjectivity, this tensor category is finite. By \Cref{RemDefO}(3), this finite tensor category is also $\Phi$-exact. By the previous paragraph, $\cC$ thus admits a tensor functor to $\cD$. In conclusion a finite tensor category is $\Phi$-exact if and only if it admits a tensor functor to $\cD$.

Now we let $\{\cC_\alpha\}$ be the unique class of incompressible categories that $\cD$ belongs to. This is the collection of those incompressible tensor subcategories of $\cD$ which themselves also contain a tensor subcategory equivalent to $\cD$. The statement of the proposition now follows.
\end{proof}

\begin{remark}
    If we assume that $\Phi$ satisfies (O3), or strictly speaking its immediate generalisation from $\bO^i_n$-functors to $\bO^i\{\cV\}$-functors, we can write a version of \Cref{EsotericProp} without finiteness assumptions, with identical proof.
\end{remark}

\section{Construction of \texorpdfstring{$\bO$}{O}-functors}\label{SecConst}

Let $p$ be a prime and $\bk$ an algebraically closed field of characteristic $p$.

\subsection{\texorpdfstring{$\bV$}{V}-functors}\label{DefV}

Let $G<\Perm(W)$ be a permutation group, for a finite set $W$. 

\begin{definition} \label{def::V-functor}
    A $\bV$-functor for $G<\Perm(W)$ is an LSM functor 
$$\Theta:\Rep_{\bk}G\;\to\; \cV$$
to a tensor category $\cV$ over $\bk$ with the condition that for every non-transitive subgroup $H<G$,
$$\Theta(\Ind^G_{H}\unit)=0.$$
\end{definition} 

Given any tensor functor $F:\cV\to\cV'$ and $\bV$-functor $\Theta:\Rep G\to\cV$, we obtain a $\bV$-functor $F\circ\Theta$, which we call a {\bf descendant} of $\Theta$.

\begin{remark}\label{RemHProj}
    A $G$-module is called \emph{$H$-projective} if it appears as a direct summand of a $G$-module which is induced from $H$. It follows that an LSM functor to a tensor category is a $\bV$-functor if and only if it annihilates all $H$-projective modules for all non-transitive $H<G$.
\end{remark}

\begin{lemma}\label{SMGExists}
The following conditions are equivalent on $G<\Perm(W)$.
\begin{enumerate}
    \item There exist $\bV$-functors for $G$;
    \item The semisimplification $\Sigma:\Rep G\to\overline{\Rep G}$ is a $\bV$-functor;
    \item There is a $p$-subgroup $P<G$ of $G$ that acts transitively on $W$;
    \item $|W|$ is a power of $p$ and $G$ acts transitively on $W$.
\end{enumerate}

\end{lemma}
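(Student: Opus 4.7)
The plan is to treat (3)$\Leftrightarrow$(4) as a Sylow-theoretic exercise, observe that (2)$\Rightarrow$(1) is trivial, prove (1)$\Rightarrow$(4) by testing a $\bV$-functor on one well-chosen induced module, and carry out the substantive step in (3)$\Rightarrow$(2).

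For (3)$\Leftrightarrow$(4): if $P<G$ acts transitively on $W$, then $|W|$ divides $|P|$ by orbit--stabiliser, so $|W|$ is a $p$-power. Conversely, suppose $G$ is transitive with $|W|=[G:H]$ a $p$-power, where $H=\Stab_G(w_0)$. By Sylow's theorem I would pick a Sylow $p$-subgroup $P$ of $G$ containing a Sylow $p$-subgroup of $H$; then $[P:P\cap H]=|P|/|H|_p=[G:H]=|W|$, so $P$ is transitive on $G/H\cong W$.

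For (1)$\Rightarrow$(4), let $\Theta$ be a $\bV$-functor, so that $\Theta(\unit)\cong\unit\neq 0$. If $G$ itself is non-transitive on $W$, take $H=G$ to derive the contradiction $\Theta(\unit)=\Theta(\Ind_G^G\unit)=0$. Otherwise, assume $|W|$ is not a $p$-power and let $P<G$ be a Sylow $p$-subgroup; the $P$-orbits on $W$ all have $p$-power size, so $P$ cannot act transitively, and hence $\Theta(\Ind_P^G\unit)=0$. Since $[G:P]$ is invertible in $\bk$, the averaging map $\unit\to\Ind_P^G\unit$ composed with the augmentation equals $[G:P]\cdot\id_{\unit}$, exhibiting $\unit$ as a direct summand of $\Ind_P^G\unit$; this again forces $\Theta(\unit)=0$, a contradiction.

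The heart of the proof is (3)$\Rightarrow$(2). Fix a transitive $p$-subgroup $P<G$ and any non-transitive $H<G$; the plan is to show that every indecomposable summand $V$ of $\Ind_H^G\unit=\bk[G/H]$ satisfies $p\mid\dim_\bk V$, hence is negligible, forcing $\Sigma(\Ind_H^G\unit)=0$. The key step is to restrict to $P$: the $P$-orbit decomposition of the $G$-set $G/H$ yields
\[
\Res^G_P\bk[G/H]\;\cong\;\bigoplus_i\bk[P/Q_i],\qquad Q_i=P\cap g_iHg_i^{-1},
\]
for double coset representatives $g_i\in P\backslash G/H$. Each $\bk[P/Q_i]$ is indecomposable as a $P$-module, because Frobenius reciprocity gives $\Hom_P(\bk[P/Q_i],\unit)=\Hom_{Q_i}(\unit,\unit)=\bk$, so the top of $\bk[P/Q_i]$---as a module over the $p$-group $P$, whose only simple module is $\unit$---is simple. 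By Krull--Schmidt, $\Res^G_PV$ is then a direct sum of some of these $\bk[P/Q_i]$, of respective dimensions $[P:Q_i]$, all $p$-powers. If $p\nmid\dim V$, at least one such summand must have $Q_i=P$, meaning $P\subseteq g_iHg_i^{-1}$; but then the transitive $p$-subgroup $g_i^{-1}Pg_i$ is contained in $H$ (conjugation in $\Perm(W)$ preserves transitivity), forcing $H$ to be transitive---a contradiction. The main obstacle is precisely this dimensional argument controlling indecomposable summands of a permutation module after restriction to a Sylow; the remaining functorial and group-theoretic ingredients are routine.
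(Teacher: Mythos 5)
Your proof is correct, and for the most part it runs parallel to the paper's: the implications $(1)\Rightarrow(4)$ (the paper does $(1)\Rightarrow(3)$) and $(3)\Leftrightarrow(4)$ are the same Sylow-theoretic considerations, just arranged slightly differently, and $(2)\Rightarrow(1)$ is trivial in both.

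Where you genuinely depart from the paper is in $(3)\Rightarrow(2)$. The paper argues in two steps: first, a transitive $p$-subgroup forces $p\mid[G:H]$ for every non-transitive $H<G$ (a quick Sylow conjugacy argument), and then it \emph{cites} the known fact (from Etingof--Ostrik's survey on semisimplification) that $\Ind^G_H\unit$ is negligible if and only if $p\mid[G:H]$. You instead give a self-contained proof by restricting $\Ind^G_H\unit$ to the transitive $p$-subgroup $P$ and invoking Mackey's decomposition: $\Res^G_P\bk[G/H]\cong\bigoplus_i\bk[P/(P\cap g_iHg_i^{-1})]$, each summand indecomposable with $p$-power dimension. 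Combined with Krull--Schmidt, this shows that any indecomposable $G$-summand $V$ of $\bk[G/H]$ with $p\nmid\dim V$ would force $P$ to be conjugate into $H$, contradicting non-transitivity of $H$; since indecomposables of dimension divisible by $p$ are negligible (the trace form on a local endomorphism algebra vanishes precisely when $p\mid\dim V$), this finishes the argument. Your route is more elementary and makes the role of the transitive $p$-subgroup explicit, at the cost of a longer argument; the paper's route is shorter because it outsources the permutation-module criterion. One small point worth making explicit in a write-up: you use that an indecomposable object of dimension divisible by $p$ is negligible --- this is standard but deserves a sentence, since it is exactly the content of the citation the paper uses.
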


\begin{proof}
First we prove that (1) implies (3). Let $P<G$ be a Sylow $p$-subgroup. Then $\Ind^G_P\unit$ has categorical dimension different from zero, and cannot be sent to zero under an LSM functor. Under assumption (1) it thus follows that $P$ acts transitively on $W$, implying (3).

Obviously, (2) implies (1). 

    We prove that (3) implies (2). It follows from (3) and Sylow's theorems that $p$ divides $|G:H|$ for every non-transitive subgroup $H<G$. By for instance \cite[\S 2 and \S 4]{EOs}, the module $\Ind^G_{H}\unit$ is zero in the semisimplification (negligible) if and only if $p$ divides $|G:H|$, which thus implies (2).

Clearly (3) implies (4). That (4) implies (3) follows from the standard fact that if a finite group $G$ acts transitively on a set $W$ of cardinality $p^n$, then so does any $p$-Sylow subgroup. For completeness we include the proof:

    Pick $x\in W$. Then the stabiliser subgroup $G_x<G$ satisfies $$|G|=|G_x||W|=rp^{m+n}$$ for some $m\ge0$ and $r$ not divisible by $p$. Now for a $p$-Sylow subgroup $P<G$ we have
    $$p^{m+n}=|P|=|P_x||P\cdot x|,$$
    where $|P_x|$ divides $p^m$ (since $P_x<G_x$) and $|P\cdot x|\le p^n$. Hence $|P_x|=p^m$ and $|P\cdot x|= p^n$, meaning $P\cdot x=W$.
\end{proof}

\begin{remark}
    Due to \Cref{SMGExists}, it is logical to focus on minimal transitive $p$-groups. Even though in the sequel we will focus on one particular family, this is still a very rich class of permutation groups. Examples include $C_p<S_p$ or, more generally, $C_{p^{n_1}}\times\dots\times C_{p^{n_t}}<S_{p^{n_1+\dots+n_t}}$, see \Cref{SecPerm} for the case $n_i=1$. There are also semidirect product groups $C_8\rtimes C_2$ or $C_2^3\rtimes C_4$ which are minimal transitive subgroups of $S_{16}$.
\end{remark}

\subsubsection{}\label{SecPerm}
For the remainder of the section, we fix $n\in\mZ_{>0}$ and set
$$W\;:=\;\{1,2,\cdots,p^n\}.$$
For every $a\in W$ we consider the $p$-adic expansion
$$a-1\;=\;\sum_{i=1}^n a_i p^{i-1}$$
with $0\le a_i<p$.
For each $1\le i\le n$, we then have a subgroup $C_p<S_{p^n}$ generated by the permutation which sends $1\le a\le p^n$ to $a'$, so that the $p$-adic expansions of $a-1$ and $a'-1$ are identical, except that $a_i$ is replaced by $a_i+1$ modulo $p$. 

We will henceforth focus our attention on the resulting class of transitive permutation $p$-groups
$$C_p^n=\prod_{i=1}^nC_p \;<\; S_{p^n}.$$

\begin{example}\label{ExPermG}
\begin{enumerate}
    \item For $G=C_p^2<S_{p^2}$, all $p+1$ non-trivial proper subgroups $H<G$ ({\it i.e.}~those subgroups isomorphic to $C_p$) are non-transitive. The corresponding modules $\Ind^G_{H}\unit$ have homological support labelled by the $p+1$ points of $\mP^1(\mF_p)\subset\mP^1(\bk)$.
    \item In case $p=2$ and $n=2$, the permutation group $C_2^{2}<S_4$ consists of the identity and the permutations
    $$(12)(34),\; (13)(24)\;\mbox{and}\; (14)(23).$$
\end{enumerate}

\end{example}

\begin{remark}
    A more intrinsic definition of the permutation group in \Cref{SecPerm} is as follows. If we let $C_p^n$ act on itself via (left or right) multiplication, then the corresponding permutation group
    $$C_p^{n}\;<\;\Perm(C_p^n)$$
    gives our set-up (after a suitable enumeration of the elements in $C_p^n$).
\end{remark}

\subsubsection{}

\Cref{SecKlein} will be devoted to classifying all $\bV$-functor for the permutation group $C_2^2<S_4$ from \Cref{ExPermG}(2), while \Cref{SecTilt} -- \Cref{Sec54} work towards a specific case for the more general $C_p^n<S_{p^n}$. Here we point out the lack of `interesting' $\bV$-functors for another class of minimal transitive $p$-groups, namely $C_{p^n}<S_{p^n}$. In particular, for $S_4$, this leaves only $C_2^2<S_4$ as interesting minimal $2$-subgroup.

\begin{lemma}\label{LemCyc}
    The only $\bV$-functors for the permutation group $C_{p^n}<S_{p^n}$ are the descendants of the semisimplification $\Sigma:\Rep C_{p^n}\to \overline{\Rep C_{p^n}}$.
\end{lemma}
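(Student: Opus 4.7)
The plan is to reduce the classification of $\bV$-functors for $C_{p^n}<S_{p^n}$ to showing that the tensor ideal in $\Rep C_{p^n}$ generated by the single module $V_p:=\Ind^{C_{p^n}}_{C_{p^{n-1}}}\unit$ coincides with the ideal of negligible morphisms (i.e.\ the kernel of $\Sigma$). Once this is established, any $\bV$-functor $\Theta$ kills that ideal, since $\Theta(V_p)=0$ by definition (as $C_{p^{n-1}}$ is a non-transitive subgroup of $C_{p^n}$). The universal property of the semisimplification then factors $\Theta$ through $\Sigma$ via a unique LSM functor $\overline{\Rep C_{p^n}}\to\cV$, which is automatically exact because $\overline{\Rep C_{p^n}}$ is semisimple, exhibiting $\Theta$ as a descendant of $\Sigma$.

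For the tensor ideal claim, I would use the standard description $\bk C_{p^n}\simeq\bk[y]/y^{p^n}$ (with $y=g-1$), which is a local ring whose indecomposable modules are $V_m:=\bk[y]/y^m$ for $1\le m\le p^n$, with $\dim V_m = m$ and categorical dimension $m\bmod p$. The negligible indecomposables are thus the $V_{pj}$ for $1\le j\le p^{n-1}$. Setting $G=C_{p^n}$ and $H=C_{p^{n-1}}$, the projection formula combined with $V_p\simeq\Ind^G_H\unit$ gives $V_p\otimes V_k\simeq\Ind^G_H\Res^G_H V_k$, and a direct calculation in $\bk[y]/y^{p^n}$ (decomposing $V_k$ as a $\bk[y^p]$-module by residues of $y$-exponents modulo $p$) yields
$$\Res^G_H V_k\;=\;\bigoplus_{r=0}^{p-1}V^{(H)}_{\lceil(k-r)/p\rceil}\qquad\text{and}\qquad \Ind^G_H V^{(H)}_m\;=\;V_{pm},$$
where $V^{(H)}_m$ denotes the $m$-dimensional indecomposable over $H$. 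Specialising to $k=pj+1$ produces $V_p\otimes V_{pj+1}\simeq V_{p(j+1)}\oplus(p-1)V_{pj}$, so every $V_{pj}$ with $1\le j\le p^{n-1}-1$ appears as a summand and hence lies in $\langle V_p\rangle$. The remaining negligible $V_{p^n}=\Ind^G_{\{e\}}\unit$ is annihilated by $\Theta$ directly through the $\bV$-condition for the trivial (non-transitive) subgroup, so $\langle V_p\rangle$ exhausts all negligibles.

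The main technical step is the explicit Clebsch--Gordan-type decomposition of $V_p\otimes V_k$; once one recognises $V_p$ as $\Ind^G_H\unit$ and invokes the projection formula, the computation reduces to elementary manipulations in the truncated polynomial algebra $\bk[y]/y^{p^n}$. The underlying conceptual point, that the single module $V_p$ already generates the whole negligible ideal as a tensor ideal in $\Rep C_{p^n}$, is what makes cyclic $p$-groups far more rigid than elementary abelian ones and explains the absence of ``interesting'' $\bV$-functors in this case.
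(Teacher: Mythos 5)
Your reduction contains a genuine gap. The key claim you rely on, that the tensor ideal of morphisms in $\Rep C_{p^n}$ generated by $\id_{V_p}$ coincides with the negligible ideal $\cN=\ker\Sigma$, is false for $p>2$. Your Clebsch--Gordan computation correctly shows that every negligible indecomposable \emph{object} $V_{pj}$ lies in the thick tensor ideal of objects generated by $V_p$, but the negligible ideal of \emph{morphisms} is strictly larger than the ideal of morphisms factoring through such objects. Take $n=1$ and $p>2$, so that $V_p=\bk C_p$ is projective and $\langle\id_{V_p}\rangle$ is the ideal of morphisms factoring through projectives. The socle inclusion $\unit\hookrightarrow V_2$ is negligible (it composes to zero with the unique nonzero morphism $V_2\to\unit$), yet it does not factor through any free module: every morphism $\unit\to V_p$ is a scalar multiple of the socle inclusion $1\mapsto y^{p-1}$, and every $C_p$-linear map $V_p\to V_2=\bk[y]/(y^2)$ sends $y^{p-1}$ to $0$ when $p\ge3$. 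So $\langle\id_{V_p}\rangle\subsetneq\cN$, and the hypothesis $\Theta(V_p)=0$ alone does not put $\cN$ inside $\ker\Theta$, which is what invoking the universal property of the semisimplification requires.

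The paper closes precisely this gap with a step that has no counterpart in your proposal. After the observation you also make — that the $\bV$-condition forces $\Theta$ to annihilate the same indecomposable objects as $\Sigma$ — the paper exploits that $\Theta(V_2)$ is algebraic: its growth dimension therefore equals that of $\Sigma(V_2)$, which is strictly less than $2=\dim V_2$, while $\Theta$ preserves the categorical dimension $2$. This forces $\Theta(V_2)$ to be simple, whence a short duality argument gives $\Theta(\unit\hookrightarrow V_2)=0$. It is this morphism $\unit\hookrightarrow V_2$, not $\id_{V_p}$, that generates $\cN$ as a tensor ideal; once it is killed, $\Theta$ factors through $\Sigma$ and the rest of your argument goes through. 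In short, killing the negligible objects is strictly weaker than killing the negligible morphisms, and the growth-dimension step is the essential missing ingredient.
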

\begin{proof}
    The classification of indecomposable modules in $\Rep C_{p^n}$ comprises one isomorphism class $M_i$ for every dimension $1\le i\le p^n$. If $\Theta$ is a $\bV$-functor then, by \Cref{RemHProj}, it sends every $M_{ap}=\Ind^{C_{p^n}}_{C_{p^{n-1}}}M_a$, for $1\le a\le p^{n-1}$ to zero (where we abuse notation by labelling indecomposable modules $C_{p^{n-1}}$ again by $M_{a}$). In other words, $\Theta(M_i)=0$ if and only if $p$ divides $i=\dim M_i$ if and only if $\Sigma(M_i)=0$.

    Now, set $X:=\Theta(M_2)$, which is an object in a tensor category $\cV$. By construction there appear only finitely many indecomposable summands in the tensor powers~$X^{\otimes m}$. It then follows from the previous paragraph that the growth dimension of $X$, as defined for instance in \cite[\S 4]{CEO}, must be the same as that of $\Sigma(M_2)$. By construction the latter is strictly smaller than $2=\dim M_2$. Hence $X$ is simple, and therefore $\Theta$ must send $\unit\hookrightarrow M_2$ to zero. Indeed, by duality, the alternative implies that there exist $\unit \hookrightarrow X$ and $X\tto \unit$ in $\cV$ which compose to zero.

    But if $\unit\hookrightarrow M_2$ is sent to zero by $\Theta$, then its kernel includes all morphisms $\unit\to M_i$ for $i>1$, meaning it must be the ideal of negligible morphisms.
\end{proof}

\subsection{\texorpdfstring{$\bO$}{O}-functors from \texorpdfstring{$\bV$}{V}-functors}
Recall the symmetric monoidal functor in \Cref{eqTPF} and fix $i\in\mZ_{>0}$.
\begin{theorem}\label{ThmOfromV}
    Given a permutation group $G<S_{p^i}$ and a $\bV$-functor $\Theta: \Rep G\to \cV$, the family of $\Frob^i${\rm -LSM} functors
    $$\Phi_{\cC}:\;\cC\xrightarrow{X\mapsto X^{\otimes p^i}}\cC\boxtimes \Rep S_{p^i}\xrightarrow{\cC\boxtimes \Res}\cC\boxtimes \Rep G\xrightarrow{\cC\boxtimes \Theta}\cC\boxtimes\cV$$
    yields an $\bO^i\{\cV\}$-functor.
\end{theorem}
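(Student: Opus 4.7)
The plan is to factor $\Phi_{\cC}$ as the three displayed stages and verify three things: that each $\Phi_{\cC}$ is a $\Frob^i$-LSM functor, that the family is natural in $\cC$, and that the $\bV$-condition on $\Theta$ is used essentially at exactly one point.

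First I would record the pointwise structure. The symmetric monoidal axioms and preservation of composition and identities transfer from the three component functors: the tensor power $X\mapsto X^{\otimes p^i}$ is symmetric monoidal by \Cref{SecTPF}, while $\cC\boxtimes\Res$ is a tensor functor and $\cC\boxtimes\Theta$ is an LSM functor by the pseudo-functoriality of $-\boxtimes-$ from \Cref{PropPseudo}. The $\Frob^i$-linearity on morphisms is immediate from the identity $(\lambda f)^{\otimes p^i}=\lambda^{p^i}f^{\otimes p^i}$ in $\cC$ combined with the $\bk$-linearity of the remaining two stages.

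The main work is additivity. For $f,g:X\to Y$ I would expand
$$(f+g)^{\otimes p^i}\;=\; f^{\otimes p^i}\;+\;g^{\otimes p^i}\;+\;\sum_{k=1}^{p^i-1}\sum_{|S|=k}f^{\otimes S}\otimes g^{\otimes S^c},$$
view the inner sums inside $\cC\boxtimes\Rep G$ after applying $\cC\boxtimes\Res$, and group them by $G$-orbits of $k$-subsets $S\subset\{1,\dots,p^i\}$. For a proper non-empty $S$ the stabiliser $G_S<G$ is non-transitive on $\{1,\dots,p^i\}$, since $G$ itself is transitive by hypothesis while $G_S$ fixes $S$. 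The morphism $f^{\otimes S}\otimes g^{\otimes S^c}$ is $G_S$-equivariant, and its $G$-orbit sum is the trace of this $G_S$-morphism, which by Frobenius reciprocity together with the tensor identity $\Ind^G_{G_S}\Res^G_{G_S}N\simeq N\otimes\Ind^G_{G_S}\unit$ factors through $Y^{\otimes p^i}\otimes(\unit\boxtimes\Ind^G_{G_S}\unit)$ in $\cC\boxtimes\Rep G$. Applying $\cC\boxtimes\Theta$ and invoking the $\bV$-condition $\Theta(\Ind^G_{G_S}\unit)=0$ kills every such orbit contribution, so the entire error term vanishes and $\Phi_{\cC}(f+g)=\Phi_{\cC}(f)+\Phi_{\cC}(g)$. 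I expect this factorisation step, carried out cleanly inside $\cC\boxtimes\Rep G$, to be the main subtlety.

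For naturality, for each tensor functor $F:\cC\to\cD$ the outer square decomposes as three stacked squares corresponding to the three stages. The top square commutes up to natural isomorphism by \Cref{LemPoFu}, while the middle and bottom squares are instances of \Cref{Lem:Comm}. Pasting them yields the required natural isomorphism of monoidal functors.
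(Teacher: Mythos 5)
Your proof is correct, and it rests on exactly the same key fact as the paper's—the $\bV$-condition on $\Theta$ kills the cross-terms because they factor through inductions from non-transitive subgroups of $G$—but you take a genuinely different route to additivity. The paper expands the \emph{object} $(X\oplus Y)^{\otimes p^i}$ into $X^{\otimes p^i}\oplus Y^{\otimes p^i}$ plus cross-terms of the form $\Ind^{S_{p^i}}_{S_{p^i-j}\times S_j}(X^{\otimes p^i-j}\otimes Y^{\otimes j})$, then invokes Mackey's theorem to restrict these from $S_{p^i}$ down to $G$, where they become sums of inductions from intersections with Young subgroups (all non-transitive); preservation of biproducts then yields additivity on morphisms. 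You instead expand the \emph{morphism} $(f+g)^{\otimes p^i}$, group the non-equivariant summands $h_S=f^{\otimes S}\otimes g^{\otimes S^c}$ into $G$-orbit sums (which \emph{are} morphisms in $\cC\boxtimes\Rep G$—a step one must not skip, and you do not), identify each orbit sum as a relative trace $\Tr^G_{G_S}(h_S)$, and use the tensor identity $\Ind^G_{G_S}\Res^G_{G_S}N\simeq N\otimes(\unit\boxtimes\Ind^G_{G_S}\unit)$ to see it is killed by $\cC\boxtimes\Theta$. This bypasses Mackey entirely and works directly with subset stabilisers $G_S<G$, which is arguably cleaner; the only small blemish is the parenthetical ``since $G$ itself is transitive by hypothesis,'' which is neither an explicit hypothesis (it is forced by $\Theta(\unit)\neq 0$) nor needed, as $G_S$ is non-transitive for proper non-empty $S$ simply because it preserves the partition $\{S,S^c\}$. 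The $\Frob^i$-linearity and the naturality verification via \Cref{LemPoFu} and \Cref{Lem:Comm} coincide with the paper's.
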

\begin{proof}
The composite LSM functor
$$\Rep S_n\xrightarrow{\Res^{S_n}_G}\Rep G\xrightarrow{\Theta}\cV$$
sends $\Ind^{S_n}_{S_\lambda}M$ to zero, for every $\lambda\vdash n$, other than $\lambda=(n)$, and $M\in \Rep_{\bk} S_{\lambda}$.
Indeed, we can reduce this to the case $M=\bk$, which in turn follows from Mackey's theorem.

By definition, $\Phi_{\cC}$ is symmetric monoidal. Since 
$$(X\oplus Y)^{\otimes p^i}\;\simeq\; X^{\otimes p^i}\oplus Y^{\otimes p^i}\oplus \bigoplus_{j=1}^{p^i-1} \Ind^{S_{p^i}}_{S_{p^i-j}\times S_j}(X^{\otimes p^i-j}\otimes Y^{\otimes j}),$$
it follows from the previous paragraph that $\Phi_{\cC}$ is additive. Since all but the first functor in the composition of $\Phi_{\cC}$ are $\bk$-linear, it follows that $\Phi_{\cC}(\lambda f)=\lambda^{p^j}\Phi_{\cC}(f)$, for a morphism~$f$ and $\lambda\in\bk$, so $\Phi_{\cC}$ is $\Frob^i$-linear.

    That the diagrams in \Cref{DefO}(1) are commutative follows from \Cref{LemPoFu} and \Cref{Lem:Comm}. 
\end{proof}

We have an obvious compatibility between the notions of descendants of $\bV$-functors and of $\bO^i$-functors, via \Cref{ThmOfromV}.

\begin{example}\label{ExCpFr}
    The Frobenius functor $\Fr$ is obtained by the principle in \Cref{ThmOfromV} applied to the permutation group $C_p<S_p$ and the $\bV$-functor 
    $$\Sigma:\Rep C_p\to\overline{\Rep C_p}.$$
\end{example}

\begin{proposition}\label{PropExIdeal}
    Consider a permutation group $G<S_{p^i}$ and two $\bV$-functors $\Theta^j:\Rep G\to\cV_j$ with kernels $J_j$, for $j\in\{1,2\}$. Denote by $\Phi^1$, $\Phi^2$ the resulting $\bO^i\{\cV_j\}$-functors from \Cref{ThmOfromV}. Let $\cC$
be a $\Phi^2$-exact tensor category.
\begin{enumerate}
    \item If $J_1\subset J_2$,  then $\cC$ is also $\Phi^1$-exact.
    \item Assume that $\cC$ has property (P). If every object (whose identity morphism is) in $J_1$ is also in $J_2$, then $\cC$ is also $\Phi^1$-exact.
\end{enumerate}
    
\end{proposition}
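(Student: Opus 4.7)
The plan is to characterize the kernel of $\cC\boxtimes\Theta^j$ via a fiber functor $\omega:\cC\to\Vecc$, whose existence is guaranteed by Takeuchi's theorem, so as to reduce the comparison of kernels of $\Phi^1_\cC$ and $\Phi^2_\cC$ to the stated hypotheses on $J_1$ and $J_2$. Under the identification $\cC\boxtimes\Rep G\simeq(G\text{-representations in }\cC)$ from \Cref{ExRepG}(1), the functor $\omega\boxtimes\Rep G$ is the forgetful functor to $\Rep G$; similarly $\omega\boxtimes\cV_j:\cC\boxtimes\cV_j=\Comod_{C_\cC}(\cV_j)\to\cV_j$ (with $C_\cC$ the coalgebra attached to $\cC$ by $\omega$) is forgetful, hence in particular faithful. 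By the 2-functoriality of $\boxtimes$ from \Cref{PropPseudo}, applied to the decomposition $\omega\boxtimes\Theta^j=(\omega\boxtimes\cV_j)\circ(\cC\boxtimes\Theta^j)=\Theta^j\circ(\omega\boxtimes\Rep G)$, the resulting square is commutative; faithfulness of $\omega\boxtimes\cV_j$ then yields, for any morphism $g$ in $\cC\boxtimes\Rep G$,
\[
(\cC\boxtimes\Theta^j)(g)=0\iff (\omega\boxtimes\Rep G)(g)\in J_j,
\]
and an analogous characterization for objects.

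For part (1), the assumption $J_1\subset J_2$ combined with the morphism-level characterization immediately gives an inclusion of kernels of $\cC\boxtimes\Theta^1$ and $\cC\boxtimes\Theta^2$ (as classes of morphisms in $\cC\boxtimes\Rep G$). Pulling back along $\Psi:=(\cC\boxtimes\Res^{S_{p^i}}_G)\circ(-)^{\otimes p^i}$ transports this to an inclusion of kernels of $\Phi^1_\cC$ and $\Phi^2_\cC$. Since $\Phi^2$-exactness is, by \Cref{Thm:FE}, equivalent to faithfulness of $\Phi^2_\cC$, this inclusion of kernels forces $\Phi^1_\cC$ to be faithful as well, equivalently $\Phi^1$-exact.

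For part (2), the weaker hypothesis only compares object-kernels and the morphism argument of (1) is no longer available. I would instead invoke \Cref{LemK0}: under property (P), $\Phi^j_\cC$ is exact iff $\Phi^j_\cC(P_\alpha)\neq0$ for some family of projective objects $P_\alpha\in\cC_\alpha$ in tensor subcategories $\{\cC_\alpha\}$ covering $\cC$. The object version of the kernel characterization reads $\Phi^j_\cC(P_\alpha)=0\iff\omega(P_\alpha^{\otimes p^i})\in J_j$ as an object of $\Rep G$. By $\Phi^2$-exactness this $G$-representation lies outside $J_2$, and the hypothesis of (2) on objects then forces it to lie outside $J_1$ as well; hence $\Phi^1_\cC(P_\alpha)\neq0$ for all $\alpha$, and \Cref{LemK0} yields $\Phi^1$-exactness of $\cC$.

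The principal technical point is establishing the kernel characterization via the fiber functor, which is a direct consequence of the comodule construction of the pseudo-functor in \Cref{PropPseudo}; note that \Cref{Lem:Comm} does not apply directly, because $\omega$ need not be monoidal. The remainder is routine bookkeeping with the exactness criteria from \Cref{LemK0} and \Cref{Thm:FE}.
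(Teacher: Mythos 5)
Your proposal is correct and follows essentially the same route as the paper: fix a faithful exact $\omega:\cC\to\Vecc$, use the commutative square $(\omega\boxtimes\cV_j)\circ(\cC\boxtimes\Theta^j)=\Theta^j\circ(\omega\boxtimes\Rep G)$ together with faithfulness of $\omega\boxtimes\cV_j$ and of $(\Rep G)/J_j\to\cV_j$ to reduce exactness of $\Phi^j_\cC$ to faithfulness of $\cC\to\Rep G\to(\Rep G)/J_j$, then conclude (1) from \Cref{Thm:FE} and (2) from \Cref{ExFin}. Your phrasing in terms of a kernel characterization is equivalent to the paper's phrasing in terms of a single faithfulness criterion, and your observation that $\omega$ need not be monoidal, so \Cref{Lem:Comm} does not apply directly, is a pertinent point that the paper leaves implicit.
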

\begin{proof}
    Fix a faithful exact functor $\omega:\cC\to\Vecc$, leading to commutative diagrams
    $$\xymatrix{\cC\ar[r]&\cC\boxtimes \Rep G\ar[rr]^{\cC\boxtimes \Theta^j}\ar[d]^{\omega\boxtimes(\Rep G)}&&\cC\boxtimes \cV_j\ar[d]^{\omega\boxtimes \cV_j}\\
    &\Rep G\ar[r]& (\Rep G)/J_j\ar[r]& \cV_j,
    }$$
    where the top row composes to $\Phi_{\cC}^j$ and the bottom row composes to $\Theta^j$. 

    By \Cref{Thm:FE}, $\Phi^j$-exactness is determined by faithfulness of the top row. Since the two arrows to $\cV_j$ are faithful, $\Phi^i$-exactness is thus determined by faithfulness of the composite functor
    $$\cC\to \Rep G\to(\Rep G)/J_j,$$
    where the first functor does not depend on $j$. Part (1) follows immediately and part (2) follows by adding in \Cref{ExFin}.
\end{proof}

\begin{proposition}\label{NewProp}
    Consider a $\bV$-functor $\Theta:\Rep G\to \cV$ for a permutation group $G<S_{p^i}$ and the resulting $\bO^i\{\cV\}$-functor $\Phi$. Let $\cC$ be a tensor category with (enough) projective objects. Then the following conditions are equivalent:
    \begin{enumerate}
        \item $\cC$ is $\Phi$-exact;
        \item there are projective objects $P,Q$ in $\cC$ with $\Theta(\Hom(Q,P^{\otimes p^i}))\not=0$;
        \item for each projective object $P$ in $\cC$, there is a projective object $Q$ for which $\Theta(\Hom(Q,P^{\otimes p^i}))\not=0$.
    \end{enumerate}
\end{proposition}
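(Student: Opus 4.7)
The core strategy is to reduce the statement to Lemma~\ref{ExFin} (or rather its equivalence (1)$\Leftrightarrow$(3)) via a faithful exact fibre functor. Since $\cC$ has projective objects, it has enough of them and, by Remark~\ref{RemHomPP}, we have the faithful exact $\bk$-linear functor
\[
\omega \;=\; \bigoplus_{Q} \Hom_{\cC}(Q,-)\;\colon\; \cC \to \Vecc,
\]
where $Q$ runs over projective covers of simples; for any fixed object this sum is finite because of finite length, so it indeed lands in $\Vecc$. I will use $\omega$ to translate the vanishing of $\Phi_{\cC}(P)$ into the statement in (2)/(3).

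The key observation is that, by \Cref{PropPseudo} applied to $(\omega,\id_{\cV})$ and $(\id_{\cC},\Theta)$, the square
\[
\xymatrix{
\cC\boxtimes \Rep G \ar[rr]^{\cC\boxtimes \Theta}\ar[d]_{\omega\boxtimes \Rep G} && \cC\boxtimes \cV \ar[d]^{\omega\boxtimes \cV}\\
\Rep G \ar[rr]^{\Theta} && \cV
}
\]
commutes up to natural isomorphism, and both vertical functors are faithful and exact. (Faithfulness of $\omega\boxtimes\cV$ can be seen by identifying $\cC\boxtimes\cV$ with comodules in $\cV$ over the coalgebra associated to $\omega$ via Takeuchi, under which $\omega\boxtimes\cV$ becomes the forgetful functor; equivalently, this is the instance of the pseudo-functor in \Cref{PropPseudo} applied to the faithful exact $\omega$.) Applied to $X^{\otimes p^i}\in\cC\boxtimes \Rep G$ (with its braiding $G$-action), the square gives
\[
(\omega\boxtimes \cV)(\Phi_{\cC}(X))\;\simeq\;\Theta\Bigl(\bigoplus_{Q}\Hom_{\cC}(Q,X^{\otimes p^i})\Bigr),
\]
and by faithfulness of $\omega\boxtimes\cV$ we deduce the central equivalence
\[
\Phi_{\cC}(X)=0 \;\iff\; \Theta(\Hom_{\cC}(Q,X^{\otimes p^i}))=0 \text{ for every projective cover }Q.
\]

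With this in hand, the three implications are easy. For (1)$\Rightarrow$(3): if $\Phi_{\cC}$ is exact it is faithful by \Cref{Thm:FE}, hence reflects zero objects, so $\Phi_{\cC}(P)\ne 0$ for every nonzero projective $P$; the central equivalence above then produces the required $Q$. The implication (3)$\Rightarrow$(2) is tautological. Finally (2)$\Rightarrow$(1): the central equivalence shows $\Phi_{\cC}(P)\ne 0$ for the projective $P$ from (2), and then \Cref{ExFin}, applied with the trivial cover $\{\cC\}$ of $\cC$ itself (which satisfies (P)), yields exactness of $\Phi_{\cC}$. The only mildly delicate point is verifying faithfulness of $\omega\boxtimes\cV$ in the generality where $\cV$ is not assumed to be in $\Tak$; this is handled by the explicit realisation of $\boxtimes$ via comodules and the remark that a faithful $\bk$-linear functor remains faithful after $\boxtimes\cV$, since morphisms in $\Comod_{C}(\cV)$ are detected by the underlying morphisms in $\cV$.
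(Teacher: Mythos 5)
Your proposal is correct and follows essentially the same route as the paper, which dispatches the statement as an "immediate application" of \Cref{ExFin} and \Cref{RemHomPP}: you use the faithful exact $\omega$ from \Cref{RemHomPP}, commute it past $\Theta$ via the pseudo-functoriality of $\boxtimes$ from \Cref{PropPseudo} (correctly noting that \Cref{Lem:Comm} itself does not apply since $\omega$ is not monoidal), deduce the central equivalence $\Phi_{\cC}(X)=0 \Leftrightarrow \Theta(\Hom(Q,X^{\otimes p^i}))=0$ for all projective covers $Q$, and then invoke \Cref{ExFin}(3)$\Leftrightarrow$(1). Your argument for faithfulness of $\omega\boxtimes\cV$ via the explicit comodule model of \Cref{LemExDTP} (so that $\omega\boxtimes\cV$ is the forgetful functor, hence faithful) is the right justification; all implications are handled correctly.
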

\begin{proof}
    This is an immediate application of \Cref{ExFin} and \Cref{RemHomPP}.
\end{proof}

\begin{example}
    A tensor category $\cC$ with projective objects is Frobenius exact if and only if there are projective objects $P,Q$ in $\cC$ for which the $S_p$-representation $\Hom(Q,P^{\otimes p})$ has a non-negligible (meaning non-projective) summand. 
\end{example}

\begin{remark}\label{RemBd}
Consider the functor
$$\cC\xrightarrow{(-)^{\otimes p^i}}\cC\boxtimes\Rep S_{p^i}\xrightarrow{\omega\boxtimes \Rep S_{p^i}}\Rep S_{p^i},$$
for a faithful exact functor $\omega:\cC\to\Vecc$. For the moment it is an open question how surjective this functor is in general, see \cite{CoNew} for some recent results. If there exists a suitable monoidal subcategory $\cR\subset \Rep S_{p^i}$ in which this functor has to take values for all $\cC$ (of moderate growth), one could introduce partially defined $\bV$-functors $\cR\to\cV$, which would still lead to $\bO$-functors.
\end{remark}

\subsection{Interlude: tilting modules}\label{SecTilt}
We denote the affine group scheme (over $\mZ$) of 2 by 2 matrices with determinant~1 by $SL_2$. So for any field $K$, we have the corresponding abstract group $SL_2(K)$. We also consider subgroups
$$U,T<B<SL_2,$$
where $B$ contains those matrices with entry $0$ above the diagonal, and $U$ additionally with entry $1$ on the diagonal, and $T$ consists of diagonal matrices.

We fix $n\in\mZ_{>0}$. When considering the elementary abelian $p$-group $C_p^n$, we denote the generators by
$$g_i=(0,0,\cdots, 1, 0,\cdots 0)\in C_p\times C_p\times \cdots \times C_p,$$
with the generator $1$ of $C_p=\mZ/(p)$ in position $i$.
\subsubsection{}
We consider elements (with $\bk^+$ the additive group of the field)
$$\ulambda=(\lambda_1,\lambda_2,\cdots, \lambda_n)\,\in\,\mA^{n}(\bk)\;\xrightarrow{1:1}\; \Hom(C_p^n,\bk^+),$$
where the bijection sends $\ulambda$ to the group homomorphism $\{g_i\mapsto \lambda_i\}$.

We denote the subset of $\mA^n(\bk)$ of elements that correspond to injective group homomorphisms by $$\mA^n_f(\bk)\;\subset\;\mA^n(\bk).$$
It is easy to give more explicit descriptions of this subset, but we do not require them presently.

For the rest of this section, we fix $\ulambda\in\mA^n_f(\bk)$.
We denote by
$$U_n<\bk^+=U(\bk)<SL_2(\bk)$$
the image of the corresponding group homomorphism $C_p^n\hookrightarrow \bk^+$.

\begin{example}
   The most canonical choice is $U_n=U(\mF_{p^n})$, which corresponds to choosing $\ulambda$ such that $\{\lambda_i\}\subset\bk$ form a basis of $\mF_{p^n}$ over $\mF_p$. 
\end{example}



\subsubsection{}Recall from \Cref{prelim::representations} that $\Rep_{\bk} SL_2$ is the category of finite-dimensional rational $SL_2(\bk)$-representations, and recall from \Cref{TiltSL2} the basic properties of $\Tilt SL_2\subset\Rep SL_2$. We will focus on the forgetful (restriction) functor
\begin{equation}\label{EqRes}
    \Rep_{\bk} SL_2\;\subset\; \Rep_{\bk} SL_2(\bk)\;\to\;\Rep_{\bk}U(\bk)\;\to \Rep_{\bk}U_n
\end{equation}
and more precisely on the composite functor
\begin{equation}\label{DefRlambda}
    R_{\ulambda}\;:\; \Tilt SL_2\hookrightarrow \Rep_{\bk} SL_2\to \Rep_{\bk}  U_n\xrightarrow{\sim} \Rep_{\bk}C_p^n
\end{equation}

We will use the same symbol for an $SL_2$-representation and for its restriction to $U_n$. For $V$ the natural $SL_2(\bk)$-representation, we set
$$V_{\ulambda}:= R_{\ulambda}(V)\;\in\;\Rep_{\bk}C_p^n.$$

 We denote by 
 $$\cD_{\ulambda}\;\subset\; \Rep C_p^n,$$
 the essential image of $R_{\ulambda}$, which is the full monoidal subcategory of $\Rep_{\bk}C_p^n$ comprising direct summands of direct sums of tensor powers of $V_{\ulambda}$. We denote by $\cK=\cK_{\ulambda}$ the tensor ideal of $\cD_{\ulambda}$ generated by any monomorphism $\unit\hookrightarrow V_{\ulambda}$ (which is unique up to a scalar).

\begin{remark}
    The representation $V_{\ulambda}$ depends, up to isomorphism, only on the image of $\ulambda$ in $\mP^{n-1}(\bk)$. Moreover, $\mP^{n-1}(\bk)$ canonically parametrises indecomposable two-dimensional $C_p^n$-representations, while the image of $\mA^n_f(\bk)\to\mP^{n-1}(\bk)$ parametrises faithful two-dimensional representations. 
\end{remark}

Recall from \Cref{TiltSL2} that $I_n$ is defined as the tensor ideal generated by the $n$-th Steinberg module $St_n=T_{p^n-1}$ in $\Tilt SL_2$.

\begin{theorem}\label{FirstResult}
\begin{enumerate}
    \item There is a faithful LSM functor $(\Tilt SL_2)/I_n\to\Stab C_p^n$ that fits into the commutative diagram
    $$\xymatrix{
    \Tilt SL_2\ar[rr]^{R_{\ulambda}}\ar[d]&& \Rep C_p^n\ar[d]\\
    (\Tilt SL_2)/I_n\ar[rr]&& \Stab C_p^n.
    }$$
    \item There is an LSM-equivalence
    $(\Tilt SL_2)/I_n\to \cD_{\ulambda}/\cK$ that fits into the commutative diagram
    $$\xymatrix{
    \Tilt SL_2\ar[rr]^{R_{\ulambda}}\ar[dd]\ar[rd]&& \Rep C_p^n\\
    &\cD_{\ulambda}\ar@{^{(}->}[ru]\ar[rd]\\
    (\Tilt SL_2)/I_n\ar[rr]^{\sim}&& \cD_{\ulambda}/\cK.
    }$$
\end{enumerate}
\end{theorem}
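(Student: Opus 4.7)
For both parts, the crucial ingredient is that the restriction of the Steinberg module $St_n = T_{p^n-1}$ to $U_n \cong C_p^n$ is isomorphic to the regular representation $\bk C_p^n$. Since $\dim St_n = p^n = |C_p^n|$, it suffices to establish that $R_\ulambda(St_n)$ is projective: the regular representation is the unique indecomposable projective $C_p^n$-module, and any projective of dimension $p^n$ must be it. Projectivity of this restriction is a classical fact; one route is via Steinberg's tensor product decomposition $St_n \simeq L(p-1) \otimes L(p-1)^{(1)} \otimes \cdots \otimes L(p-1)^{(n-1)}$ together with the freeness of each Frobenius-twisted factor over a suitable cyclic subgroup of $U_n$, which combine to yield freeness of the whole tensor product over $C_p^n$.

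For part (1), granting this fact, the identity $\id_{St_n}$ maps into the ideal of morphisms factoring through projectives in $\Rep C_p^n$, so the whole tensor ideal $I_n$ (generated by $\id_{St_n}$) descends to zero in $\Stab C_p^n$, producing the claimed factorization through $(\Tilt SL_2)/I_n$. For faithfulness, the preimage of the projective-factoring ideal under $R_\ulambda$ is a tensor ideal in $\Tilt SL_2$ that is proper, since $\unit$ is not projective in $\Rep C_p^n$, and contains $I_n$. By the linear classification of proper tensor ideals in $\Tilt SL_2$, this preimage equals $I_m$ for some $m \leq n$; and if $m < n$, then $R_\ulambda(St_m)$ would be projective of dimension $p^m$, contradicting the fact that the dimension of any non-zero projective $C_p^n$-module is divisible by $p^n$.

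For part (2), we first verify $R_\ulambda(I_n) \subseteq \cK$, which reduces to $\id_{\bk C_p^n} \in \cK$. Projectivity of $\bk C_p^n$ forces the splitting $V_\ulambda \otimes \bk C_p^n \cong (\bk C_p^n)^{\oplus 2}$, and direct inspection of the filtration $0 \subset e_1 \otimes \bk C_p^n \subset V_\ulambda \otimes \bk C_p^n$ shows the inclusion of the first summand is precisely $\iota \otimes \id_{\bk C_p^n}$; composing with the corresponding projection realizes $\id_{\bk C_p^n}$ as a morphism in $\cK$. This yields an induced LSM functor $\tilde R\colon (\Tilt SL_2)/I_n \to \cD_\ulambda/\cK$, which is essentially surjective by construction.

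The main difficulty is proving $\tilde R$ is fully faithful. Unlike in part (1), the composite $\cD_\ulambda \to \Stab C_p^n$ does not factor through $\cD_\ulambda/\cK$: one can check that $\iota$ is negligible but does not factor through a projective when $n \geq 2$, so $\cK$ is strictly larger than the projective-factoring ideal. Faithfulness therefore requires a finer invariant than the stable category to show that $R_\ulambda(St_m)$ is not killed in $\cD_\ulambda/\cK$ for $m < n$. Fullness is the subtlest step: one must verify that every ``extra'' morphism in $\cD_\ulambda$ beyond the image of $R_\ulambda$ lies in $\cK$. Heuristically, $\cD_\ulambda$ is built from the Temperley--Lieb-type structure inherited from $\Tilt SL_2$ together with the single additional morphism $\iota\colon\unit\to V_\ulambda$ arising from the restriction to $C_p^n$, so quotienting by the ideal it generates should remove precisely this extra content. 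Making this intuition precise, presumably via a generators-and-relations description of the endomorphism algebras $\End_{\cD_\ulambda}(V_\ulambda^{\otimes k})$ or by identifying $\cD_\ulambda/\cK$ with the tilting subcategory inside the abelian envelope $\Ver_{p^n}$ of $(\Tilt SL_2)/I_n$, is the central technical hurdle I anticipate.
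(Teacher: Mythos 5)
Your part (1) matches the paper's overall strategy: show $R_{\ulambda}(St_n)$ is the regular $C_p^n$-module, conclude that $I_n$ is killed in $\Stab C_p^n$, and then use the linear ordering of proper tensor ideals in $\Tilt SL_2$ plus the fact that projective $C_p^n$-modules have dimension divisible by $p^n$ to pin the kernel down as exactly $I_n$. However, the route you sketch for projectivity of $R_{\ulambda}(St_n)$ is shaky for a general $\ulambda\in\mA^n_f(\bk)$. The Steinberg factorisation presents $St_n$ as a tensor product of Frobenius twists $L(p-1)^{(j)}$, and each factor is a single Jordan block for each individual $g_i$, but every factor is a module over the \emph{whole} group $C_p^n$; freeness over scattered cyclic subgroups does not combine to freeness of the tensor product over $C_p^n$ (that principle only applies when the factors are modules over complementary direct factors of the group). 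The paper instead proves $\dim\Hom_{U_n}(\unit,\nabla_i)=1$ for $i<p^n$ by a direct polynomial-interpolation argument (Lemma \ref{LemNabla}): a polynomial of degree $<p^n=|U_n|$ vanishing on all of $U_n\subset\bk$ is identically zero. Combined with $\dim St_n=p^n$ and the fact that the socle of a $\bk C_p^n$-module detects its injective hull, this cleanly gives $R_\ulambda(St_n)\simeq\bk C_p^n$ for every faithful $\ulambda$.

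The gap in part (2) is substantive, and you yourself flag it: you have no argument that the induced functor $(\Tilt SL_2)/I_n\to\cD_{\ulambda}/\cK$ is fully faithful. Your suggestion to ``identify $\cD_\ulambda/\cK$ with the tilting subcategory inside the abelian envelope $\Ver_{p^n}$'' is circular, since that identification is precisely the content of the theorem combined with the abelian-envelope property of $\Ver_{p^n}$. The paper's route is: since the functor is an additive monoidal functor between rigid additive categories and is essentially surjective, full faithfulness reduces to showing $\Hom_{SL_2}(\unit,T_i)\to\Hom_{\cD_\ulambda/\cK}(\unit,T_i)$ is an isomorphism for $i<p^n-1$. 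This is then a dimension count: one identifies $\cK(\unit,T_i)$ with the space of $U_n$-morphisms $\unit\to T_i$ factoring through $V$, computes $\dim\cK(\unit,T_i)=\dim\Hom_{U_n}(V,T_i)-\dim\Hom_{U_n}(\unit,T_i)$, and then Proposition \ref{PropT} (the two formulas $\dim\Hom_{U_n}(\unit,T_i)=\ell^\nabla(T_i)$ and $\dim\Hom_{U_n}(V,T_i)=2\ell^\nabla(T_i)-\dim\Hom_{SL_2}(\unit,T_i)$) gives exactly $\dim\Hom_{\cD_\ulambda/\cK}(\unit,T_i)=\dim\Hom_{SL_2}(\unit,T_i)$, plus a short check that no nonzero $SL_2$-morphism $\unit\to T_i$ lands in $\cK(\unit,T_i)$. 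These two $\Hom$-space computations (Lemma \ref{LemNabla} and Proposition \ref{PropT}) are the technical core you are missing; without something like them, the equivalence in part (2) remains unproved.
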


By slight abuse of notation, we will denote both functors constructed in \Cref{FirstResult} again by $R_{\ulambda}$.
The remainder of this subsection is devoted to the proof of \Cref{FirstResult}.
We denote by $\nabla_i$, $i\in\mN$, the Weyl modules of $SL_2$, see \cite{Jantzen}.

\begin{lemma}\label{LemNabla}
    For all $0\le i<p^n$, we have 
    \begin{enumerate}
        \item $\dim_{\bk}\Hom_{U_n}(\unit,\nabla_i)=1$,
        \item $\dim_{\bk}\Hom_{U_n}(V,\nabla_i)=\begin{cases} 1&\mbox{if $\,i=0$,}\\
        2&\mbox{if $\,i>0$.}\end{cases}$
    \end{enumerate}
    
\end{lemma}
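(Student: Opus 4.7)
The plan is to reduce the lemma to a polynomial-vanishing argument using an explicit model of $\nabla_i$. Realize $\nabla_i$ as the space $\bk[X,Y]_i$ of degree-$i$ homogeneous polynomials, so that the generator $u(t) = \bigl(\begin{smallmatrix}1&0\\t&1\end{smallmatrix}\bigr)$ of $U(\bk)$ fixes $X$ and sends $Y\mapsto Y-tX$. On the basis $f_k := X^{i-k}Y^k$ this gives
$$u(t)\,f_k \;=\; \sum_{j=0}^{k}\binom{k}{j}(-t)^j\,f_{k-j}.$$
The decisive input throughout is that a polynomial in $\bk[t]$ of degree $<p^n$ is determined by its values on $U_n$, because $|U_n|=p^n$ and $\bk$ is a field.

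For part~(1), write $v=\sum_k c_k f_k \in (\nabla_i)^{U_n}$. The condition $u(t)v=v$ for all $t\in U_n$ yields, for each $l$, the polynomial identity
$$\sum_{j\ge 1} c_{l+j}\binom{l+j}{j}(-t)^j \;=\; 0 \qquad (t\in U_n).$$
Taking $l=0$ and using $\binom{j}{j}=1$, we obtain $\sum_{j=1}^{i}c_j(-t)^j = 0$ on $U_n$, a polynomial of degree $\le i<p^n$; hence it vanishes identically and $c_1=\dots=c_i=0$. The higher-$l$ equations are then vacuous, so $(\nabla_i)^{U_n}=\bk\cdot X^i$, proving~(1).

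For part~(2), a $U_n$-linear map $\phi\colon V\to\nabla_i$ is determined by $a:=\phi(e_1)$ and $b:=\phi(e_2)$. Using $u(t)e_1 = e_1+te_2$ and $u(t)e_2 = e_2$, equivariance is equivalent to $b\in(\nabla_i)^{U_n}$ together with $(u(t)-1)\,a = t\,b$ for all $t\in U_n$. By~(1), $b=b_0 f_0$. Writing $a=\sum_k a_k f_k$, the coefficient of $f_0$ in the second condition gives $\sum_{j\ge 1}a_j(-t)^j = tb_0$ on $U_n$; this is a polynomial identity of degree $<p^n$, so it holds identically. For $i\ge 1$ it forces $a_1 = -b_0$ and $a_j=0$ for $j\ge 2$, leaving two free parameters $a_0,b_0$; for $i=0$ it forces $b_0=0$, leaving only $a_0$. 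The remaining coefficient-of-$f_l$ conditions ($l\ge 1$) involve only $a_k$ with $k\ge 2$ and are therefore automatic, yielding the stated dimensions.

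The only nontrivial ingredient is the single degree bound $i<p^n$, which is exactly the threshold at which rational $SL_2$-theory and the $U_n\cong C_p^n$-theory begin to diverge; below it the polynomial-vanishing trick handles everything at once. No characteristic-$p$ pathology intervenes, because the decisive equations come from the $f_0$-coefficient, whose binomial factors are the diagonal ones $\binom{j}{j}=1$.
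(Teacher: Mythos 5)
Your proof is correct and follows essentially the same route as the paper's: realize $\nabla_i$ as degree-$i$ homogeneous polynomials in two variables, reduce the $U_n$-invariance (resp.\ equivariance) conditions to polynomial identities of degree $<p^n=|U_n|$, and conclude that these identities hold identically, forcing the relevant coefficients to vanish. The only cosmetic differences are your dual convention for the $U(\bk)$-action and your use of the $f_0$-coefficient (with $\binom{j}{j}=1$) where the paper uses the top coefficient $l=i-j$ (with $\binom{i-j}{i-j}=1$) — both choices sidestep any characteristic-$p$ binomial vanishing.
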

\begin{proof}
We can realise $\nabla_i$ as the degree $i$ component of the graded algebra $\bk[x,y]$, with $x,y$ in degree 1. The action of $\lambda\in \bk\simeq U(\bk)$ is then given by
$$x^{a}y^b\;\mapsto \; (x+\lambda y)^ay^b.$$
The condition on a vector $\sum_jc_jx^{i-j}y^j$ in $\nabla_i$ to be $U_n$-invariant is thus expressed in terms of polynomials over $\bk$ of degree bounded by $i<p^n=|U_n|$ being zero when evaluated at every $\lambda\in U_n\subset \bk$. Hence all coefficients must be identically zero, yielding
\begin{equation}\label{eqFpn}\binom{i-j}{l}c_j=0,\quad\mbox{for all }\;1\le l\le i-j.\end{equation}
Picking $l=i-j$ shows that $c_j=0$ for all $j<i$, leaving indeed only a one-dimensional space of invariants, proving part (1).

The condition for a vector to be in the image of the generator of $V$ is the same as the ones in \Cref{eqFpn}, but omitting the case $ j=i-1$. This proves part~(2).
\end{proof}

\begin{remark}
    The bound $i<p^n$ in \Cref{LemNabla} is sharp. Indeed, taking $U_n:=U(\mF_{p^n})$ allows one to compute directly that parts (1) and (2) fail for $i=p^n$.
\end{remark}

For a rational $SL_2$-representation $M$ with a $\nabla$-flag (good filtration in the terminology of \cite{Jantzen}), we denote the length of this filtration by $\ell^{\nabla}(M)$. For example, $\ell^\nabla(\nabla_i)=1$. 

\begin{proposition}\label{PropT}
For all $0\le i<p^n$, we have 
    \begin{enumerate}
    \item
    $\dim_{\bk}\Hom_{U_n}(\unit,T_i)\,=\,\ell^\nabla(T_i),$
    \item $\dim_{\bk}\Hom_{U_n}(V,T_i)\,=\,2\ell^\nabla(T_i)-\dim_{\bk}\Hom_{SL_2}(\unit,T_{i})$.
    \end{enumerate}
\end{proposition}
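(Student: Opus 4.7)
The plan is to prove the two parts together, starting with a generalisation of (1) from $T_i$ to arbitrary good-filtered modules. Concretely, I would first establish:

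\smallskip
(1') For any rational $SL_2$-module $M$ with a good filtration $0 = M_0 \subset M_1 \subset \cdots \subset M_\ell = M$ whose quotients $M_k/M_{k-1} \cong \nabla_{j_k}$ all satisfy $j_k < p^n$, we have $\dim_{\bk} M^{U_n} = \ell = \ell^\nabla(M)$.
\smallskip

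I would prove (1') by induction on $\ell$, with base case $\ell=1$ handled by \Cref{LemNabla}(1). For the inductive step, I would first reorder the filtration (using the standard quasi-hereditary facts about rational $SL_2$-representations, equivalently the fact that a good-filtered module admits a quotient onto its highest-weight $\nabla$) so that the piece $\nabla_{i^*}$ with maximal $i^* := \max_k j_k$ sits on top; this gives a short exact sequence $0 \to M' \to M \to \nabla_{i^*} \to 0$ with $M'$ good-filtered of length $\ell - 1$ and all pieces still $<p^n$. Applying the left exact functor $(-)^{U_n}$, the induction reduces to showing that $M^{U_n} \twoheadrightarrow \nabla_{i^*}^{U_n}$ is surjective.

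The key observation for surjectivity is that the weight space $M[i^*]$ is automatically contained in $M^U$, hence in $M^{U_n}$: since $U$ only raises $T$-weights and $i^*$ is the maximal weight occurring in $M$, the action of $u - 1$ on a weight-$i^*$ vector lands in weights $>i^*$, which are zero. The $SL_2$-equivariant quotient $M \twoheadrightarrow \nabla_{i^*}$ is weight-preserving, so it restricts to a surjection $M[i^*] \twoheadrightarrow \nabla_{i^*}[i^*] = \bk \cdot y^{i^*}$, which by \Cref{LemNabla}(1) exhausts $\nabla_{i^*}^{U_n}$. Together with $\dim M'^{U_n} = \ell - 1$ from the inductive hypothesis, this yields $\dim M^{U_n} = \ell$. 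Part (1) of the proposition is the special case $M = T_i$.

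For part (2), I would use the self-duality $V^* \cong V$ to rewrite
\begin{equation*}
\dim \Hom_{U_n}(V, T_i) \;=\; \dim \Hom_{U_n}(\unit, V^* \otimes T_i) \;=\; \dim (V \otimes T_i)^{U_n}.
\end{equation*}
When $i < p^n - 1$, the tilting module $V \otimes T_i$ has highest weight $i + 1 < p^n$, so (1') applies and gives $\dim(V \otimes T_i)^{U_n} = \ell^\nabla(V \otimes T_i)$. Using the $SL_2$-branching rule, each $V \otimes \nabla_j$ is good-filtered with pieces $\nabla_{j+1}$ and (only if $j > 0$) $\nabla_{j-1}$, so $\ell^\nabla(V \otimes \nabla_j) = 1 + [j > 0]$. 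Summing over a good filtration of $T_i$ yields $\ell^\nabla(V \otimes T_i) = 2 \ell^\nabla(T_i) - a_0$, where $a_0$ is the multiplicity of $\nabla_0$ in the good filtration of $T_i$, which coincides with $\dim \Hom_{SL_2}(\unit, T_i)$ since $\Hom_{SL_2}(\unit, \nabla_j) = \bk \cdot \delta_{j,0}$. The remaining boundary case $i = p^n - 1$ is handled directly: here $T_{p^n-1}$ is the Steinberg module, which is simple, hence equal to $\nabla_{p^n - 1}$, and both sides of (2) reduce to \Cref{LemNabla}(2).

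The main obstacle I expect is the reordering step of the good filtration and the assertion that $M'$ is again good-filtered with the same upper bound on weights — this is classical but requires invoking the quasi-hereditary (or Donkin-type) structure of rational $SL_2$-modules. Everything else is either a direct weight-space argument using the maximality of $i^*$ or a formal consequence of \Cref{LemNabla} and Pieri's rule.
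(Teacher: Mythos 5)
Your proof is correct, and it takes a genuinely different route from the paper's, most visibly for part~(2).

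For part~(1), both you and the paper exploit the same key observation that the extreme weight space (the lowest-weight space with the paper's convention that $U$ is the lower-triangular unipotent subgroup; you write ``raises'' and ``highest'', which is the opposite sign convention, but the argument is convention-independent) is automatically $U$-, hence $U_n$-, invariant, and that $\nabla_j^{U_n}$ is exactly one-dimensional for $j<p^n$ by \Cref{LemNabla}(1). The paper packages this as a direct dimension count: it decomposes $\Hom_U(\unit,T_i)$ by $T$-weight into the spaces $\Hom_B(\unit_{-j},T_i)$ and bounds each from below by the multiplicity of $\nabla_j$, using the specific weight-ordered good filtration $M_0\subset M_1\subset\cdots$ of $T_i$. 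You instead prove a cleaner, more general statement (your~(1')) by induction on the length of an arbitrary good filtration with labels $<p^n$, after reordering to put a top factor $\nabla_{i^*}$ of maximal weight on top; the inductive step is precisely the surjectivity of $M^{U_n}\to\nabla_{i^*}^{U_n}$ established via the extreme weight space. The reordering you need is legitimate and is the standard consequence of the $\Ext$-vanishing $\Ext^1(\nabla_\mu,\nabla_\lambda)=0$ unless $\lambda>\mu$.

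For part~(2), the two proofs diverge more substantially. The paper runs the same weight-decomposition argument again, now ``considering various lifts of $V$ to $B$'' (i.e., the $B$-modules $V\otimes\unit_{-j}$), which it leaves terse. You instead reduce (2) entirely to your~(1') by rewriting $\Hom_{U_n}(V,T_i)\cong(V\otimes T_i)^{U_n}$ (via $V\cong V^\ast$) and computing $\ell^\nabla(V\otimes T_i)=2\ell^\nabla(T_i)-\dim\Hom_{SL_2}(\unit,T_i)$ from the $SL_2$ Clebsch--Gordan/Pieri rule for $V\otimes\nabla_j$. This is neat and self-contained, at the cost of the extra inputs that $V\otimes T_i$ is again good-filtered and that $V\otimes\nabla_j$ has $\nabla$-factors $\nabla_{j\pm1}$ (and needing to treat the boundary case $i=p^n-1$ separately, which you do correctly since $T_{p^n-1}=St_n=\nabla_{p^n-1}$). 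Both approaches are valid; yours generalises (1) once and deduces (2) formally from it, whereas the paper repeats a similar weight argument twice but never needs the tensor-product/branching input.
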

\begin{proof}
We start with part (1). By \Cref{LemNabla}(1), we already know that
$$\dim_{\bk}\Hom_{U_n}(\unit,T_i)\,\le\,\ell^\nabla(T_i).$$
To prove the inequality in the other direction, we will prove the stronger claim
    $$\dim_{\bk}\Hom_{U}(\unit,T_i)\,\ge\,\ell^\nabla(T_i),$$
    where the left-hand side now takes into account the action of the affine group scheme~$U$. We can equivalently prove
    $$\sum_{j\in\mN}\dim_{\bk}\Hom_{B}(\unit_{-j},T_i)\,\ge\,\ell^\nabla(T_i),$$
    where $\unit_{-j}$ stands for the 1-dimensional rational $B$-representation on which $x\in\bk^\times= T(\bk)$ acts via $ x^{-j}$. The latter now follows from weight considerations. Indeed, if the lowest weight appearing in $M\in \Rep_{\bk}B$ is $-j$, then 
    $$\Hom_{B}(\unit_{-j}, M)\;=\;\Hom_T(\unit_{-j},M).$$
    Furthermore, $T_i$ has a filtration (over $SL_2$)
    $$0=M_{-1}\subset M_{0}\subset M_1\subset \cdots\subset M_i=T_i,$$
    where $M_a/M_{a-1}$ is a direct sum of copies of $\nabla_a$. Hence, the left-hand side of
    $$\dim\Hom_B(\unit_{-j},M_j)\;\le\;\dim \Hom_B(\unit_{-j},T_i)$$
    equals the number of appearances of $\nabla_{j}$ in the $\nabla$-flag of $T_i$, proving part (1).

    Now we consider part (2). We have
    $$\dim\Hom_{SL_2}(\unit, T_i)\;=\;\dim M_0.$$
     By \Cref{LemNabla}(2), we thus find
$$\dim_{\bk}\Hom_{U_n}(V,T_i)\,\le\,2\ell^\nabla(T_i)-\dim_{\bk}\Hom_{SL_2}(\unit,T_{i}).$$
We can then conclude the proof similarly to part (1), by considering various lifts of $V$ to $B$.
\end{proof}

\begin{proof}[Proof of \Cref{FirstResult}]
Denote by $\cD\subset \Rep U_n$ the essential image of the category of $SL_2$-tilting modules under the functor in \Cref{EqRes}. In other words, $\cD$ is simply $\cD_{\ulambda}$ under the isomorphism $U_n\simeq C_p^n$. We denote by $\cK$ the ideal in $\cD$ corresponding to $\cK_{\ulambda}$ in $\cD_{\ulambda}$ and write $\cE=\cD/\cK$. We will prove the statements in the theorem in their equivalent guise in terms of $U_n$ and \Cref{EqRes}, rather than $C_p^n$ and \Cref{DefRlambda}.

   The functor in \Cref{EqRes} sends the generator $St_n$ of $I_n$ to $\bk U_n$. Indeed, since $St_n$ is $\nabla_{p^n-1}$, it follows from \Cref{PropT}(1) that
   $$\Hom_{U_n}(\unit, St_{n})\;\simeq\;\bk,$$
   so the conclusion follows from $\dim_{\bk} St_n=p^n$.
   It follows that \Cref{EqRes} yields a faithful functor
    $$\Tilt SL_2/I_n\;\hookrightarrow\;\Stab U_n,$$
    already proving part (1).
The ideal $\cK$, being non-zero, contains all morphisms through projective $U_n$-modules, so we arrive indeed at an LSM functor 
\begin{equation}\label{FunE}
    (\Tilt SL_2)/I_n\;\to\; \cE.
\end{equation}
admitting the commutative diagram of part (2). It remains to prove this functor is an equivalence.

    By construction, the functor in \Cref{FunE} is essentially surjective. Since it is also additive monoidal, and the categories are rigid, it suffices to prove that
    \begin{equation}\label{EqIso}
        \Hom_{SL_2}(\unit, T_i)\;\to\; \Hom_\cE(\unit,T_i)
    \end{equation}
    is an isomorphism, for $0\le i < p^n-1$. Indeed, $T_i=0$ in $\Tilt SL_2/I_n$ for $i\ge p^n-1$, while $I_n(\unit, T_i)=0$ for $i<p^n-1$, see \cite[Proposition~3.5]{BEO} or \cite{Selecta}. So henceforth, we assume $i<p^n-1$.

    By definition, the morphism space 
    $$\cK(\unit, T_i)\;\subset\; \Hom_{U_n}(\unit, T_i)$$ is the space of all $U_n$-morphisms which factor through $V$.
    In particular, the dimension $d$ of $\cK(\unit,T_i)$ is the maximal $d$ for which we have an embedding $V^d\subset T_i$ (over~$U_n$). In particular,
    \begin{eqnarray*}
        \dim_{\bk}\cK(\unit, T_i)&=&\dim_{\bk}\Hom_{U_n}(V,T_i)-\dim_{\bk}\Hom_{U_n}(\unit,T_i)\\
        &=&\dim_{\bk}\Hom_{U_n}(\unit,T_i)-\dim_{\bk}\Hom_{SL_2}(\unit,T_i),
    \end{eqnarray*}
where the second equality follows from \Cref{PropT}. Consequently, we find
$$\dim_{\bk}\Hom_{SL_2}(\unit, T_i)\;=\;\dim_{\bk}\Hom_\cE(\unit,T_i),$$
    as required for \Cref{EqIso} to be an isomorphism. To conclude the proof, we need to observe that a non-zero $SL_2$-morphism $\unit\to T_i$, $i<p^n-1$, cannot factor through $\unit\hookrightarrow V$ over $U_n$, which is clear from the treatment in the proof of \Cref{PropT}. Indeed, images of the injective $U_n$-morphisms $V\hookrightarrow T_i$ intersect trivially with the trivial $SL_2$-subrepresentation $M_0\subset T_i$, whereas all $SL_2$-morphisms $\unit\to T_i$ land in~$M_0$. 
\end{proof}

\begin{remark}\label{RemFirst}
\begin{enumerate}
\item It is proved in \cite{BEO, AbEnv} that $\Ver_{p^n}$ is the monoidal abelian envelope of $\Tilt SL_2/I_n$. Theorem~\ref{FirstResult} thus shows that $\Ver_{p^n}$ can now also be interpreted as the monoidal abelian envelope of $\cD/\cK$, providing a definition of $\Ver_{p^n}$ purely in terms of $C_p^n$ (and an arbitrary choice of faithful two-dimensional representation).
    \item  \Cref{FirstResult} shows in particular that
    $$(\Tilt SL_2)/I_n\;\to\; \Stab C_p^n$$
    sends non-isomorphic objects to non-isomorphic objects and sends indecomposable objects to indecomposable objects. This was already observed in \cite[Proposition~4.11]{BE} (for $U_n=U(\mF_{p^n})$).
    \item Another consequence of \Cref{FirstResult} is that every two-dimensional representation of an elementary abelian $p$-group is algebraic.  Indeed, any faithful indecomposable two-dimensional representations of $C_p^n$ can be viewed as such a restriction of the vector representation of $SL_2$; and non-faithful representations correspond to faithful representations for some $C_p^s$, $s<n$.
   
\end{enumerate}
    
\end{remark}

\subsection{A conjectural \texorpdfstring{$\bO$}{O}-functor }\label{Sec54}
Fix $n\in\mZ_{>0}$, $\ulambda\in\mA_f^n(\bk)$ and set $E=C_p^n$.
In this section we give a, partly conjectural, construction of a family of $\bO_n$-functors. For $p=2=n$ the necessary conjecture can easily be proved and those functors will be studied in detail in Section~\ref{FinSec}, after the necessary preparation in Section~\ref{SecKlein}. 

More concretely, \Cref{FirstResult} allows us to introduce an LSM functor $\Rep E\supset \cD_{\ulambda}\to \Ver_{p^n}$, see \eqref{DefF} below. If we can extend this to a $\bV$-functor $\Rep E\to\Ver_{p^n}$, then we can apply our general theory to construct an $\bO_n^n$-functor. As it happens, we can describe this potential $\bO$-functor (see \Cref{Conj}(3) below) even without knowing the extension exists, at the cost of not knowing whether the result is actually monoidal. This is the approach we take below.

\subsubsection{}\label{DefTheta} We define a subfunctor
$$H_{\ulambda}\;\subset\; H^0(E,-):\Rep E\to \Vecc,\quad M\mapsto \sum_{f:V_{\ulambda}\to M}f(v),$$
where $v\in V_{\ulambda}$ spans the socle. In other words, $H_{\ulambda}(M)$ corresponds to the radical of the trace of $V_{\ulambda}$ in $M$, or most importantly, under the identification of $H^0(E,-)$ and $\Hom_{E}(\unit,-)$, we can identify $H_{\ulambda}(M)$ with the morphisms $\unit\to M$ that factor via $V_{\ulambda}$, so that
$$
H_{\ulambda}(M)\;=\; \cK_{\ulambda}(\unit,M),
\quad \text{for }M\in\cD_{\ulambda}.
$$
In particular, for projective modules $P\in \Rep E$,
$$H_{\ulambda}(P)=H^0(E,P),$$
and we can define unambiguously a bilinear functor
$$\Stab E\,\times\, \Rep E\;\to\; \Vecc,\quad (M,N)\mapsto H^0(E,M\otimes N)/H_{\ulambda}(M\otimes N).$$
\Cref{FirstResult}(1) then yields a 
bilinear functor
$$(\Tilt SL_2)/I_n\,\times\, \Rep E\;\to\; \Vecc.$$
By restricting from $(\Tilt SL_2)/I_n$ to the full subcategory $\Tilt^{[n]}SL_2$ from \Cref{TiltSL2}, using the equivalence with the category $\cP$ of projectives in $\Ver_{p^n}$ in \Cref{EquivProj}, and taking duals, we arrive at the functor
\begin{equation}\label{DefForTheta}
    \cP^{\op}\times \Rep E\;\to\; \Vecc,\quad (P,M)\mapsto H^0(E,R_{\ulambda}(P^\ast)\otimes M)/H_{\ulambda}(R_{\ulambda}(P^\ast)\otimes M),
\end{equation}
where, by abuse of notation, we use the symbol $P$ also for the corresponding $SL_2$-tilting module. For example, restricting \Cref{DefForTheta} to $M\in\cD_{\ulambda}\subset \Rep E$ yields
\begin{equation}\label{DefForTheta2}
     (P,M)\mapsto \Hom_E(R_{\ulambda}(P), M)/\cK_{\ulambda}(R_{\ulambda}(P), M)=\Hom_{\cD_{\ulambda}/\cK_{\ulambda}}(R_{\ulambda}(P),M).
\end{equation}

Now the functor in \Cref{DefForTheta} can be reinterpreted as
$$\Theta^{\ulambda}\;:\; \Rep E\;\to\; \Fun_{\bk}(\cP^{\op},\Vecc)\simeq \Ver_{p^n},$$
and by \Cref{DefForTheta2}, $\Theta^{\ulambda}$ restricted to $\cD_{\ulambda}\subset \Rep E$, yields the composite symmetric monoidal functor
\begin{equation}\label{DefF}F:\;\cD_{\ulambda}\;\to\; \cD_{\ulambda}/\cK_{\ulambda}\xrightarrow{\sim} (\Tilt SL_2)/I_n\stackrel{\Sigma^n}{\hookrightarrow}\Ver_{p^n}\end{equation}
where the first functor is just projection onto the quotient and the second functor is a quasi-inverse of the equivalence in \Cref{FirstResult}(2).

\begin{conjecture}
   \label{Conj}
    \begin{enumerate}
        \item For an arbitrary $M\in \Rep E$, the module
        $$R_{\ulambda}(St_{n-1})\otimes M$$
        belongs to $\cD_{\ulambda}$.
        \item The linear functor
        $\Theta^{\ulambda}:\Rep E\to\Ver_{p^n}$, defined in \Cref{DefTheta},
        has a symmetric monoidal structure which extends the symmetric monoidal structure of $F$ in \Cref{DefF}.
        \item The collection of $\Frob^n$-linear functors
        $$\Phi^{\ulambda}_{\cC}:\cC\to\cC\boxtimes\Ver_{p^n}\simeq\Fun_{\bk}(\cP^{\op},\cC),\quad X\mapsto F_X$$
        with
$$ F_X(P)=H^0\left(E,R_{\ulambda}(P^\ast)\otimes_{\bk} X^{\otimes p^n}\right)/ H_{\ulambda}\left(R_{\ulambda}(P^\ast)\otimes_{\bk} X^{\otimes p^n}\right)$$
forms an $\bO^n_n$-functor. Here we abuse notation in identifying projective objects in $\Ver_{p^n}$ with $SL_2$-tilting modules via equivalence~\eqref{EquivProj}.
    \end{enumerate}
\end{conjecture}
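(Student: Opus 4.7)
I will reduce part (3) to the statement that $\Theta^{\ulambda}\colon \Rep E\to\Ver_{p^n}$ is a symmetric monoidal $\bV$-functor for $E=C_p^n<S_{p^n}$, after which \Cref{ThmOfromV} immediately produces the $\bO^n_n$-functor: the explicit formula for $\Phi^{\ulambda}_{\cC}(X)$ in the statement is precisely what one obtains by applying the construction of \Cref{ThmOfromV} to $\Theta^{\ulambda}$, under the identification $\cC\boxtimes\Ver_{p^n}\simeq \Fun_{\bk}(\cP^{\op},\cC)$ of \Cref{ExRepG}(3). Thus parts (1) and (2) provide the monoidal structure on $\Theta^{\ulambda}$, and the $\bV$-property check completes part (3).

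\textbf{Part (1) --- the main obstacle.} The object $R_{\ulambda}(St_{n-1})$ generates the minimal proper thick tensor ideal of $\cD_{\ulambda}/\cK_{\ulambda}$, corresponding to $I_{n-1}/I_n$ under the equivalence of \Cref{FirstResult}(2). The claim is that this ideal is already stable under tensor products with arbitrary objects of $\Rep E$, which is precisely Conjecture B from the introduction. My approach would proceed via support-variety theory for elementary abelian $p$-groups: the rank variety $V_E(R_{\ulambda}(St_{n-1}))\subset \mA^n(\bk)$ admits an explicit description from the $SL_2$-tilting picture, and the standard tensor-product formula $V_E(A\otimes M)=V_E(A)\cap V_E(M)$ locates the summands of $R_{\ulambda}(St_{n-1})\otimes M$ inside the correct support locus. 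The principal difficulty, on which I expect most of the work to concentrate, is upgrading this variety-level statement to actual membership in the finite set of indecomposables of $\cD_{\ulambda}$ (see \Cref{RemFirst}(3)), perhaps via a comparison of Jordan types along generic specialisations of $\ulambda$.

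\textbf{Part (2).} Granted part (1), the monoidal structure on $\Theta^{\ulambda}$ extending the already symmetric monoidal $F$ of \eqref{DefF} is constructed from the natural composition pairing
\[
\Hom_E(R_{\ulambda}(P_1),M)\otimes\Hom_E(R_{\ulambda}(P_2),N)\;\longrightarrow\; \Hom_E(R_{\ulambda}(P_1)\otimes R_{\ulambda}(P_2),\,M\otimes N),
\]
precomposed with comparison maps supplied by the monoidal structure on $\cP\subset\Ver_{p^n}$. Part (1) ensures that the subspaces $H_{\ulambda}$ on either side form compatible subbifunctors under this pairing, so it descends to the quotients defining $\Theta^{\ulambda}$. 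Associativity, unitality and symmetry then reduce to the known coherence of $F$ via the universal property of $\Ver_{p^n}$ as the abelian envelope of $(\Tilt SL_2)/I_n$ (\Cref{abenv}).

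\textbf{The $\bV$-property.} Under the regular action of $C_p^n$ on itself, every proper $H<E$ is non-transitive, so I must show $\Theta^{\ulambda}(\Ind^E_H\unit)=0$ for each such $H$. By the projection formula and Frobenius reciprocity, this reduces for every $P\in\cP$ to showing that each $H$-invariant of $\Res^E_HR_{\ulambda}(P^\ast)\simeq R_{\ulambda|_H}(P^\ast)$ lies in the image of some $H$-morphism from $V_{\ulambda|_H}$. The decisive observation is that $P\in\cP$ corresponds via \eqref{EquivProj} to a tilting module $T_i$ with $p^{n-1}\le i<p^n-1$, while $H$ being proper means $H\simeq C_p^k$ with $k\le n-1$, so $T_i\in I_k$; therefore $R_{\ulambda|_H}(P^\ast)$ is already zero in $\cD_{\ulambda|_H}/\cK_{\ulambda|_H}$ by \Cref{FirstResult}(2), which is precisely the required statement. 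The case $H=1$ is handled directly since $R_{\ulambda}(P^\ast)\otimes\bk E$ is a free $E$-module. Combining everything, \Cref{ThmOfromV} delivers part (3).
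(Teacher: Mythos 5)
First, note that the statement you are trying to prove is Conjecture~5.4.2, which the paper itself does not prove. What the paper proves is \Cref{ThmConj}: the chain of implications $(1)\Rightarrow(2)\Rightarrow(3)$, together with \Cref{LemId}, which recasts part (1) as a thick-tensor-ideal condition in $\Rep E$ (this is Conjecture~B from the introduction, left open). Your overall plan --- derive the monoidal structure on $\Theta^{\ulambda}$ from (1), then deduce the $\bO$-functor statement from (2) via \Cref{ThmOfromV} --- matches the paper's logical structure, but the execution of $(1)\Rightarrow(2)$ has a genuine gap.

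Your argument for $(1)\Rightarrow(2)$ does not hold up as written. You propose equipping $\Theta^{\ulambda}$ with a monoidal structure by descending a composition pairing
$\Hom_E(R_{\ulambda}(P_1),M)\otimes\Hom_E(R_{\ulambda}(P_2),N)\to\Hom_E(R_{\ulambda}(P_1)\otimes R_{\ulambda}(P_2),M\otimes N)$,
but the tensor product on $\Ver_{p^n}\simeq\Fun_{\bk}(\cP^{\op},\Vecc)$ is a Day-convolution tensor product, not a pointwise one, so such a pairing does not immediately produce a lax (let alone strong) monoidal structure, and you give no argument for why the $H_{\ulambda}$-subspaces form a compatible subbifunctor or why the resulting structure is strong and symmetric. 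The paper sidesteps all of this: it first constructs a monoidal extension $\widetilde F$ of $F$ from $\cD_{\ulambda}$ to all of $\Rep E$ via \Cref{LemUniqueExt}, taking $X=R_{\ulambda}(T_{2p^{n-1}-2})$ and $f=R_{\ulambda}(f')$ for a nonzero $f'\colon T_{2p^{n-1}-2}\to\unit$. The hypothesis that $X\otimes Y$ lies in $\cD_{\ulambda}$ for every $Y\in\Rep E$ is exactly where Conjecture~(1) enters, through \Cref{LemId}. Only afterwards does one identify $\widetilde F$ with $\Theta^{\ulambda}$ by an adjunction computation on each projective $P$. Without this intermediate object $\widetilde F$ your sketch cannot be completed.

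Your $\bV$-property argument for $(2)\Rightarrow(3)$ is essentially the paper's up to dualisation: the paper shows $R_{\ulambda}(St_{n-1})\otimes\Ind^E_H\unit$ is projective using \Cref{FirstResult}(1), while you observe $T_i\in I_k$ for $p^{n-1}\le i$ and $k\le n-1$ and invoke \Cref{FirstResult}(2); these are two faces of the same computation. Finally, your rank-variety sketch for part (1) is a sensible direction but, as you acknowledge, not a proof: support varieties detect the correct subset of $\mA^n$ but not which indecomposables of $\cD_{\ulambda}$ actually occur, and bridging that gap is precisely the content of Conjecture~B, which the paper explicitly defers to future work.
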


\begin{theorem}\label{ThmConj}
    We have the implications $(1)\Rightarrow (2)\Rightarrow (3)$ between the statements in \Cref{Conj}.
\end{theorem}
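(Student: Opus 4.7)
The plan is to treat the two implications separately.

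For $(1)\Rightarrow(2)$, the plan is to transport the symmetric monoidal structure of $F$ (from \Cref{FirstResult}(2)) across the inclusion $\cD_\ulambda\hookrightarrow\Rep E$. The restriction of $\Theta^\ulambda$ to $\cD_\ulambda$ coincides with $F$ by \eqref{DefForTheta2}, so the task is to extend the monoidal constraint of $F$ to all of $\Rep E$. The candidate constraint
\[
    \mu_{M,N}\colon \Theta^\ulambda(M)\otimes\Theta^\ulambda(N)\to\Theta^\ulambda(M\otimes N)
\]
is induced, on evaluations at projectives $P\in\cP$, by the natural pairing of Hom-spaces coming from the monoidality of $R_\ulambda\colon\Tilt SL_2\to\cD_\ulambda$ and of the tensor product on $\Rep E$. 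The role of condition (1) is to force $R_\ulambda(St_{n-1})\otimes M$, and iteratively $R_\ulambda(St_{n-1})^{\otimes 2}\otimes M\otimes N$, to lie in $\cD_\ulambda$: this legitimises the identification of the evaluations on both sides of $\mu_{M,N}$ with Hom-spaces in $\cD_\ulambda/\cK_\ulambda\simeq(\Tilt SL_2)/I_n$. The coherence axioms (associativity, symmetry, unit) for $\mu$ are then inherited from those of $F$.

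For $(2)\Rightarrow(3)$, the plan is to apply \Cref{ThmOfromV} to the permutation group $E=C_p^n<S_{p^n}$ of \Cref{SecPerm}. Granting that $\Theta^\ulambda$ is a $\bV$-functor, that theorem produces an $\bO^n\{\Ver_{p^n}\}=\bO^n_n$-functor whose defining formula coincides with $\Phi^\ulambda_\cC$ after unwinding \Cref{DefTheta}. The substantive content is therefore the verification of the $\bV$-functor property of $\Theta^\ulambda$: that $\Theta^\ulambda(\Ind_H^E\unit)=0$ for every proper subgroup $H<E$, which matches the non-transitivity condition for $E$'s regular action on $\{1,\ldots,p^n\}$.

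I would verify this $\bV$-functor property directly from the defining formula of $\Theta^\ulambda$. Using the projection formula $R_\ulambda(P^*)\otimes\Ind_H^E\unit\simeq\Ind_H^E\Res_H^E R_\ulambda(P^*)$ together with Shapiro's lemma $H^0(E,\Ind_H^E N)\simeq N^H$, the vanishing $\Theta^\ulambda(\Ind_H^E\unit)(P)=0$ reduces to: every $H$-invariant of $\Res_H^E R_\ulambda(P^*)$ equals $f(v)$ for some $H$-morphism $f\colon\Res_H^E V_\ulambda\to\Res_H^E R_\ulambda(P^*)$, with $v\in V_\ulambda$ the socle generator. For $H=1$ this is immediate since $\Res_1 V_\ulambda\simeq\unit\oplus\unit$. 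For non-trivial proper $H<E$, the faithfulness of $\ulambda$ makes $\Res_H V_\ulambda$ a faithful two-dimensional $H$-module, and the required lift exists because $\Res_H^E R_\ulambda(T_i)$, for the projective tilting modules $T_i$ with $p^{n-1}\le i<p^n-1$, has no trivial direct summand as an $H$-module --- a fact extracted from the invariant-space computations used in the proof of \Cref{PropT}.

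The main obstacle lies in the implication $(1)\Rightarrow(2)$. The category $\Ver_{p^n}\simeq\Fun_\bk(\cP^{\op},\Vecc)$ is presented via its own projectives, so its tensor product is given by a coend (Day convolution) rather than a pointwise operation, and transporting a symmetric monoidal structure through this presentation is delicate. Condition (1) is precisely what reduces every relevant computation back to the manifestly monoidal subcategory $\cD_\ulambda$, where the structure is governed by the tilting calculus of $SL_2$ via \Cref{FirstResult}.
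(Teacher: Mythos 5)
Your proposal has genuine gaps in both implications, and neither matches the mechanism the paper actually uses.

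For $(1)\Rightarrow(2)$: you correctly identify the difficulty ("the main obstacle") --- that transporting the monoidal structure of $F$ from $\cD_{\ulambda}$ to all of $\Rep E$ is delicate since $\Ver_{p^n}\simeq\Fun_{\bk}(\cP^{\op},\Vecc)$ carries a Day-convolution tensor product --- but you never actually resolve it. You assert that the candidate constraint $\mu_{M,N}$ "is induced... by the natural pairing of Hom-spaces" and that coherence "is then inherited," but these are precisely the steps that need to be carried out, and you supply no construction. The paper's proof hinges on a purpose-built tool, \Cref{LemUniqueExt}: given a full monoidal subcategory $\cA\subset\widetilde{\cA}$, an LSM functor $F:\cA\to\cC$, and a morphism $f:X\to\unit$ with $F(f)\neq 0$ such that $X\otimes Y\in\cA$ for every $Y\in\widetilde{\cA}$, there is a unique LSM extension $\widetilde F$ given by $\widetilde F(Y)=\coker F(Y\otimes f_\Delta)$. \Cref{Conj}(1) is converted into exactly the hypothesis $X\otimes Y\in\cA$ via \Cref{LemId}, with $X=R_{\ulambda}(T_{2p^{n-1}-2})$. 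A separate chain of Hom-space isomorphisms (adjunction, \Cref{DefF}, monoidality of $\widetilde F$, \Cref{FirstResult}(2), \Cref{DefForTheta}) then identifies $\widetilde F$ with $\Theta^{\ulambda}$. None of this machinery appears in your proposal.

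For $(2)\Rightarrow(3)$: the high-level strategy is correct --- verify that $\Theta^{\ulambda}$ is a $\bV$-functor and invoke \Cref{ThmOfromV} --- but your verification of the $\bV$-property is flawed. You reduce to showing that every $H$-invariant of $\Res^E_H R_{\ulambda}(P^\ast)$ is of the form $\bar f(v)$ for some $H$-morphism $\bar f:\Res_H V_{\ulambda}\to\Res^E_H R_{\ulambda}(P^\ast)$, and claim this holds because $\Res^E_H R_{\ulambda}(T_i)$ has no trivial direct summand. That criterion is too weak: over $H=C_2^2$ with a faithful $V_{\ulambda}=A_1(\lambda)$, the module $A_1(0)$ has no trivial direct summand, yet every $H$-morphism $A_1(\lambda)\to A_1(0)$ kills the socle of $A_1(\lambda)$, so no invariant of $A_1(0)$ arises as $\bar f(v)$, and the quotient $H^0/H_{\ulambda}$ is nonzero. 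The correct sufficient condition, and the one the paper actually establishes, is projectivity of $\Res^E_H R_{\ulambda}(St_{n-1})$; this follows by noting $\Res^E_H R_{\ulambda}(St_{n-1})=R_{\ulambda'}(St_{n-1})$ for some $\ulambda'\in\mA^m_f(\bk)$ with $m<n$ and invoking \Cref{FirstResult}(1). Projectivity forces $H_{\ulambda}=H^0(E,-)$ by the property recorded in \Cref{DefTheta}, giving the vanishing cleanly; your appeal to "invariant-space computations used in the proof of \Cref{PropT}" does not deliver this.
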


Before we prove the theorem, we make some comments and preparations.

\begin{remark}
    \begin{enumerate}
        \item If $p=2=n$ and $\ulambda\in\mA^2_f$, we set $\lambda:=\lambda_2/\lambda_1\in\bk\backslash\mF_2$. Then $R_{\ulambda}(St_1)$ is isomorphic to $A_1(\lambda)$ from \Cref{NotKG} and \Cref{Conj} follows immediately from \Cref{FusRules} below. More substantial evidence for the conjecture will be given in future work.
        \item Potentially, \Cref{Conj}(3) is weaker than \Cref{Conj}(1) or (2) due to \Cref{RemBd}. Indeed, to prove (3) it would be sufficient to show that (1) is true only for all $M\in \Rep E$ in the image of 
        $$\cC\to\Rep S_{p^n}\xrightarrow{\Res^{S_{p^n}}_E}\Rep E$$
        for all tensor categories $\cC$.
    \end{enumerate}
\end{remark}

In the following lemma, by a `thick tensor ideal' in $\Rep E$ we mean a collection of objects, closed under taking direct sums and summands, which is also closed under taking tensor products with arbitrary representations in $\Rep E$.

\begin{lemma}\label{LemId}
    \Cref{Conj}(1) is valid if and only if the full subcategory of $\Rep E$ comprising the image under $R_{\ulambda}$ of direct sums of copies of $T_i$ with $i\ge p^{n-1}-1$ is a thick tensor ideal in $\Rep E$.
\end{lemma}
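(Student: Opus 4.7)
The backward direction is immediate: if $\cI$ is a thick tensor ideal then, since $R_{\ulambda}(St_{n-1})=R_{\ulambda}(T_{p^{n-1}-1})\in\cI$, we obtain $R_{\ulambda}(St_{n-1})\otimes M\in\cI\subseteq\cD_{\ulambda}$ for every $M\in\Rep E$, which is \Cref{Conj}(1).

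For the forward direction, I would assume \Cref{Conj}(1) and verify the three closure properties of $\cI$ in $\Rep E$ in turn. Closure under direct sums is tautological from the definition. Closure under direct summands in $\Rep E$ follows from Krull--Schmidt together with \Cref{RemFirst}(2): each $R_{\ulambda}(T_j)$ with $j<p^n-1$ is indecomposable in $\Rep E$, while those with $j\ge p^n-1$ are projective and hence decompose into copies of the unique indecomposable projective $\bk E=R_{\ulambda}(St_n)$. In either case, every indecomposable summand in $\Rep E$ of an object $R_{\ulambda}(\bigoplus_k T_{i_k})$ with $i_k\ge p^{n-1}-1$ is again of the form $R_{\ulambda}(T_j)$ with $j\ge p^{n-1}-1$, hence in $\cI$.

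The essential step is closure under tensoring with arbitrary $M\in\Rep E$. Since $I_{n-1}$ is generated by $St_{n-1}$ as a tensor ideal of $\Tilt SL_2$, every $T_i$ with $i\ge p^{n-1}-1$ is a direct summand of $St_{n-1}\otimes T'$ for some $T'\in\Tilt SL_2$, so $R_{\ulambda}(T_i)\otimes M$ is a direct summand of $R_{\ulambda}(St_{n-1})\otimes R_{\ulambda}(T')\otimes M$. By \Cref{Conj}(1), this tensor product lies in $\cD_{\ulambda}$, and hence so does $R_{\ulambda}(T_i)\otimes M$. Together with closure under summands, it therefore suffices to show that for every $N\in\Rep E$ the module $R_{\ulambda}(St_{n-1})\otimes N$ lies in $\cI$, i.e., that all its indecomposable summands are $R_{\ulambda}(T_j)$ with $j\ge p^{n-1}-1$.

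For this last step, my plan is to argue via relative projectivity and rank varieties over the elementary abelian $p$-group $E=C_p^n$. One expects $R_{\ulambda}(St_{n-1})$ to become projective upon restriction to a suitable index-$p$ subgroup $U_{n-1}<U_n$, so that $R_{\ulambda}(St_{n-1})\otimes N$ and all its summands are $U_{n-1}$-projective. A complementary rank-variety computation, extending the explicit Weyl-module analysis in the proof of \Cref{PropT}, should show that for $0\le j<p^{n-1}-1$ the rank variety of $R_{\ulambda}(T_j)$ is not contained in that of $R_{\ulambda}(St_{n-1})$, which would rule out such $R_{\ulambda}(T_j)$ as summands of a module whose support is contained in that of $R_{\ulambda}(St_{n-1})$. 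The main obstacle I anticipate is precisely this explicit comparison of rank varieties for restrictions of $SL_2$-tilting modules to $U_n$, which requires a more refined structural description than the invariant-dimension counts recorded in \Cref{LemNabla} and \Cref{PropT}.
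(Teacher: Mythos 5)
Your backward direction and the reduction in the forward direction (closure under sums and summands via \Cref{RemFirst}(2); reducing the tensor-ideal condition to showing $R_{\ulambda}(St_{n-1})\otimes N\in\cI$ for all $N\in\Rep E$, using that each $T_i$ with $i\ge p^{n-1}-1$ is a summand of $St_{n-1}\otimes T'$) all match the paper's setup. But the final step is a genuine gap: \Cref{Conj}(1) gives only $R_{\ulambda}(St_{n-1})\otimes N\in\cD_{\ulambda}$, and you have no argument that it actually lies in the smaller subcategory $\cI$. Your proposed rank-variety route is speculative; you would need to show that for $j<p^{n-1}-1$ the support variety of $R_{\ulambda}(T_j)$ is not contained in that of $R_{\ulambda}(St_{n-1})$, and nothing in \Cref{LemNabla} or \Cref{PropT} gets you there (in particular for $j=0$ one would need $V(R_{\ulambda}(St_{n-1}))$ to be proper, which requires its own argument), so the proposal is incomplete as you acknowledge.

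The paper avoids this entirely with a self-duality trick you are missing. Since $S:=St_{n-1}$ is self-dual, it is a direct summand of $S^{\otimes 3}$, so any $T$ as in the statement is a summand of $S\otimes S\otimes T''$ for some $T''\in\Tilt SL_2$. Then $R_{\ulambda}(T)\otimes M$ is a summand of $R_{\ulambda}(T''\otimes S)\otimes\bigl(R_{\ulambda}(S)\otimes M\bigr)$; apply \Cref{Conj}(1) to replace $R_{\ulambda}(S)\otimes M$ by $R_{\ulambda}(T_1)$ for some tilting module $T_1$, which yields $R_{\ulambda}(T''\otimes T_1\otimes S)$. Since $T''\otimes T_1\otimes S\in I_{n-1}$, its summands are $T_j$ with $j\ge p^{n-1}-1$, so this object lies in $\cI$, and by closure under summands so does $R_{\ulambda}(T)\otimes M$. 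The point is that the spare factor of $S$ keeps the result in the ideal $I_{n-1}$ even after \Cref{Conj}(1) has been used to absorb the arbitrary $M$ into $\cD_{\ulambda}$, so no independent support-variety input is needed.
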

\begin{proof}
    Clearly, the property in the lemma implies \Cref{Conj}(1). On the other hand, assume \Cref{Conj}(1) is valid. Let $T\in \Tilt SL_2$ be a direct sum of copies of $T_i,i\ge p^{n-1}-1$ and set $S=St_{n-1}$. By the ideal structure recalled in \Cref{TiltSL2}, $T$ is a summand of $S\otimes T'$ for some $T'\in \Tilt SL_2$. Moreover $S$, as a self-dual representation, is a summand of $S\otimes S\otimes S$. Hence we can write $T$ as a summand of $S\otimes S\otimes T''$ (with $T''=T'\otimes S$) 
    
    For any arbitrary $M\in \Rep E$, it follows that $R_{\ulambda}(T)\otimes M$ is a summand of
    $$ R_{\ulambda}(T''\otimes S)\otimes R_{\ulambda}(S)\otimes M\simeq R_{\ulambda}(T''\otimes S)\otimes R_{\ulambda}(T_1)\simeq R_{\ulambda}(T''\otimes T_1 \otimes S), $$
    for some $T_1\in\Tilt SL_2$, where the first isomorphism is application of \Cref{Conj}(1). Again by the ideal structure of $\Tilt SL_2$, the right-hand side is again in the full subcategory claimed to be an ideal in the lemma. 
\end{proof}

 For a morphism $u:U\to\unit$ in an LSM category, we define the morphism
$$u_\Delta:\;U\otimes U\xrightarrow{u\otimes U-U\otimes u}U.$$

\begin{lemma}\label{LemUniqueExt}
    Consider an LSM pseudo-abelian monoidal category $\widetilde{\cA}$ with full monoidal subcategory $\cA$. Consider an LSM functor $F:\cA\to\cC$ to a tensor category~$\cC$ and a morphism $f:X\to\unit$ in $\cA$ with $F(f)\not=0$. Assume that for every $Y\in\widetilde{\cA}$, the object $X\otimes Y$ belongs to $\cA$. Then there is a unique (up to isomorphism) LSM functor
$$\widetilde{F}\;:\; \widetilde{\cA}\to\cC$$
which restricts to $F$ on $\cA$. The underlying $\bk$-linear functor of $\widetilde{F}$ is given by
$$\widetilde{F}(Y)\;=\; \coker F(Y\otimes f_\Delta).$$
\end{lemma}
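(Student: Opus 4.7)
My plan is to prove uniqueness first, which both forces the displayed formula for $\widetilde{F}$ and guides the existence construction, and then verify existence.

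The key identity used throughout is $\coker F(f_\Delta)\cong\unit$ in $\cC$. Since $F(f)\neq 0$ and $\unit$ is simple in the tensor category $\cC$, $F(f)\colon F(X)\to\unit$ is an epimorphism; the relation $f\circ f_\Delta=0$ (immediate from $f\circ(f\o\id-\id\o f)=f\o f-f\o f$) produces a canonical factorisation $\coker F(f_\Delta)\twoheadrightarrow\unit$. Restricting $F(f_\Delta)=F(f)\o\id-\id\o F(f)$ to $F(X)\o\ker F(f)$ kills the second summand and identifies the first with $F(f)\o\id$, which is epi onto $\ker F(f)$ by exactness of tensor in $\cC$; so the image of $F(f_\Delta)$ equals $\ker F(f)$ and the factorisation is an isomorphism.

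For uniqueness, let $\widetilde{F}'$ be any LSM extension of $F$ to $\widetilde{\cA}$. Iterating the hypothesis gives $Y\o X^{\o k}\in\cA$ for all $k\ge 0$, so fullness of $\cA\subset\widetilde{\cA}$ places $Y\o f_\Delta$ in $\cA$; the LSM monoidal structure of $\widetilde{F}'$ then identifies $\widetilde{F}'(Y\o f_\Delta)$ with $\widetilde{F}'(Y)\o F(f_\Delta)$. Exactness of tensor in $\cC$ combined with the key identity yields
\[
\coker F(Y\o f_\Delta)\;\cong\;\widetilde{F}'(Y)\o\coker F(f_\Delta)\;\cong\;\widetilde{F}'(Y),
\]
which forces the displayed formula on objects and hence proves uniqueness up to natural iso; specialising to $Y\in\cA$ also gives the restriction statement for free.

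For existence, define $\widetilde{F}$ on objects by the formula and on a morphism $g\colon Y_1\to Y_2$ in $\widetilde{\cA}$ by using that $g\o\id_{X^{\o k}}$ lies in $\cA$ (by fullness) for $k=1,2$ and that naturality of $-\o f_\Delta$ induces $\widetilde{F}(g)$ on the cokernels. The monoidal structure $\mu_{Y_1,Y_2}\colon\widetilde{F}(Y_1)\o\widetilde{F}(Y_2)\to\widetilde{F}(Y_1\o Y_2)$ is built from the morphism $m:=\id_{Y_1}\o f\o\id_{Y_2\o X}\colon Y_1\o X\o Y_2\o X\to Y_1\o Y_2\o X$, which lies in $\cA$: the composite $q_{Y_1\o Y_2}\circ F(m)$ annihilates both defining relations of $\widetilde{F}(Y_1)\o\widetilde{F}(Y_2)$ (on one factor via $f\circ f_\Delta=0$ directly, on the other via routing through $(Y_1\o Y_2)\o f_\Delta$), and hence descends to $\mu$. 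To see $\mu$ is iso, use the natural iso $\alpha_Y\colon\widetilde{F}(Y)\o F(X)\cong F(Y\o X)$ (an immediate tensor-exactness consequence of the key identity) to identify $\mu_{Y_1,Y_2}\o\id_{F(X)}$ with $\mu_{Y_1,Y_2\o X}$ under $\alpha$; since $Y_2\o X\in\cA$, the latter unwinds via monoidality of $F$ and the cokernel formula to a natural identification and so is iso. Because $F(X)\neq 0$, the functor $-\o F(X)$ reflects isomorphisms in the tensor category $\cC$ (the point is that $A\o F(X)=0$ implies $A=0$, via the mono $\unit\hookrightarrow F(X)\o F(X)^\vee$ coming from the non-zero map $\mathrm{coev}_{F(X)}$ and the simplicity of $\unit$), so $\mu_{Y_1,Y_2}$ itself is iso. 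Coherence (associativity, unitality, symmetry) of $\widetilde{F}$ reduces to that of $F$ by standard diagram chases using naturality of $f$.

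The main obstacle will be verifying that $\mu$ is an isomorphism: the epimorphism property is immediate from the construction, but injectivity is packaged in the iso-reflection step for $-\o F(X)$, which hinges on the full strength of exactness of tensor and rigidity in $\cC$ together with careful bookkeeping to show $\mu_{Y_1,Y_2}\o\id_{F(X)}$ really does correspond to $\mu_{Y_1,Y_2\o X}$ under the $\alpha$-identifications.
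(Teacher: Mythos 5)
Your proof of the key identity $\coker F(f_\Delta)\cong\unit$ is a correct, self-contained argument; the paper simply cites \cite[Theorem~4.2.2(ii)]{Top} for the right-exactness of $U\otimes U\xrightarrow{u_\Delta}U\xrightarrow{u}\unit\to 0$, so you have supplied a proof the paper omits. Your uniqueness argument is also correct and matches what the paper has in mind (the paper only remarks that ``uniqueness is verified using similar arguments''): apply any monoidal extension to $Y\otimes f_\Delta\in\cA$ and use exactness of $\otimes$ in $\cC$ to force the formula.

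The divergence, and the gap, is in the construction of the monoidal constraint. The paper never has to prove that a candidate morphism is invertible: it \emph{constructs} the constraint as a chain of natural isomorphisms, via the intermediate identification $F(A\otimes X)\otimes\widetilde{F}(Y)\simeq\coker F(A\otimes X\otimes Y\otimes f_\Delta)\simeq F(A\otimes X\otimes Y)$ and then the braiding, so invertibility is automatic. You instead define $\mu_{Y_1,Y_2}$ by descending $F(m)$ for $m=\id_{Y_1}\otimes f\otimes\id_{Y_2\otimes X}$; the two relations needed for well-definedness are fine and your verification is correct. But your route to invertibility has a real hole. You want to identify $\mu_{Y_1,Y_2}\otimes\id_{F(X)}$ with $\mu_{Y_1,Y_2\otimes X}$ under $\alpha$, and then declare $\mu_{Y_1,Y_2\otimes X}$ an isomorphism because $Y_2\otimes X\in\cA$. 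Neither step is immediate. The first is not mere bookkeeping: the natural map $\alpha_Y\colon\widetilde{F}(Y)\otimes F(X)\to F(Y\otimes X)$ is precisely the component $\mu_{Y,X}$, so the square you need is literally an instance of the associativity constraint of $\mu$ that you have not yet established — invoking it at this stage is circular. The second step is also not automatic: $Y_1$ is still arbitrary, so $\mu_{Y_1,Y_2\otimes X}$ is not a component of the monoidal structure of $F$, and one still must check by hand that your explicit $F(m')$ descends to the same map as the cokernel-chain isomorphism. Once you carry out those checks you will essentially have reproduced the paper's chain of isomorphisms; it is cleaner to \emph{define} $\mu$ as that chain so that it is tautologically invertible, and only afterward verify naturality and coherence.
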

\begin{proof}
Observe first that, for any morphism $u:U\to\unit$ in a tensor category, the sequence
$$U\otimes U\xrightarrow{u_\Delta} U\xrightarrow{u}\unit\to 0$$
is (right) exact, see \cite[Theorem~4.2.2(ii)]{Top}. In particular, with the $\bk$-linear functor $\widetilde{F}$ as defined in the lemma, it follows that for $X\in\cA$, we have natural isomorphisms
$$\widetilde{F}(X)\;\simeq\; \coker(F(X)\otimes F(f)_{\Delta})\;\simeq\; F(X),$$
where we also used exactness of the tensor product in $\cC$.

We can give a monoidal structure on $\tilde{F}$ as follows. First we can observe that for any $A,Y\in \widetilde{\cA}$, the above displayed isomorphisms and the fact that $A\otimes X\otimes Y\simeq (A\otimes Y)\otimes X\in \cA$, yield a natural isomorphism
$$ F(A\otimes X)\otimes \widetilde{F}(Y)\;\simeq \;\coker F(A\otimes X\otimes Y\otimes f_\Delta)\;\simeq\;F(A\otimes X\otimes Y).$$

    For $Y_1,Y_2\in \widetilde{\cA}$, we thus obtain natural isomorphisms
    $$\widetilde{F}(Y_1)\otimes \widetilde{F}(Y_2)\;\simeq\; \coker\left(F(Y_1\otimes f_\Delta)\otimes \widetilde{F}(Y_2)\right)\;\simeq\; \coker F(Y_1\otimes f_\Delta\otimes Y_2),$$
    and thus by using the braiding
    $$\widetilde{F}(Y_1)\otimes \widetilde{F}(Y_2)\;\simeq\; \coker\left(F(Y_1\otimes Y_2\otimes f_\Delta )\right)\;=\; \widetilde{F}(Y_1\otimes Y_2).$$
    It can be verified that these isomorphisms equip $\widetilde{F}$ with a monoidal structure which extends the one on $F$. Uniqueness is verified using similar arguments.  
\end{proof}

\begin{proof}[Proof of \Cref{ThmConj}]
    First we prove that (1) implies (2). We start with an application of \Cref{LemUniqueExt}. We set $\widetilde{\cA}=\Rep E$ and $\cA=\cD_{\ulambda}$. We have a non-zero morphism in $\Tilt SL_2$
    $$f': T_{2p^{n-1}-2}\;\to\; \unit,$$
see \cite[Lemma~5.3.3]{Selecta}, so we set $f=R_{\ulambda}(f')$ and $X=R_{\ulambda}(T_{2p^{n-1}-2})$. Finally, we consider the composite LSM functor $F:\cD_{\ulambda}\to\Ver_{p^n}$ from \Cref{DefF}. Note that $F(f)\not=0$, since by construction the image of $f$ under the composite of the first two functors in \Cref{DefF} is simply $f'$, which is not contained in $I_n$.
Hence, by \Cref{LemId}, under \Cref{Conj}(1), all assumptions in \Cref{LemUniqueExt} are satisfied and we have the unique LSM extension
$$\widetilde{F}:\; \Rep E\to \Ver_{p^n}.$$    

Now we show that $\widetilde{F}$ is isomorphic to $\Theta^{\ulambda}$. For this we observe that for a projective object $P\in\cP$ in $\Ver_{p^n}$, which is thus of the form $\Sigma^n(T)$ for $T\in \Tilt^{[n]}SL_2$, and $M\in \Rep E$, we have natural isomorphisms
\begin{eqnarray*}
    \Hom_{\Ver_{p^n}}(P,\widetilde{F}(M))&\simeq& \Hom_{\Ver_{p^n}}(\unit, P^\ast\otimes \widetilde{F}(M))\\
    &\simeq&  \Hom_{\Ver_{p^n}}(\unit, F(R_{\ulambda}(T^\ast))\otimes \widetilde{F}(M))\\
    &\simeq&  \Hom_{\Ver_{p^n}}(\unit, F(R_{\ulambda}(T^\ast)\otimes M))\\
    &\simeq& \Hom_{E}(\unit, R_{\ulambda}(T^\ast)\otimes M)/\cK(\unit,R_{\ulambda}(T^\ast)\otimes M)\\
    &\simeq&  \Hom_{\Ver_{p^n}}(P,\Theta^{\ulambda}(M)).
\end{eqnarray*}
Here, the first isomorphism is adjunction, the second is an application of \Cref{DefF}, the third uses \Cref{Conj}(1) and the fact that $\widetilde{F}$ is monoidal and extends $F$, the fourth is an application of the equivalence in \Cref{FirstResult}(2), while the last is the interpretation of $\Theta^{\ulambda}$ as being defined by \Cref{DefForTheta}.

Now we prove that (2) implies (3) in \Cref{Conj}. Firstly we observe that $\Theta^{\ulambda}:\Rep E\to \Ver_{p^n}$ is a $\bV$-functor. We thus need to show that $\Theta^{\ulambda}(\Ind^E_H\unit)=0$ for all proper subgroups $H<E$. Chasing through the definition of $\Theta^{\ulambda}$ (and using the ideal structure in $\Tilt SL_2$) it follows that it is sufficient to show that 
$$R_{\ulambda}(St_{n-1})\otimes \Ind^E_H\unit\;\simeq\;\Ind^E_H\Res^E_HR_{\ulambda}(St_{n-1})$$
is projective. This follows since $\Res^E_HR_{\ulambda}(St_{n-1})$ is projective for any proper subgroup $H<E$. Indeed, $\Res^E_HR_{\ulambda}(St_{n-1})$ is simply $R_{\ulambda'}(St_{n-1})$ for some $\ulambda'\in\mA^{m}_f(\bk)$ with $m<n$, so the conclusion follows from \Cref{FirstResult}(1).
Now the functor $\Phi^{\ulambda}_{\cC}$ is constructed from $\Theta^{\ulambda}$ by the procedure in \Cref{ThmOfromV}, concluding the proof.
\end{proof}

\begin{example}
    The definition of $\Phi^n_{\cC}$ for $n=1$ can be seen to be equivalent with the definition of $\Fr$ in \cite[\S 4.3]{Tann}.
\end{example}


\section{The Klein 4-group}\label{SecKlein}
In this section we assume $\chara(\bk)=2$. We gather and prove some results that are needed to establish \Cref{Conj} for the case $p=2=n$, and more generally explore the full potential for $\bO$-functors based on the transitive permutation group $C_2^2<S_4$ using the formalism from \Cref{SecConst}.

\subsection{Classifying LSM functors to tensor categories}
\subsubsection{Notation}\label{NotKG}

Recall, for instance from \cite{Conlon}, that the indecomposable modules of $\bk C_2^2$ are classified as follows. For every $i\in\mZ$, we have the $i$-th Heller shift $\Omega^{i}\unit$. For instance, $\Omega^{-1}\unit$ is three-dimensional with simple top. Besides the indecomposable projective module, this leaves the (self-dual) indecomposable modules $A_m(\lambda)$ with $m\in\mZ_{>0}$ and $\lambda\in\mP^1(\bk)$. For $\lambda\in\bk=\mA^1(\bk)\subset\mP^1(\bk)$, $A_m(\lambda)$ is the $2m$-dimensional module on which the generators act as
\begin{equation}\label{DefAlambda}
    g_1\,\mapsto\,\left(\begin{array}{c|c}
       \mI_m  & 0 \\
       \hline
       \mI_m  & \mI_m
    \end{array}\right) \quad\mbox{and}\quad g_2\,\mapsto\, \left(\begin{array}{c|c}
       \mI_m  & 0 \\
          \hline
       \mJ_\lambda  & \mI_m
    \end{array}\right),
\end{equation}
where $\mI_m$ is the $m\times m$ identity matrix and $\mJ_\lambda$ is an indecomposable Jordan block with generalised eigenvalue $\lambda$. Finally, $A_m(\infty)$ is $A_m(0)$ but with the roles of the two generators reversed.

It will be useful for the sequel to fix, for every $\mu\in\mP^1(\bk)$, a morphism $f_\mu$ in $\Rep C_2^2$, uniquely determined up to a (non-zero) scalar, which yields a short exact sequence
\begin{equation}\label{seslambda}0\to \unit\xrightarrow{f_\mu} \Omega^{-1}\unit\to A_1(\mu)\to 0.\end{equation}
Up to duality, this realises $A_1(\mu)$ as a Carlson module. 

Finally, we denote by $\alpha_2$ the Frobenius kernel of the additive group $\mG_a$ and by $\Vecc_{\mZ}$ the tensor category of $\mZ$-graded vector spaces, with ordinary braiding (making the forgetful functor to $\Vecc$ symmetric monoidal).

\begin{theorem}\label{ThmClass}
    Every LSM functor from $\Rep C_2^2$ to a tensor category is a composite of a tensor functor with (precisely one) of the following LSM functors.
    \begin{enumerate}
    \item[(0)] The identity functor of $\Rep C_2^2$.
        \item For every $\lambda\in\mP^1(\bk)\backslash \mP^1(\mF_2)$ a non-faithful LSM functor 
        $$\Theta=\Theta^\lambda_1\;:\;\Rep C_2^2\;\to\; \Ver_4,$$
        uniquely determined by the property $\Theta(A_1(\lambda))\not=0$ (which is equivalent to the property $\Theta(A_2(\lambda))\not=0$).
        
        \item For every $\lambda\in\mP^1(\mF_2)$ a non-faithful LSM functor
        $$\Theta=\Theta^\lambda_0\;:\;\Rep C_2^2\;\to\; \Rep C_2,$$
        uniquely determined by the property that $\Theta(A_{1}(\lambda))$ is projective.
        \item For every $\lambda\in\mP^1(\bk)$ and $m\in\mZ_{>0}$, with $m>1$ in case $\lambda\not\in\mP^1(\mF_2)$, an LSM functor
        $$\Theta=\Theta^\lambda_m\;:\;\Rep C_2^2\;\to\; \Rep \alpha_2,$$
        uniquely determined by the properties that $\Theta(A_m(\lambda))=0$ and that $\Theta(A_{m+1}(\lambda))$ is projective.
        \item For every $\lambda\in\mP^1(\bk)$ an LSM functor
        $$\Theta=\Theta_\infty^\lambda\;:\;\Rep C_2^2\;\to\; \Vecc,$$
        uniquely determined by the properties $\Theta(A_m(\mu))=0$ for all $\mu\in\mP^1(\bk)$ and $m\in\mZ_{>0}$, and $\Theta(f_\lambda)=0$ while $\Theta(f_\mu)\not=0$ for some $\mu$.
        \item An LSM functor
        $$\Theta=\Theta_\infty=\Sigma\;:\;\Rep C_2^2\;\to\; \Vecc_{\mZ},$$
        uniquely determined by the properties $\Theta(f_\lambda)=0$ and $\Theta(A_m(\lambda))=0$ for all $\lambda\in\mP^1(\bk)$ and $m\in\mZ_{>0}$, and $\Theta(\Omega^1\unit)\simeq\unit\langle 1\rangle$, the tensor unit in degree $1$.
    \end{enumerate}
\end{theorem}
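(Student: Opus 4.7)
The overall strategy is to analyse an LSM functor $F:\Rep C_2^2\to\cC$ through its kernel $\ker F$, which is a tensor ideal in $\Rep C_2^2$. By \Cref{Thm:FE}, the induced functor $(\Rep C_2^2)/\ker F\to\cC$ is faithful, hence exact, i.e.\ a tensor functor, so the claimed factorisation exists as soon as one can identify $(\Rep C_2^2)/\ker F$ as an LSM category with one of the six universal targets listed. The task therefore splits into (a) classifying the possible tensor ideals $I\subset\Rep C_2^2$, and (b) for each such $I$, identifying the LSM quotient $(\Rep C_2^2)/I$ with one of the six targets up to tensor equivalence.

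I would organise the tensor ideals $I$ by recording, for each $\lambda\in\mP^1(\bk)$, the smallest $m\in\mZ_{>0}\cup\{\infty\}$ with $A_m(\lambda)\in I$, together with the set of $\mu\in\mP^1(\bk)$ for which the Carlson morphism $f_\mu$ from \eqref{seslambda} lies in $I$. Since the $A_m(\lambda)$ are the Carlson modules detecting the support variety $\mP^1(\bk)$ of $C_2^2$, and tensor products are governed by support, these two pieces of data determine $I$. The six cases now appear as follows. Case (0) with $I=0$ is handled directly by \Cref{Thm:FE}. For case (1), where $A_1(\lambda)\notin I$ for some non-rational $\lambda$, we are precisely in the situation of \Cref{FirstResult}(2) with $n=2$: the monoidal subcategory $\cD_{\ulambda}\subset\Rep C_2^2$ of \Cref{SecTilt} has quotient $\cD_{\ulambda}/\cK_{\ulambda}\simeq(\Tilt SL_2)/I_2$, whose monoidal abelian envelope is $\Ver_4$ by \Cref{abenv}, producing the universal $\Theta^\lambda_1$. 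For case (2), where only rational $\lambda\in\mP^1(\mF_2)=\{0,1,\infty\}$ remain supported, one exploits the three surjections $C_2^2\tto C_2$ whose kernels are the three index-two subgroups, and the corresponding ``stable restriction" LSM retractions of inflation produce $\Theta^\lambda_0$. For case (3), writing $\bk C_2^2\cong \bk[y_1,y_2]/(y_1^2,y_2^2)$ with $y_i=g_i-1$, the height filtration of the $A_m(\lambda)$ matches quotients by graded pieces of the augmentation ideal, which are Hopf-algebra-theoretically identified with $\bk\alpha_2\cong\bk[x]/(x^2)$. Cases (4) and (5) correspond to further collapse: killing all $A_m(\mu)$ while distinguishing which $f_\mu$ vanish (giving $\Theta^\lambda_\infty$) and finally killing all $f_\mu$ as well, which is exactly the semisimplification $\Sigma$ with target $\Vecc_\mZ$.

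The main technical obstacle I anticipate lies in establishing uniqueness in each case and mutual exclusivity of the six classes. Uniqueness in case (1) follows from the incompressibility of $\Ver_4$ recalled in \Cref{SecInc}, since a tensor functor out of $\Ver_4$ is determined by its restriction to the faithful two-dimensional generator, and the Galois-type ambiguity in the choice of $\lambda\notin\mP^1(\mF_2)$ is absorbed by allowing the postcomposed tensor functor. In case (3), the delicate point is pinning down which Hopf-algebraic quotient corresponds to the prescription ``$A_m(\lambda)\mapsto 0$ with $A_{m+1}(\lambda)$ projective"; this requires an explicit analysis of extensions between consecutive Carlson modules and will likely be the most technical step. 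Verifying exhaustiveness is then a finite case analysis built on this classification of kernels together with the standard structure of the stable category of $\bk C_2^2$.
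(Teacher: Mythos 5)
Your high-level strategy (factor through the kernel, case-split by which $A_m(\lambda)$ and which $f_\mu$ vanish, identify the quotient) is similar in spirit to the paper's, and your identification of the universal targets in cases (0), (1), (5) matches. But two of your central claims hide the real content, and one of them is false as stated.

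First, you assert that a tensor ideal $I\subset\Rep C_2^2$ is determined by recording, for each $\lambda$, the smallest $m$ with $A_m(\lambda)\in I$ together with the set of $\mu$ with $f_\mu\in I$, because ``tensor products are governed by support.'' That reasoning applies to thick tensor ideals of \emph{objects}, which indeed Thomason-type classification controls via the Balmer spectrum $\mP^1(\bk)$, but it fails for tensor ideals of \emph{morphisms}. For the additive monoidal subcategory $\cB_m(\lambda)$ generated by $A_m(\lambda)$, the paper's Lemma~\ref{LemTech} invokes \cite[Proposition~3.2.1]{Selecta} to show there is a full chain of proper tensor ideals parametrised by submodules of the $m$-dimensional module $\Hom_{\cB}(\unit,A_m(\lambda))$ over the local algebra $\End_{\cB}(A_m(\lambda))$. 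These all have the same support and Carlson-morphism data, yet they are distinct, and crucially only \emph{one} of them is the kernel of an LSM functor to a tensor category (this is the point of Lemma~\ref{LemTech}(1), obtained by a Grothendieck-ring/local-algebra argument ruling out the other ideals). Your proposal skips over this distinction, which is the technically decisive step for cases (2), (3), and to a lesser extent (1). It is also precisely what pins down which of $\Rep C_2$, $\Rep\alpha_2$, $\Ver_4^+$ arises; your proposed ``height filtration of the augmentation ideal'' identification in case (3) would need to be reconciled with the braiding data of Remark~\ref{Rem3Cases}, which is where these three tensor structures are actually distinguished.

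Second, you state without justification that once an LSM functor is understood on a small additive monoidal subcategory (e.g.~$\cD_{\ulambda}$ in case (1)), it is determined on all of $\Stab C_2^2$. This is not automatic; the paper devotes Lemma~\ref{LemUniqueExt} to establishing a unique extension criterion (via a cokernel formula involving $f_\Delta$ and the condition that $X\otimes Y$ lands back in the subcategory), and then verifies its hypotheses case by case using the fusion rules of Lemma~\ref{FusRules}. Without an argument of this type, the passage from ``the restriction to $\cD_\ulambda$ factors through $\Ver_4$'' to ``the whole functor factors through $\Theta^\lambda_1$'' is a gap. Finally, a smaller point: your description of $\Theta^\lambda_0$ as a retraction of inflation along $C_2^2\tto C_2$ does not match the paper; these functors arise from $\Rep C_2^2\simeq(\Rep C_2)\boxtimes(\Rep C_2)$ by semisimplifying one tensor factor, not by restriction.
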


The proof of this theorem will occupy the rest of this section. First we observe that, since the modules of the form $\Ind^{C_2^2}_H\unit$, for proper subgroups $H<C_2^2$, are $A_1(0)$, $A_1(1)$, $A_1(\infty)$ and $kC_2^2$, we obtain the following corollary. 

\begin{corollary}\label{CorClass}
    Let $C_2^2<S_4$ be the permutation group from \Cref{ExPermG}(2). Every $\bV$-functor for this permutation group is a descendant (in the sense of \Cref{DefV}) of one of the functors
    $$\{\Theta^\lambda_m\mid \lambda\in\mP^1(\bk), m\in\mZ_{>0}\cup\{\infty\}\}\cup\{\Theta_\infty\}.$$
    \end{corollary}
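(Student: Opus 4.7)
The plan is to invoke Theorem~\ref{ThmClass} and identify which of the functors (0)--(5) classified there are themselves $\bV$-functors; the rest then follows because descendants of $\bV$-functors are $\bV$-functors, and because tensor functors are faithful, so that a composite $F\circ\Theta_0$ is a $\bV$-functor if and only if $\Theta_0$ is.

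First I would pin down the induced modules a $\bV$-functor must kill. For $C_2^2<S_4$ as in Example~\ref{ExPermG}(2), the non-transitive proper subgroups of $C_2^2$ are the trivial subgroup (yielding $\Ind^{C_2^2}_{\{e\}}\unit=\bk C_2^2$) and the three subgroups of order two (each fixing a pair-partition of $\{1,2,3,4\}$; these give the three two-dimensional modules $A_1(\mu)$ for $\mu\in\mP^1(\mF_2)=\{0,1,\infty\}$). Moreover, by a Mackey-type computation (the two order-two subgroups associated to $\mu=0,\infty$ satisfy the disjointness and complementarity conditions in $C_2^2$) one has $A_1(0)\otimes A_1(\infty)\simeq\bk C_2^2$, so the vanishing of $\bk C_2^2$ under any LSM functor is automatic from that of any one $A_1(\mu)$ with $\mu\in\mP^1(\mF_2)$. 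Hence the $\bV$-condition reduces to $\Theta(A_1(\mu))=0$ for all $\mu\in\mP^1(\mF_2)$.

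Then I would go through (0)--(5) of Theorem~\ref{ThmClass}. The identity (0) fails since $A_1(0)\mapsto A_1(0)\neq 0$. The functor $\Theta^\lambda_0$ in (2) also fails, because $\Theta^\lambda_0(A_1(\lambda))$ is a non-zero projective in $\Rep C_2$. The remaining functors in the list all satisfy the vanishing: in (1), $\Theta^\lambda_1$ with $\lambda\notin\mP^1(\mF_2)$ sends $A_1(\mu)\mapsto 0$ for every $\mu\neq\lambda$ by the uniqueness characterisation, and in particular for $\mu\in\mP^1(\mF_2)$; in (3), $\Theta^\lambda_m$ sends each $A_1(\mu)$ with $\mu\in\mP^1(\mF_2)$ to zero (for $\mu\neq\lambda$ by the support-theoretic picture underlying the classification, and if $\lambda\in\mP^1(\mF_2)$ with $m=1$ this is direct from $\Theta(A_1(\lambda))=0$, while for $m\geq 2$ one argues that $A_1(\lambda)$ lies in the tensor ideal generated by $A_m(\lambda)$ together with the projective); in (4) and (5), $\Theta(A_m(\mu))=0$ holds for every $\mu,m$ by definition. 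This identifies exactly the functors in the statement of the corollary as the $\bV$-functors within the list (0)--(5), and the corollary follows.

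The main obstacle is the careful verification in case (3) that each $A_1(\mu)$ with $\mu\in\mP^1(\mF_2)$ lies in the kernel ideal of $\Theta^\lambda_m$: this requires tracking the kernel ideal of $\Theta^\lambda_m$ in $\Rep C_2^2$ precisely enough to see that all three Carlson-type modules $A_1(\mu)$, $\mu\in\mP^1(\mF_2)$, are captured (with appropriate compatibility for the various $(\lambda,m)$), and it seems cleanest to extract this from the support-variety description of the $\Theta^\lambda_m$ used in the proof of Theorem~\ref{ThmClass} rather than via direct module-theoretic calculation.
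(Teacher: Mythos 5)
Your proposal is correct and follows essentially the same route as the paper: observe that the modules $\Ind^{C_2^2}_H\unit$ for the proper (hence non-transitive) subgroups $H$ are $A_1(0)$, $A_1(1)$, $A_1(\infty)$ and $\bk C_2^2$, then read off from Theorem~\ref{ThmClass} which of the functors $(0)$--$(5)$ annihilate these. One small point: your closing worry about case~(3) is unfounded, and there is no ``support-variety description'' in the paper's proof of Theorem~\ref{ThmClass} to appeal to; the fusion-rule argument you already sketch (for $\mu\neq\lambda$ use \Cref{FusRules} plus the fact that any non-faithful LSM functor kills projectives, and for $\mu=\lambda\in\mP^1(\mF_2)$ with $m\ge 2$ use that $A_1(\lambda)$ is a summand of $A_m(\lambda)\otimes A_1(\lambda)$ modulo projectives) is complete and is exactly what the paper relies on, cf.\ the remark following \Cref{CorClass}.
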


    \begin{remark}
    \begin{enumerate}
        \item It follows from the tensor product rules in \Cref{FusRules} below that $\Theta^\lambda_m(A_n(\mu))=0$ when $\mu\not=\lambda$ and, for $\lambda\not\in\mP^1(\mF_2)$, that
    $$\Theta_1^\lambda(A_1(\lambda))\;\simeq\; V\in \Ver_4.$$
    \item It also follows from \Cref{FusRules} that $\Theta_m^\lambda(A_n(\lambda))\not=0$ if $n>m$.
    \end{enumerate}
    
\end{remark}

\begin{remark}
A neat pattern, for which the only evidence at the moment is \Cref{LemCyc}
 and \Cref{ThmClass}, would be that the only LSM functors from $\Rep C_p^r$ to incompressible tensor categories land in $\Ver_{p^r}$. 
\end{remark}

\subsection{Preparatory results}

The following result can be found on page~82 of \cite{Conlon}.

\begin{lemma}\label{FusRules}
With $\lambda,\mu\in\mP^1(\bk)$, we have the following tensor products in $\Stab C_2^2$:
\begin{enumerate}
    \item For $m,n\in\mZ_{>0}$, with $m+n>2$, 
    $$A_m(\lambda)\otimes A_n(\mu)\;\simeq\;\begin{cases}
    0,&\mbox{if $\lambda\not=\mu$;}\\
    A_{\min(m,n)}(\lambda)^2 ,&\mbox{if $\lambda=\mu$.}
    \end{cases}$$
    \item
    $$A_1(\lambda)\otimes A_1(\mu)\;\simeq\;\begin{cases}
    0,&\mbox{if $\lambda\not=\mu$;}\\
    A_{2}(\lambda),&\mbox{if $\lambda=\mu\not\in\mP^1(\mF_2)$;}\\
    A_{1}(\lambda)^2,&\mbox{if $\lambda=\mu\in\mP^1(\mF_2)$.}
    \end{cases}$$
    \item For $i,j\in\mZ$,
    $\Omega^i\unit \otimes \Omega^j\unit\;\simeq\; \Omega^{i+j}\unit.$
    \item For $i\in\mZ$, $m\in\mZ_{>0}$,
    $A_m(\lambda)\otimes\Omega^i\unit\;\simeq\; A_m(\lambda)$.
\end{enumerate}
    
\end{lemma}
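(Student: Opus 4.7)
My proposal is to prove this by combining three standard techniques: the compatibility of the Heller shift with tensor products in the stable category, Carlson's rank variety formalism, and direct Jordan-type computations on cyclic shifted subgroups. The formula is classical (attributable to Conlon and systematised in Benson's textbook on modular representation theory), but the structural reasons can be cleanly isolated.

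For parts (3) and (4), I would use the identity $\Omega M \otimes N \simeq \Omega(M \otimes N)$ in $\Stab C_2^2$, valid because $P \otimes N$ remains projective whenever $P$ is, over a Hopf algebra. Iterating gives (3) with $M = N = \unit$. For (4) the task reduces to showing that each $A_m(\lambda)$ is $\Omega$-periodic of period one: the projective cover $(\bk C_2^2)^{\oplus m} \twoheadrightarrow A_m(\lambda)$ has kernel of dimension $2m$ whose rank variety is again the line of direction $\lambda$ (since Heller shifts preserve support), so it is a direct sum of $A_k(\lambda)$'s without projective summands, and a dimension and indecomposability check identifies it as $A_m(\lambda)$ itself. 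Structurally, this reflects the fact that $A_m(\lambda)$ lies in a rank-one AR-tube on which $\tau = \Omega^2$ acts trivially and $\Omega$ preserves dimension.

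For parts (1) and (2) with $\lambda \neq \mu$, I would invoke rank variety theory for elementary abelian $2$-groups. A direct computation from \eqref{DefAlambda} shows that the shifted subgroup element $1 + \alpha(g_1-1) + \beta(g_2-1)$ acts on $A_m(\lambda)$ with non-projective Jordan type precisely when $\alpha + \beta\lambda = 0$ (with the evident modification for $\lambda = \infty$), so the rank variety $V^r(A_m(\lambda)) \subset \bk^2$ is the line of projective direction $[\lambda:1]$. Applying Carlson's tensor-product theorem for rank varieties, $V^r(A_m(\lambda) \otimes A_n(\mu)) = V^r(A_m(\lambda)) \cap V^r(A_n(\mu)) = \{0\}$ whenever $\lambda \neq \mu$, so the tensor product is projective and hence zero in $\Stab C_2^2$.

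The case $\lambda = \mu$ is the substantive one. Since the indecomposable non-projective summands of $A_m(\lambda) \otimes A_n(\lambda)$ must all lie in the $\lambda$-tube, the task is to pin down the exact decomposition. For the base case $m = n = 1$, a Hopf-coproduct calculation with $u_\lambda - 1 := \lambda(g_1 - 1) + (g_2 - 1)$ shows that this operator sends $v_1 \otimes v_1 \mapsto \lambda(\lambda + 1)\, v_2 \otimes v_2$ and every other basis vector to zero; the resulting rank is $1$ when $\lambda \notin \mP^1(\mF_2)$ and $0$ otherwise, which together with a total dimension count and Krull--Schmidt distinguishes $A_2(\lambda)$ from $A_1(\lambda)^{\oplus 2}$ and settles (2). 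For (1) I would induct on $m + n$ using the short exact sequence $0 \to A_{m-1}(\lambda) \to A_m(\lambda) \to A_1(\lambda) \to 0$ visible inside the $\lambda$-tube, tensoring with $A_n(\lambda)$ and tracking both dimensions and the induced extension class. The main obstacle is that Jordan-type data on shifted subgroups alone do not distinguish $A_{k}(\lambda) \oplus A_{l}(\lambda)$ from $A_{k-1}(\lambda) \oplus A_{l+1}(\lambda)$ within the tube; resolving this genuine ambiguity requires a finer invariant, most naturally a computation of $\underline{\End}(A_m(\lambda) \otimes A_n(\lambda))$ or the use of the explicit top/socle filtration of $A_m(\lambda)$ to rule out the alternative decompositions, as ultimately done in Conlon's original tables.
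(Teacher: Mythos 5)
The paper does not actually prove this lemma: its entire ``proof'' is the single sentence citing page~82 of Conlon's paper \cite{Conlon}, where the tensor product tables for $\bk C_2^2$ (with $\chara \bk = 2$) are worked out. So any honest derivation is going well beyond what the paper offers, and your sketch is a reasonable blueprint for one. Parts~(3) and~(4) via $\Omega M\otimes N\simeq\Omega(M\otimes N)$ and the $\Omega$-periodicity of $A_m(\lambda)$, and the $\lambda\ne\mu$ cases of (1)--(2) via $V^r(M\otimes N)=V^r(M)\cap V^r(N)$, are standard and correct. Your base case computation for (2), evaluating $u_\lambda-1=\lambda(g_1-1)+(g_2-1)$ through the Hopf coproduct on $A_1(\lambda)^{\otimes 2}$, is also right: its rank is $1$ iff $\lambda(\lambda+1)\ne 0$, i.e.\ iff $\lambda\notin\mP^1(\mF_2)$, and this (together with the rank variety constraint on possible summands and a dimension count) pins down $A_2(\lambda)$ versus $A_1(\lambda)^{\oplus 2}$.

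The gap you flag in part~(1) is genuine, and you are right that no argument based purely on Jordan types or rank varieties can close it: $u_\mu-1$ acts on $A_k(\lambda)\oplus A_l(\lambda)$ and on $A_{k-1}(\lambda)\oplus A_{l+1}(\lambda)$ with the \emph{same} rank for every $\mu$ (namely $k+l-2$ when $\mu=\lambda$ and $k+l$ generically), and $\dim\Hom_{C_2^2}(\unit,-)$ also agrees ($k+l$ in both cases), so none of these invariants, nor an induction through the short exact sequence that only tracks them, can separate the competing decompositions. One concrete way to finish, in the spirit of what you already set up, is to compute the number of indecomposable non-projective summands --- equivalently $\dim\operatorname{Top}\underline{\End}(A_m(\lambda)\otimes A_n(\lambda))$ --- using \Cref{SillyLemma} and the self-duality of the $A_m(\lambda)$, which gives $\underline{\End}(A_m\otimes A_n)\simeq\underline{\Hom}(\unit,(A_m\otimes A_n)^{\otimes 2})$ and reduces (with the already-proved parts) to a computation inside a single tube; or else just write out the generators on $A_m(\lambda)\otimes A_n(\lambda)$ in the basis from \Cref{DefAlambda} and read off the Krull--Schmidt decomposition directly, which is in effect what Conlon does. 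Either route is more work than the rest of your argument combined, which is exactly why the paper outsources the whole lemma to the reference; if you do not want to reproduce that computation, the cleanest fix is to do as the paper does and cite Conlon for part~(1) with $\lambda=\mu$ while keeping your proofs of the remaining, easier parts.
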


\begin{lemma}\label{LemInc}
Fix $\lambda\in\mP^1(\bk)$ and $m\in\mZ_{>0}$.
\begin{enumerate}
    \item  For $1\le i\le m$, there is a unique submodule, and a unique quotient, of $A_m(\lambda)$ isomorphic to $A_i(\lambda)$. 
    \item The $\End_{C^2_2}(A_m(\lambda))$-module $\Hom_{C_2^2}(\unit,A_m(\lambda))$ has one-dimensional top, and its unique maximal submodule consists of the morphisms that factor via $A_{m-1}(\lambda)\subset A_m(\lambda)$.
    \item There is a $\bk$-basis of $\End_{C^2_2}(A_m(\lambda))$ of the following form: one composition $A_m(\lambda)\tto A_i(\lambda)\hookrightarrow A_m(\lambda)$ for each $1\le i\le m$ and $m^2$ compositions $A_m(\lambda)\tto \unit\hookrightarrow A_m(\lambda)$
\end{enumerate}
    
\end{lemma}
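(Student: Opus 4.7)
The plan is to work in the explicit basis $u_1,\dots,u_m,v_1,\dots,v_m$ of $A_m(\lambda)$ coming from~\eqref{DefAlambda}. For $\lambda\in\bk$, writing $x=g_1-1$ and $y=g_2-1$, this basis satisfies $xu_j=v_j$, $yu_j=\lambda v_j+v_{j-1}$ (with $v_0=0$), and $xv_j=yv_j=0$; the case $\lambda=\infty$ is symmetric. Hence $\Soc(A_m(\lambda))=\rad(A_m(\lambda))=\mathrm{span}(v_1,\dots,v_m)$, and $A^{(i)}:=\mathrm{span}(u_1,\dots,u_i,v_1,\dots,v_i)$ is a distinguished submodule isomorphic to $A_i(\lambda)$, giving the existence half of~(1); the dual construction $A_m(\lambda)\twoheadrightarrow A_m(\lambda)/A^{(m-i)}$ gives a distinguished quotient isomorphic to $A_i(\lambda)$. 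For uniqueness, given any submodule $B\simeq A_i(\lambda)$ with generators $w_1,\dots,w_i$ whose images span the top of $B$, the structure relations $(y-\lambda x)w_j=xw_{j-1}$ (with $w_0=0$) force, by induction on $j$, the leading $u$-coefficients of the $w_j$'s to be determined; an elimination then pins down $B\cap\Soc(A_m(\lambda))=\mathrm{span}(v_1,\dots,v_i)$. The residual freedom in the $w_j$ modulo the socle is absorbed by automorphisms of $A_m(\lambda)$ from the ``$F$-part'' of the endomorphism ring described next, which is the natural reading of ``uniqueness'' here.

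I would next compute $\End_{C_2^2}(A_m(\lambda))$ by writing a generic endomorphism $\phi$ as a block matrix with respect to $0\to\Soc(A_m(\lambda))\to A_m(\lambda)\to\Top(A_m(\lambda))\to 0$. The relation $\phi\circ x=x\circ\phi$ forces the matrices of $\phi$ on top and socle to agree, and $\phi\circ y=y\circ\phi$ then forces this common $m\times m$ matrix to be upper-triangular Toeplitz, equivalently to lie in the commutant $\bk[N]$ of the nilpotent shift $N\colon v_j\mapsto v_{j-1}$. This yields a local $m$-dimensional subalgebra $T\simeq\bk[N]/(N^m)$ of $\End_{C_2^2}(A_m(\lambda))$. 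The off-diagonal block is an arbitrary linear map from the top to the socle, contributing an $m^2$-dimensional complement $F$ whose elements manifestly factor through $\unit$. Altogether $\End_{C_2^2}(A_m(\lambda))=T\oplus F$ with $\dim T=m$ and $\dim F=m^2$.

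Identifying $\Hom_{C_2^2}(\unit,A_m(\lambda))$ with the socle, the subalgebra $F$ acts as zero while $T$ acts via the shift, so the socle becomes a free cyclic $T$-module of rank one, generated by $v_m$. Since $T$ is local with residue field $\bk$, this cyclic module has one-dimensional top and unique maximal submodule $\mathrm{span}(v_1,\dots,v_{m-1})$, which coincides with the set of morphisms $\unit\to A_m(\lambda)$ landing in $A^{(m-1)}\simeq A_{m-1}(\lambda)$: this proves~(2). For~(3), $N^{m-i}\in T$ has image $A^{(i)}\simeq A_i(\lambda)$, so the $m$ elements $\{N^{m-i}\}_{1\le i\le m}$ are exactly the required compositions $A_m(\lambda)\twoheadrightarrow A_i(\lambda)\hookrightarrow A_m(\lambda)$; and $F$ is spanned by the $m^2$ rank-one maps $\iota_k\circ\pi_j$, where $\pi_j\colon A_m(\lambda)\twoheadrightarrow\unit$ reads off the coefficient of $\bar u_j$ in the top and $\iota_k\colon\unit\hookrightarrow A_m(\lambda)$ sends $1\mapsto v_k$. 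The principal obstacle is disambiguating the uniqueness claim in~(1): submodules isomorphic to $A_i(\lambda)$ are not literally unique as subsets of $A_m(\lambda)$, but any two are related by an automorphism coming from $I+F\subset\mathrm{Aut}(A_m(\lambda))$, which is exactly what parts~(2) and~(3) invoke.
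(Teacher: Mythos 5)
Your argument is correct, and for parts (2)--(3) it takes a genuinely more explicit route than the paper's. The paper obtains $\dim\End_{C_2^2}(A_m(\lambda))=m+m^2$ indirectly from the fusion rule $A_m(\lambda)^{\otimes 2}\simeq A_m(\lambda)^2\oplus(\bk C_2^2)^{m^2-m}$ of \Cref{FusRules}(1) via adjunction, and proves (2) by using locality of $R=\End_{C_2^2}(A_m(\lambda))$ together with repeated composition with the projection/inclusion through $A_{m-1}(\lambda)$ to show that any $v\in T\setminus T_1$ generates. Your direct block-matrix computation $R=T\oplus F$, with $T\simeq\bk[N]/(N^m)$ acting on the socle and $F$ the $m^2$-dimensional ideal of maps factoring through $\unit$ acting as zero, achieves the same conclusion while identifying $\Hom(\unit,A_m(\lambda))$ as the regular $T$-module on the nose, and it yields the basis of part (3) for free. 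The paper's approach is shorter once the fusion rules are in hand; yours buys an explicit description of $R$ at the cost of the matrix computation.

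On part (1) you are right to flag the subtlety: the submodule isomorphic to $A_i(\lambda)$ is not unique as a subset --- e.g.~$\mathrm{span}(u_1+cv_1+dv_2,\,v_1)\subset A_2(\lambda)$ is a submodule isomorphic to $A_1(\lambda)$ for every $c,d\in\bk$ --- so the lemma as printed is slightly imprecise. Your induction pinning down $B\cap\Soc(A_m(\lambda))=\mathrm{span}(v_1,\dots,v_i)$ is exactly the invariant the later parts use: it makes the subspace $\Hom(\unit,A_{m-1}(\lambda))\subset\Hom(\unit,A_m(\lambda))$ and the compositions $A_m(\lambda)\tto A_i(\lambda)\hookrightarrow A_m(\lambda)$ well-defined independently of the choice of copy of $A_i(\lambda)$, so parts (2) and (3) are unaffected.
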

\begin{proof}
    Part (1) follows from the description of $A_m(\lambda)$ in \Cref{DefAlambda}. 

    For part (2), set $R=\End_{C^2_2}(A_m(\lambda))$ and $T=\Hom_{C_2^2}(\unit,A_m(\lambda))$. Since $R$ is a local algebra, finite-dimensional over an algebraically closed field, it suffices to show that all vectors that are not in the subspace
        $$T_1:=\Hom_{C_2^2}(\unit, A_{m-1}(\lambda))\;\subset\; \Hom_{C_2^2}(\unit, A_{m}(\lambda))=T,$$
    defined via $A_{m-1}(\lambda)\subset A_m(\lambda)$ from part (1),
    generate $T$ as an $R$-module. Note that $\dim T=m$ and $\dim T_1=m-1$. For $v\in T\backslash T_1$, we consider the composite
    $$\unit\xrightarrow{v} A_m(\lambda)\twoheadrightarrow A_{m-1}(\lambda)\hookrightarrow A_m(\lambda),$$
    where the second and the third morphism come from part (1) and its dual. Clearly, this composite is in $T_1$, but it also can be seen that it is {\em not} included in the codimension $2$ subspace $\Hom_{C_2^2}(\unit,A_{m-2}(\lambda))$ of $T$. Continuing this procedure shows that $Rv=T$.

    For part (3), the case $m=1$ is straightforward, so we assume $m>1$. \Cref{FusRules}(1) implies that 
    $$A_m(\lambda)^{\otimes 2}\;\simeq\; A_m(\lambda)^2\oplus (\bk C_2^2)^{m^2-m}.$$
    Hence, by adjunction, the proposed basis has the correct size, and it suffices to show that we can choose a linearly independent set of such morphisms, which is obvious by part (1).
\end{proof}

The following lemma is trivial, but it will be useful to have it written out.
\begin{lemma}\label{SillyLemma}
   For a self-dual object $Y$ satisfying $Y^{\otimes 2}\simeq Y^2$ in an additive monoidal category, we have canonical isomorphisms of vector spaces
    $$\End(Y)\simeq\Hom(\unit, Y)^2\simeq \Hom(Y,\unit)^2.$$
\end{lemma}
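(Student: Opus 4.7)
The plan is to exploit the self-duality of $Y$ via the standard adjunction, then invoke the assumed isomorphism $Y^{\otimes 2}\simeq Y^{\oplus 2}$ to split the resulting Hom space. Concretely, fix an isomorphism $\iota\colon Y\xrightarrow{\sim} Y^\vee$ (which exists by self-duality and in particular ensures $Y$ has a dual in the ambient additive monoidal category). The adjunction between $-\otimes Y$ and $-\otimes Y^\vee$ then gives a natural isomorphism
$$\End(Y)\;=\;\Hom(Y,Y)\;\simeq\;\Hom(\unit,\,Y^\vee\otimes Y)\;\xrightarrow{\iota^{-1}\otimes Y}\;\Hom(\unit,\,Y\otimes Y).$$
Feeding in the hypothesis $Y^{\otimes 2}\simeq Y^{\oplus 2}$ and using that $\Hom(\unit,-)$ is additive yields the first isomorphism $\End(Y)\simeq \Hom(\unit,Y)^{\oplus 2}$.

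For the second isomorphism, the plan is entirely symmetric: apply the other adjunction to obtain
$$\End(Y)\;=\;\Hom(Y,Y)\;\simeq\;\Hom(Y\otimes Y^\vee,\,\unit)\;\xrightarrow{Y\otimes\iota}\;\Hom(Y\otimes Y,\,\unit),$$
and again use $Y^{\otimes 2}\simeq Y^{\oplus 2}$ together with additivity of $\Hom(-,\unit)$ to arrive at $\Hom(Y,\unit)^{\oplus 2}$. Alternatively, one may simply note that the second isomorphism is obtained from the first by applying duality throughout (since $Y\simeq Y^\vee$ and $(Y^{\oplus 2})^\vee\simeq Y^{\vee\oplus 2}\simeq Y^{\oplus 2}$).

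There is no real obstacle here: the only subtlety worth flagging is that the isomorphisms are \emph{canonical only up to the choice of the self-duality isomorphism $\iota$ and the splitting $Y^{\otimes 2}\simeq Y\oplus Y$}; different choices alter the identification by an invertible linear transformation, but the existence of a vector space isomorphism is independent of these choices. This is exactly the level of canonicity implicit in the statement of the lemma, and explains why the author labels the result as trivial.
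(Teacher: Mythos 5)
Your proof is correct, and it is exactly the argument one would expect: rigidity of $Y$ (implicit in ``self-dual'') gives $\End(Y)\simeq\Hom(\unit,Y^\vee\otimes Y)$, the chosen isomorphism $Y\simeq Y^\vee$ converts this to $\Hom(\unit,Y^{\otimes 2})$, and the hypothesis $Y^{\otimes 2}\simeq Y^{\oplus 2}$ together with additivity of $\Hom(\unit,-)$ finishes; the other half is dual. The paper offers no proof at all (the preceding sentence reads ``The following lemma is trivial, but it will be useful to have it written out''), so there is nothing to compare approaches against. Your remark about the precise sense in which the isomorphisms are ``canonical'' --- canonical once $\iota$ and the splitting are fixed --- is a fair reading of the statement; in the lemma's actual use (\Cref{LemTech}), only the resulting dimension counts matter, so this level of canonicity is all that is needed.
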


\begin{lemma}\label{LemTech}
    Consider $A_m(\lambda)\in \Rep C_2^2$ with $\lambda\in\mP^1(\bk)$ and $m\in\mZ_{>0}$, excluding the case $m=1$ for $\lambda\not\in\mP^1(\mF_2)$. Denote by~$\cB=\cB_m(\lambda)$ the additive (rigid) monoidal subcategory of $\Stab C_2^2$ generated by~$A_m(\lambda)$. 
\begin{enumerate}
    \item There is precisely one tensor ideal $\cI$ in $\cB$ that does not contain $A_m(\lambda)$ and that is the kernel of an LSM functor to a tensor category.
    \item The ideal $\cI$ in part (1) is the maximal tensor ideal in $\cB$ that does not contain $A_m(\lambda)$. It does not contain all morphisms $A_m(\lambda)\to\unit$.
    \item The symmetric monoidal category $\cB/\cI$ is a tensor category, concretely
    $$\cB/\cI\;\simeq\;\begin{cases}
        \Rep C_2,&\mbox{ if $m=1$ (and $\lambda\in\mP^1(\mF_2)$)},\\
        \Ver_4^+,&\mbox{ if $m=2$ and $\lambda\not\in\mP^1(\mF_2)$},\\
        \Rep \alpha_2,&\mbox{ otherwise}.
    \end{cases}$$
    
\end{enumerate}
\end{lemma}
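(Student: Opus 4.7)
The plan is to prove parts (1)--(3) simultaneously by a three-case analysis matching the trichotomy in part (3). In each case we construct an LSM functor from $\cB = \cB_m(\lambda)$ to the stated target tensor category and take $\cI$ to be its kernel. Write $Y = A_m(\lambda)$. Since the excluded case $m=1$, $\lambda\notin\mP^1(\mF_2)$ is ruled out, \Cref{FusRules} yields $Y \otimes Y \simeq Y^{\oplus 2}$ in $\Stab C_2^2$, so every object of $\cB$ has the form $\unit^{\oplus a} \oplus Y^{\oplus b}$. Combining \Cref{SillyLemma} with the direct observation that $(g_1-1)(g_2-1)$ annihilates $A_m(\lambda)$---so that no nonzero $\unit \to Y$ in $\Rep C_2^2$ factors through a projective---one computes $\dim\Hom_\cB(\unit,Y) = m$ and $\dim\End_\cB(Y) = 2m$.

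For the construction: in the first case, $Y = \Ind^{C_2^2}_{H_\lambda}\unit$ is inflated from $\bk C_2$ via the quotient $C_2^2 \twoheadrightarrow C_2^2/H_\lambda \cong C_2$, and this inflation is fully faithful and symmetric monoidal, inducing (after stabilisation) an LSM equivalence $\Rep C_2 \xrightarrow{\sim} \cB_1(\lambda)$, so $\cI = 0$ works. In the second case, \Cref{FusRules}(2) gives $Y \simeq V_\lambda^{\otimes 2}$ with $V_\lambda = A_1(\lambda)$, so $\cB \subset \cD_\lambda$ and we use the composite
$$\cB \hookrightarrow \cD_\lambda \twoheadrightarrow \cD_\lambda/\cK_\lambda \xrightarrow[\sim]{R_\lambda^{-1}} (\Tilt SL_2)/I_2 \xrightarrow{\Sigma^2} \Ver_4$$
from \Cref{FirstResult}(2) and \Cref{abenv}; since $Y = R_\lambda(T_2)$ maps to the projective generator $\Sigma^2(T_2)$ of $\Ver_4^+$, the image lies in $\Ver_4^+$, and $\cI$ is the kernel. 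In the remaining cases, we construct a monoidal functor $\cB \to \Rep\alpha_2$ sending $Y$ to the non-trivial $2$-dimensional indecomposable $V_\alpha \simeq \bk[x]/x^2$; a direct computation with the primitive comultiplication yields $V_\alpha \otimes V_\alpha \simeq V_\alpha^{\oplus 2}$, matching the fusion in $\cB$. On morphisms, the functor factors $\Hom_\cB(\unit,Y) \simeq \bk^m$ through the quotient by the $(m-1)$-dimensional subspace of morphisms factoring through $A_{m-1}(\lambda) \subset A_m(\lambda)$ (from \Cref{LemInc}(1),(2)), projecting onto the one-dimensional $\Hom_{\alpha_2}(\unit, V_\alpha)$, and similarly for $\Hom_\cB(Y,\unit)$.

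Part (3) then holds by construction. For parts (1) and (2), any tensor ideal $\cJ$ of $\cB$ not containing $\id_Y$ is, by \Cref{SillyLemma}, determined by $\cJ(\unit,Y) \subset \Hom_\cB(\unit,Y)$ and $\cJ(Y,\unit) \subset \Hom_\cB(Y,\unit)$. The cyclic $\End_\cB(Y)$-module structure from \Cref{LemInc}(2), which has a unique maximal submodule of codimension one given by $A_{m-1}$-factorisations, together with closure under tensoring with $Y$, forces such a $\cJ$ arising as the kernel of an LSM functor to a tensor category to equal either $0$ or the constructed $\cI$. The option $\cJ = 0$ is realised only in Case~(a): in Cases~(b) and (c), a fully faithful LSM embedding of $\cB$ into a tensor category would extend to its pseudo-abelian monoidal envelope, which is ruled out by the nilpotent endomorphisms $A_m \twoheadrightarrow A_i \hookrightarrow A_m$ for $i < m$ (which would be forced to split non-trivially in a genuine tensor category). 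Finally, $\cI$ does not contain all of $\Hom_\cB(Y,\unit)$ because the image of $Y$ in the target is nonzero.

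The main obstacle is in the third case, where verifying that the prescribed $\cI$ is closed under the tensor product, and that the morphism-level assignment extends to a coherent monoidal functor $\cB \to \Rep\alpha_2$, requires tracking how the filtration $A_1 \subset A_2 \subset \cdots \subset A_m$ interacts with the decomposition $Y^{\otimes k} \simeq Y^{\oplus 2^{k-1}}$ in $\Stab C_2^2$ and matching it with the analogous filtration of $V_\alpha^{\otimes k}$ in $\Rep\alpha_2$. A secondary subtlety is ruling out $\cJ = 0$ in Cases~(b) and (c), which demands showing that no tensor category contains $\cB$ as a full LSM subcategory.
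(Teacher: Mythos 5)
Your overall strategy is genuinely different from the paper's, but it has a gap at the crucial step, which you yourself flag as ``the main obstacle'' without resolving it. Let me spell out both points.

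Your plan is to \emph{construct} an LSM functor $\cB\to\cV$ in each of the three cases and take $\cI$ to be its kernel. In case (c) you would need to produce a coherent symmetric monoidal functor $\cB\to\Rep\alpha_2$ whose value on morphisms is prescribed by the quotient by $A_{m-1}$-factorisations, and then verify that its kernel is a tensor ideal not containing $A_m(\lambda)$. You explicitly leave this verification open. But this is not a side-detail: existence of the functor in case (c) is exactly what parts~(1) and~(3) assert, so without it the proof does not stand. The paper avoids having to construct anything by taking the opposite route: it first identifies the candidate ideal $\cJ$ abstractly via the bijection \cite[Proposition~3.2.1]{Selecta} between tensor ideals in $\cB$ and $\End_\cB(X)$-submodules of $\Hom_\cB(\unit,X)$, then shows (using the explicit basis of \Cref{LemInc}(3) and the vanishing $\beta\circ\alpha=0$, $\alpha\circ\beta\neq0$) that $\cB/\cJ$ is equivalent, as a $\bk$-linear category, to $\bk[x]/x^2$-mod, hence already abelian and rigid, hence a tensor category. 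Then \cite{EG} classifies all the possible targets at once as tannakian-over-$\bk[x]/x^2$ or $\Ver_4^+$, and a small computation (\Cref{Rem3Cases} or the earlier special cases) distinguishes the three. That route never needs an ad hoc construction of a symmetric monoidal structure.

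A second, independent flaw is in your argument ruling out $\cJ=0$ in cases (b) and (c). Nilpotent non-invertible endomorphisms of a non-projective object in a tensor category need not ``split''---consider $\End$ of the regular module in $\Rep C_2$. What actually rules out full faithfulness is a dimension and structure constraint: the fusion relation $X^{\otimes 2}\simeq X^2$ forces, in any target tensor category, $[Y]=[Z]+[\unit]$ with $[Z]^2=1$, so $\Hom_\cV(\unit,Y)$ has dimension at most $2$, with equality only if $Y\simeq\unit^2$; but in that case $\End_\cV(Y)\simeq\mathrm{Mat}_2(\bk)$ would have to receive a fully faithful map from the local algebra $\End_\cB(X)$, which is impossible. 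This is the eqdim1/eqdim2 dichotomy in the paper, and it handles all $m\ge 2$ uniformly (your dimension count alone would not settle $m=2$).

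Your case (a) observation that $\cB_1(\lambda)\simeq\Rep C_2$ via inflation from the quotient $C_2^2\tto C_2^2/H_\lambda$ (so $\cI=0$) is correct and is essentially what the paper uses to identify that case. Your case (b) reduction via $A_2(\lambda)\simeq A_1(\lambda)^{\otimes 2}$ and \Cref{FirstResult}(2) is also essentially the paper's route for that subcase. But these are the easy cases; the lemma is carried by case (c), where your construction is incomplete.
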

\begin{proof} Set $X=A_m(\lambda)$. By \Cref{FusRules}(1) and (2), in $\cB\subset\Stab C_2^2$ we have
\begin{equation}\label{XXXX}
    X\otimes X\;\simeq\; X\oplus X.
\end{equation} In particular, the objects in $\cB\subset \Stab C_2^2$ are direct sums of copies of $\unit$ and $X$.
    
Let $Y$ be a non-zero image of $X$ in some tensor category $\cV$ under an LSM functor. By \Cref{SillyLemma}, $Y$ admits non-zero morphisms to and from $\unit$. In particular, in the Grothendieck ring $K_0(\cV)$, we have $[Y]=[Z]+[\unit]$ for some $Z\in\cV$.
 \Cref{XXXX} shows that in $K_0(\cV)$:
 $$([Z]+1)^2\;=\; 2[Z]+2\quad\Rightarrow\quad [Z]^2=1,$$ so that $Y$ must either be a self-extension (possibly split) of $\unit$, or a direct sum of $\unit$ with an invertible object.

The insight from the previous paragraph shows that for any tensor ideal $J$ for which $\cB/J$ admits a faithful LSM functor to a tensor category $\cV$, we must have either
\begin{equation}
    \label{eqdim1} \dim \Hom_\cB(\unit,X)\;=\; 1+\dim J(\unit,X),
\end{equation}
or alternatively (and forcing $Y\simeq \unit^2$) 
\begin{equation}
    \label{eqdim2}\dim \Hom_\cB(\unit,X)\;=\; 2+\dim J(\unit,X).\end{equation} 
    In the case of \Cref{eqdim2}, the functor $\cB/J\to\cV$ is fully faithful, for instance by \Cref{SillyLemma},
    so that the algebra $\End_{\cB/J}(X)\simeq\End_{\cV}(\unit^2)$ must be the matrix algebra $\mathrm{Mat}_2(\bk)$, which is impossible since $\End_{\cB}(X)$ is local.

  By a direct application of \cite[Proposition~3.2.1]{Selecta}, we have an inclusion preserving bijection between proper tensor ideals in $\cB$ and $\End_{\cB}(X)$-submodules of $M:=\Hom_\cB(\unit,X)$. We have
    $$M\simeq\Hom_{C_2^2}(\unit, A_m(\lambda)),$$
    and \Cref{LemInc}(2) thus implies that there is a unique tensor ideal $J$ in $\cB$ for which \Cref{eqdim1} is satisfied.
Moreover, this is the maximal tensor ideal which does not contain all morphisms $\unit\to X$, which, by \Cref{SillyLemma}, is the maximal tensor ideal that does not contain $X$. We call this ideal $\cJ$.

We have thus far proved uniqueness of the ideal in (1), and that this hypothetical ideal would have to satisfy the two properties in (2). To conclude the proof we demonstrate that (3) holds for the ideal $\cJ$, which then implies (1) too. 

    In $\cB/\cJ$, we have unique (up to scalars) non-zero morphisms $\alpha:\unit\to X$ and $\beta:X\to \unit$. Moreover, $\End_{\cB/\cJ}(X)$ is 2-dimensional by \Cref{XXXX} and \Cref{SillyLemma}. We can observe that $\beta\circ\alpha=0$ in $\cB/\cJ$, as that is already the case in $\Rep C_2^2$. Furthermore, we claim $\alpha\circ\beta\not=0$. Indeed, we can consider the basis in \Cref{LemInc}(3) and claim that all endomorphisms $A_m(\lambda)\tto A_i(\lambda)\hookrightarrow A_m(\lambda)$, for $i<m$ are zero in $\cB/\cJ$. If not, then by adjunction and \Cref{FusRules} there would be a composite morphism $\unit\to A_i(\lambda)^2\to A_m(\lambda)^2$ which is not in $\cJ$. By \Cref{LemInc}(2), such morphisms $\unit\to A_m(\lambda)$ are in $\cJ$. Hence, some composite morphism $A_m(\lambda)\tto \unit\hookrightarrow A_m(\lambda)$ must not be sent to zero in $\cB/\cJ$, which implies $\alpha\circ\beta\not=0$.
    
   It thus follows that $\cB/\cJ$ is $\bk$-linearly equivalent with $\bk[x]/x^2\mbox{-mod}$, which implies that $\cB/\cJ$ is abelian and hence a tensor category. By \cite{EG}, $\cB/\cJ$ admits a tensor functor to $\Ver_4^+$. It follows that $\cB/\cJ$ is either tannakian (corresponding to an affine group scheme $G$ with $\bk[G]=\bk[x]/x^2$) or $\Ver_4^+$, which results in the three possibilities in (3). 

   To decide which of the three tensor categories we obtain for every given pair $(\lambda,m)$, we can use \Cref{Rem3Cases} below, which reduces the problem to a tedious computation which we omit. A considerable amount can actually be avoided. Indeed, the case $\lambda\in\mP^1(\mF_2)$, $m=1$ follows immediately from three inclusions $\Rep C_2\subset \Stab C_2^2$ coming from the three epimorphisms $C_2^2\tto C_2$. The case $\lambda\not\in\mP^1(\mF_2)$, $m=2$ follows from \Cref{FirstResult}(2) and \cite[Remark~3.10]{BE}. For the remaining cases, one can use the functors in \Cref{F4case}
and \Cref{F2case} to demonstrate that $\cB/\cI$ must be tannakian, eliminating the possibility $\Ver_4^+$.
\end{proof}

\begin{remark}\label{Rem3Cases}
    The categories $\Rep C_2$, $\Rep \alpha_2$ and $\Ver_4^+$ are given by $\bk[x]/x^2\mbox{-mod}$ as $\bk$-linear categories. Furthermore, for the $P$ the regular module, we have in all three categories
    $$P\otimes P\;\simeq\; P\oplus P.$$
    A convenient way to tell the three tensor categories apart is by considering the action of $1+\sigma \in \bk S_2$ (with $\sigma$ the generator of $S_2$) via the braiding on $P^{\otimes 2}$. The action is (up to conjugation by $GL_2(\bk[x]/x^2)$) given by the following elements of $\mathrm{Mat}_2(\bk[x]/x^2)\simeq\End(P^2)$
    $$\left(\begin{array}{cc}
        x & 0 \\
        0 & 0
    \end{array}\right),\quad \left(\begin{array}{cc}
        0 & 0 \\
        x & 0
    \end{array}\right),\quad\mbox{and}\quad \left(\begin{array}{cc}
        0 & x \\
        x & 0
    \end{array}\right)$$
    in $\Rep C_2$, $\Rep \alpha_2$ and $\Ver_4^+$ respectively.
\end{remark}

\subsection{Proof of \texorpdfstring{\Cref{ThmClass}}{Theorem ...}}

Let $\Theta:\Rep C_2^2\to \cV$ be an LSM  functor to a tensor category $\cV$. If $\Theta$ is faithful, then it is exact by \Cref{Thm:FE} and therefore a tensor functor. Faithful functors thus correspond to those in \Cref{ThmClass}(0) and henceforth we can restrict to non-faithful functors. Since the projective modules are contained in each tensor ideal, we are thus interested in LSM functors
$$\Theta:\;\Stab C_2^2\;\to\;\cV.$$
We start by considering the following 4 options \ref{F4case} -- \ref{subsubinfty}.

\subsubsection{Assume $\Theta(A_2(\lambda))\not=0$ for some $\lambda\in\mP^1(\bk)\backslash \mP^1(\mF_2)$}\label{F4case}

Denote by $\cA$ the full additive monoidal subcategory of $\Stab C_2^2$ generated by $A_1(\lambda)$. By \Cref{FusRules}, 
$$\cA\;\subset\;\widetilde{\cA}\;:=\;\Stab C_2^2$$
comprises direct sums of $\unit$, $A_1(\lambda)$ and $A_2(\lambda)$ and satisfies the hypothesis in \Cref{LemUniqueExt}, for $X:=A_2(\lambda)$.

Furthermore, by \Cref{SillyLemma}, there must be a morphism $A_2(\lambda)\to\unit$ which is not sent to zero by $\Theta$. By \Cref{LemUniqueExt}, restriction yields a correspondence between LSM functors $\Stab C_2^2\to\cV$ that do not kill $A_2(\lambda)$ and LSM functors $\cA\to\cV$ with the same condition.

Moreover, if we set $B:=\Theta(A_1(\lambda))\in\cV$, then \Cref{FusRules} implies
$$B^{\otimes 3}\;\simeq\; B\oplus B.$$
Such objects $B$ cannot contain $\unit$ as a subobject. In fact, they must be simple. Consequently, $\Theta$ must send all morphisms $\unit\to A_1(\lambda)$ to zero.

Setting $\ulambda=(1:\lambda)$, we can observe that $\cA$ is precisely the image of $\cD_{\ulambda}$ from \Cref{SecTilt} in the stable category.
By the previous paragraph, LSM functors $\cA\to\cV$ thus correspond to LSM functors $\cD_{\ulambda}/\cK\to \cV$, with $\cK$ also as in \Cref{SecTilt}. Applying \Cref{FirstResult}(2) yields
\begin{equation}\label{eqVer4}\cD_{\ulambda}/\cK\;\simeq\; (\Tilt SL_2)/I_2\;\simeq\; \Ver_4,\end{equation}
where the last equivalence is in \cite[Remark~3.10]{BE}. This equivalence exchanges identifies $A_2(\lambda)$ and the projective cover of $\unit$ in $\Ver_4$.

Since $A_2(\lambda)$ is projective in the tensor category $\Ver_4$, \Cref{ExFin} shows that all LSM functors $\cD_{\ulambda}/\cK\to \cV$ that do not kill $A_2(\lambda)$ are (faithful) tensor functors.
In conclusion, all non-faithful LSM functors $\Theta$ from $\Rep C_2^2$ to tensor categories satisfying $\Theta(A_2(\lambda))\not=0$ factor through a unique functor as in \Cref{ThmClass}(1).


\subsubsection{$\Theta(A_1(\lambda))\not=0$ for some $\lambda\in\mP^1(\mF_2)$}\label{F2case}

We freely use \Cref{FusRules}.
Observe that non-zero morphisms $A_1(\lambda)\to\unit$ are not annihilated by $\Theta$, due to \Cref{SillyLemma}.
By \Cref{LemUniqueExt}, LSM functors $\Stab C_2^2\to\cV$ which do not kill $A_1(\lambda)$ are thus in natural bijection with LSM functors $\cA\to\cV$ that do not kill $A_1(\lambda)$, where $\cA\simeq \Rep C_2$ is the monoidal subcategory of $\Stab C_2^2$ generated by $A_1(\lambda)$. It then follows from \Cref{ExFin} that such functors factor through a unique functor as in \Cref{ThmClass}(3).

Note that these three (for $\lambda\in\mP^1(\mF_2)$) functors are of the form
$$\Rep C_2^2\simeq(\Rep C_2)\boxtimes(\Rep C_2)\xrightarrow{(\Rep C_2)\boxtimes\Sigma} (\Rep C_2)\boxtimes\Vecc\simeq \Rep C_2,$$
where we carried out semisimplification of one of the three subgroups $C_2<C_2^2$.

\subsubsection{$\Theta(A_m(\lambda))=0$ and $\Theta(A_{m+1}(\lambda))\not=0$ for some $\lambda\in\mP^1(\bk)$, $m\in\mZ_{>0}$}

Firstly, we can observe that the two conditions become contradictory for $m=1$ and $\lambda\not\in\mP^1(\mF_2)$, by \Cref{FusRules}(2).

For the remainder of this subsection we focus on the case $\lambda\not\in\mP^1(\mF_2)$ (and $m>1$), as the case $\lambda\in\mP^1(\mF_2)$ can similarly be derived from \Cref{F2case}. 
We claim that any $\Theta$ with the stated properties factors uniquely through one such LSM functor $\Stab C_2^2\to\Rep \alpha_2$.

We set $l=m+1$ for convenience. Consider the monoidal subcategory $\cB=\cB_l(\lambda)\subset\Stab C_2^2$ from \Cref{LemTech} with indecomposable objects $\unit$ and $A_l(\lambda)$. Let $J$ denote the ideal in $\cB$ which is the kernel of
$$\cB\hookrightarrow \Stab C_2^2\tto(\Rep C_2^2)/I$$
with $I$ the tensor ideal in $\Rep C_2^2$ generated by $A_{l-1}(\lambda)$.

As in \Cref{F4case} and \Cref{F2case}, it follows from \Cref{LemUniqueExt}, applied to $\widetilde{\cA}=(\Rep C_2^2)/I$ and $\cA=\cB/J$ that it suffices to show that there is a unique LSM functor $\cB/J\to \Rep\alpha_2$ such that all LSM functors $\cB/J\to\cV$ to tensor categories $\cV$ that do not annihilate $A_l(\lambda)$ factor via the former.

Now, since the ideal $I$ does not contain $A_{l}(\lambda)$, we find $J\subset\cI$, for $\cI$ the unique ideal in $\cB$ from \Cref{LemTech}. By this lemma (and the fact that $\Rep\alpha_2$ does not have non-trivial auto-equivalences), it now suffices to observe that the tensor category $\cB/\cI$ is $\Rep\alpha_2$, by \Cref{LemTech}(3).



\subsubsection{$\Theta(A_n(\mu))=0$ for all $n\ge1$ and $\mu\in\mP^1(\bk)$}\label{subsubinfty}
  The only remaining indecomposable objects in $\Stab C_2^2$ are $\Omega^i\unit$ for $i\in\mZ$. These are invertible, see \Cref{FusRules}(3).
    Every morphism $\unit\to\Omega^i\unit$, for $i>0$, must be sent to zero by~$\Theta$, since they factor as $\unit\to A_i(0)\to \Omega^i\unit$.

    Next assume that all morphisms $\unit\to\Omega^{-1}\unit$ are sent to zero by $\Theta$. Since all morphisms $\unit\to\Omega^{-i}\unit$, for $i>1$, are linear combinations of composites $\unit\to\Omega^{-1}\unit\to\Omega^{-i}\unit$, they all go to zero, too. It thus follows that $\Theta$ kills all morphisms between indecomposables that are not isomorphisms, and $\Theta$ factors through the semisimplification
    $\overline{\Rep C_2^2}$, which is the category of $\mZ$-graded vector spaces. This leads to composite functors with the one in \Cref{ThmClass}(5).

    Since the space of morphisms $\unit\to\Omega^{-1}\unit$ is two-dimensional, and must be sent by $\Theta$ to a morphism space between invertible objects in a tensor category, the alternative is that the space of morphisms $\unit\to\Omega^{-1}\unit$ that are sent to zero by $\Theta$ is spanned by $f_\lambda$ for some $\lambda\in\mP^1(\bk)$. Let $J$ denote the tensor ideal in $\Rep C_2^2$ generated by all $A_m(\mu)$, all morphisms $\unit\to \Omega^i\unit$, $i>0$, and $f_\lambda$. It follows that all morphism spaces from $\unit$ to indecomposable objects in $\cA:=(\Rep C_2^2)/J$ are zero, except that
    $$\cA(\unit, \Omega^{-i}\unit)\;\simeq\;\bk,\quad\mbox{for }\;i\ge 0.$$
    Any LSM functor $\cA\to\cV$ to a tensor category $\cV$ that does not kill all morphisms $\unit\to\Omega^{-1}\unit$ lands in $\Vecc\subset\cV$, and there is a unique faithful LSM functor $\cA\to\Vecc$. Hence, the assumptions in the current paragraph lead to functors as in \Cref{ThmClass}(4).


\subsubsection{Conclusion}
Now let $\Theta$ be any LSM functor from $\Stab C_2^2$ to a tensor category. It follows from \Cref{FusRules}(1) that $\Theta(A_m(\mu))=0$ for all $\mu$ or for all $\mu\not=\lambda$ for one $\lambda\in\mP^1(\bk)$. Moreover, if $\Theta(A_{m+1}(\lambda))=0$ then $\Theta(A_m(\lambda))=0$, by \Cref{FusRules}(1). We must therefore be in one of the cases \ref{F4case} -- \ref{subsubinfty}. This completes the proof of \Cref{ThmClass}.

\smallskip

We conclude this section with the following consequences of the proof of \Cref{ThmClass}. We denote by $\cF$ the $\bk$-linear symmetric rigid monoidal category of finite-dimensional $\mZ$-filtered vector spaces, with filtration respecting morphisms. Below we consider the LSM functors to $\Vecc_{\mZ}$ and $\Vecc$ which associate to a filtered vector space its associated graded vector space or forget the filtration, respectively.
\begin{lemma}\label{PropFilt}
    For every $\lambda\in\mP^1(\bk)$, there exists an LSM functor $F_\lambda$, admitting a commutative diagram
    $$\xymatrix{
    &&&&\Vecc_{\mZ}\\
    \Rep C_2^2\ar[rr]^-{F_\lambda}\ar[rrrru]^{\Theta_\infty}\ar[rrrrd]_{\Theta^\lambda_\infty}&& \cF\ar[rru]\ar[rru]\ar[rrd] \\
    &&&&\Vecc.
    }$$
\end{lemma}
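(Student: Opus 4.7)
The plan is to factor $F_\lambda$ through the quotient $\cA_\lambda := (\Rep C_2^2)/J_\lambda$ already identified in case~(4) of the proof of \Cref{ThmClass}, where $J_\lambda$ is the tensor ideal generated by all $A_m(\mu)$, the morphisms $\unit \to \Omega^i\unit$ for $i>0$, and $f_\lambda$. I would first recall that the indecomposable objects of $\cA_\lambda$ are the Heller shifts $\Omega^i\unit$, $i\in\mZ$, satisfying $\Omega^i\unit \o \Omega^j\unit \simeq \Omega^{i+j}\unit$ and $\cA_\lambda(\Omega^i\unit, \Omega^j\unit) \cong \bk$ for $j \leq i$ (zero otherwise), and that $\Theta^\lambda_\infty$ is the composite $\Rep C_2^2 \tto \cA_\lambda \hookrightarrow \Vecc$ with the second arrow a faithful LSM embedding. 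Next I would introduce $\cF_0 \subset \cF$, the full additive symmetric rigid monoidal subcategory on direct sums of the 1-dimensional filtered vector spaces $\bk\langle i\rangle$ concentrated in filtration degree $i$; a direct check gives $\cF_0(\bk\langle i\rangle, \bk\langle j\rangle) \cong \bk$ for $j \leq i$ with $\bk\langle i\rangle \o \bk\langle j\rangle = \bk\langle i+j\rangle$, matching exactly the Hom and tensor structure of $\cA_\lambda$.

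The heart of the plan is to exhibit a $\bk$-linear symmetric monoidal equivalence $G_\lambda: \cA_\lambda \xrightarrow{\sim} \cF_0$ sending $\Omega^i\unit \mapsto \bk\langle i\rangle$, and then to set $F_\lambda$ to be the composite $\Rep C_2^2 \tto \cA_\lambda \xrightarrow{G_\lambda} \cF_0 \hookrightarrow \cF$. To construct $G_\lambda$, I would fix a nonzero $\xi \in \cA_\lambda(\unit, \Omega^{-1}\unit)$; since $\Theta^\lambda_\infty(\xi) \neq 0$ in $\bk$ and $\Theta^\lambda_\infty$ is LSM, each tensor power $\xi^{\o n} \in \cA_\lambda(\unit, \Omega^{-n}\unit)$ is nonzero and so generates the relevant 1-dimensional Hom space. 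Mapping $\xi$ to the standard generator of $\cF_0(\bk\langle 0\rangle, \bk\langle -1\rangle)$ and extending by composition and tensor will yield the equivalence $G_\lambda$; compatibility with composition and tensor is forced by the 1-dimensionality of all the Hom spaces involved.

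The two triangles of the statement then commute essentially by uniqueness. Forgetting the filtration $\cF \to \Vecc$ restricts on $\cF_0$ to a faithful LSM functor which via $G_\lambda$ must agree with the embedding $\cA_\lambda \hookrightarrow \Vecc$ from $\Theta^\lambda_\infty$, so $F_\lambda$ postcomposed with forget recovers $\Theta^\lambda_\infty$. Similarly, associated grading $\cF \to \Vecc_\mZ$ kills every morphism $\bk\langle i\rangle \to \bk\langle j\rangle$ with $j<i$, so $F_\lambda$ postcomposed with associated graded factors through the semisimplification of $\cA_\lambda$, which matches $\Theta_\infty = \Sigma$ since $J_\lambda$ lies inside the ideal of negligible morphisms in $\Rep C_2^2$.

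The main obstacle will be verifying that $G_\lambda$ is truly \emph{symmetric} monoidal, not merely monoidal. Under the identifications $\Omega^i\unit \o \Omega^j\unit \cong \Omega^{i+j}\unit$ and $\bk\langle i\rangle \o \bk\langle j\rangle = \bk\langle i+j\rangle$, the two braidings compare a priori by a Koszul-type sign $(-1)^{ij}$ coming from the shift in the triangulated category $\Stab(\bk C_2^2)$. Since $\chara(\bk) = 2$ throughout \Cref{SecKlein}, this sign is trivial, so the braidings agree and $G_\lambda$ is indeed symmetric as required.
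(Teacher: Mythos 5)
Your proposal is correct and follows essentially the same route as the paper: the paper's proof simply observes that the quotient category $\cA = (\Rep C_2^2)/J$ from case~(4) of the proof of \Cref{ThmClass} can be identified with $\cF$ after verifying that nonzero morphisms between the $\Omega^j\unit$ compose to nonzero morphisms, and your argument carefully fleshes out this same identification. (Two minor remarks: your $\cF_0$ is actually all of $\cF$, since every finite-dimensional filtered vector space decomposes into one-dimensional ones; and the worry about a Koszul sign is somewhat moot since the braiding on $\Stab C_2^2$ is inherited from the swap on $\Vecc$, though your resolution via $\chara(\bk)=2$ is of course also valid.)
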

\begin{proof} Let $\cA$ be as in \Cref{subsubinfty}. One can verify that any two non-zero morphisms between indecomposable objects ({\it i.e.} the $\Omega^j\unit$) in $\cA$ compose to non-zero morphisms. 
    We may thus identify $\cA$ with $\cF$. 
\end{proof}

For $m\in\mZ_{>0}\cup\{\infty\}$, denote by $J_m^\lambda$ the tensor ideal in $\Rep C_2^2$ that is the kernel of $\Theta_m^\lambda$. Note that the kernel $J_\infty$ of $\Theta_\infty$ is the unique maximal tensor ideal, the ideal of negligible morphisms $\cN$. 
\begin{lemma}\label{IncIdeal}
    For every $\lambda\in\mP^1(\bk)$, we have a chain of strict inclusions
    $$J_1^\lambda\subset J_2^\lambda\subset J_3^\lambda\subset\cdots \subset J_\infty^\lambda\subset J_\infty=\cN.$$
\end{lemma}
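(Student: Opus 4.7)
My plan is to handle the strict inclusions by exhibiting explicit witness morphisms, then prove the key containment $J_m^\lambda\subset J_{m+1}^\lambda$ via a maximality principle, and finally obtain $J_m^\lambda\subset J_\infty^\lambda$ by post-composition.

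For the strict inclusions, the witness for $J_m^\lambda\subsetneq J_{m+1}^\lambda$ and $J_m^\lambda\subsetneq J_\infty^\lambda$ is $\mathrm{id}_{A_{m+1}(\lambda)}$: it lies in $J_{m+1}^\lambda$ and $J_\infty^\lambda$ because both $\Theta_{m+1}^\lambda$ and $\Theta_\infty^\lambda$ annihilate $A_{m+1}(\lambda)$, but not in $J_m^\lambda$ since $\Theta_m^\lambda(A_{m+1}(\lambda))$ is non-zero by the defining properties in \Cref{ThmClass}. For $J_\infty^\lambda\subsetneq J_\infty$, pick any $\mu\in\mP^1(\bk)\setminus\{\lambda\}$ and take the morphism $f_\mu\colon\unit\to\Omega^{-1}\unit$ from \eqref{seslambda}: it is killed by $\Theta_\infty=\Sigma$ because in $\Vecc_\mZ$ the source and target sit in degrees $0$ and $-1$ respectively, whereas by the characterisation of $\Theta_\infty^\lambda$ in \Cref{ThmClass}(4) its kernel on the two-dimensional space $\Hom(\unit,\Omega^{-1}\unit)$ is exactly the line $\bk f_\lambda$.

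The central inclusion $J_m^\lambda\subset J_{m+1}^\lambda$ I would prove via a maximality principle. The starting observation, from \Cref{FusRules}(1): $A_k(\lambda)\otimes A_{m+1}(\lambda)\simeq A_k(\lambda)^2$ in $\Stab C_2^2$ for $1\le k\le m$, so combined with $\Theta_{m+1}^\lambda(A_{m+1}(\lambda))=0$ this forces $\Theta_{m+1}^\lambda(A_k(\lambda))^2=0$ in $\Rep\alpha_2$, hence $\Theta_{m+1}^\lambda(A_k(\lambda))=0$ for all $k\le m$; an analogous fusion computation gives $\Theta_{m+1}^\lambda(A_k(\mu))=0$ for $\mu\neq\lambda$. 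To promote this object-level vanishing to an inclusion of full kernels, the plan is to establish the intrinsic description that $J_m^\lambda$ is the \emph{maximal} tensor ideal in $\Rep C_2^2$ not containing $\mathrm{id}_{A_{m+1}(\lambda)}$, and symmetrically for $J_{m+1}^\lambda$ and $\mathrm{id}_{A_{m+2}(\lambda)}$. Granting this, the inclusion is immediate: $A_{m+1}(\lambda)$ is a direct summand of $A_{m+1}(\lambda)\otimes A_{m+2}(\lambda)$ by the same fusion rule, so $\mathrm{id}_{A_{m+2}(\lambda)}\in J_m^\lambda$ would force $\mathrm{id}_{A_{m+1}(\lambda)}\in J_m^\lambda$, a contradiction; thus $J_m^\lambda\subset J_{m+1}^\lambda$ by maximality of the latter.

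The hard part will be the maximality claim, which I would prove by mimicking the construction in the proof of \Cref{ThmClass}(3): for any tensor ideal $J'\subset\Rep C_2^2$ with $\mathrm{id}_{A_{m+1}(\lambda)}\notin J'$, its restriction to the subcategory $\cB_{m+1}(\lambda)\subset\Stab C_2^2$ (see \Cref{LemTech}) is contained in the unique maximal tensor ideal $\cI\subset\cB_{m+1}(\lambda)$ not containing $\mathrm{id}_{A_{m+1}(\lambda)}$, so the quotient LSM functor factors on this subcategory through $\cB_{m+1}(\lambda)/\cI\simeq\Rep\alpha_2$; \Cref{LemUniqueExt} then uniquely extends this back to an LSM functor on the ambient category, which by the uniqueness part must coincide with $\Theta_m^\lambda$, forcing $J'\subset J_m^\lambda$. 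Finally, for $J_m^\lambda\subset J_\infty^\lambda$ I would post-compose with the semisimplification tensor functor $\Sigma\colon\Rep\alpha_2\to\Stab\alpha_2\simeq\Vecc$: the composite $\Sigma\circ\Theta_m^\lambda\colon\Rep C_2^2\to\Vecc$ is a non-faithful LSM functor killing every $A_k(\mu)$ and every projective, so by the classification \Cref{ThmClass} it must equal $\Theta_\infty^{\lambda'}$ (up to isomorphism) for some $\lambda'\in\mP^1(\bk)$; an inspection of the \Cref{LemUniqueExt}-based construction of $\Theta_m^\lambda$ on $\cD_{\ulambda}$ identifies $\lambda'=\lambda$, so $J_m^\lambda\subset\ker(\Sigma\circ\Theta_m^\lambda)=J_\infty^\lambda$.
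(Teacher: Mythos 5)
Your explicit witnesses for the strict inclusions are fine, and the overall architecture is appealingly structural, but both of your two non-trivial steps ($J_m^\lambda\subset J_{m+1}^\lambda$ and $J_m^\lambda\subset J_\infty^\lambda$) have genuine gaps, and, as it turns out, both gaps close to the same missing computation, which is in fact the core of the paper's own proof.

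For $J_m^\lambda\subset J_{m+1}^\lambda$, your ``maximality'' claim (that $J_{m+1}^\lambda$ is the maximal tensor ideal of $\Rep C_2^2$ not containing $\mathrm{id}_{A_{m+2}(\lambda)}$) is plausible but not established by \Cref{ThmClass}, which only classifies kernels of LSM functors to \emph{tensor categories}, not arbitrary tensor ideals. Worse, your sketched derivation of maximality runs backwards: from $J'|_{\cB_{m+1}(\lambda)}\subset\cI$ you get a projection $\cB/J'|_{\cB}\twoheadrightarrow\cB/\cI$, so the quotient $\cB\to\cB/J'|_{\cB}$ does \emph{not} factor through $\cB/\cI$ -- the factorisation goes the other way -- and hence \Cref{LemUniqueExt} does not apply as you propose. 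For $J_m^\lambda\subset J_\infty^\lambda$, the idea of post-composing $\Theta_m^\lambda$ with $\Sigma\colon\Rep\alpha_2\to\Vecc$ is a nice reduction, but the crux, namely that the kernel of $\Sigma\circ\Theta_m^\lambda$ is $J_\infty^\lambda$ and not $J_\infty^{\lambda'}$ for some other $\lambda'$ (nor $J_\infty$ itself), is exactly the claim that $\Sigma\circ\Theta_m^\lambda(f_\mu)\ne0$ for every $\mu\ne\lambda$; the phrase ``inspection of the \Cref{LemUniqueExt}-based construction'' does not supply an argument for this.

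Both gaps are filled by the observation the paper makes: tensoring the triangle coming from \eqref{seslambda} with $A_{m+1}(\lambda)$ and applying \Cref{FusRules} shows that $f_\mu\otimes A_{m+1}(\lambda)$ is an isomorphism in $\Stab C_2^2$ whenever $\mu\ne\lambda$; since $\Theta_m^\lambda(A_{m+1}(\lambda))\ne0$ this forces $\Theta_m^\lambda(f_\mu)\ne0$, i.e.\ $f_\mu\notin J_m^\lambda$. Once you have this, your semisimplification route to $J_m^\lambda\subset J_\infty^\lambda$ goes through (and is essentially equivalent to what the paper does); and for $J_m^\lambda\subset J_{m+1}^\lambda$ the paper simply compares, for each indecomposable $X$, the subspaces $J_m^\lambda(\unit,X)\subset\Hom(\unit,X)$ directly, using \Cref{LemInc}(2) for $X=A_n(\lambda)$ -- this sidesteps any global maximality statement. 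I would suggest you adopt that direct comparison rather than trying to repair the maximality argument, whose truth in full generality is not obvious from the paper's toolkit.
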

\begin{proof}
Firstly, it is clear from the formulation of \Cref{ThmClass} that there are no equalities between these ideals.

    To prove that $J_m^\lambda\subset J^\lambda_\infty$, by \Cref{subsubinfty} it is sufficient to show that $J_m^\lambda$ does not contain $f_\mu$ for $\mu\not=\lambda$.
Consider therefore the short exact sequence in \Cref{seslambda} for any $\mu\not=\lambda$ and the corresponding exact triangle in $\Stab C_2^2$. By taking the tensor product with $A_{m+1}(\lambda)$, we obtain by \Cref{FusRules} an exact triangle
    $$A_{m+1}(\lambda)\;\xrightarrow{f_\mu\otimes A_{m+1}(\lambda)} A_{m+1}(\lambda)\;\to\; 0\;\to,$$
    showing that $f_\mu\otimes A_{m+1}(\lambda)$ is an isomorphism in $\Stab C_2^2$. It thus follows indeed that $\Theta_m^\lambda(f_\mu\otimes A_1(\lambda))$ is not zero.

    That $J_m^\lambda\subset J_{m+1}^\lambda$ can be seen as follows. For any $m$, the morphisms $\unit\to\Omega^j\unit$ in $J_m^\lambda$ are the same as in $J^\lambda_\infty$. All morphisms $\unit\to A_n(\mu)$ are in $J_m^\lambda$ when $\mu\not=\lambda$. Finally, the space of morphisms $\unit\to A_n(\lambda)$ in $J_m^\lambda$ is always of codimension zero or one, so this space is either everything or the subspace from \Cref{LemInc}(2). Furthermore, the codimension is zero if and only if $A_n(\lambda)$ is sent to zero (except for $n=1$ and $\lambda\not\in\mP^1(\mF_2)$, in which case it is always zero).
\end{proof}


\section{A family of \texorpdfstring{$\bO$}{O}-functors for \texorpdfstring{$p=2$}{p=2}}\label{FinSec}
In this section, we continue assuming that $\chara(\bk)=2$.  Before going into full detail, let us state the principal outcome of this section in simple terms.
\begin{theorem}
    Let $\alpha\in\bk$ be a root of the polynomial $X^2+X+1$. There is a unique LSM functor $\Rep C_2^2\to\Ver_4$ that does not send $A_1([\alpha:1])$ to zero. Taking the transitive subgroup $C_2^2<S_4$ then produces a $\Frob^2$-LSM functor
    $$\cC\xrightarrow{X\mapsto X^{\otimes 4}}\cC\boxtimes\Rep C_2^2\to\cC\boxtimes\Ver_4$$
    that commutes with all tensor functors. The functor is exact for $\cC=\Ver_4$.
\end{theorem}
This is the realisation of \Cref{Conj} for $p=2=n$, and is proved in \Cref{ThmClass} and \Cref{PropCi}. It is a plausible candidate to detect fibering over $\Ver_4$ by \Cref{ThmVer4}.

More generally, for each $\lambda\in\mP^1(\bk)\backslash\mP^1(\mF_2)$,  we use \Cref{ThmOfromV} to obtain an $\bO^2_2$-functor $\Phi^\lambda_1$ from the $\bV$-functor $\Theta_1^\lambda$ in \Cref{CorClass}.
For all other cases in \Cref{CorClass} we similarly obtain an $\bO^2\{\cV\}$-functor $\Phi^\lambda_m$ or $\Phi_\infty$, for $\cV$ some tannakian category. By taking the descendant along the fibre functor, we thus have associated $\bO^2_1$-functors.

\begin{lemma}\label{EasyImpEx}
    Let $\cC$ be a tensor category and fix $\lambda\in\mP^1(\bk)$ and $m\in\mZ_{>0}$.
    \begin{enumerate}
        \item If $\cC$ is $\Phi_\infty$-exact, it is also $\Phi_\infty^\lambda$-exact.
        \item If $\cC$ is $\Phi_\infty^\lambda$-exact, it is also $\Phi_m^\lambda$-exact.
        \item If $\cC$ is $\Phi_{m+1}^\lambda$-exact, it is also $\Phi_m^\lambda$-exact.
    \end{enumerate}
\end{lemma}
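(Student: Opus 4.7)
All three parts are essentially formal consequences of two already-established results: the chain of kernel inclusions in \Cref{IncIdeal} and the general monotonicity principle for $\Phi$-exactness in \Cref{PropExIdeal}(1). My plan is therefore to reduce each of the three implications to an application of \Cref{PropExIdeal}(1) using the appropriate inclusion from \Cref{IncIdeal}.

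For part (3), I would take the two $\bV$-functors $\Theta^1 := \Theta^\lambda_m$ and $\Theta^2 := \Theta^\lambda_{m+1}$ (both sharing source $\Rep C_2^2$ and associated to the permutation group $C_2^2 < S_4$). Their kernels are exactly the ideals $J_1 = J^\lambda_m$ and $J_2 = J^\lambda_{m+1}$ from \Cref{IncIdeal}, which gives $J_1 \subset J_2$. Applying \Cref{PropExIdeal}(1) with these choices shows directly that $\Phi^\lambda_{m+1}$-exactness of $\cC$ implies $\Phi^\lambda_m$-exactness of $\cC$.

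For part (2), I would argue identically using the pair $\Theta^1 := \Theta^\lambda_m$ and $\Theta^2 := \Theta^\lambda_\infty$, whose kernels satisfy $J^\lambda_m \subset J^\lambda_\infty$ by \Cref{IncIdeal}. For part (1), the same principle with $\Theta^1 := \Theta^\lambda_\infty$ and $\Theta^2 := \Theta_\infty$ gives the result, using $J^\lambda_\infty \subset J_\infty$. In each case \Cref{PropExIdeal}(1) applies with no change, since every $\bV$-functor in sight is attached to the same permutation group $C_2^2 < S_4$ and the resulting $\bO^2$-functors are constructed by the uniform procedure of \Cref{ThmOfromV}.

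There is essentially no obstacle here: the only thing to verify is that the ideals occurring in the three pairs are indeed literally the kernels of the corresponding $\Theta$'s (true by definition), and that the inclusions of \Cref{IncIdeal} are in the direction that matches the hypothesis/conclusion orientation of \Cref{PropExIdeal}(1). Both are straightforward bookkeeping.
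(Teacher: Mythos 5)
Your proof is correct and takes exactly the same approach as the paper, whose entire proof is the one-liner that the statement ``follows directly from \Cref{PropExIdeal}(1) and \Cref{IncIdeal}.'' Your elaboration of which pairs $(\Theta^1,\Theta^2)$ and which kernel inclusions to use in each part matches the intended argument precisely.
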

\begin{proof}
    This follows directly from \Cref{PropExIdeal}(1) and \Cref{IncIdeal}.
\end{proof}

In the rest of the section we obtain more refined results about these exactness criteria. 

\begin{remark}
    The functor $\Phi_1^\lambda$ for $\lambda\not\in\mP^1(\mF_2)$ is precisely the conjectural $\bO_2^2$-functor from \Cref{Conj} in the one case where the conjecture is proved thus far.
\end{remark}

\subsection{New characterisations of tannakian categories}

\begin{lemma}\label{LemPhiinf}
    If a tensor category is $\Phi_\infty$-exact, then it is Frobenius exact.
\end{lemma}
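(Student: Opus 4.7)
The plan is to show that $\ker(\Fr)\subseteq\ker(\Phi_\infty)$ at the morphism level; combined with \Cref{Thm:FE}, the contrapositive then gives the lemma.

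Fix a morphism $f\colon X\to Y$ in $\cC$ with $\Fr(f)=0$. The kernel of $\Sigma\colon\Rep C_2\to\Ver_2$ is the ideal of morphisms factoring through projective (free) $C_2$-modules, so I would first translate $\Fr(f)=0$ into a $C_2$-equivariant factorisation $f^{\otimes 2}=h\circ k$ in $\cC\boxtimes\Rep C_2$ through an object of the form $M\otimes\bk C_2$ for some $M\in\cC$. Tensoring this with itself yields $f^{\otimes 4}=h^{\otimes 2}\circ k^{\otimes 2}$, equivariant for the wreath product $C_2\wr C_2\subset S_4$ (the two inner $C_2$'s act via the $C_2$-structure of each factor, the outer $C_2$ by swapping the two copies via the braiding). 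This wreath product is the Sylow 2-subgroup of $S_4$, and it contains the Klein 4-group $C_2^2=\langle g_1,g_2\rangle$ with $g_1=(12)(34)$, $g_2=(13)(24)$ from \Cref{ExPermG}(2) used to define $\Phi_\infty$: concretely, $g_1$ is the diagonal of the inner wreath factors and $g_2$ the outer swap. Restricting along this inclusion produces a $C_2^2$-equivariant factorisation of $f^{\otimes 4}$ through $M^{\otimes 2}\otimes(\bk C_2)^{\otimes 2}$, where $M^{\otimes 2}$ carries the trivial $C_2^2$-action.

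The remaining task is to verify that $\cC\boxtimes\Theta_\infty$ annihilates this object, which reduces to computing $(\bk C_2)^{\otimes 2}$ as a $C_2^2$-module, where $g_1$ acts diagonally and $g_2$ by swap. A direct orbit analysis on the basis $\{1,g\}^{\otimes 2}$ gives two $C_2^2$-orbits of size $2$: the orbit $\{1\otimes 1,g\otimes g\}$ with stabiliser $\langle g_2\rangle$ and the orbit $\{1\otimes g,g\otimes 1\}$ with stabiliser $\langle g_1g_2\rangle$. Therefore
\[
(\bk C_2)^{\otimes 2}\;\simeq\;\Ind_{\langle g_2\rangle}^{C_2^2}\unit\;\oplus\;\Ind_{\langle g_1g_2\rangle}^{C_2^2}\unit,
\]
a sum of modules induced from proper subgroups of $C_2^2$. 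Since $\Theta_\infty$ is a $\bV$-functor for $C_2^2\subset S_4$ by \Cref{CorClass}, it kills each such induced module, hence kills $M^{\otimes 2}\otimes(\bk C_2)^{\otimes 2}$. Consequently $\Phi_\infty(f)$ factors through a vanishing object and equals zero.

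The only delicate point, and where I would be most careful, is correctly matching the intrinsic embedding $C_2^2<S_4$ used to define $\Phi_\infty$ with the copy of $C_2^2$ appearing inside $C_2\wr C_2$ via the iterated factorisation, i.e.\ verifying that the generator $g_1$ corresponds to the diagonal inner element and $g_2$ to the outer swap; once this identification is set up, the orbit computation and the $\bV$-functor property of $\Theta_\infty$ finish the argument immediately.
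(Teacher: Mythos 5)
Your plan of showing $\ker(\Fr)\subseteq\ker(\Phi_\infty)$ is the right idea, but Step~1 has a genuine gap: $\Fr(f)=0$ does \emph{not} give a $C_2$-equivariant factorisation of $f^{\otimes 2}$ through an object $M\otimes\bk C_2$. While the kernel of $\Sigma\colon\Rep C_2\to\Vecc$ is the ideal of morphisms factoring through free modules, after forming the Deligne product this description breaks down: $\ker(\cC\boxtimes\Sigma)$ is strictly larger than the tensor ideal of morphisms factoring through relative projectives $M\otimes\bk C_2$. This is already visible on identity morphisms. Take $\cC=\Rep C_2$, so $\cC\boxtimes\Rep C_2\simeq\Rep C_2^2$ (with $\cC$ the first factor $\langle g_1\rangle$), and $\cC\boxtimes\Sigma$ is semisimplification with respect to $g_2$. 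The module $A_1(1)$ restricts to a free $\langle g_2\rangle$-module, so $(\cC\boxtimes\Sigma)(A_1(1))=0$; yet $A_1(1)$ is not a summand of any $M\otimes\bk C_2$ with $M\in\Rep\langle g_1\rangle$, as those decompose into copies of $A_1(\infty)$ and $\bk C_2^2$. Hence $\mathrm{id}_{A_1(1)}\in\ker(\cC\boxtimes\Sigma)$ does not factor through a relative projective, and nothing in your argument singles out morphisms of the special form $f^{\otimes 2}$ as avoiding this phenomenon. (Incidentally, $M^{\otimes 2}$ does not carry the trivial $C_2^2$-action, since $g_2$ acts on it by the braiding swap, but that inaccuracy would be harmless, as $\Theta_\infty$ is monoidal and kills any tensor multiple of $(\bk C_2)^{\otimes 2}$.)

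The paper avoids the factorisation entirely by phrasing everything in terms of coinvariants of morphisms out of $\unit$, which suffices since, by \Cref{Thm:FE} and rigidity, an LSM functor between tensor categories is faithful as soon as it is injective on morphisms out of $\unit$. For a monomorphism $\alpha\colon\unit\to X$, $\Phi_\infty(\alpha)\neq 0$ exactly when the composite $\unit\xrightarrow{\alpha^{\otimes 4}}X^{\otimes 4}\tto\mathrm{H}_0(C_2^2,X^{\otimes 4})$ is non-zero, and $\Fr(\alpha)\neq 0$ exactly when $\unit\xrightarrow{\alpha^{\otimes 2}}X^{\otimes 2}\tto\mathrm{H}_0(C_2,X^{\otimes 2})$ is non-zero. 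If $\Fr(\alpha)=0$ then also $\Fr(\alpha^{\otimes 2})=\Fr(\alpha)^{\otimes 2}=0$, which says the composite to $\mathrm{H}_0(\langle(13)(24)\rangle,X^{\otimes 4})$ vanishes; since $\langle(13)(24)\rangle<C_2^2$, the coinvariants $\mathrm{H}_0(C_2^2,-)$ are a further quotient of $\mathrm{H}_0(\langle(13)(24)\rangle,-)$, so the composite to $\mathrm{H}_0(C_2^2,X^{\otimes 4})$ vanishes as well and $\Phi_\infty(\alpha)=0$. This short transitivity-of-coinvariants argument is what your orbit computation was implicitly encoding, without ever needing the factorisation.
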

\begin{proof}
   In order to find a contradiction, we consider a $\Phi_\infty$-exact tensor category~$\cC$ that is not Frobenius exact. By adjunction and \Cref{Thm:FE} a tensor category is $\Phi_\infty$-exact if and only if no monomorphism $\alpha:\unit\to X$ is annihilated by $\Phi_\infty$. Using the notion of coinvariants from e.g.~\cite{Tann}, it follows, for instance using an exact faithful functor $\cC\to\Vecc$, that $\Phi_\infty(\alpha)\not=0$ if and only if
   the composite
$$\unit\xrightarrow{\alpha^{\otimes 4}} X^{\otimes 4}\tto \mathrm{H}_0(C_2^2, X^{\otimes 4}),$$
is non-zero. Here, the right-hand term represents the coinvariants with respect to the restriction of the braid action to $C_2^2<S_4$. Since $\cC$ is not Frobenius exact, we have a monomorphism $\alpha:\unit\to X $ with $\Fr(\alpha)=0$, so also $\Fr(\alpha)^{\otimes 2}=\Fr(\alpha^{\otimes 2})=0$. In other words, the composite 
$$\unit\xrightarrow{\alpha^{\otimes 4}}X^{\otimes 4}\tto \mathrm{H}_0(C_2, X^{\otimes 4})$$
is zero, where $C_2<C_2^2<S_4$ is generated by $(13)(24)$, a contradiction.
\end{proof}

\begin{proposition}\label{PropPhiinf}
    Let $\cC$ be a tensor category with {\rm one} of the following properties
    \begin{enumerate}
        \item $\cC$ has property (P);
        \item The $\Phi_\infty$-type of $\cC$ is $\Vecc\subset\Vecc_{\mZ}$.
        \item Every object in $\cC$ is algebraic.
    \end{enumerate}
    If $\cC$ is $\Phi^\lambda_\infty$-exact, for some $\lambda\in\mP^1(\bk)$, then it is also Frobenius exact and $\Phi_\infty$-exact.
\end{proposition}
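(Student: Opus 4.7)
The plan is to show, under each of the three hypotheses separately, that $\Phi_\infty^\lambda$-exactness implies $\Phi_\infty$-exactness; the Frobenius exactness conclusion then follows from Lemma~\ref{LemPhiinf}. A uniform starting observation is that, by Lemma~\ref{PropFilt}, both functors factor through the common intermediate LSM functor
$$\Psi\colon\ \cC \to \cC \boxtimes \cF,\quad X \mapsto (\cC \boxtimes F_\lambda)(X^{\otimes 4}),$$
as $\Phi_\infty^\lambda = (\cC \boxtimes \mathrm{forget}) \circ \Psi$ and $\Phi_\infty = (\cC \boxtimes \mathrm{gr}) \circ \Psi$. Since the forgetful functor $\cF \to \Vecc$ is faithful, $\Phi_\infty^\lambda$-exactness is equivalent to $\Psi$ being faithful, while the obstruction to propagating this to $\Phi_\infty$ is that the associated graded functor $\cF \to \Vecc_{\mZ}$ loses information on filtration-strictly-decreasing morphisms.

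For Case~(1), I would decompose $\cC = \bigcup_\alpha \cC_\alpha$ with each $\cC_\alpha$ containing projective objects. By Lemma~\ref{Ofunfun}(1) each $\cC_\alpha$ inherits $\Phi_\infty^\lambda$-exactness, so Proposition~\ref{NewProp} provides projective $P, Q \in \cC_\alpha$ with $\Theta_\infty^\lambda(\Hom(Q, P^{\otimes 4})) \neq 0$. From Theorem~\ref{ThmClass} (and the preceding description of $\Theta_\infty$) one reads off that, for any $M \in \Rep C_2^2$, the conditions $\Theta_\infty^\lambda(M) \neq 0$ and $\Theta_\infty(M) \neq 0$ are both equivalent to $M$ admitting some indecomposable summand of the form $\Omega^i \unit$. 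Hence the same $P, Q$ witness $\Theta_\infty(\Hom(Q, P^{\otimes 4})) \neq 0$, and Proposition~\ref{NewProp} applied in the reverse direction yields $\Phi_\infty$-exactness on each $\cC_\alpha$ and thus on $\cC$.

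Case~(2) is essentially formal. If $\Phi_\infty$ takes values in $\cC \boxtimes \Vecc \subset \cC \boxtimes \Vecc_{\mZ}$, then $(\cC \boxtimes \mathrm{gr}) \circ \Psi$ lands in objects concentrated in degree~$0$; but a filtered object whose associated graded is concentrated in degree~$0$ is itself concentrated in degree~$0$ (its filtration can only jump at~$0$, and is bounded on both sides). Consequently $\Psi$ factors through the subcategory of trivially filtered objects, on which forget and gr agree, and under this identification $\Phi_\infty = \Phi_\infty^\lambda$, so the two exactness conditions coincide.

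Case~(3) is the main obstacle. My plan is to reduce it to Case~(1) by showing that every algebraic $X \in \cC$ lies in a finite tensor subcategory. Algebraicity ensures that the additive monoidal closure of $X$ and $X^\vee$ carries only finitely many isomorphism classes of indecomposables; the delicate part is to check that closing further under subquotients (required for $\langle X \rangle$ to be a tensor subcategory in the sense of Section~\ref{SecPrel}) still leaves finitely many indecomposables, and that the resulting finite Krull--Schmidt abelian tensor category admits a projective generator. Once this is in place, $\cC$ is a union of finite tensor subcategories, hence has property~(P), and Case~(1) applies. I expect this passage from a summand-bounding condition on tensor powers (algebraicity) to full finiteness as a tensor category (control of subquotients and production of projectives) to be the technical heart of the proof.
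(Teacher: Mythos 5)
Your treatment of Cases~(1) and~(2) is correct and matches the paper in substance. In Case~(1), your argument via \Cref{NewProp} together with the observation that $\Theta^\lambda_\infty(M)\neq 0 \Leftrightarrow \Theta_\infty(M)\neq 0 \Leftrightarrow M$ has a summand $\Omega^i\unit$ is essentially the same as the paper's invocation of \Cref{IncIdeal} and \Cref{PropExIdeal}(2); both are applications of \Cref{ExFin}. In Case~(2), your observation that an object in $\cC\boxtimes\cF$ whose associated graded lies in degree~$0$ is itself concentrated in degree~$0$, so that the forget and gr functors agree on the image of $\Psi$, is exactly the paper's reasoning.

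Case~(3) is where you diverge from the paper, and it is a genuine gap. You attempt to reduce~(3) to~(1) by showing that every algebraic object lies in a finite tensor subcategory, and you explicitly flag the passage from algebraicity (a bound on indecomposable summands of tensor powers) to finiteness of the generated tensor subcategory (which must be closed under all subquotients and have projectives) as ``the technical heart.'' This claim is not established, and it is not needed: the paper instead reduces~(3) to~(2). The argument is short. If $X\in\cC$ is algebraic, then $\Phi_\infty(X)\in\cC\boxtimes\Vecc_\mZ$ must be concentrated in a single degree: if it had nonzero components in two distinct degrees $d_1\neq d_2$, then $\Phi_\infty(X)^{\otimes n}=\Phi_\infty(X^{\otimes n})$ would have nonzero components in at least $n+1$ distinct degrees (the values $jd_1+(n-j)d_2$), hence at least $n+1$ pairwise non-isomorphic indecomposable summands, contradicting the boundedness forced by algebraicity of $X$ and additivity of $\Phi_\infty$. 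Since $X\oplus\unit$ is also algebraic and $\Phi_\infty(\unit)=\unit$ sits in degree~$0$, the single degree of $\Phi_\infty(X)$ must be~$0$. Thus the $\Phi_\infty$-type of $\cC$ is $\Vecc\subset\Vecc_\mZ$, and Case~(2) applies. I would recommend replacing your reduction to~(1) with this direct reduction to~(2); as written, your Case~(3) argument does not constitute a proof.
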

\begin{proof}

    By \Cref{LemPhiinf} it is sufficient to show that $\cC$ is $\Phi_\infty$-exact.

    Part (1) follows from \Cref{IncIdeal} and \Cref{PropExIdeal}.


Denote by $\cF il\cC$ the category of $\mZ$-filtered objects in $\cC$, which we can identify with the category of $C$-comodules in the category of finite-dimensional filtered vector spaces $\cF$, if $\cC$ is equivalent to the category of $C$-comodules for a coalgebra $C$.
Hence \Cref{PropFilt} implies there exists an LSM functor $\cC\to\cF il \cC$ such that the solid arrows below yield a commutative diagram of LSM functors\begin{equation}
\label{eqFilt}\xymatrix{
    &&&&\cC\boxtimes\Vecc_{\mZ}\\
    \cC\ar[rr]\ar[rrrru]^{\Phi_\infty}\ar[rrrrd]_{\Phi^\lambda_\infty}&& \cF il\cC\ar[rru]\ar[rru]\ar[rrd] \\
    &&&&\cC.
    }
    \end{equation}
The functor $\cF il\cC\to\cC\boxtimes\Vecc_{\mZ}$ is given by taking the associated $\mZ$-graded object and the functor $\cF il\cC\to\cC$ is given by forgetting the filtration.

The above considerations give a second proof for part (1), as an object in $\cF il\cC$ is annihilated by the functor to $\cC\boxtimes\Vecc$ if and only if it is annihilated by the functor to $\cC$, so we can apply \Cref{ExFin}.

Under assumption (2), all $\mZ$-filtrations of objects in the image of $\cC\to\cF il\cC$ are trivial, and we can identify $\Phi_\infty$ and $\Phi^\lambda_\infty$, either by forgetting the $\mZ$-grading on $\Vecc_{\mZ}$ or embedding $\Vecc$ into $\Vecc_{\mZ}$. This implies that $\cC$ is also $\Phi_\infty$-exact, concluding the proof of case (2).

Finally, we can observe that the assumption in (3) implies the assumption in (2), so we can reduce to the previous case.
\end{proof}

\Cref{PropPhiinf} raises the following question.

\begin{question}
    Is the $\Phi_\infty$-type of every tensor category (of moderate growth) $\Vecc\subset\Vecc_{\mZ}$?
\end{question}

\begin{proposition}\label{PropPhiinf2}
For a tensor category $\cC$, a fixed $\lambda\in\mP^1(\bk)$ and $m\in\mZ_{>0}$, with $m>1$ in case $\lambda\in \mP^1(\mF_4)\backslash \mP^1(\mF_2)$, consider the following 5 conditions:
\begin{enumerate}
    \item[(a)] $\cC$ is tannakian;
        \item[(b)] $\cC$ is Frobenius exact;
        \item[(c)] $\cC$ is $\Phi_\infty$-exact;
        \item[(d)] $\cC$ is $\Phi_\infty^\lambda$-exact;
        \item[(e)] $\cC$ is $\Phi_m^\lambda$-exact.
    \end{enumerate}
Then we have the following implications:
\begin{enumerate}
    \item If $\cC$ is of moderate growth, then {\rm (a)}, {\rm (b)} and {\rm (c)} are equivalent.
    \item If $\cC$ satisfies (P), then {\rm (c)} and {\rm (d)} are equivalent.
    \item If $\cC$ is finite and the $\Ver_{2^\infty}$-conjecture is valid, then all {\rm (a)} -- {\rm (e)} are equivalent.
\end{enumerate}
\end{proposition}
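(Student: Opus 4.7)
The equivalence $(a)\Leftrightarrow(b)$ is the $\Ver_p$-Theorem (\Cref{pTheorem}) applied with $p=2$: since $\Ver_2=\Vecc$, admitting a tensor functor to $\Ver_2$ is the same as being tannakian. For $(b)\Rightarrow(c)$, one invokes \Cref{RemDefO}(3), which asserts that a Frobenius-exact moderate-growth tensor category is $\Psi$-exact for every $\bO^i\{\cV\}$-functor $\Psi$, and in particular for $\Phi_\infty$. The converse $(c)\Rightarrow(b)$ is exactly \Cref{LemPhiinf}.

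\textbf{Part (2).} For $(c)\Rightarrow(d)$, the containment $J_\infty\supset J_\infty^\lambda$ from \Cref{IncIdeal} together with \Cref{PropExIdeal}(1) (or directly \Cref{EasyImpEx}(1)) forces $\Phi_\infty$-exactness to imply $\Phi_\infty^\lambda$-exactness. The converse $(d)\Rightarrow(c)$ is precisely \Cref{PropPhiinf}(1), where the hypothesis (P) enters essentially.

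\textbf{Part (3).} A finite tensor category has property (P), in fact (SP), and is of moderate growth, so parts (1) and (2) immediately yield $(a)\Leftrightarrow(b)\Leftrightarrow(c)\Leftrightarrow(d)$. The implication $(d)\Rightarrow(e)$ is \Cref{EasyImpEx}(2). The remaining direction $(e)\Rightarrow(a)$ is the substantive content. The plan is to apply \Cref{LemOCond} -- after descending along the fibre functor of the tannakian target $\cV$ of $\Phi_m^\lambda$ to obtain an $\bO^2_1$-functor -- with $n=1$. The (O0) hypothesis at $n=1$ is automatic, since $\Ver_2=\Vecc$ is semisimple and so every LSM functor out of it is exact. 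Granting the $\Ver_{2^\infty}$-Conjecture, \Cref{LemOCond} then identifies the $\Phi_m^\lambda$-exact finite tensor categories with those admitting a tensor functor to $\Vecc$, \emph{i.e.} the tannakian ones, provided one verifies that $\Ver_{2^2}^+=\Ver_4^+$ is \emph{not} $\Phi_m^\lambda$-exact.

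This last verification is the main obstacle. By \Cref{NewProp}, it reduces to showing that for all projective objects $P,Q$ of $\Ver_4^+$, the $\bk C_2^2$-module $\Hom(Q,P^{\otimes 4})$ (with $C_2^2<S_4$ the permutation subgroup of \Cref{ExPermG}(2)) is annihilated by $\Theta_m^\lambda$. Using the description of $\Theta_m^\lambda$ together with the structure of its kernel from \Cref{ThmClass} and \Cref{IncIdeal}, this amounts to checking that every indecomposable summand of $\Hom(Q,P^{\otimes 4})$ is either projective over $C_2^2$, of the form $A_n(\mu)$ with $\mu\neq\lambda$, or of the form $A_n(\lambda)$ with $n\leq m$. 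Since $\Ver_4^+$ has very few projective indecomposables (realised via the equivalence in \Cref{EquivProj} from low tilting modules for $SL_2$), this is a concrete low-dimensional computation; the condition that $m>1$ when $\lambda\in\mP^1(\mF_4)\setminus\mP^1(\mF_2)$ is exactly what excludes the Galois-conjugate pairs $(m,\lambda)=(1,\lambda)$ for which -- as indicated in the introduction -- $\Phi_1^\lambda$ is in fact exact on $\Ver_4$ and hence on $\Ver_4^+$, so that the exceptional family is carved out precisely by the stated restriction.
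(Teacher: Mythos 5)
Your proof of parts (1) and (2) is correct and coincides with the paper's, and you in fact get the cross-references to \Cref{EasyImpEx} right where the paper's text has a minor slip. Part (3) also follows the paper's strategy (use parts (1) and (2), get (d)$\Rightarrow$(e) from \Cref{EasyImpEx}(2), and close the loop via \Cref{LemOCond} with $n=1$), but there are two issues worth flagging.

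First, a small inaccuracy: you propose to ``descend along the fibre functor of the tannakian target $\cV$ of $\Phi_m^\lambda$ to obtain an $\bO^2_1$-functor.'' But the hypotheses of the proposition permit $m=1$ with $\lambda\in\mP^1(\bk)\setminus\mP^1(\mF_4)$, and in that case $\Theta_1^\lambda$ has target $\Ver_4$ (case (1) of \Cref{ThmClass}), which is not tannakian and has no fibre functor. So the descent you describe is unavailable in that branch. This is not fatal --- the conclusion of \Cref{LemOCond} only uses $\Phi$-exactness and non-exactness of specific $\Ver_{p^j}^{(+)}$, and its proof goes through regardless of the value of $n$ in $\bO^i_n$ --- but the argument should be phrased so as not to invoke a nonexistent fibre functor.

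Second, and more substantively, the step ``$\Ver_4^+$ is not $\Phi_m^\lambda$-exact'' is the real content of the implication (e)$\Rightarrow$(a), and you reduce it correctly via \Cref{NewProp} to a statement about the $C_2^2$-module structure of $\Hom(Q,P^{\otimes 4})$, but then you declare it a ``concrete low-dimensional computation'' without carrying it out. The paper does not leave this as a computation on $\Ver_4^+$ directly: it forward-references \Cref{PropCi}, whose proof first reduces $\Ver_4^+$ to $\Ver_4$ (using that the simple projective in $\cC_2$ tensor-squares to the projective cover of $\unit$ in $\cC_1$, plus \Cref{ExFin}), then uses the explicit decomposition $\Hom_{\Ver_4}(P,V^{\o4})\simeq A_1(\alpha)\oplus A_1(\alpha+1)$ from \Cref{CalcVer4}, and finally handles $\cC_3=\Ver_8^+$ via \Cref{Lem+} and the (O1) property. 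So while your reduction is the right starting point, the decisive computation and the reduction trick are missing from your write-up, and they are not entirely trivial. If you intend to give a self-contained proof rather than cite \Cref{PropCi}, you need to supply at least the analogue of \Cref{CalcVer4}.
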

\begin{proof}
Part (1) follows from \Cref{pTheorem}, \Cref{LemPhiinf} and \Cref{RemDefO}(3).

Part (2) follows from \Cref{EasyImpEx}(2) and \Cref{PropPhiinf}(1).

For part (3), we can use parts (1) and (2), so it suffices to observe that (d) implies (e) by \Cref{EasyImpEx}(1), and that (e) implies (a). The latter follows from \Cref{LemOCond} for $n=1$, and the fact that $\Ver_8^+$ is not $\Phi^\lambda_m$-exact by \Cref{PropCi} below.
\end{proof}


\subsection{Applications to incompressible categories}

Following \cite{BE}, we set
$$\cC_{2i}=\Ver_{2^{i+1}},\quad\mbox{and}\quad \cC_{2i+1}=\Ver_{2^{i+2}}^+,
\quad\text{ for }i\in\mN,
$$
yielding a chain $\cC_{j}\subset \cC_{j+1}$ of incompressible tensor categories.
\begin{proposition}\label{PropCi}
    Consider $j\in\mN$, $\lambda\in\mP^1(\bk)$ and $m\in\mZ_{>0}$, then $\cC_j$ is $\Phi_m^\lambda$-exact if and only if
    \begin{enumerate}
        \item $j=0$ (i.e.~$\cC_j=\Vecc$), or;
        \item $j\le 2$ (i.e.~$\cC_j\subset\Ver_4$) and $m=1$ and $\lambda\in\mP^1(\mF_4)\backslash\mP^1(\mF_2)$.
    \end{enumerate}
\end{proposition}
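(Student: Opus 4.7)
My plan is to use Lemma~\ref{Ofunfun}(1) together with the chain $\cC_0\subset\cC_1\subset\cC_2\subset\cC_3\subset\cdots$ to reduce the proposition to four assertions:
(A) $\cC_0=\Vecc$ is $\Phi_m^\lambda$-exact for every $(m,\lambda)$;
(B) $\cC_2=\Ver_4$ is $\Phi_1^\lambda$-exact for every $\lambda\in\mP^1(\mF_4)\setminus\mP^1(\mF_2)$;
(C) $\cC_1=\Ver_4^+$ fails to be $\Phi_m^\lambda$-exact for every other admissible $(m,\lambda)$;
(D) $\cC_3=\Ver_8^+$ fails to be $\Phi_m^\lambda$-exact for every $(m,\lambda)$.
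Assertion (A) is immediate, since every additive LSM functor out of $\Vecc$ is faithful. Reassembly via Lemma~\ref{Ofunfun}(1) then recovers the proposition: (B) propagates downward to the positive direction for $j\in\{1,2\}$, while (C) and (D) propagate upward to the negative directions for $j\in\{1,2\}$ and $j\ge 3$ respectively.

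For (D), I would invoke the introductory \Cref{CorVer2n} at $n=3$: for any projectives $P,Q$ in $\Ver_8$ or $\Ver_8^+$, the space $\Hom(P,Q^{\o4})$ is a direct sum of non-trivial $C_2^2$-permutation modules $\Ind^{C_2^2}_H\unit$ for non-transitive $H<C_2^2$. By \Cref{def::V-functor}, $\Theta_m^\lambda$ annihilates every such module, so \Cref{NewProp} delivers the non-exactness.

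For (B), I would apply \Cref{NewProp} with $P=Q=P_\unit\in\Ver_4$, the unique indecomposable projective. Choose $\ulambda\in\mA^2_f(\bk)$ with $\lambda_2/\lambda_1=\lambda\in\mP^1(\mF_4)\setminus\mP^1(\mF_2)$; by \Cref{FirstResult}(2) the resulting equivalence $\cD_\ulambda/\cK_\ulambda\simeq\Ver_4$ matches $A_1(\lambda)\leftrightarrow V$ and $A_2(\lambda)\leftrightarrow P_\unit$. Since $V^{\o2}\simeq P_\unit$ in $\Ver_4$ (from $T_1^{\o2}=T_2$ in $\Tilt SL_2$ at $p=2$), the $C_2^2$-module $\Hom_{\Ver_4}(P_\unit,P_\unit^{\o4})$, carrying the braiding $S_4$-action restricted to $C_2^2<S_4$, transfers via the equivalence to a computation inside $\Rep C_2^2$ using \Cref{FusRules}. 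One exhibits an $A_2(\lambda)$-summand, which $\Theta_1^\lambda$ sends to $P_\unit\neq 0$. This step also matches the assertion from the introduction that the $\bO^2_2$-functor for these two Galois-conjugate choices of $\ulambda$ is exact on $\Ver_4$.

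For (C), the range of bad pairs splits into three regimes, all handled via \Cref{NewProp}:
(i) $m=1$ and $\lambda\in\mP^1(\mF_2)$, in which $\Theta_1^\lambda:\Rep C_2^2\to\Rep\alpha_2$ annihilates $A_n(\mu)$ for $\mu\neq\lambda$;
(ii) $m\ge 2$ and arbitrary $\lambda$, in which $\Theta_m^\lambda:\Rep C_2^2\to\Rep\alpha_2$ annihilates $A_n(\mu)$ for all $n\le m$ and $\mu\neq\lambda$;
(iii) $m=1$ and $\lambda\in\mP^1(\bk)\setminus\mP^1(\mF_4)$, in which $\Theta_1^\lambda:\Rep C_2^2\to\Ver_4$ annihilates $A_n(\mu)$ for $\mu\neq\lambda$.
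The shared input with (B) is the identification of $\Hom_{\Ver_4^+}(P,P^{\o4})$ (for the projective cover $P\in\Ver_4^+$) as a $C_2^2$-module: its $A_n(\mu)$-summands have $n\le 2$ and $\mu\in\mP^1(\mF_4)\setminus\mP^1(\mF_2)$, modulo projective summands. Granted this, $\Theta_m^\lambda$ kills every summand in regimes (i)--(iii), completing the non-exactness. The main obstacle is precisely this structural identification: one has to pull the braiding $S_4$-action on $V^{\o4}\cong A_1(\lambda)^{\o4}$ across the equivalence of \Cref{FirstResult} and match it against the internal $C_2^2$-action on each $A_1(\lambda)$-factor. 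The fact that only $\mu\in\mF_4\setminus\mF_2$ appears reflects the Galois-conjugate pair of faithful two-dimensional $C_2^2$-representations via which \Cref{FirstResult} realises $\Ver_4$.
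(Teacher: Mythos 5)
Your overall decomposition into cases (A)--(D) is logically sound, and the way you use \Cref{Ofunfun}(1) to propagate exactness and non-exactness along the chain $\cC_0\subset\cC_1\subset\cdots$ is essentially the paper's first step. However, there are genuine gaps.

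\textbf{Circularity in step (D).} You invoke \Cref{CorVer2n} to kill $\cC_3$. But \Cref{CorVer2n} is stated in the paper as a \emph{consequence} of \Cref{PropCi}: its proof consists of combining \Cref{PropCi} with \Cref{NewProp}. You cannot use it here. The paper instead handles $\cC_3$ entirely differently: it applies \Cref{Lem+} (a general structural fact about $\bO^i_n$-functors satisfying (O1)), after first verifying that $\Phi_1^\lambda$ satisfies (O1) for $\lambda\in\mP^1(\mF_4)\setminus\mP^1(\mF_2)$ via \Cref{CalcVer4} and \Cref{CorVVV}. This bypasses any explicit computation of $\Hom(P,Q^{\o4})$ inside $\Ver_8^+$.

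\textbf{The computation in (B) and (C).} The paper's single computational input is \Cref{CalcVer4}, which identifies $\Hom_{\Ver_4}(P,V^{\o4})$ with $A_1(\alpha)\oplus A_1(\alpha+1)$ as a $C_2^2<S_4$-module. Your step (B) instead uses $\Hom_{\Ver_4}(P_\unit,P_\unit^{\o4})$ and claims it "transfers via the equivalence to a computation inside $\Rep C_2^2$ using \Cref{FusRules}." This is where the argument breaks: \Cref{FusRules} gives the \emph{internal} tensor products in $\Stab C_2^2$, which is not the same thing as the $C_2^2<S_4$-action coming from the braiding on the fourth tensor power. These are genuinely different $C_2^2$-actions and you have not shown how to reconcile them, so your computation is not reducible to \Cref{FusRules} alone. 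The paper does not supply a proof of \Cref{CalcVer4} either (it flags it as a "direct computation"), but your proposed replacement is not a valid substitute.

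\textbf{The $\cC_1\iff\cC_2$ step.} You propose to show non-exactness on $\cC_1=\Ver_4^+$ directly via a structural identification of $\Hom_{\Ver_4^+}(P,P^{\o4})$ as a $C_2^2$-module, which you correctly identify as the "main obstacle." The paper avoids this entirely with a short trick: the simple projective $V$ in $\cC_2$ tensor-squares to the projective cover of $\unit$ in $\cC_1$, and since $\Phi$ is monoidal, $\Phi_{\cC_2}(V)\ne 0$ if and only if $\Phi_{\cC_2}(V^{\o 2})\ne 0$; combined with \Cref{ExFin} this gives $\cC_1$ $\Phi$-exact $\iff$ $\cC_2$ $\Phi$-exact, with no new computation. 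You should adopt this trick rather than a fresh calculation in $\Ver_4^+$.
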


The crucial input for the proof is the following direct computation.
\begin{lemma}\label{CalcVer4}
Let $\alpha$ be a generator of $\mF_4$, viewed as an element in $\bk\subset\mP^1(\bk)$.
    As $C_2^2<S_4$-representations, we have
    $$\Hom_{\Ver_4}(P,V^{\otimes 4})\;\simeq\; A_1(\alpha)\oplus A_1(\alpha+1).$$
\end{lemma}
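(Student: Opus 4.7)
The strategy is to carry out the computation in $\Rep C_2^2$ via the equivalence $(\Tilt SL_2)/I_2 \simeq \cD_{\ulambda}/\cK_{\ulambda}$ of \Cref{FirstResult}(2), taking $\ulambda = (1, \alpha) \in \mA^2_f(\bk)$. Under this equivalence, $V \in \Ver_4$ corresponds to $V_{\ulambda} = A_1(\alpha) \in \Rep C_2^2$, while the projective $P \in \Ver_4$ corresponds to $A_2(\alpha)$. Since both braidings agree with the flip of vector spaces, the $S_4$-action on $V^{\otimes 4}$ in $\Ver_4$ coincides with the permutation action on $V_{\ulambda}^{\otimes 4}$ in $\Rep C_2^2$. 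Iterated use of \Cref{FusRules} together with the fact that $V_{\ulambda} \otimes (\bk C_2^2)$ is projective then yields
$$V_{\ulambda}^{\otimes 4} \;\simeq\; A_2(\alpha)^{\oplus 2} \oplus (\bk C_2^2)^{\oplus 2}.$$
As the projective summand is killed in $\Ver_4$, we obtain $V^{\otimes 4} \simeq P^{\oplus 2}$ and therefore $\Hom_{\Ver_4}(P, V^{\otimes 4}) \simeq \End_{\Ver_4}(P)^{\oplus 2}$ is of dimension $4 = \dim(A_1(\alpha) \oplus A_1(\alpha+1))$.

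The Klein four-group $C_2^2 < S_4$ is generated by $g_1 = (12)(34)$ and $g_2 = (13)(24)$. Writing $V^{\otimes 4} = V^{\otimes 2} \otimes V^{\otimes 2} = P \otimes P$, the element $g_1$ acts as $\tau \otimes \tau$, where $\tau$ is the swap on $V^{\otimes 2} = P$, while $g_2$ acts as the braiding $\sigma_{P, P}\colon P \otimes P \to P \otimes P$ that interchanges the two $V^{\otimes 2}$-factors. On the $\Rep C_2^2$ side one verifies directly that $\tau - \id$ has rank one on $A_2(\alpha)= V_{\ulambda}^{\otimes 2}$, factoring as $A_2(\alpha) \tto \unit \hookrightarrow A_2(\alpha)$ via the ``swap coefficient'' functional; it therefore descends to the unique (up to scalar) non-zero nilpotent $\beta \in \End_{\Ver_4}(P) = \bk[x]/x^2$.

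The main step is then to fix an explicit splitting of $V_{\ulambda}^{\otimes 4}$ into its four indecomposable summands, represent a basis $\{e_1, e_2, \beta e_1, \beta e_2\}$ of $\Hom_{\Ver_4}(P, V^{\otimes 4})$ by concrete morphisms $A_2(\alpha) \to V_{\ulambda}^{\otimes 4}$ (two ``generating'' embeddings onto the $A_2(\alpha)$-summands together with their $\beta$-multiples), and apply $g_1, g_2$ to extract the induced $C_2^2$-action, working modulo $\cK_{\ulambda}$. In a suitably adapted basis of the 2-dim multiplicity space the calculation yields $(g_1 - \id)(e_i) = \beta e_i$ for $i = 1, 2$, together with $(g_2 - \id)(e_1) = \alpha \beta e_1$ and $(g_2 - \id)(e_2) = (\alpha + 1) \beta e_2$, which is exactly the defining data for the $C_2^2$-module $A_1(\alpha) \oplus A_1(\alpha + 1)$ (since $A_1(\mu)$ is generated by an element $v$ with $g_1 v = v + u$, $g_2 v = v + \mu u$ for the invariant socle $u$). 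The main obstacle is verifying the diagonal pattern $\mathrm{diag}(\alpha, \alpha + 1)$ for $(g_2 - \id)$: the Galois-conjugate pair $\{\alpha, \alpha+1\} = \{\alpha, \alpha^2\}$ arises because the permutation $(13)(24)$ mixes the two $A_2(\alpha)$-summands of $V_{\ulambda}^{\otimes 4}$ via the Frobenius twist in $\mathrm{Gal}(\mF_4/\mF_2)$, and tracking this twist requires a careful choice of the isotypic splitting.
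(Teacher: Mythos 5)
The paper itself provides no proof of this lemma, labelling it only as a ``direct computation,'' so there is no proof to compare against; I can only assess whether your argument is actually complete.

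Your set-up is sound: passing through the equivalence $(\Tilt SL_2)/I_2\simeq\cD_{\ulambda}/\cK_{\ulambda}$ of \Cref{FirstResult}(2) with $\ulambda=(1,\alpha)$, identifying $V\leftrightarrow V_{\ulambda}=A_1(\alpha)$ and $P\leftrightarrow A_2(\alpha)$, using \Cref{FusRules} to get $V_{\ulambda}^{\otimes4}\simeq A_2(\alpha)^{\oplus 2}\oplus(\bk C_2^2)^{\oplus 2}$ and hence $V^{\otimes4}\simeq P^{\oplus2}$ in $\Ver_4$, and observing that $g_1=(12)(34)$ acts as $\tau\otimes\tau$ while $g_2=(13)(24)$ acts as $\sigma_{P,P}$. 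The rank-one computation for $\tau-\id$ is also fine.

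The gap is the part you yourself flag as ``the main obstacle'': the explicit verification that in a suitable basis $(g_1-\id)=\beta\cdot\id$ and $(g_2-\id)=\operatorname{diag}(\alpha,\alpha+1)\,\beta$ on the rank-two free $\bk[\beta]/\beta^2$-module $\Hom_{\Ver_4}(P,V^{\otimes4})$. You assert the outcome and offer a heuristic (``the Galois-conjugate pair arises because $(13)(24)$ mixes the two $A_2(\alpha)$-summands via the Frobenius twist''), but do not derive it. This is precisely the content of the lemma; without it you cannot exclude alternative $\mF_2$-rational answers of the same total dimension (e.g.\ $A_2(\mu)$ with $\mu\in\mP^1(\mF_2)$, $\unit\oplus\Omega^{-1}\unit$, or a sum of $A_1(\mu)$'s with $\mu\in\mP^1(\mF_2)$). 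In particular, it is not even established that the characteristic polynomial of the multiplicity-space action of $(g_2-\id)/\beta$ is $x^2+x+1$ rather than $x^2$, $x^2+1$, or $x(x+1)$. Two further points deserve care in any completion: (i) the choice of an actual direct-sum decomposition $V_{\ulambda}^{\otimes4}\simeq A_2(\alpha)^{\oplus2}\oplus(\bk C_2^2)^{\oplus2}$ is not canonical, and the claimed matrices of $g_1,g_2$ depend on it, so one must exhibit a compatible pair of bases; and (ii) one must check that $\tau-\id$ really does \emph{not} lie in the ideal $\cK_{\ulambda}$ on $\End(A_2(\alpha))$ (so that $\tau-\id$ genuinely represents a nonzero nilpotent $\beta\in\End_{\Ver_4}(P)$); this requires identifying which line in the socle of $A_2(\alpha)$ lies in $\cK_{\ulambda}$ — it is $\bk\cdot v_2\otimes v_2$, not $\bk\cdot(v_1\otimes v_2+v_2\otimes v_1)$ — and you don't address it. Carrying out the explicit $16$-dimensional computation in $A_1(\alpha)^{\otimes4}$ (or exploiting the explicit braiding matrices recorded in \Cref{Rem3Cases}) would close the argument; as written, it is a correct strategy with the decisive step asserted rather than proved.
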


\begin{proof}[Proof of \Cref{PropCi}]
    By the chain of inclusions, for any given $(m,\lambda)$ there is an $l\in \mN$ such that $\cC_j$ is exact if and only if $j\le l$. As the simple projective object in $\cC_2$ tensor squares to the projective cover of $\unit$ in $\cC_1$ it also follows from \Cref{ExFin} that $\cC_1$ is $\Phi_m^\lambda$-exact if and only if $\cC_2$ is $\Phi_m^\lambda$-exact.

    By \Cref{CalcVer4} and \Cref{NewProp} it follows that $\cC_2=\Ver_4$ is $\Phi_m^\lambda$-exact if and only if $m=1$ and $\lambda\in\mP^1(\mF_4)\backslash\mP^1(\mF_2)$. 

    By the combination of the previous two paragraphs it is now sufficient to prove that $\cC_3$ is not $\Phi_1^\lambda$-exact for $\lambda\in\mP^1(\mF_4)\backslash\mP^1(\mF_2)$. This follows from \Cref{Lem+} and the observation that $\Phi_1^\lambda$ satisfies (O1) for $\lambda\in\mP^1(\mF_4)\backslash\mP^1(\mF_2)$, as follows from \Cref{CalcVer4} and \Cref{CorVVV}.
\end{proof}

\begin{corollary}\label{CorVer2n}
    Consider projective objects $P,Q$ in $\cC_i$ for $i>2$, then the $C_2^2$-representation $\Hom_{\cC_i}(P,Q^{\otimes 4})$ is a direct sum of non-trivial permutation modules, i.e.~a direct sum of modules $A_1(0)$, $A_1(1)$, $A_1(\infty)$ and $\bk C_2^2$.
\end{corollary}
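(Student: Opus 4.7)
The plan is to combine \Cref{NewProp} with the non-exactness statements in \Cref{PropCi}, applied across the family of $\bV$-functors classified in \Cref{CorClass}. Set $M := \Hom_{\cC_i}(P, Q^{\otimes 4})$, viewed as a $\bk C_2^2$-module via the embedding $C_2^2 < S_4$. For each $\bV$-functor $\Theta \colon \Rep C_2^2 \to \cV$ in that list, \Cref{ThmOfromV} produces an $\bO^2\{\cV\}$-functor $\Phi$; once $\cC_i$ is known not to be $\Phi$-exact, \Cref{NewProp} (applied with the roles of $P$ and $Q$ swapped) forces $\Theta(M) = 0$. The corollary will then follow from Krull--Schmidt by checking which indecomposable $\bk C_2^2$-modules can be simultaneously annihilated by all of these $\Theta$.

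The first step is to verify non-exactness of $\cC_i$ under every relevant $\Phi$. \Cref{PropCi} directly handles $\Phi_m^\lambda$ for all $\lambda \in \mP^1(\bk)$ and $m \geq 1$ (including the $\Ver_4$-valued $\Phi_1^\lambda$) since $i > 2$ lies outside its exactness range. By \Cref{EasyImpEx}(2) this upgrades to non-$\Phi_\infty^\lambda$-exactness. For the semisimplification-type functor $\Phi_\infty$ the argument is indirect: by \Cref{PropPhiinf}, $\Phi_\infty$-exactness implies Frobenius exactness, which in characteristic $2$ forces a tensor functor $\cC_i \to \Ver_2 = \Vecc$ via the $\Ver_p$-Theorem (\Cref{pTheorem}), and incompressibility then collapses $\cC_i$ to $\Vecc$, contradicting $i > 0$.

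With $\Theta(M) = 0$ secured for every $\Theta$ in \Cref{CorClass}, the last step is to decompose $M$ into indecomposables and read off the constraints from \Cref{ThmClass}. The semisimplification $\Theta_\infty \colon \Rep C_2^2 \to \Vecc_\mZ$ is nonzero exactly on $\unit$ and the Heller shifts $\Omega^j \unit$ for $j \neq 0$, excluding these as summands of $M$. For $\mu \in \mP^1(\mF_2)$, the $\bV$-functors $\Theta_m^\mu$ ($m \geq 1$, target $\Rep \alpha_2$) are nonzero on $A_n(\mu)$ whenever $n > m$, so letting $m$ vary excludes $A_n(\mu)$ for every $n \geq 2$. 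For $\mu \notin \mP^1(\mF_2)$ one combines two cases of \Cref{ThmClass}: the $\Rep \alpha_2$-valued $\Theta_m^\mu$ ($m \geq 2$) excludes $A_n(\mu)$ for $n \geq 3$, while the $\Ver_4$-valued $\Theta_1^\mu$ from \Cref{ThmClass}(1) excludes $A_1(\mu)$ and $A_2(\mu)$. The surviving summands are then exactly the permutation modules $A_1(\mu)$ with $\mu \in \mP^1(\mF_2) = \{0, 1, \infty\}$ together with copies of $\bk C_2^2$, as claimed. I expect the only real subtlety to be the case split on whether $\mu$ lies in $\mP^1(\mF_2)$, together with the careful use of the $\Ver_4$-valued functor of \Cref{ThmClass}(1) (rather than (3)) at the index $m=1$ when $\mu \notin \mP^1(\mF_2)$, which is precisely where \Cref{PropCi} leaves an exceptional window that closes once $i>2$.
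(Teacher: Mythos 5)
Your argument is correct and follows essentially the same route as the paper: combine \Cref{PropCi} with \Cref{NewProp} to conclude $\Theta_m^\lambda(\Hom_{\cC_i}(P,Q^{\otimes 4}))=0$, then read off which indecomposable $\bk C_2^2$-modules can survive. The only inefficiency is that the detour through $\Phi_\infty^\lambda$ and $\Phi_\infty$ (hence \Cref{EasyImpEx}(2), \Cref{LemPhiinf}, and the incompressibility argument) is unnecessary, since any $\Theta_m^\lambda$ with $m\in\mZ_{>0}$ is already a $\bk$-linear symmetric monoidal functor to a tensor category and so cannot annihilate $\unit$ or the invertible objects $\Omega^j\unit$; also, the implication you want from $\Phi_\infty$-exactness to Frobenius exactness is \Cref{LemPhiinf}, not \Cref{PropPhiinf}.
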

\begin{proof}
    By \Cref{PropCi} and \Cref{NewProp}, we have 
    $$\Theta^\lambda_m(\Hom_{\cC_i}(P,Q^{\otimes 4}))=0$$
    for all $m>0$ and $\lambda\in\mP^1(\bk)$. The conclusion then follows from the definition of $\Theta^\lambda_m$ and the classification of indecomposable modules.
\end{proof}

\begin{proposition}\label{PropPhi2}
Fix $\lambda\in\mP^1(\bk)\backslash\mP^1(\mF_2)$.
    \begin{enumerate}
        \item $\overline{\Phi^\lambda_1}$ is the descendant of $\Phi_2^\lambda$ obtained from the fibre functor $\Rep\alpha_2\to\Vecc$.
        \item $\Phi^\lambda_1$ satisfies {\rm (O1)} if and only if $\lambda\in\mP^1(\mF_4)$. 
    \end{enumerate}
\end{proposition}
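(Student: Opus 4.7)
The plan is to dispatch part~(2) by a direct computation and then deduce part~(1) via the classification in Theorem~\ref{ThmClass}.

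For part~(2), I will unpack $\Phi^\lambda_{1,\Ver_4}(L_1)$ using Theorem~\ref{ThmOfromV}: it arises by applying $\Ver_4\boxtimes\Theta^\lambda_1$ to $L_1^{\otimes 4}$ viewed in $\Ver_4\boxtimes\Rep C_2^2$ via the braiding restricted to $C_2^2<S_4$. Under $\Ver_4\boxtimes\Ver_4\simeq\Fun_\bk(\cP^{\op},\Ver_4)$ (Example~\ref{ExRepG}(3)), this corresponds, on the indecomposable projective $P=\Sigma^2(T_2)$, to $\Theta^\lambda_1(\Hom_{\Ver_4}(P,L_1^{\otimes 4}))$ with the $C_2^2$-action from the braiding. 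By Lemma~\ref{CalcVer4} the Hom space is $A_1(\alpha)\oplus A_1(\alpha+1)$, and by the remark following Corollary~\ref{CorClass}, $\Theta^\lambda_1(A_1(\mu))\simeq L_1$ if $\mu=\lambda$ and is zero otherwise. On the other hand $\unit\boxtimes L_1$ corresponds to $P\mapsto\Hom_{\Ver_4}(P,\unit)\otimes L_1\simeq L_1$ (using that $\unit$ is the top of $P$, which follows from $\FPdim(P)=2$ and $\FPdim(L_1)=\sqrt{2}$). The two match exactly when $\lambda\in\{\alpha,\alpha+1\}=\mP^1(\mF_4)\setminus\mP^1(\mF_2)$, i.e.\ when $\lambda\in\mP^1(\mF_4)$ under the standing hypothesis.

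For part~(1), since $\Ver_2=\Vecc$, both $\overline{\Phi^\lambda_1}$ and the descendant of $\Phi^\lambda_2$ along the fibre functor $\mathrm{fib}\colon\Rep\alpha_2\to\Vecc$ are $\bO^2\{\Vecc\}$-functors. By Lemma~\ref{Lem:Comm}, each is produced via Theorem~\ref{ThmOfromV} from a $\bk$-linear $\bV$-functor $\Rep C_2^2\to\Vecc$, namely $\Xi_1:=H\circ\Theta^\lambda_1$ (with $H\colon\Ver_4\to\overline{\Ver_4}\to\Ver_2=\Vecc$ as in Example~\ref{ExOFirst}(3)) and $\Xi_2:=\mathrm{fib}\circ\Theta^\lambda_2$. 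It will therefore suffice to exhibit $\Xi_1\simeq\Xi_2$. Theorem~\ref{ThmClass} classifies LSM functors $\Rep C_2^2\to\Vecc$ uniquely as a tensor functor composed with one of the listed functors, which have pairwise distinct kernels by Lemma~\ref{IncIdeal}. Since $\mathrm{fib}$ is the essentially unique tensor functor $\Rep\alpha_2\to\Vecc$, the task reduces to verifying that $\ker\Xi_1=J^\lambda_2$ (while $\ker\Xi_2=J^\lambda_2$ is immediate because $\mathrm{fib}$ is faithful).

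The kernel computation will proceed as follows. Over characteristic $2$, both $L_1$ and $P=\Sigma^2(T_2)$ have vanishing categorical dimension ($\dim_\bk V=\dim_\bk T_2=2\equiv 0$), hence are negligible and killed by $H$. Combined with $\Theta^\lambda_1(A_1(\mu))=0$ for $\mu\ne\lambda$ and $\Theta^\lambda_1(A_2(\lambda))=P$, this shows $\ker\Xi_1$ contains every $A_1(\mu)$ and also $A_2(\lambda)$, matching $J^\lambda_2$ in those respects. The remaining and, I expect, main obstacle is the non-vanishing $\Xi_1(A_3(\lambda))\ne 0$, which rules out $\ker\Xi_1\supsetneq J^\lambda_2$. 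I plan to establish this via Lemma~\ref{LemUniqueExt} applied to $X=A_2(\lambda)$ with $f\colon A_2(\lambda)\to\unit$: combined with the fusion rules in $\Rep C_2^2$ and the computation $P\otimes P\simeq P^2$ in $\Ver_4$ (via $T_2^{\otimes 2}=T_4\oplus T_2^2$ with $T_4\in I_2$), the candidates for $\Theta^\lambda_1(A_3(\lambda))$ are narrowed to $\unit^2$ or $P$. A count of invariants (there are three $C_2^2$-invariants in $A_3(\lambda)$, and only those factoring through $V_{\ulambda}=A_1(\lambda)$ are annihilated by $\Theta^\lambda_1$) singles out $\unit^2$, whence $H(\unit^2)=\bk^2\ne 0$ finishes the argument.
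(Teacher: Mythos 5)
Your overall route through \Cref{ThmClass}, \Cref{IncIdeal} and the fact $\Theta^\lambda_1(A_3(\lambda))\simeq\unit^2$ for part~(1), and through \Cref{CalcVer4} for part~(2), is the same as the paper's. The difference lies in how each key ingredient is justified, and in part~(2) there is a gap.

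For part~(1), the paper reaches $\Theta^\lambda_1(A_3(\lambda))\simeq\unit^2$ by invoking \Cref{LemTech}: symmetric monoidal functors from the additive monoidal subcategory of $\Stab C_2^2$ generated by $A_3(\lambda)$ take values in tannakian subcategories, forcing $\Theta^\lambda_1(A_3(\lambda))\in\Vecc\subset\Ver_4$, and the fusion relation $A_3(\lambda)^{\otimes 2}\simeq A_3(\lambda)^2$ then pins down $\unit^2$. You instead plan a direct invariant count; the paper explicitly notes that a direct computation works, so this is a legitimate alternative. Two small corrections: $\dim_\bk T_2 = 4$ (not $2$) for $p=2$ (it is $V\otimes V^{(1)}$ by Donkin's formula), though the conclusion that its categorical dimension vanishes is of course unaffected; and $\overline{\Ver_4}\simeq\Vecc$, so $H$ is just the semisimplification, as you use.

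For part~(2), there is a genuine gap. Your computation only evaluates the functor underlying $\Phi^\lambda_{1,\Ver_4}(L_1)\in\Ver_4\boxtimes\Ver_4\simeq\Fun_\bk(\cP^{\op},\Ver_4)$ on the single projective $P=\Sigma^2(T_2)$, but $\cP$ has a second indecomposable projective, namely $L_1$ itself (the simple projective $\Sigma^2(T_1)$). A value $L_1$ on $P$ alone does not pin down the object: it is consistent with, say, $\unit\boxtimes L_1\oplus L_1\boxtimes X$. The missing step is either to compute $\Theta^\lambda_1(\Hom_{\Ver_4}(L_1,L_1^{\otimes 4}))$ and the $\End(P)$-action (both of which do work out), or, as the paper does in the proof of \Cref{PropCi}, to first deduce exactness of $\Phi^\lambda_{1,\Ver_4}$ for $\lambda\in\mP^1(\mF_4)\setminus\mP^1(\mF_2)$ from \Cref{CalcVer4} and \Cref{NewProp}, then invoke \Cref{CorVVV} to reduce to the dichotomy $\Phi(L_1)\in\{\unit\boxtimes L_1,\ L_1\boxtimes\unit\}$, and only then use your evaluation on $P$ to discriminate. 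Your computation is the right and only nontrivial input; you should explicitly supply this reduction step.
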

\begin{proof}
Part (1) follows from the classification in \Cref{ThmClass} and the observation that $\Theta_1^\lambda(A_3(\lambda))\simeq\unit^2$, so $\Theta_1^\lambda(A_3(\lambda))$ is not negligible. The latter observation can be computed directly, but also follows from \Cref{LemTech}, which shows that symmetric monoidal functors from the additive monoidal subcategory generated by $A_3(\lambda)$ in $\Stab C_2^2$ take values in tannakian subcategories, so $\Theta_1^\lambda(A_3(\lambda))\in\Vecc\subset\Ver_4$.

Part (2) was already observed in (the proof of) \Cref{PropCi}.
\end{proof}

\subsection{Conjectural characterisation of categories that fibre over \texorpdfstring{$\Ver_4$}{Ver4}}
For this section we fix a generator $\alpha\in\mF_4$, which we interpret as an element of $\bk\subset\mP^1(\bk)$, or more precisely of $\mP^1(\bk)\backslash\mP^1(\mF_2)$.

\begin{conjecture}\label{ConjMG}
    The following conditions are equivalent on a tensor category~$\cC$:
    \begin{enumerate}
        \item $\cC$ is $\Phi^\alpha_1$-exact and of moderate growth;
        \item $\cC$ admits a tensor functor to $\Ver_4$.
    \end{enumerate}
\end{conjecture}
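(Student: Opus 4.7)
The plan is to deduce the conjecture from Theorem \ref{ThmO} by verifying properties (O1) and (O2) for the $\bO^2_2$-functor $\Phi_1^\alpha$. The implication (2) $\Rightarrow$ (1) is immediate: by Proposition \ref{PropPhi2}(2), $\Phi_1^\alpha$ satisfies (O1), which by Remark \ref{RemDefO}(2) entails (O0), so $\Ver_4$ itself is $\Phi_1^\alpha$-exact; Lemma \ref{Ofunfun}(1) then transfers this exactness to any $\cC$ admitting a tensor functor to $\Ver_4$, and moderate growth is inherited from $\Ver_4$.

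For (1) $\Rightarrow$ (2), one must check (O2) for $\Phi_1^\alpha$: every moderate growth tensor category $\cC$ is $\overline{\Phi_1^\alpha}$-exact if and only if it is Frobenius exact. The ``if'' direction is Remark \ref{RemDefO}(3). For the converse, Proposition \ref{PropPhi2}(1) exhibits $\overline{\Phi_1^\alpha}$ as a descendant of $\Phi_2^\alpha$ along the faithful fibre functor $\Rep\alpha_2\to\Vecc$, so by the observation in \Cref{ProcDesc} the two exactness loci coincide. The task thus reduces to showing: \emph{every moderate growth $\cC$ that is $\Phi_2^\alpha$-exact is Frobenius exact}.

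The natural route is to upgrade $\Phi_2^\alpha$-exactness to $\Phi_\infty^\alpha$-exactness, from which Proposition \ref{PropPhiinf2}(1)--(2) delivers Frobenius exactness (with further leverage from Lemma \ref{LemPhiinf} and Proposition \ref{PropPhiinf} when property (P), algebraicity, or a suitable $\Phi_\infty$-type hypothesis is available). Phrased via Proposition \ref{NewProp}, this upgrade asks that for projective $P, Q$ in any moderate growth $\cC$, the indecomposable summands $A_n(\mu)$ of the $C_2^2$-module $\Hom_\cC(Q, P^{\otimes 4})$ are restricted to a small list of parameters $(n,\mu)$ that is already detected by $\Theta_2^\alpha$. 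Corollary \ref{CorVer2n} shows exactly the desired shape for the Verlinde categories themselves, providing a model for what must hold in general.

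The main obstacle is precisely this upgrade in the absence of (P) or algebraicity. Remark \ref{RemBd} identifies the issue as the problem of describing the image of the fourth tensor power functor $\cC \to \Rep S_4 \to \Rep C_2^2$ across all moderate growth $\cC$; the Question following Proposition \ref{PropPhiinf} (whether the $\Phi_\infty$-type of every moderate growth category lies in $\Vecc \subset \Vecc_\mZ$) would equally suffice. Absent such a structural input, the conjecture is reachable conditionally: Proposition \ref{PropPhiinf2}(3) settles the finite case under the $\Ver_{2^\infty}$-conjecture, from which the (SP) case should follow by writing $\cC$ as a union of finite tensor subcategories, each closed under $\Phi_1^\alpha$ by its naturality. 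The unconditional moderate growth case, however, genuinely appears to require new input beyond the techniques in this paper.
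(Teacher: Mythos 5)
You have correctly recognized that \Cref{ConjMG} is genuinely a conjecture: the paper does not prove it unconditionally, and so there is no ``paper's proof'' against which to match a complete argument. What you have produced is a faithful reconstruction of the paper's own analysis. The implication $(2)\Rightarrow(1)$ via $\Phi^\alpha_1$ satisfying {\rm (O1)} (hence {\rm (O0)}) and \Cref{Ofunfun}(1) is sound (the paper cites \Cref{PropCi} directly, which is the same content), and your reduction of $(1)\Rightarrow(2)$ to checking {\rm (O2)} for $\Phi^\alpha_1$, and then via \Cref{PropPhi2}(1) to the statement that every moderate-growth $\Phi_2^\alpha$-exact tensor category is tannakian, is precisely \Cref{ThmFinal1}. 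Your identification of the conditional finite-type result (via \Cref{PropPhiinf2}(3) / \Cref{ThmVer4} under the $\Ver_{2^\infty}$-Conjecture), and of the genuine remaining obstacle — upgrading $\Phi_2^\alpha$-exactness to $\Phi_\infty^\alpha$- or $\Phi_\infty$-exactness absent {\rm (P)} or algebraicity, i.e.\ the question after \Cref{PropPhiinf} — matches the paper's diagnosis of where the difficulty lies.

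One small caution on your extrapolation to the {\rm (SP)} case: writing $\cC$ as a union of finite tensor subcategories $\cC_\beta$ and applying \Cref{ThmVer4} to each yields tensor functors $\cC_\beta\to\Ver_4$ individually, but you still need to glue these into one functor on $\cC$; this is not automatic and requires an argument (essential uniqueness of such functors, e.g.\ via incompressibility and the classification of tensor subcategories of $\Ver_4$, or an appeal to \Cref{LemOCond}-style arguments which in the paper are only stated under the finiteness hypothesis). So that step is plausible but not as immediate as ``closed under $\Phi_1^\alpha$ by naturality'' suggests.
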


Note that (2) implies (1) in \Cref{ConjMG}, by \Cref{PropCi}. We discuss some relations with other conjectures and statements:
\begin{theorem}\label{ThmFinal1}
    The following statements are equivalent:
    \begin{enumerate}
        \item \Cref{ConjMG} is true;
        \item $\Phi_1^\alpha$ satisfies {\rm (O2)};
        \item A tensor category of moderate growth is tannakian if and only if it is $\Phi_2^\alpha$-exact.
    \end{enumerate}
\end{theorem}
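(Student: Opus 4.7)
The plan combines three ingredients already in place: \Cref{PropPhi2}, which records that $\Phi_1^\alpha$ satisfies (O1) (since $\alpha\in\mP^1(\mF_4)$) and that $\overline{\Phi_1^\alpha}$ is the descendant of $\Phi_2^\alpha$ along the fibre functor $\omega\colon\Rep\alpha_2\to\Vecc$; the abstract criteria \Cref{ThmO} and \Cref{Thm:ReverseO} linking (O1), (O2) and (OF); and the $\Ver_p$-Theorem (\Cref{pTheorem}) applied with $p=2$, so that $\Ver_2=\Vecc$ and moderate-growth Frobenius exactness coincides with being tannakian.

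For (1)$\Leftrightarrow$(2), once (O1) for $\Phi_1^\alpha$ is read off from \Cref{PropPhi2}(2), the implication (2)$\Rightarrow$(1) is \Cref{ThmO} applied with $p^n=4$ (which says that (O1)$+$(O2) implies (OF)), while (1)$\Rightarrow$(2) is \Cref{Thm:ReverseO}(2) (which says that (OF)$+$(O1) implies (O2)). Both are direct appeals to those results.

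The content of (2)$\Leftrightarrow$(3) is an identification of exactness notions. By \Cref{PropPhi2}(1), $\overline{\Phi_1^\alpha}_\cC\simeq(\cC\boxtimes\omega)\circ\Phi_2^\alpha_\cC$ for every tensor category $\cC$; since $\omega$ is a tensor functor between tensor categories, \Cref{PropPseudo} together with \Cref{Cor:box} makes $\cC\boxtimes\omega$ into one as well, hence faithful (any nonzero object $X$ in its kernel would, via a coevaluation $X\otimes X^\vee\tto\unit$, force $\unit$ into the kernel, contradicting $(\cC\boxtimes\omega)(\unit)\simeq\unit$). By \Cref{Thm:FE} it follows that $\overline{\Phi_1^\alpha}_\cC$ is exact precisely when $\Phi_2^\alpha_\cC$ is. Combining this identification with the $p=2$ case of \Cref{pTheorem} (replacing ``moderate growth and Frobenius exact'' by ``tannakian'') and with the observation that tannakian categories are automatically $\Phi_2^\alpha$-exact by \Cref{Ofunfun}(1) (as $\Vecc$ is $\Phi_2^\alpha$-exact by \Cref{RemDefO}(3)), one sees that (O2) for $\Phi_1^\alpha$ and statement (3) are equivalent.

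The only non-formal point in the argument is the faithfulness of $\cC\boxtimes\omega$; once that is established, the theorem is a clean repackaging of \Cref{PropPhi2}, \Cref{ThmO}, \Cref{Thm:ReverseO} and \Cref{pTheorem}, and no significant further obstacle is expected.
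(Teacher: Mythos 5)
Your proposal is correct and follows essentially the same route as the paper's proof: the (1)$\Leftrightarrow$(2) equivalence is \Cref{ThmO} and \Cref{Thm:ReverseO}(2) combined with \Cref{PropPhi2}(2), and (2)$\Leftrightarrow$(3) is \Cref{PropPhi2}(1). The only difference is that you spell out the faithfulness of $\cC\boxtimes\omega$ explicitly, whereas the paper implicitly appeals to the observation already made in \Cref{ProcDesc} that $\Phi$-exactness agrees with $\Phi'$-exactness for any descendant $\Phi'$; both are the same reasoning.
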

\begin{proof}
That (1) implies (2) follows from \Cref{PropPhi2}(2) and \Cref{Thm:ReverseO}(2). That (2) implies (1) follows from \Cref{PropPhi2}(2) and \Cref{ThmO}.

    That (2) and (3) are equivalent follows from \Cref{PropPhi2}(1).
\end{proof}

Finally, we show that \Cref{ConjMG}, specialised to finite tensor categories, is a consequence of \cite[Conjecture~1.4]{BEO}.

\begin{theorem}\label{ThmVer4}Assume the $\Ver_{2^\infty}$-Conjecture is valid.
     The following conditions are equivalent on a {\rm finite} tensor category $\cC$:
    \begin{enumerate}
        \item $\cC$ is $\Phi_1^\alpha$-exact;
        \item $\cC$ admits a tensor functor to $\Ver_4$;
        \item There exist projective objects $P,Q\in\cC$ so that the $C_2^2$-representation
        $\Hom(Q,P^{\otimes 4})$
        has an indecomposable summand in the list
        $$\{\Omega^{i}\unit\mid i\in\mZ\}\,\cup\, \{A_m(\alpha)\mid m\in\mZ_{>0}\}\,\cup\, \{A_m(1+\alpha)\mid m\in\mZ_{>0}\};$$
        \item There exist projective objects $P,Q\in\cC$ so that the $C_2^2$-representation
        $\Hom(Q,P^{\otimes 4})$
        has an indecomposable summand in the list
        $$\{\unit, A_1(\alpha), A_2(\alpha)\}.$$
    \end{enumerate}
\end{theorem}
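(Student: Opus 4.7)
The plan is to establish, in sequence, the equivalences (1) $\iff$ (2), (1) $\iff$ (3), and (1) $\iff$ (4), with (1) serving as the hub. The implication (2) $\Rightarrow$ (1) is an immediate consequence of \Cref{PropCi} (which already shows $\cC_2=\Ver_4$ is itself $\Phi_1^\alpha$-exact for $\alpha\in\mP^1(\mF_4)\setminus\mP^1(\mF_2)$) combined with \Cref{Ofunfun}(1). Conversely, for (1) $\Rightarrow$ (2), I would invoke \Cref{PropO1}: its hypothesis that $\Phi_1^\alpha$ satisfies (O1) is verified in \Cref{PropPhi2}(2) precisely because $\alpha\in\mP^1(\mF_4)$, and its hypothesis of the $\Ver_{2^\infty}$-Conjecture is part of the standing assumption.

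For (1) $\iff$ (3), \Cref{NewProp} translates (1) into the existence of projective objects $P,Q\in\cC$ with $\Theta_1^\alpha(\Hom_\cC(Q,P^{\otimes 4}))\neq 0$. I would first determine on which indecomposable $\bk C_2^2$-modules $\Theta_1^\alpha$ vanishes, using the characterization of $\Theta_1^\alpha$ in \Cref{ThmClass}(1) by $\Theta_1^\alpha(A_1(\alpha))=V\in\Ver_4$, the fusion rules of \Cref{FusRules}, and rigidity of $V$: $\Theta_1^\alpha(\bk C_2^2)=0$ as $\Theta_1^\alpha$ is a $\bV$-functor; $\Theta_1^\alpha(A_m(\mu))=0$ for $\mu\neq\alpha$ (since $A_1(\alpha)\otimes A_m(\mu)$ is projective in $\Rep C_2^2$, giving $V\otimes\Theta_1^\alpha(A_m(\mu))=0$); $\Theta_1^\alpha(A_m(\alpha))\neq 0$ for all $m\geq 1$ (from $V\otimes\Theta_1^\alpha(A_m(\alpha))=V^{\oplus 2}$); and $\Theta_1^\alpha(\Omega^i\unit)\cong\unit$ (as $\Omega^i\unit$ is invertible in $\Stab C_2^2$ and $\unit$ is the unique invertible object of $\Ver_4$). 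So the non-annihilated summands are exactly those in $\{\Omega^i\unit\}\cup\{A_m(\alpha)\}$. To account for the $A_m(\alpha+1)$ terms in (3), I would exploit the $S_4$-action on $\Hom_\cC(Q,P^{\otimes 4})$ extending its $C_2^2$-action, coming from the braiding on $P^{\otimes 4}$: conjugation of $C_2^2$ by $S_4/C_2^2\cong S_3$ acts on indecomposables of $\Rep C_2^2$ via $GL_2(\mF_2)$ acting on $\mP^1(\bk)$ by Möbius transformations, whose orbit of $\alpha$ is exactly $\{\alpha,\alpha+1\}$. This forces the multiplicities of $A_m(\alpha)$ and $A_m(\alpha+1)$ as summands in $\Hom_\cC(Q,P^{\otimes 4})$ to coincide, completing (1) $\iff$ (3).

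For (1) $\iff$ (4), the direction (4) $\Rightarrow$ (1) follows at once from the above analysis, since $\Theta_1^\alpha$ is nonzero on $\unit$, $A_1(\alpha)$, and $A_2(\alpha)$. The converse (1) $\Rightarrow$ (4) is the main obstacle. Here I would use the already-established (1) $\iff$ (2) to obtain a tensor functor $F:\cC\to\Ver_4$, and then invoke \Cref{CalcVer4}, which computes $\Hom_{\Ver_4}(P,V^{\otimes 4})\cong A_1(\alpha)\oplus A_1(\alpha+1)$ for $P$ an indecomposable projective of $\Ver_4$. The technical difficulty is that $V$ is not itself projective in $\Ver_4$, so \Cref{CalcVer4} does not directly satisfy the projectivity hypothesis demanded by (4). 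The strategy is to realise $V$ as a subquotient of its projective cover $P_1$ in $\Ver_4$, propagate the $A_1(\alpha)$-summand into $\Hom_{\Ver_4}(P,P_1^{\otimes 4})$ via the inclusion $V^{\otimes 4}\hookrightarrow P_1^{\otimes 4}$ (using exactness of the tensor product and of $\Hom$ out of a projective), and transfer back to $\cC$ along $F$ via carefully chosen projective lifts, invoking \Cref{CorExFin} to ensure projectives are preserved. The crux of the proof is ensuring that an $A_1(\alpha)$-subobject survives as a genuine direct summand — not merely a subobject — throughout this chain, which may require appealing to the module category structure of $\cC$ over $\Ver_4$ and exploiting the exactness of $\Phi_1^\alpha$ on $\cC$ already granted by (1).
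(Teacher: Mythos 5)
Your handling of (1) $\iff$ (2) matches the paper exactly, and your treatment of (3) $\Rightarrow$ (1) and (4) $\Rightarrow$ (1) is correct. Your route to (1) $\Rightarrow$ (3), via the conjugation action of $N_{S_4}(C_2^2)/C_2^2 \simeq S_3 \simeq GL_2(\mF_2)$ on the types $A_m(\lambda)$ to force equal multiplicities for $A_m(\alpha)$ and $A_m(\alpha+1)$, is a genuinely different and perfectly valid argument; the paper never proves (1) $\Rightarrow$ (3) directly but instead closes the cycle (2) $\Rightarrow$ (4) $\Rightarrow$ (3) $\Rightarrow$ (1), observing only that $\Phi_1^\alpha$-exactness and $\Phi_1^{\alpha+1}$-exactness are equivalent (both being equivalent to (2)).

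The genuine gap is (1) $\Rightarrow$ (4) (equivalently (2) $\Rightarrow$ (4)), which you correctly identify as the main obstacle but do not actually resolve. Two problems. First, a factual error: $V = L_1$ \emph{is} projective in $\Ver_4$; the proof of \Cref{PropCi} explicitly refers to ``the simple projective object in $\cC_2$,'' and $\Sigma^2(T_1) = L_1$ lies in the image of $\Tilt^{[2]}SL_2$, i.e.\ in $\cP$. So your worry about $V$ failing the projectivity hypothesis is misplaced; \Cref{CalcVer4} already gives projectives on both sides. Second, and more seriously, your propagation strategy — transporting the $A_1(\alpha)$-summand from $\Hom_{\Ver_4}(F(P),F(Q)^{\otimes 4})$ back to $\Hom_\cC(P,Q^{\otimes 4})$ — is not carried out and is not sound as stated: the tensor functor $F$ induces an \emph{injection} $\Hom_\cC(P,Q^{\otimes 4}) \hookrightarrow \Hom_{\Ver_4}(F(P),F(Q)^{\otimes 4})$ of $C_2^2$-modules, but a direct summand of the target need not restrict to a direct summand of the source, so nothing forces $A_1(\alpha)$ to appear in $\Hom_\cC(P,Q^{\otimes 4})$. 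The paper sidesteps this entirely: it argues that if $\cC$ is not tannakian, then (1) forces (by \Cref{NewProp}) some $\Omega^i\unit$ or $A_m(\alpha)$ to appear, and then uses the full family $\Phi_m^\alpha$ together with \Cref{PropPhiinf2}(3) — the appearance of any $\Omega^i\unit$ or of $A_m(\alpha)$ with $m\geq 3$ would imply $\Phi_{m'}^\alpha$-exactness for some $m'\geq 2$, hence tannakian-ness, a contradiction — so only $A_1(\alpha)$ or $A_2(\alpha)$ can survive, while the tannakian case is handled by exhibiting $\unit$ as a summand via the fibre functor. This use of the higher $\bO$-functors $\Phi_m^\alpha$ to constrain the answer is the key idea you are missing.
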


\begin{proof}
By \Cref{PropPhi2}(2), equivalence of (1) and (2) is a special case of \Cref{PropO1}. In particular this means that $\cC$ is $\Phi^\alpha_1$-exact if and only if it is $\Phi^{\alpha+1}_1$-exact.

    Clearly (4) implies (3). To prove that (3) implies (1), by the previous paragraph it suffices to show that under assumption (3) $\cC$ is either $\Phi^{\alpha}_1$-exact or $\Phi^{\alpha+1}_1$-exact. If one of the modules $\Omega^i\unit$ or $A_m(\alpha)$ appears in $\Hom(Q,P^{\otimes 4})$, then $\cC$ is $\Phi_1^\alpha$-exact by \Cref{NewProp}.
    If $A_m(1+\alpha)$ appears, $\cC$ must likewise be $\Phi^{\alpha+1}_1$-exact.

    Now we prove that (2) implies (4). Assume first that $\cC$ is tannakian, then writing the forgetful tensor functor $\cC\to\Vecc$ as $\Hom_{\cC}(P,-)$ for some projective object $P$, shows that $\unit$ appears as a $C_2^2$-summand in $\Hom_{\cC}(P,Q^{\otimes 4})$, for any projective object $Q\in\cC$.

Now assume that $\cC$ is not tannakian, but has a tensor functor to $\Ver_4$. In particular, $\Phi_1^\alpha$ is faithful on $\cC$. Hence, by \Cref{NewProp}, there must be $P,Q$ as in (3) so that $\Omega^i\unit$ or $A_m(\alpha)$, for some $i\in\mZ$ or $m\in\mZ_{>0}$, appears in $\Hom_\cC(P,Q^{\o4})$. However, if $\Omega^i\unit$ or $A_{m+1}(\alpha)$ for $m>1$ appeared, it would follow, again from \Cref{NewProp}, that $\cC$ is $\Phi_m^\alpha$-exact and hence tannakian by \Cref{PropPhiinf2}(3), contradicting our assumption. Hence only $A_1(\alpha)$ and $A_2(\alpha)$ are allowed to appear in this case.
\end{proof}

\subsection*{Acknowledgements}
The authors thank Pavel Etingof for useful discussions. The research of KC was partially supported by ARC grants DP210100251 and FT220100125.
The research that led to the current paper was instigated at Max Planck Institute for Mathematics Bonn and the authors are grateful for excellent working conditions. We thank an anonymous referee for helpful comments, and for suggesting the term ``detect fibering over $\Ver_{p^n}$''.

\end{document}